\newlist{steps}{enumerate}{1}
\setlist[steps, 1]{label = Step \arabic*:}
\newcommand{\blue}[1]{\textcolor{blue}{#1}}
\theoremstyle{plain}
\newtheorem{thm}{Theorem}[section]
\crefname{thm}{Theorem}{Theorems}
\Crefname{thm}{Theorem}{Theorems}
\newtheorem{prop}[thm]{Proposition}
\crefname{prop}{Proposition}{Propositions}
\Crefname{prop}{Proposition}{Propositions}
\newtheorem{lem}[thm]{Lemma}
\crefname{lem}{Lemma}{Lemmas}
\Crefname{lem}{Lemma}{Lemmas}
\newtheorem{cor}[thm]{Corollary}
\crefname{cor}{Corollary}{Corollaries}
\Crefname{cor}{Corollary}{Corollaries}
\crefname{claim}{Claim}{Claims}
\Crefname{claim}{Claim}{Claims}
\crefname{property}{Property}{Properties}
\Crefname{property}{Property}{Properties}
\crefname{problem}{Problem}{Problems}
\Crefname{problem}{Problem}{Problems}
\newtheorem{ques}[thm]{Question}
\crefname{ques}{Question}{Questions}
\Crefname{ques}{Question}{Questions}
\theoremstyle{definition}
\newtheorem{defn}[thm]{Definition}
\crefname{defn}{Definition}{Definitions}
\Crefname{defn}{Definition}{Definitions}
\crefname{notation}{Notation}{Notations}
\Crefname{notation}{Notation}{Notations}
\crefname{convention}{Convention}{Conventions}
\Crefname{convention}{Convention}{Conventions}
\crefname{cond}{Condition}{Conditions}
\Crefname{cond}{Condition}{Conditions}
\crefname{assum}{Assumption}{Assumptions}
\Crefname{assum}{Assumption}{Assumptions}
\crefname{conj}{Conjecture}{Conjectures}
\Crefname{conj}{Conjecture}{Conjectures}
\newtheorem{rem}[thm]{Remark}
\crefname{rem}{Remark}{Remarks}
\Crefname{rem}{Remark}{Remarks}
\newtheorem{ex}[thm]{Example}
\crefname{ex}{Example}{Examples}
\Crefname{ex}{Example}{Examples}
\crefname{section}{Section}{Sections}
\Crefname{section}{Section}{Sections}
\crefname{subsection}{Subsection}{Subsections}
\Crefname{subsection}{Subsection}{Subsections}
\crefname{figure}{Figure}{Figures}
\Crefname{figure}{Figure}{Figures}
\newcommand{\Z}{\mathbb{Z}}
\newcommand{\Q}{\mathbb{Q}}
\newcommand{\sign}{\mathrm{sign}}
\newcommand{\End}{\mathop{\mathrm{End}}\nolimits}
\newcommand{\id}{\mathrm{id}}
\newcommand{\ind}{\mathop{\mathrm{ind}}\nolimits}
\newcommand{\A}{\mathcal A}
\newcommand{\B}{\mathcal B}
\newcommand{\G}{\mathcal G}
\newcommand{\su}{\mathfrak{su}(2)}
\newcommand{\s}{\mathfrak{s}}
\def\ri{\rightarrow}
\def\Om{\Omega}
\def\spinc{\text{spin}^c}
\newcommand{\N}{\mathbb N}
\newcommand{\R}{\mathbb R}
\newcommand{\ctext}[1]{\raise0.2ex\hbox{\textcircled{\scriptsize{#1}}}}
\def\dim{\operatorname{dim}}
\def\aut{\operatorname{Aut}}
\def\End{\operatorname{End}}
\def\id{\operatorname{id}}
\def\ind{\operatorname{ind}}
\def\grad{\operatorname{grad}}
\newcommand{\mbar}[1]{{\ooalign{\hfil#1\hfil\crcr\raise.167ex\hbox{--}}}}
\def\Tr{\operatorname{Tr}}
\def\wt{\widetilde}
\def\un{\underline}
\def\ov{\overline}
\def\CF {\mathit{CF}}
\def\HF {\mathit{HF}}
\newcommand\HFp {\HF^+}
\newcommand \CFm {\CF^-}
\newcommand \HFm {\HF^-}
\newcommand\CP{\smash{\mathbb{CP}^2}}
\newcommand\oCP{\smash{\ov{\mathbb{CP}}}^2}
\newcommand\RP{\smash{\mathbb{RP}^2}}
\newcommand\K{\mathfrak{K}}
\newcommand\E{\mathfrak{E}}
\title[]{Involutions and the Chern-Simons filtration in instanton Floer homology}
\author{Antonio Alfieri}
\address{Centre de Recherche Math\' ematiques, Montreal}
\email{alfieriantonio90@gmail.com}
\author{Irving Dai}
\address{Department of Mathematics, The University of Texas at Austin}
\email{irving.dai@math.utexas.edu}
\author{Abhishek Mallick}
\address{Department of Mathematics, Rutgers University - New Brunswick}
\email{abhishek.mallick@rutgers.edu}
\author{Masaki Taniguchi}
\address{Department of Mathematics, Graduate School of Science, Kyoto University, Kitashirakawa Oiwake-cho, Sakyo-ku, Kyoto 606-8502, Japan}
\email{taniguchi.masaki.7m@kyoto-u.ac.jp}
\begin{document}

\maketitle
\vspace{-0.5cm}

\begin{abstract}
Building on the work of Nozaki, Sato and Taniguchi, we develop an instanton-theoretic invariant aimed at studying strong corks and equivariant bounding. Our construction utilizes the Chern-Simons filtration and is qualitatively different from previous Floer-theoretic methods used to address these questions. As an application, we give an example of a cork whose boundary involution does not extend over any 4-manifold $X$ with $H_1(X, \Z_2) = 0$ and $b_2(X) \leq 1 $, and a strong cork which survives stabilization by either of $n\CP$ or $n\oCP$. We also prove that every nontrivial linear combination of $1/n$-surgeries on the strongly invertible knot $\ov{9}_{46}$ constitutes a strong cork. Although Yang-Mills theory has been used to study corks via the Donaldson invariant, this is the first instance where the critical values of the Chern-Simons functional have been utilized to produce such examples. Finally, we discuss the geography question for nonorientable surfaces in the case of extremal normal Euler number. 
\end{abstract}


\section{Introduction} \label{sec:1}

Let $Y$ be an integer homology $3$-sphere equipped with an involution $\tau$. If $Y$ bounds a smooth $4$-manifold $W$, then it is natural to ask whether $\tau$ extends as a diffeomorphism over $W$. By work of Akbulut \cite{Ak91_cork} and Akbulut-Ruberman \cite{AR16}, this question is closely related to the study of exotic phenomena in four dimensions. Indeed, in the case that $W$ is contractible, a theorem of Freedman \cite{Fr82} shows that $\tau$ always extends as a homeomorphism. This gives the notion of a \textit{cork} \cite{Ak91_cork}, which is known to capture the difference between any pair of exotic structures on the same simply-connected, closed $4$-manifold \cite{MAt, CFHS}. Investigating the extendability of $\tau$ has also led to new sliceness obstructions via branched coverings \cite{ASA20, DHM20} and (in particular) a recent proof that the $(2, 1)$-cable of the figure-eight is not slice \cite{dai20222}.

Several authors have studied the extension question through the lens of Heegaard Floer homology, monopole Floer homology, and Seiberg-Witten theory for families; see for example \cite{LRS18, ASA20, DHM20, dai20222, Kang2022, KMT23A}. These developments have led to a wide range of striking applications to exotica and concordance. The aim of the present work is to introduce new tools for obstructing the extendability of $\tau$ through the use of the Chern-Simons filtration from instanton Floer homology. As we will see, this will allow us to establish different results than were previously accessible via the aforementioned methods. 

In this paper, we present applications to equivariant bounding, corks and exotica, surgeries on symmetric knots, and nonorientable slice surfaces. Each of these is discussed below, along with topological consequences and motivations from Floer theory. To the best of the authors’ knowledge, this is the first instance in which the Chern-Simons filtration has been utilized to address questions of this nature. Indeed, while Yang-Mills theory has been heavily utilized in smooth 4-manifold topology, this is the first time that the critical \textit{values} of the Chern-Simons functional have had any bearing on corks or exotic phenomena. Our examples are obtained through a combined analysis of both the Chern-Simons filtration and an understanding of the behavior of the Donaldson invariant under certain cork twists.

\subsection{Statement of results}\label{sec:1.1}

Recent results of Daemi \cite{Da20} and Nozaki-Sato-Taniguchi \cite{NST19} have suggested a fundamental difference between the information contained in the Chern-Simons filtration and that of other Floer homologies. These applications include several surprising results regarding bordism and Dehn surgery which were previously unobtainable via Heegaard-Floer-theoretic methods. The present paper aims to develop similar techniques in the equivariant setting, which will have an additional connection to the theory of corks and equivariant bordism. 

Our main technical construction is an involutive refinement of the $r_s$-invariant from the work of Nozaki-Sato-Taniguchi \cite{NST19}:

\begin{thm}\label{thm:1.1}
Let $Y$ be an oriented integer homology $3$-sphere and $\tau$ be a smooth, orientation-preserving involution on $Y$. For any $s \in [-\infty, 0]$, we define a real number 
\[
r_s(Y, \tau) \in (0, \infty]
\]
which is an invariant of the diffeomorphism class of $(Y, \tau)$. Moreover, let $(W, \wt{\tau})$ be an equivariant negative-definite cobordism from $(Y, \tau)$ to $(Y', \tau')$ with $H_1(X, \Z_2 )=0$. Then
\[
r_s(Y,\tau) \leq r_s(Y', \tau'). 
\]
If $r_s(Y, \tau)$ is finite and $W$ is simply connected, then in fact
\[
r_s(Y,\tau) < r_s(Y', \tau').
\]
\end{thm}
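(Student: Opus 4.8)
The plan is to build a $\wt\tau$-equivariant refinement of the filtered instanton package of \cite{NST19} and then to run the cobordism argument of that paper equivariantly. I would begin by fixing a $\tau$-invariant metric and $\tau$-invariant abstract perturbation on $Y$, obtained by averaging over $\langle \tau \rangle$. Since $\tau$ is orientation-preserving it acts on the $SU(2)$ configuration space of $Y$ covering the identity, carries the (perturbed) flat connections to themselves, and commutes with the real-valued, lifted Chern--Simons functional, as $\tau$ acts trivially on $H^3(Y;\Z)\cong\Z$. Hence $\tau$ induces a filtration-preserving automorphism $\tau_\sharp$ of the instanton chain complex $C_*(Y)$, where the filtration is the Chern--Simons filtration of \cite{NST19}, and $\tau_\sharp$ fixes the canonical class used there, being natural under diffeomorphisms covering $\id_Y$. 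As $\tau^2=\id_Y$, one has $\tau_\sharp^2=\id$ with $\tau$-invariant data (or, if equivariant transversality at $\tau$-fixed connections causes trouble, $\tau_\sharp^2\simeq\id$ through a filtered homotopy, or one replaces this model by the manifestly transverse quotient-orbifold model on an equivariant cobordism to $Y$). Working over $\Z/2$, so that $1+\tau_\sharp=1-\tau_\sharp$, I form the filtered mapping cone $\widehat{CI}(Y,\tau):=\mathrm{Cone}(1+\tau_\sharp)$, which inherits an $\R$-filtration and a canonical class, and \emph{define} $r_s(Y,\tau)\in(0,\infty]$ by applying to $\widehat{CI}(Y,\tau)$ the exact recipe that \cite{NST19} applies to $\widehat{CI}(Y)$ to produce $r_s(Y)$; concretely, $r_s(Y,\tau)$ is the threshold filtration level at which the canonical class of $\widehat{CI}(Y,\tau)$ becomes a boundary (the parameter $s$ enters exactly as in \cite{NST19} and plays no special role below). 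Invariance under equivariant diffeomorphism then follows word for word from \cite{NST19} using $\tau$-equivariant continuation maps, together with the fact that the Chern--Simons filtration is a diffeomorphism invariant, so threshold levels are preserved.

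For the monotonicity statement, given the equivariant negative-definite cobordism $(W,\wt\tau)$ from $(Y,\tau)$ to $(Y',\tau')$ with $H_1(W;\Z/2)=0$, I would equip $W$ with a $\wt\tau$-invariant metric and a $\wt\tau$-invariant cylindrical neck near each boundary component, and then promote the \cite{NST19} cobordism map to a filtered chain map of cones: the point is simply that a $\wt\tau$-invariant choice of data makes the induced map $\widehat{CI}(Y,\tau)\to\widehat{CI}(Y',\tau')$ intertwine the two copies of $1+\tau_\sharp$, so it descends to the cones. Negative-definiteness gives $H^2_+(W)=0$, which keeps the ASD equation on the glued manifolds unobstructed along reducibles exactly as in \cite{NST19}, while $H_1(W;\Z/2)=0$ rules out the nontrivial flat connections on $W$ that would otherwise produce extra ends and sign ambiguities. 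Tracking Chern--Simons/energy through the glued moduli spaces, the chain-level estimate of \cite{NST19} shows that this filtered map of cones cannot move the canonical class past its threshold, giving $r_s(Y,\tau)\le r_s(Y',\tau')$.

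For the strict inequality, assume $r:=r_s(Y,\tau)<\infty$ and $\pi_1(W)=1$. Finiteness means the threshold $r$ is realized by an honest, compact, $\wt\tau$-invariant ASD moduli space, and --- this is the key point, and the reason the hypothesis strengthens from $H_1(W;\Z/2)=0$ to $\pi_1(W)=1$ --- the configurations realizing $r$ are asymptotic along $Y$ to an \emph{irreducible} flat connection $\alpha$ (with Chern--Simons value $r$), since $r$ comes from the Floer differential hitting the canonical class. If equality $r_s(Y',\tau')=r$ held, chasing the cobordism map at the critical level would force this configuration to extend over $W$ as an ASD connection of no additional energy, i.e.\ as a flat connection on $W$ restricting to $\alpha$ along $Y$; but $\pi_1(W)=1$ forces every flat $SU(2)$ connection on $W$ to be trivial, hence to restrict to $\theta\neq\alpha$ along $Y$, a contradiction. (Equivalently one invokes the \cite{NST19}-type dichotomy ``equality forces a nontrivial flat or reducible connection on $W$'', eliminating the flat case by $\pi_1(W)=1$ and the reducible case by $H^2_+(W)=0$.) Therefore $r_s(Y,\tau)<r_s(Y',\tau')$.

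The step I expect to be the \textbf{main obstacle} is the analytic foundation of the equivariant theory: arranging equivariant transversality for the $\wt\tau$-invariant moduli spaces (or, equivalently, setting up and matching the quotient-orbifold model), and verifying that $\wt\tau$ is compatible with both the Chern--Simons filtration and the orientation/determinant-line data throughout the neck-stretching, so that the \cite{NST19} chain-level estimates survive in the involutive setting. Within this, the passage from $\le$ to $<$ is the most delicate: one must pin down that ``$r_s$ finite'' genuinely yields an energy-realizing instanton asymptotic to an irreducible, and control the lowest-energy stratum of the glued moduli space carefully enough to exclude the borderline extension over a simply-connected $W$.
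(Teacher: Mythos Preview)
Your approach has the right overall shape, but it diverges from the paper in a way that creates a genuine foundational gap, and your sketch of the strict inequality misses the new subtlety that appears in the equivariant setting.

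\textbf{The foundational gap.} You propose to choose $\tau$-invariant perturbations so that $\tau$ acts as a strict filtered automorphism $\tau_\sharp$ of the chain complex, and likewise $\wt\tau$-invariant data on $W$. You correctly flag equivariant transversality as the main obstacle, but the paper does not resolve it by averaging or by an orbifold model: it \emph{avoids it entirely}. The perturbation $\pi_i$ on $Y$ is not taken to be $\tau$-invariant. Instead, $\tau_i$ is defined as the cobordism map on $Y\times[0,1]$ with perturbation ends $\pi_i$ and $\tau^*\pi_i$, composed with the tautological identification $\un{C}(Y,\tau^*\pi_i)\cong\un{C}(Y,\pi_i)$. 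This makes $\tau_i$ only a \emph{homotopy} involution, and not even filtered---only level-$\delta_i$ with $\delta_i\to0$---which is why the paper needs the enriched-complex formalism of Section~4.4. On $W$ one likewise does not use $\wt\tau$-invariant data; equivariance of the cobordism map $\lambda_i$ holds only up to a homotopy $H_i$ (Lemma~5.10), obtained by interpolating between the glued perturbations $\pi_{W,i}\#\pi_i^{\tau'}$ and $\pi_i^\tau\#\wt\tau^*\pi_{W,i}$.

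\textbf{The definition.} The paper does not form $\mathrm{Cone}(1+\tau)$. Its $r_s(Y,\tau)$ is defined directly as the infimum of $-r$ over those $r$ for which there exists a $\theta$-supported cycle in $\un{C}^{[r,-s]}$ whose homology class is fixed by $\tau_*^{[r,-s]}$. This is not literally the NST19 recipe applied to a cone, and you would need to check that your cone invariant agrees with it.

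\textbf{The strict inequality.} This is where the equivariant argument genuinely departs from \cite{NST19}. In the nonequivariant case, equality produces a sequence of instantons on $W^*$ of vanishing energy, limiting to a flat connection that restricts to an irreducible on $Y$. In the involutive case, the obstruction to $z_i$ being an \emph{equivariant} $\theta$-supported cycle has two pieces: $\un{d}z_i$ (failure to be a cycle) and a chain $\xi_i$ with $\un{d}h_i=(z_i+\tau_iz_i)+\xi_i$ (failure of $\tau$-invariance in homology). The key Lemma~6.2 shows that, assuming equality, one finds chains $\alpha_i,\alpha_i'$ at filtration level converging to $-r_s$ with either $\alpha_i'=\lambda_i\alpha_i$ \emph{or} $\alpha_i'=H_i\alpha_i$, where $H_i$ is the homotopy witnessing $\lambda_i\tau_i\simeq\tau_i'\lambda_i$. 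In the second alternative the instanton producing the contradiction lives in the \emph{one-parameter family} moduli space used to define $H_i$, not in the ordinary ASD moduli space for $W$. Your sketch (``equality forces a flat extension over $W$'') captures only the first alternative and omits the role of $H_i$, which is exactly where the equivariance enters the compactness argument.
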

\noindent
We refer to a cobordism $W$ from $(Y, \tau)$ to $(Y', \tau')$ as \textit{equivariant} if it is equipped with a self-diffeomorphism $\wt{\tau}$ which restricts to $\tau$ and $\tau'$ on $Y$ and $Y'$, respectively. The assumption that $\tau$ is an involution is not essential; our $r_s$-invariant can be defined for any orientation-preserving diffeomorphism on $Y$.


As an initial application, recall the definition of a \textit{strong cork} from the work of Lin-Ruberman-Saveliev \cite{LRS18}. This is a cork for which the boundary involution $\tau$ does not extend over \textit{any} homology ball $W$ that $Y$ bounds. Techniques for detecting strong corks via Heegaard Floer theory were developed by Dai-Hedden-Mallick in \cite{DHM20}; these led to many novel families of corks, some of which have recently been used in the construction of new closed $4$-manifold exotica \cite{LLP}. It follows from Theorem~\ref{thm:1.1} that if $\tau$ extends over some homology ball $W$ with $Y = \partial W$, then $r_s(Y, \tau) = \infty$ for all $s$, and hence $r_s(Y, \tau)$ can be used for strong cork detection. As we will see, the additional information of the Chern-Simons filtration will allow us to derive new results and examples that were previously out of reach using other Floer-theoretic techniques.

\subsubsection{Equivariant bounding}\label{sec:1.1.1}
An obvious extension of the notion of a strong cork is the problem of obstructing \textit{equivariant bounding}. Indeed, it is natural to ask whether there is a cork for which $\tau$ does not extend over any definite manifold (of either sign). This is surprisingly difficult to answer, since even in the nonequivariant case, there are few examples in the literature where Floer theory has been used to provide constraints on both positive- and negative-definite boundings of a homology sphere $Y$.\footnote{For \textit{rational} homology spheres, the fact that there are multiple $d$-invariants makes this problem much more approachable; see \cite{GL20}. However, for an \textit{integer} homology sphere, note that any straightforward approach using the $d$- or $h$-invariant necessarily fails.} Indeed, the first example of an integer homology sphere with no definite bounding was exhibited recently by Nozaki-Sato-Taniguchi \cite{NST19} using the Chern-Simons filtration.\footnote{We generally use ``definite bounding" to refer to a definite bounding $W$ with $H_1(W, \Z_2) = 0$.}

We provide a partial answer to this question by placing homological constraints on action of the extension $\wt{\tau}$. We say $\wt{\tau}$ is \textit{homology-fixing} if $\wt{\tau}_* = \id$ on $H_2(W, \Q)$ and \textit{homology-reversing} if $\wt{\tau}_* = -\id$ on $H_2(W, \Q)$. Combining the involutive $r_s$-invariant with techniques of \cite{DHM20}, we prove:

\begin{thm}\label{thm:1.2}
There exists a cork $Y = \partial W$ such that the boundary involution $\tau$:
\begin{enumerate}
\item Does not extend as a diffeomorphism over any negative-definite $4$-manifold $W^-$ with $H_1(W^-, \Z_2)=0$ bounded by $Y$; and,
\item Does not extend as a homology-fixing or homology-reversing diffeomorphism over any positive-definite $4$-manifold $W^+$ with $H_1(W^+, \Z_2)=0$ bounded by $Y$.
\end{enumerate}
\end{thm}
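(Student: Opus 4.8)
The plan is to establish the two parts of the statement by different mechanisms, realized on a single strong cork $(W_0,\tau)$ with $Y=\partial W_0$, produced by (a variant of) the methods of \cite{DHM20}. We will arrange that $\tau$ fixes a point of $Y$---automatic when $(Y,\tau)$ is built from a knot carrying a strong inversion, since the fixed axis survives the surgery---so that every extension of $\tau$ over a filling of $Y$ has a fixed point; consequently the homology hypotheses in part (2) are not needed to produce fixed points, and are instead forced on us by the Donaldson-theoretic input.

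For part (1): exactly as in the homology-ball case recorded in the introduction (which used only the monotonicity of Theorem~\ref{thm:1.1} for equivariant negative-definite cobordisms with $H_1(\,\cdot\,,\Z_2)=0$), if $\tau$ extends over a negative-definite $W^-$ with $\partial W^-=Y$ and $H_1(W^-,\Z_2)=0$, then removing a $\wt\tau$-invariant ball produces an equivariant negative-definite cobordism from $(S^3,\sigma)$ to $(Y,\tau)$ (where $\sigma$ is a linear involution of $S^3$), and since $r_s(S^3,\sigma)=\infty$ Theorem~\ref{thm:1.1} gives $r_s(Y,\tau)=\infty$. Hence it suffices to exhibit a cork with $r_s(Y,\tau)<\infty$ for some $s\in[-\infty,0]$, and this is the first genuinely new input: from the surgery (or branched-cover) description of $(Y,\tau)$ and its symmetry, we enumerate the relevant flat connections on $Y$ together with their Chern--Simons values, assemble the $\tau$-equivariant filtered instanton chain complex, and verify that a class detecting the nontriviality of the cork is killed only at a finite level of the Chern--Simons filtration, giving $r_s(Y,\tau)<\infty$.

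For part (2), reversing orientation converts a positive-definite filling $W^+$ of $Y$ into a negative-definite filling $\ov{W^+}$ of $\ov Y$; but the monotonicity of Theorem~\ref{thm:1.1} now controls $r_s(\ov Y,\tau)$, which for our cork need not be finite, so the argument of part (1) does not transfer. Instead we would invoke the behavior of the Donaldson invariant under the cork twist: the cork $(W_0,\tau)$ is chosen so that for a suitable closed $4$-manifold $X=A\cup_Y W_0$ (with $b_2^+$ large enough for the Donaldson series to be defined) the cork-twisted manifold $A\cup_\tau W_0$ has a different Donaldson invariant---equivalently, the $\tau$-action on the instanton Floer homology of $Y$ moves the relative Donaldson invariant of $W_0$. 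If now $\tau$ extended as a homology-fixing or homology-reversing diffeomorphism $\wt\tau$ over a positive-definite $W^+$ with $H_1(W^+,\Z_2)=0$, then $A\cup_\tau W^+\cong A\cup_{\id}W^+$, so the Donaldson--Floer gluing formula would force the relative invariant of $W^+$ to be $\tau$-invariant; the homology hypotheses on $\wt\tau$ are exactly what make this usable, since they pin down the action of $\wt\tau$ on $H_2(W^+;\Q)$, hence on the $\mu$-classes and bundle data entering the gluing formula, and thereby permit a comparison of the relative invariant of $W^+$ with that of the contractible $W_0$, contradicting the cork detection.

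The main obstacle is part (2). Running a Donaldson-invariant gluing argument over a \emph{positive}-definite piece is considerably more delicate than the negative-definite case underlying part (1): one must control the reducible connections on the definite filling and check that the $\wt\tau$-action is compatible with the gluing, and it is precisely here that the homology-fixing/reversing hypotheses become unavoidable. The remaining serious input is the instanton computation in part (1), which---to the authors' knowledge---would be the first explicit determination of an equivariant Chern--Simons filtration for a cork.
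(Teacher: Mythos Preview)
Your proposal has a genuine gap in part (2) and misses the key structural point that makes the theorem work.

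\textbf{The cork is not a single DHM20 cork.} The paper's example is a carefully chosen connected sum
\[
(Y,\tau) = -2(Y_1,\tau) - 2(Y_1,\sigma) + (Y_2,\ast), \qquad Y_i = S^3_{1/i}(\overline{9}_{46}),
\]
where $\ast\in\{\tau,\sigma\}$ is chosen depending on which of $r_0(Y_1,\tau)$, $r_0(Y_1,\sigma)$ is smaller. The whole point is to simultaneously satisfy an instanton condition (for part (1)) and a Heegaard Floer condition (for part (2)); a generic strong cork from \cite{DHM20} is not known to do both.

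\textbf{Part (1): no Chern--Simons enumeration is performed.} The paper does not compute flat connections or their Chern--Simons values on $Y$. Finiteness of $r_0$ is obtained indirectly: the Donaldson-invariant pairing formula (Section~\ref{sec:6.2}) forces the $\tau$-action on $\un{I}(Y_1)$ to be locally nontrivial, hence $r_0(Y_1,\tau)<\infty$ by Lemma~\ref{lem:4.26}; the strict monotonicity in Theorem~\ref{thm:1.1} then gives $r_0(Y_2,\ast)<r_0(Y_1,\ast)$, and the connected sum inequality (Theorem~\ref{thm:connectedsum}) pushes the finiteness to the full $Y$. Your proposed direct filtered computation is both harder and unnecessary.

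\textbf{Part (2): wrong mechanism.} The paper does not use Donaldson gluing here at all; it uses the Heegaard Floer local equivalence group $\mathfrak{I}$ of \cite{HMZ, DHM20}. Reversing orientation turns $W^+$ into a negative-definite filling of $-Y$; the homology-fixing (resp.\ homology-reversing) hypothesis on $\wt\tau$ makes $-W^+$ a $\spinc$-fixing (resp.\ $\spinc$-reversing) cobordism, which would force $(\CFm(Y),\tau)\le 0$ (resp.\ $(\CFm(Y),\iota\tau)\le 0$). The connected sum $Y$ above is engineered so that both classes are strictly positive, giving the contradiction.

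Your Donaldson-gluing sketch has a real hole: even granting the (delicate) reducible analysis on a positive-definite piece, the conclusion ``the relative invariant of $W^+$ is $\tau$-invariant'' does not contradict cork detection. Cork detection says the relative invariant of the \emph{contractible} $W_0$ is moved by $\tau_*$ in $\un{I}(Y)$; nothing prevents a different filling $W^+$ from having a $\tau$-invariant relative class (indeed, $W^+$ may hit a different homology class entirely). You would need to compare the relative invariants of $W^+$ and $W_0$, and there is no mechanism for this without further structure.
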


Note that all previous results regarding equivariant bounding have either been restricted to manifolds which are spin or concern definite manifolds of a fixed sign. The first part of Theorem~\ref{thm:1.2} is established using the methods of this paper, while the second part is a consequence of the Heegaard Floer-theoretic formalism developed in \cite{DHM20}. (This leads to the conditions in the second part of the theorem.) In principle, the involutive $r_s$-invariant is capable of establishing \textit{both} the negative- and positive-definite cases of Theorem~\ref{thm:1.2} without any restrictions on the action of $\widetilde{\tau}_*$; this would follow from finding a cork with $r_s(Y, \tau)$ and $r_s(-Y, \tau)$ both nontrivial. However, we presently lack the computational tools to exhibit such an example. 

Theorem~\ref{thm:1.2} trivially provides the following corollary: 

\begin{cor}\label{cor:1.3}
There exists a cork $Y = \partial W$ such that the boundary involution $\tau$ does not extend as a diffeomorphism over any $4$-manifold $X$ with $H_1(X, \Z_2) = 0$ and $b_2 (X) \leq 1$ bounded by $Y$.
\end{cor}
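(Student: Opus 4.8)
The plan is to deduce Corollary~\ref{cor:1.3} directly from Theorem~\ref{thm:1.2} via a short case analysis on the intersection form. Let $(Y,\tau)$ with $Y=\partial W$ be the cork furnished by Theorem~\ref{thm:1.2}, and suppose toward a contradiction that $\tau$ extends as a diffeomorphism $\wt\tau$ over some $X$ with $\partial X = Y$, $H_1(X,\Z_2)=0$, and $b_2(X)\le 1$. First I would reduce to the case that $X$ is connected: since $\wt\tau$ restricts to $\tau$ on $Y$, it carries the component $X_0\subseteq X$ containing $Y$ to itself, and $X_0$ still satisfies $H_1(X_0,\Z_2)=0$ and $b_2(X_0)\le 1$, so we may replace $X$ by $X_0$. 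Because $Y$ is an integer homology sphere, the intersection form $Q_X$ on $H_2(X;\Z)/\mathrm{torsion}$ is unimodular.

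Next I would split on $b_2(X)$. If $b_2(X)=0$, then $X$ is a homology ball, hence vacuously negative-definite with $H_1(X,\Z_2)=0$, and part (1) of Theorem~\ref{thm:1.2} already forbids $\tau$ from extending over it. If $b_2(X)=1$, unimodularity forces $Q_X\cong\langle 1\rangle$ or $\langle -1\rangle$, so $X$ is definite; in the negative-definite case we again contradict part (1). The one remaining case, $X$ positive-definite with $b_2(X)=1$, is the only place a small extra observation is needed, and I expect it to be the sole (very mild) obstacle: one must check that $\wt\tau$ is automatically homology-fixing or homology-reversing. Indeed, $X$ is connected with nonempty boundary, so $\wt\tau$ is globally orientation-preserving or orientation-reversing; as its boundary restriction $\tau$ preserves the orientation of $Y$, the boundary-orientation convention forces $\wt\tau$ to preserve the orientation of $X$, whence $\wt\tau^*Q_X=Q_X$. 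Since $H_2(X;\Q)\cong\Q$, the induced map $\wt\tau_*$ is multiplication by some $\lambda\in\Q^\times$ with $\lambda^2=1$, i.e.\ $\wt\tau_*=\pm\id$. Thus $\wt\tau$ is homology-fixing or homology-reversing, and part (2) of Theorem~\ref{thm:1.2} supplies the final contradiction.

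In short, all the substantive content is contained in Theorem~\ref{thm:1.2}; the only points to handle are the reduction to a connected $X$, the classification of rank-one unimodular forms as $\pm$-definite, and the orientation bookkeeping that promotes an arbitrary extension over a positive-definite $b_2=1$ manifold to a homology-fixing or homology-reversing one. I do not anticipate any genuine difficulty beyond Theorem~\ref{thm:1.2} itself.
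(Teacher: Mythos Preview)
Your proposal is correct and follows essentially the same approach as the paper's proof, which also derives the corollary directly from Theorem~\ref{thm:1.2} by splitting into the negative-definite and positive-definite cases and noting that any extension over a positive-definite $X$ with $b_2(X)\le 1$ is automatically homology-fixing or homology-reversing. You simply make explicit a few details (the reduction to connected $X$, the classification of rank-one unimodular forms, and the orientation bookkeeping forcing $\wt\tau_*=\pm\id$) that the paper leaves implicit with the word ``clearly.''
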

\noindent
Note that if $X$ is simply connected, then $\tau$ extends over $X$ as a homeomorphism by work of Freedman \cite{Fr82}. Corollary~\ref{cor:1.3} thus emphasizes the difference between smooth and continuous topology in the context of the extension problem for definite manifolds.

We are also able to strengthen Corollary~\ref{cor:1.3} in the case that $X$ is obtained via stabilizing a homology ball by $n \CP$ or $n \oCP$. The following should be thought of as establishing the existence of a strong cork which persists under such a stabilization:

\begin{cor}\label{cor:cp2}
There exists a cork $Y = \partial W$ such that the boundary involution $\tau$ does not extend as a diffeomorphism over $W \# n \CP$ or $W \# n \oCP$ for any $n$; or, more generally, over $W' \# n \CP$ or $W' \# n \oCP$ for any homology ball $W'$ which $Y$ bounds.
\end{cor}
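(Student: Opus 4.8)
The plan is to take $(Y,\tau)$ to be the cork produced in \Cref{thm:1.2} and to verify that stabilizing a homology ball bounded by $Y$ by copies of $\CP$ or $\oCP$ never yields a $4$-manifold over which $\tau$ extends. So fix such a cork $(Y,\tau)$, $Y=\del W$, and let $W'$ be an arbitrary homology ball with $\del W'=Y$. First I would record the elementary homological bookkeeping: since $W'$ has the homology of a ball and $\CP$, $\oCP$ are simply connected, the stabilizations $X^{-}:=W'\#n\oCP$ and $X^{+}:=W'\#n\CP$ satisfy $H_1(X^{\pm},\Z_2)=0$, carry the diagonal intersection forms $n\langle-1\rangle$ and $n\langle+1\rangle$ respectively, and are bounded by $Y$; in particular $X^{-}$ is negative-definite and $X^{+}$ is positive-definite.

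The negative-definite direction is then immediate: $X^{-}$ is an instance of the manifold $W^{-}$ in \Cref{thm:1.2}(1), so $\tau$ does not extend over it. (The underlying mechanism: an extension $\wt\tau$, isotoped so as to fix an interior ball $B^4$, would exhibit $X^{-}\setminus\inte B^4$ as an equivariant negative-definite cobordism from $(S^3,\id)$ to $(Y,\tau)$ with $H_1(\cdot,\Z_2)=0$, so \Cref{thm:1.1} would force $r_s(Y,\tau)\ge r_s(S^3,\id)=\infty$ for every $s$, which is impossible because $r_s(Y,\tau)$ is finite for some $s$ — that finiteness being exactly what underlies \Cref{thm:1.2}(1).)

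For the positive-definite direction, suppose $\tau$ extended as a diffeomorphism $\wt\tau$ of $X^{+}$, and let $e_1,\dots,e_n$ be the orthonormal basis of $H_2(X^{+};\Z)\cong\Z^n$ coming from the $\CP$-summands. Since every isometry of the standard positive-definite lattice is a signed permutation, $\wt\tau_* e_i=\epsilon_i e_{\pi(i)}$ for some permutation $\pi$ and signs $\epsilon_i\in\{\pm1\}$. I would then kill the permutation: $\pi$ is realized by a diffeomorphism $\phi$ of $X^{+}$ that slides the $n$ punctured copies of $\CP$ around disjoint balls in $\inte W'$ and is the identity near $\del X^{+}$, so that $\phi^{-1}\circ\wt\tau$ is again an extension of $\tau$, now with $(\phi^{-1}\circ\wt\tau)_*=\mathrm{diag}(\epsilon_1,\dots,\epsilon_n)$. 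If the $\epsilon_i$ all coincide, then $\wt\tau$ is homology-fixing or homology-reversing and \Cref{thm:1.2}(2) yields the contradiction. The remaining mixed-sign case I would deduce from this one by using that the Heegaard Floer correction-term obstruction of \cite{DHM20} behind \Cref{thm:1.2}(2) is additive over the $\CP$-connected summands: the $i$-th summand contributes only according to $\epsilon_i$, and each such contribution is already obstructed by the $n=1$ case, \Cref{cor:1.3}. (Alternatively, one can treat the positive-definite case by tracking the Donaldson invariant through the relevant cork twist, as indicated in the introduction.) Finally, taking $W'=W$ specializes everything to $W\#n\CP$ and $W\#n\oCP$.

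The step I expect to be the main obstacle is the mixed-sign positive-definite case: one has to make precise that the obstruction of \cite{DHM20} underlying \Cref{thm:1.2}(2) — stated there only for homology-fixing and homology-reversing extensions — is genuinely additive under connected sum with $\CP$, so that a diagonal action whose signs are not all equal is ruled out summand by summand. The reduction to a diagonal action (realizing permutations of the $\CP$-summands by boundary-fixing diffeomorphisms) is routine, and the negative-definite half is essentially free from \Cref{thm:1.2}(1); it is only this additivity of the positive-definite obstruction that does not follow formally from the results already quoted.
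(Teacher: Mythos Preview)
Your negative-definite case is correct and matches the paper's argument exactly.

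In the positive-definite case there is a genuine gap, and it is precisely the one you flag as the ``main obstacle'': you only kill the permutation part of $\wt\tau_*$ and are then left with a diagonal matrix $\mathrm{diag}(\epsilon_1,\dots,\epsilon_n)$ whose signs need not agree. Your proposed workaround via ``additivity'' of the \cite{DHM20} obstruction is not made precise, and as stated \Cref{thm:1.2}(2) simply does not apply to a mixed-sign action on $H_2$.

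The fix is elementary and is what the paper does: you should realize not just the permutation but the \emph{entire} signed permutation $\wt\tau_*$ by a boundary-fixing self-diffeomorphism of $W'\#n\CP$. The point is that every isometry of the diagonal lattice $n\langle+1\rangle$ --- including each sign flip $e_i\mapsto -e_i$ --- is induced by a diffeomorphism of $n\CP$ that can be taken to fix a ball (for the sign flip, use complex conjugation on the $i$th $\CP$-summand). Postcomposing $\wt\tau$ with the diffeomorphism realizing $(\wt\tau_*)^{-1}$ then yields an extension of $\tau$ over $W'\#n\CP$ that acts as the \emph{identity} on $H_2$. This is homology-fixing, so \Cref{thm:1.2}(2) applies directly and no mixed-sign case ever arises. (In fact the paper only needs the homology-fixing half of \Cref{thm:1.2}(2) for this corollary; the $\iota\tau$-class computation is not required.)
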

\noindent
Corollary~\ref{cor:cp2} is somewhat similar in spirit to recent stabilization results regarding corks \cite{Kang2022}, except that we deal with stabilization by $n\CP$ and $n\oCP$ rather than spin manifolds such as $S^2 \times S^2$. Utilizing work of Akbulut-Ruberman \cite{AR16}, Corollary~\ref{cor:cp2} be applied to produce pairs of compact, contractible manifolds which remain absolutely exotic even after summing with either of $n\CP$ and $n\oCP$. However, this latter application can also be obtained in a straightforward manner using more standard techniques. The authors thank Anubhav Mukherjee and Kouichi Yasui for bringing this to their attention; see Remark~\ref{rem:7.4} and the techniques of \cite{Y19}.

\subsubsection{Surgeries on slice knots}\label{sec:1.1.2}
We also use our involutive $r_s$-invariant to establish various examples previously inaccessible via Heegaard Floer theory. In \cite{DHM20}, it was shown that if $K$ is a symmetric slice knot, then $1/n$-surgery on $K$ often constitutes a strong cork. This approach should be contrasted with previous methods for constructing corks in the literature, which emphasize the handle decomposition of the relevant $4$-manifold $W$. The formalism of \cite{DHM20} has led to many novel examples of corks, including surgeries on the stevedore. From the viewpoint of homology cobordism, it is natural to consider linear combinations of such examples, and especially linear combinations of $1/n$-surgeries on the same knot $K$.

Again, this turns out to be surprisingly difficult. Traditionally, Heegaard Floer homology has had difficulty distinguishing between different surgeries on the same knot up to homology cobordism. For example, the surgery formula of \cite{hendricks2020surgery} shows that if $K$ is a fixed knot, then (involutive) Heegaard Floer homology cannot be used to establish the linear independence (in $\Theta^3_\Z)$ of any infinite family of $1/n$-surgeries on $K$; see \cite[Proposition 22.9]{hendricks2020surgery}. This is in contrast to Yang-Mills theory, whose application to the linear independence of families such as $\smash{\{S^3_{1/n}(T_{p, q})\}_{n \in \N}}$ is well-known \cite{Fu90, FS90}. In our context, techniques such as the equivariant surgery formula and the use of Seiberg-Witten theory for families are similarly expected to fail.

In Theorem~\ref{thm:connectedsum}, we establish a connected sum inequality for the involutive $r_s$-invariant. (See Section~\ref{sec:2.1} for a discussion of connected sums.) Using this, we prove:

\begin{thm}\label{thm:1.4}
Let $K$ be any of the strongly invertible slice knots in Figure~\ref{fig:1.1}. Then any nontrivial linear combination of elements in $\{ (S^3_{1/n}(K), \tau) \}_{n \in \N} $ yields a strong cork.
\end{thm}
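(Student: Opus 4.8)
The plan is to combine three ingredients: (i) the strong cork detection criterion following from Theorem~\ref{thm:1.1}, namely that if $r_s(Z, \sigma) < \infty$ for some $s$ then $(Z, \sigma)$ cannot bound an equivariant homology ball, hence is a strong cork; (ii) the connected sum inequality for the involutive $r_s$-invariant stated in Theorem~\ref{thm:connectedsum}; and (iii) a computation that $r_s(S^3_{1/n}(K), \tau)$ is finite and, crucially, that these critical-value quantities behave controllably under connected sum so that cancellation cannot occur. First I would recall that for each knot $K$ in Figure~\ref{fig:1.1}, the manifold $S^3_{1/n}(K)$ is an integer homology sphere equipped with the involution $\tau$ induced by the strong inversion on $K$, and that since $K$ is slice, $S^3_{1/n}(K)$ bounds a homology ball (so this is genuinely a cork in the sense of \cite{DHM20}); the content is that $\tau$ does not extend.

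Next I would turn to the key input: compute, or at least bound, $r_s(S^3_{1/n}(K), \tau)$ for the relevant range of $s$. Here I expect the argument to run parallel to the nonequivariant computation of Nozaki-Sato-Taniguchi \cite{NST19} for $r_s$ of surgeries, adapted to the equivariant setting. Concretely, $1/n$-surgery on a knot in $S^3$ bounds a natural negative-definite (or positive-definite) four-manifold built from the surgery cobordism, and the strongly invertible structure makes this cobordism equivariant; running the instanton-theoretic estimate on this equivariant cobordism bounds $r_s$ in terms of the Chern-Simons critical values of the relevant flat connections, which for these small surgeries are finite and nonzero. The upshot should be that $r_s(S^3_{1/n}(K), \tau)$ is a finite positive real number, and moreover that as $n$ varies one gets a sequence of such values that is, in an appropriate sense, "spread out" — this is the instanton analogue of the fact that $\{S^3_{1/n}(T_{p,q})\}$ are linearly independent via Yang--Mills, and it is precisely the phenomenon that (involutive) Heegaard Floer homology misses.

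The final step is to feed this into the connected sum inequality. Given a nontrivial linear combination $Z = \#_i a_i\, S^3_{1/n_i}(K)$ (with appropriate orientations for negative coefficients), Theorem~\ref{thm:connectedsum} gives an upper bound on $r_s(Z, \tau)$ in terms of the $r_s$-values of the summands; since each summand has finite $r_s$, the inequality forces $r_s(Z, \tau) < \infty$ for suitable $s$, and then criterion (i) shows $(Z, \tau)$ is a strong cork. The one subtlety is ensuring the connected-sum bound does not allow the contributions to cancel to $\infty$: I would handle this exactly as the finiteness claim is handled, by noting that the connected sum inequality is one-sided (it bounds $r_s$ of the sum \emph{above} by a combination of the summands' values, never producing $\infty$ from finite inputs), so nontriviality of the combination guarantees at least one finite summand survives.

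\textbf{Main obstacle.} The hard part will be the equivariant instanton computation of $r_s(S^3_{1/n}(K), \tau)$ — specifically, identifying the relevant $\tau$-equivariant flat connections on the surgery cobordism and showing that their Chern-Simons values are finite and interact correctly with the connected sum formula. Everything else (the reduction to strong cork detection, the formal consequences of Theorems~\ref{thm:1.1} and~\ref{thm:connectedsum}) is comparatively routine; the real work, as in \cite{NST19}, is extracting a usable lower bound on the Chern-Simons filtration level from the geometry of $1/n$-surgery while keeping track of the involution.
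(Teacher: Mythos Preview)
Your proposal has the right high-level ingredients but contains a genuine gap: the connected sum inequality of Theorem~\ref{thm:connectedsum} goes in the \emph{opposite} direction from what you claim. It reads
\[
r_{s+s'}(Y\#Y',\tau\#\tau') \;\geq\; \min\{r_s(Y,\tau)+s',\, r_{s'}(Y',\tau')+s\},
\]
i.e.\ it gives a \emph{lower} bound on the sum, not an upper bound. So knowing that each summand has finite $r_s$ does not by itself force $r_s$ of a linear combination to be finite; your proposed final step does not go through as written.

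The paper's argument (Theorem~\ref{key:sequence}) instead exploits three facts you do not isolate. First, the \emph{strict} inequality in Theorem~\ref{thm:1.1} applied to the simply-connected equivariant negative-definite cobordism from $S^3_{1/(n+1)}(K)$ to $S^3_{1/n}(K)$ (Lemma~\ref{lem:knotsA}) gives a strictly decreasing chain $r_0(Y_1,\tau)>r_0(Y_2,\tau)>\cdots$. Second, one needs $r_0(-Y_n,\tau)=\infty$ for every $n$, which follows from Theorem~\ref{thm:definitebounding} since $(Y_n,\tau)$ equivariantly bounds a positive-definite manifold; this is essential to handle negative coefficients and is absent from your sketch. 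Third, the finiteness input $r_0(Y_1,\tau)<\infty$ for the base case comes from the Donaldson-invariant computation of Section~\ref{sec:6.2} (Lemma~\ref{cal for Akbulut}), not from a direct Chern--Simons estimate of the sort you describe. With these in hand, if a nontrivial combination were trivial one isolates the summand of largest index with nonzero coefficient, rewrites it as (minus) the rest, and applies the connected sum inequality as a \emph{lower} bound to the right-hand side: the $-Y_i$ terms contribute $\infty$, and the remaining $Y_i$ terms with $i<k$ contribute at least $r_0(Y_{k-1},\tau)$, contradicting $r_0(Y_k,\tau)<r_0(Y_{k-1},\tau)$.
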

\noindent
The first member $(m = 0)$ of the family displayed in Figure~\ref{fig:1.1} is $\ov{9}_{46} = P(-3, 3, -3)$; note that $(+1)$-surgery on $\ov{9}_{46}$ gives the (boundary of the) Akbulut-Mazur cork. Historically, the Akbulut-Mazur cork was the first example of a cork presented in the literature, and was established using the Donaldson invariant by Akbulut \cite{Ak91_cork}.

\begin{figure}[h!]
\includegraphics[scale = 1.18]{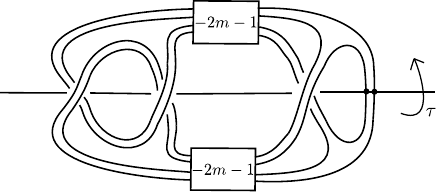}
\caption{The strongly invertible slice knots used in Theorems~\ref{thm:1.4} and \ref{thm:1.5} parameterized by $m \geq 0$. Figure~\ref{fig:1.1} is taken from \cite[Figure 12]{AKMR}. Here, $-2m - 1$ denotes the number of half twists.}\label{fig:1.1}
\end{figure}

\subsubsection{Simply-connected cobordisms}\label{sec:1.1.3}
Another advantage of Yang-Mills theory is that it can obstruct the existence of simply-connected cobordisms; see for example \cite{T87, Fu19, Da20, NST19, ADHLP22, Ta22}. 
This has its roots in a (variant of a) question of Akbulut \cite[Problem 4.95]{K78}, which asks (for example) whether there is a simply-connected cobordism from $\Sigma(2, 3, 5)$ to itself. (This was resolved negatively in \cite[Proposition 1.7]{T87}.) Here, we answer an equivariant version of this question: 

\begin{thm}\label{thm:1.5}
Let $K$ be any of the strongly invertible slice knots in Figure~\ref{fig:1.1}. Then for any $n \in \N$, there is no simply-connected, equivariant definite cobordism from $(S^3_{1/n}(K), \tau)$ to itself. 
\end{thm}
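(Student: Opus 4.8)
The proof of Theorem~\ref{thm:1.5} is obtained by combining Theorem~\ref{thm:1.1} with the computability of $r_s$ for $1/n$-surgeries on the knots $K$ in Figure~\ref{fig:1.1}. First I would argue that each $(S^3_{1/n}(K), \tau)$ has a \emph{finite} and \emph{nontrivial} value of $r_s(S^3_{1/n}(K), \tau)$ for some $s \in [-\infty, 0]$. Since $K$ is slice and strongly invertible, $1/n$-surgery on $K$ is a cork with boundary involution $\tau$; the first member is $\ov{9}_{46}$ with $(+1)$-surgery giving the Akbulut--Mazur cork, which is detected by the Donaldson invariant. The relevant computation of $r_s$ should follow from an explicit analysis of the Chern--Simons filtration on these surgered manifolds, exactly as in Nozaki--Sato--Taniguchi \cite{NST19}, together with the equivariant refinement; in particular I expect this to be carried out in the body of the paper (e.g.\ in the section establishing Theorem~\ref{thm:1.4}), so here I may take it as known that $r_s(S^3_{1/n}(K), \tau) < \infty$.

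Granting finiteness, suppose for contradiction that $W$ is a simply-connected, equivariant definite cobordism from $(S^3_{1/n}(K), \tau)$ to itself, with extension $\wt\tau$. There are two cases according to the sign of the definite form. If $W$ is negative-definite, then since $W$ is simply connected we have $H_1(W, \Z_2) = 0$, so Theorem~\ref{thm:1.1} applies and gives the \emph{strict} inequality
\[
r_s(S^3_{1/n}(K), \tau) < r_s(S^3_{1/n}(K), \tau),
\]
a contradiction. If instead $W$ is positive-definite, I would reverse orientation: $\ov W$ is then a simply-connected, equivariant negative-definite cobordism, but now from $(-S^3_{1/n}(K), \tau)$ to itself. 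So the same argument applies provided $r_s(-S^3_{1/n}(K), \tau)$ is also finite for some $s$. This is the point where one must be slightly careful: the invariant is not symmetric under orientation reversal, so I would need the computation to produce finiteness for \emph{both} orientations of the surgered manifold, or else handle the positive-definite case by a separate device (for instance, by noting that if $W$ is positive-definite then $S^3_{1/n}(K)$ admits a positive-definite bounding obtained by capping off one end with a homology ball, which again one can feed into the orientation-reversed version of Theorem~\ref{thm:1.1}).

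\textbf{Main obstacle.} The genuinely hard part is not the cobordism argument, which is a one-line application of the strict inequality in Theorem~\ref{thm:1.1}, but rather establishing that $r_s$ is finite and nonzero for these explicit families --- i.e.\ the Chern--Simons filtration computation on $1/n$-surgeries on the $\ov{9}_{46}$-family. This requires controlling the critical values of the Chern--Simons functional for flat $SU(2)$ connections on these Brieskorn-type or surgered spaces and checking equivariance of the relevant instanton moduli spaces; it is essentially the same input needed for Theorem~\ref{thm:1.4}, and I would organize the paper so that Theorem~\ref{thm:1.5} is deduced as a corollary once that computation is in hand. The secondary subtlety, as noted above, is the positive-definite case: one should verify that the computation (or a symmetry of the construction) yields nontriviality of $r_s$ for the reversed orientation as well, since unlike in the strong-cork application of Theorem~\ref{thm:1.4} we cannot here restrict attention to homology balls, where the sign of the definite form is vacuous.
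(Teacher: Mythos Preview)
Your negative-definite case is correct and matches the paper exactly. The gap is in the positive-definite case.

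Your proposed route---reversing orientation to obtain a cobordism from $(-S^3_{1/n}(K),\tau)$ to itself and then invoking finiteness of $r_s(-S^3_{1/n}(K),\tau)$---does not work, because that invariant is in fact \emph{infinite}. Indeed, since $n>0$, Lemma~\ref{lem:knotsB} produces an equivariant positive-definite filling of $(S^3_{1/n}(K),\tau)$, and Theorem~\ref{thm:definitebounding} then forces $r_s(-S^3_{1/n}(K),\tau)=\infty$ for all $s$. Your alternative ``separate device'' of capping off one end with a homology ball also fails: the capping must be equivariant to apply Theorem~\ref{thm:1.1}, but $(S^3_{1/n}(K),\tau)$ is a strong cork (this is part of Theorem~\ref{thm:1.4}), so no equivariant homology-ball filling exists.

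The correct move is much simpler and does not touch the orientation of the boundary at all. If $W$ is a positive-definite equivariant cobordism from $(Y,\tau)$ to $(Y,\tau)$, then reading it in the opposite direction (i.e.\ reversing the orientation of $W$ while swapping the roles of the incoming and outgoing ends) gives a \emph{negative}-definite equivariant cobordism, still from $(Y,\tau)$ to $(Y,\tau)$ with the same orientations on the ends. Now the strict inequality in Theorem~\ref{thm:1.1} applies with the finiteness of $r_0(Y,\tau)$ you already have, yielding the same contradiction $r_0(Y,\tau)<r_0(Y,\tau)$. This is exactly what the paper does (``in the positive-definite case, we turn the cobordism around'').

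A minor remark on your ``main obstacle'': the finiteness $r_0(S^3_{1/n}(K),\tau)<\infty$ is not obtained by directly computing Chern--Simons critical values. It is deduced from the Donaldson-invariant argument for the Akbulut--Mazur cork (Lemmas~\ref{lem:6.11}--\ref{lem:6.12} and Lemma~\ref{cal for Akbulut}), propagated to the other knots in the family and to other $n$ via the equivariant negative-definite cobordisms of Lemma~\ref{lem:knotsA} and Figure~\ref{fig:7.3}.
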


In particular, by considering $\smash{S^3_{1/n}(K) \# - S^3_{1/n}(K)}$ we obtain a cork $(Y, \tau)$ such that $\tau$ extends over some homology ball but not over any contractible manifold which $Y$ bounds. Note that this provides an example of a cork which is not strong; see \cite[Question 1.14]{DHM20} and \cite{HP20}.

\subsubsection{Nonorientable surfaces}\label{sec:1.1.4}
We now give some applications to the geography question for nonorientable surfaces. Let $K$ be a knot and $F$ be a nonorientable surface in $B^4$ with $K = \partial F$. There are two algebraic invariants associated to $F$: its \textit{nonorientable genus}, defined by $h(F) = b_1(F)$, and its \textit{normal Euler number} $e(F)$. The \textit{geography question} asks which pairs $(e, h)$ are realized by the set of (smooth) nonorientable slice surfaces for $K$. Work of Gordon-Litherland \cite{GL78} gives the well-known classical bound
\begin{equation}\label{eq:1.1}
\left | \sigma(K) - e(F)/2 \right | \leq h(F).
\end{equation}
A similar inequality was obtained by Ozsvath-Stipsicz-Szab\'o \cite{OSSunoriented}, who replaced the knot signature $\sigma(K)$ in (\ref{eq:1.1}) with the Floer-theoretic refinement $\upsilon(K)=2\Upsilon_K(1)$. Understanding the geography question in general is quite difficult; a complete answer is known only for a small handful of knots.  See \cite{GL11, MG18, Allen} for further results and discussion. 

In this paper, we consider the case where $F$ satisfies the equality
\begin{equation}\label{eq:1.2}
\left | \sigma(K) - e(F)/2 \right | = h(F).
\end{equation} 
We call such an $F$ an \textit{extremal surface}. As we explain in Section~\ref{sec:2.3}, (\ref{eq:1.2}) occurs precisely when the branched double cover $\Sigma_2(F)$ is definite. We thus apply our results regarding definite bounding to produce knots with no extremal slice surface. Note that such examples cannot be replicated by any single inequality of the same form as (\ref{eq:1.1}), including that of \cite{OSSunoriented}. Indeed, even combining the most commonly-used obstructions from \cite{Ba14} and \cite{OSSunoriented} necessarily fails to obstruct the set of all extremal surfaces; see Section~\ref{sec:7.5}. The authors are not aware of any previous example of a knot with no extremal slice surface appearing in the literature.





\begin{thm}\label{thm:1.6}
There exists a knot $J$ such that:
\begin{enumerate}
\item $J$ does not bound any extremal surface; and,
\item $\Sigma_2(J)$ bounds a contractible manifold.
\end{enumerate}
\end{thm}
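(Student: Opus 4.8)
The plan is to realize a cork of the type produced by Theorem~\ref{thm:1.2} as a double branched cover $\Sigma_2(S^3,J)$, and then to translate ``$J$ bounds an extremal surface'' into the existence of an equivariant definite bounding, which Theorem~\ref{thm:1.2} rules out. Fix a cork $(Y,\tau)$ satisfying the conclusion of Theorem~\ref{thm:1.2} which is moreover presented as $(S^3_{1/n}(K),\tau)$ for a strongly invertible slice knot $K$, with $\tau$ the extension of the strong inversion over the equivariant $1/n$-surgery; such a cork is exhibited in the course of proving Theorem~\ref{thm:1.2}. Because $K$ is strongly invertible and the surgery is performed inside an equivariant tubular neighborhood of $K$, the quotient $Y/\tau$ is again $S^3$ and $\mathrm{Fix}(\tau)$ is a knot, so the Montesinos trick identifies $(Y,\tau)$ with $(\Sigma_2(S^3,J),\iota)$, where $J\subset S^3$ is the image of $\mathrm{Fix}(\tau)$ under $Y\to Y/\tau=S^3$ and $\iota$ is the covering involution. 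This $J$ will be the desired knot. Part (2) is then immediate: since $K$ is slice, deleting an open neighborhood of a slice disk from $B^4$ and attaching a single $2$-handle along a meridian of $K$ (with the framing prescribed by the slam-dunk move) produces a simply-connected homology ball, hence a contractible manifold, with boundary $S^3_{1/n}(K)=\Sigma_2(J)$.

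For part (1), suppose toward a contradiction that $J$ bounds an extremal surface $F\subset B^4$. As recalled in Section~\ref{sec:2.3}, extremality of $F$ is equivalent to the branched double cover $\Sigma_2(F)$ being definite. Since $\partial\Sigma_2(F)=\Sigma_2(\partial F)=\Sigma_2(J)=Y$ and the covering involution $\sigma$ of $\Sigma_2(F)$ restricts to $\iota=\tau$ on $Y$, the pair $(\Sigma_2(F),\sigma)$ is an equivariant definite $4$-manifold bounded by $(Y,\tau)$. A transfer argument identifies the $(+1)$-eigenspace of $\sigma_*$ on $H_2(\Sigma_2(F);\Q)$ with $H_2(B^4;\Q)=0$, so $\sigma_*=-\id$ there; that is, $\sigma$ is homology-reversing. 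Combined with the vanishing of $H_1(\Sigma_2(F);\Z_2)$, this contradicts Theorem~\ref{thm:1.2}: its part (1) if $\Sigma_2(F)$ is negative definite, and its part (2) if $\Sigma_2(F)$ is positive definite. Hence $J$ bounds no extremal surface.

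The step I expect to require the most care is the first one: producing a cork that simultaneously satisfies the hypotheses of Theorem~\ref{thm:1.2} and is presented as $1/n$-surgery on a strongly invertible knot, and then tracking the Montesinos involution through the surgery diagram to confirm that it is the same involution $\tau$ appearing in $r_s(Y,\tau)$. A secondary technical point is the vanishing of $H_1(\Sigma_2(F);\Z_2)$, which is what allows Theorem~\ref{thm:1.2} to be invoked; by contrast, the homology-reversing property is automatic for double branched covers of $B^4$ along surfaces, since $H_2(B^4;\Q)=0$.
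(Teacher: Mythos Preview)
Your overall strategy matches the paper's exactly, but the specific claim you flag as ``the step requiring the most care'' is in fact incorrect as stated: the cork produced in the proof of Theorem~\ref{thm:1.2} is \emph{not} a single surgery $(S^3_{1/n}(K),\tau)$ on a strongly invertible slice knot. It is an equivariant connected sum $-2(Y_1,\tau)-2(Y_1,\sigma)+(Y_2,\tau)$ (or the variant with $(Y_2,\sigma)$), where $Y_i=S^3_{1/i}(\ov{9}_{46})$ and $\tau,\sigma$ are the two distinct strong inversions on $\ov{9}_{46}$. The fix is routine and is what the paper does: apply the Montesinos trick to each summand separately, so that $J$ is the corresponding connected sum of knots (explicitly $-2A_1\#-2B_1\#A_2$ or $-2A_1\#-2B_1\#B_2$, with $A_n,B_n$ as in Figure~\ref{fig:1.2}), and observe that the equivariant connected sum of branched double covers is the branched double cover of the connected sum. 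Part~(2) then follows because the boundary connected sum of contractible manifolds is contractible. The rest of your argument---definiteness of $\Sigma_2(F)$ for extremal $F$, vanishing of $H_1(\Sigma_2(F);\Z_2)$, the homology-reversing property of the covering involution, and the final appeal to Theorem~\ref{thm:1.2}---is identical to the paper's.
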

\noindent
The knot $J$ is given by a certain linear combination of the knots $A_n$ and $B_n$ in Figure~\ref{fig:1.2}. These are constructed as follows: note that $\ov{9}_{46}$ admits two strong inversions, which we denote by $\tau$ and $\sigma$. (See for example Figure~\ref{fig:7.4}.) The knots $A_n$ and $B_n$ both have branched double cover $\smash{S^3_{1/n}(\ov{9}_{46})}$, with branching involutions corresponding to $\tau$ and $\sigma$, respectively.

\begin{figure}[h!]
\includegraphics[scale =0.6]{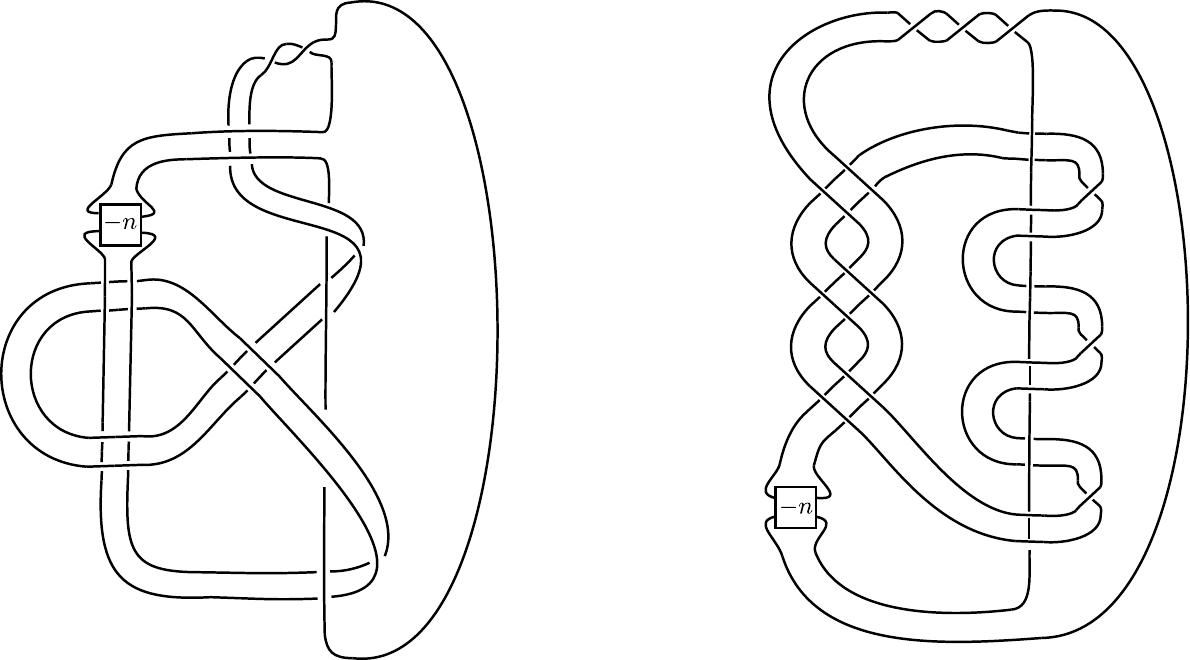}
\caption{Knots $A_n$ (left) and $B_n$ (right) with branched double cover $\smash{S^3_{1/n}(\ov{9}_{46})}$. The branching involution over $A_n$ corresponds to $\tau$; the branching involution over $B_n$ corresponds to $\sigma$. Here, $-n$ denotes the number of half twists.}\label{fig:1.2}
\end{figure}
If the second condition is removed, one can recover Theorem~\ref{thm:1.6} via the instanton-theoretic formalism of \cite{NST19} by taking any example for which $\Sigma_2(K)$ bounds no definite manifold. However, if $\Sigma_2(K)$ bounds a contractible manifold, then such a strategy clearly fails. In order to prove Theorem~\ref{thm:1.6}, we thus refine this approach by passing to the equivariant category. More precisely, note that if $K = \partial F$, then in fact $\Sigma_2(K) = \partial \Sigma_2(F)$ in the equivariant setting simply by remembering the branching involution over $F$. Theorem~\ref{thm:1.2} can then be leveraged to provide the desired examples. Similar ideas were used in \cite{ASA20, DHM20, dai20222} to define new sliceness obstructions; see Section~\ref{sec:2.3} for further discussion.

It is also natural to consider surfaces $F$ for which $\pi_1(B^4 - F)$ is as simple as possible; i.e., $\Z_2$. We refer to such an $F$ as a \textit{$\Z_2$-surface}. The condition on the fundamental group of $B^4 - F$ naturally arises in the setting of topological classification results; see \cite{COP}. One can refine the geography question by asking for which pairs $(e, h)$ we can find a $\Z_2$-surface for $K$.\footnote{Every $K$ bounds \textit{some} $\Z_2$-surface by taking the connected sum of a Seifert surface with an unknotted $\RP$.} It is easily checked that $\pi_1(\Sigma_2(F))$ is trivial for such $F$; thus, this problem may be approached by applying appropriate obstructions to simply-connected bounding. Here, we prove:

\begin{thm}\label{thm:1.7}
There exists a slice knot $J$ such that:
\begin{enumerate}
\item $J$ does not bound any extremal $\Z_2$-surface; and,
\item $\Sigma_2(J)$ bounds a contractible manifold.
\end{enumerate}
\end{thm}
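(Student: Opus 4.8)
The plan is to let $J$ be an equivariant connected sum of one of the quotient knots of Figure~\ref{fig:1.2} with its reverse mirror, and to derive part~(1) from Theorem~\ref{thm:1.5}. Fix $n$ (say $n=1$) and let $A_n$ be the strongly invertible knot of Figure~\ref{fig:1.2}, so that $\Sigma_2(A_n)=S^3_{1/n}(\ov{9}_{46})=:Y$ with branching involution $\tau$. Take $J=A_n\#\ov{A_n}^{\,r}$, the equivariant connected sum, along the symmetry axes, of $A_n$ with its reverse mirror. The underlying knot is $A_n$ connect-summed with its concordance inverse, so $J$ is slice; moreover $\Sigma_2(J)=Y\#(-Y)$, with branching involution the diagonal $\rho=\tau\,\natural\,\tau$, which preserves the connected-sum sphere $S\subset\Sigma_2(J)$ and each of its two sides. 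Part~(2) is then immediate: since $\ov{9}_{46}$ is slice, $Y=S^3_{1/n}(\ov{9}_{46})$ bounds a contractible $4$-manifold $C$, and $C\,\natural\,\ov{C}$ is a contractible filling of $Y\#(-Y)=\Sigma_2(J)$.

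For part~(1) we argue by contradiction. Suppose $J$ bounds an extremal $\Z_2$-surface $F\subset B^4$, and let $N=\Sigma_2(F)$ carry the deck involution $\widetilde\tau_F$. By the discussion of extremal surfaces in Section~\ref{sec:2.3}, extremality of $F$ is equivalent to $N$ being definite; and because $F$ is a $\Z_2$-surface, $\pi_1(N)=1$. Also $\partial N=\Sigma_2(J)=Y\#(-Y)$, and $\widetilde\tau_F$ restricts there to $\rho=\tau\,\natural\,\tau$. Now attach a $3$-handle to $N$ along the $\rho$-invariant sphere $S\subset\partial N$, extending $\rho$ over the handle; this is possible because $\rho$ preserves $S$ together with its two sides, so on a neighborhood of $S$ it has (up to conjugacy) the form $\iota_S\times\mathrm{id}$ for a smooth involution $\iota_S$ of $S^2$, and any such $\iota_S$ cones to an involution of $D^3$. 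Write $(N',\widetilde\tau')$ for the result. Then $\partial N'=Y\sqcup(-Y)$, so $N'$ is a cobordism from $Y$ to $Y$; it is still simply connected (a $3$-handle attached along an $S^2$ does not change $\pi_1$); and it is still definite, because $[S]^2=0$ in $H_2(N)$ (as $S$ separates $\partial N$) forces $[S]$ to be a torsion class when $N$ is definite, so that the intersection form of $N'$ on $H_2(N')/\mathrm{torsion}$ coincides with that of $N$. Finally $\widetilde\tau'$ restricts to $\tau$ on each boundary copy of $Y$. Hence $(N',\widetilde\tau')$ is a simply-connected, equivariant definite cobordism from $(S^3_{1/n}(\ov{9}_{46}),\tau)$ to itself, contradicting Theorem~\ref{thm:1.5}; this proves part~(1).

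The main obstacle is the reduction, in part~(1), from a definite simply-connected $\Z_2$-filling of $Y\#(-Y)$ to a cobordism $Y\to Y$ that simultaneously retains definiteness, simple connectivity, and the equivariance needed to invoke Theorem~\ref{thm:1.5}. The $3$-handle attachment --- equivalently, equivariant surgery on the connected-sum sphere --- is designed to do exactly this, but one must check the homological bookkeeping above and the (standard) fact that an invariant bicollar of $S$ can be straightened so that the handle is attached equivariantly. The auxiliary inputs --- that the equivariant connected sum realizes the diagonal involution $\rho$ on $\Sigma_2(J)$, and that $1/n$-surgery on a slice knot bounds a contractible $4$-manifold --- are routine.
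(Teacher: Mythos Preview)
Your proposal is correct and follows essentially the same approach as the paper. The paper's own proof is extremely terse: it simply asserts that a simply-connected equivariant definite filling $\Sigma_2(F)$ of $S^3_{1/n}(\ov{9}_{46})\#-S^3_{1/n}(\ov{9}_{46})$ ``contradicts Theorem~\ref{thm:1.5},'' leaving the passage from such a filling to a simply-connected equivariant definite self-cobordism of $(S^3_{1/n}(\ov{9}_{46}),\tau)$ entirely implicit. Your equivariant $3$-handle attachment along the connected-sum sphere is precisely the step that bridges this gap, and your verifications---that simple connectivity is preserved, that definiteness survives because $[S]^2=0$ forces $[S]$ to be torsion, and that the involution extends over the handle since near a fixed point of $\tau$ it is rotation by $\pi$---are all correct.
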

\noindent
Explicitly, we may take $J = A_n \# -A_n$ for any $n \in \N$, where $A_n$ is the knot in Figure~\ref{fig:1.2}.

Note that since $J$ is slice, we have that $J$ does bound \textit{some} extremal surface (by taking the connected sum of any slice disk with $\RP$). However, the complement of this surface will have complicated fundamental group. Once again, if the second condition is removed, it is possible to use the results of \cite{NST19} to produce examples that bound an extremal surface but no extremal $\Z_2$-surface. Here, to prove Theorem~\ref{thm:1.7}, we use Theorem~\ref{thm:1.5} to obstruct simply-connected, equivariant definite boundings of $\Sigma_2(K)$. The fact that $\Sigma_2(K)$ bounds a contractible manifold shows that if we forget the equivariant category, then any such obstruction vanishes.

\subsubsection{Comparison to other invariants}
We stress that the examples obtained in the present work are qualitatively different than those obtained via Heegaard Floer homology \cite{ASA20, DHM20}, monopole Floer homology \cite{LRS18}, and Seiberg-Witten theory for families \cite{KMT23A}. For instance, as discussed in Section~\ref{sec:1.1.2}, these theories are ill-equipped to handle linear combinations of $1/n$-surgeries on the same knot $K$. Likewise, the TQFT nature of such invariants prohibit applications such as Theorem~\ref{thm:1.5}, since this requires distinguishing contractible manifolds from homology balls. Moreover, even for simple manifolds such as Brieskorn spheres, the information contained in our instanton-theoretic construction differs from the output of \cite{LRS18, ASA20, DHM20, KMT23A}; see Section~\ref{sec:brieskorn}. While the examples obtained using the latter are often qualitatively similar to each other, the usage of the Chern-Simons filtration in the present work appears to produce fundamentally new results in the study of equivariant bordism.

There are also several subtle differences between our involutive $r_s$-invariant and the work of Nozaki-Sato-Taniguchi \cite{NST19}. Our involutive instanton theory is related to studying the Chern-Simons functional on the mapping torus of the configuration space with respect to a diffeomorphism action on $Y$. Thus, the involutive $r_s$-invariant may be viewed as a $1$-parameter version of the usual $r_s$-invariant of \cite{NST19}. This is related to the fact that when we prove invariance under equivariant homology cobordism, we will need to count points in $1$-parameter families of ASD moduli spaces. In contrast, invariance of the original $r_s$-invariant can be established by counting points in usual (unparameterized) ASD-moduli spaces. 

\subsubsection{Instanton-theoretic aspects of the construction}
We close by discussing some of the technical difficulties regarding the use of instanton Floer theory in the present work. As is standard when defining homology cobordism invariants, it will be necessary to work with a formulation of instanton Floer homology which takes into account the reducible connection. The usual method for doing this is via the maps $D_1$ and $D_2$ of \cite{Do02}; see for example \cite{Do02, Fr02, Da20}. (This is usually what is meant by $SO(3)$-equivariant instanton theory.) While several such constructions are present in the literature, the standard versions are generally defined over fields such as $\Q$ \cite{Do02, Fr02, Da20}. See work of Miller Eismeier \cite{Mike19} and Daemi-Miller Eisemier \cite{DaMi22} for more general coefficients.

However, in order to define an involutive $r_s$-invariant, it will be necessary to work over $\Z_2$. (See Remark~\ref{rem:differentcoefficients}.) To do this, we consider a restricted subcomplex of the construction in \cite{NST19}, which corresponds to only using the $D_1$-map (or equivalently $D_2$-map) originally from \cite{Do02}. Unfortunately, this means that we do not have a complete dualization or tensor product formula for such complexes. This is partially why our current proof of \cref{thm:1.2} requires input from the Heegaard-Floer-theoretic side. We direct the reader to recent work of Fr\o yshov \cite{Fr23} regarding instanton Floer homology over $\Z_2$.

\begin{rem}\label{rem:1.8}
It is also possible to define the formalism of the current paper using instanton Floer homology with $\Z$-coefficients. However, none of the examples presented here differ significantly when working over $\Z$ rather than $\Z_2$, so we have chosen to work with the latter out of convenience.
\end{rem}

Finally, note that computing instanton Floer chain complexes (especially with a $\tau$-action) is quite difficult. While partial information can sometimes be obtained by an analysis of the $SU(2)$-representation variety of $Y$, in general there are few tools for constraining the action of $\tau$; see the work of Saveliev \cite{Sa03} and Ruberman-Saveliev \cite{RS04}. Here, we use the Donaldson invariant to constrain the involutive structure of the Akbulut-Mazur cork. We approach other examples via a topological argument involving equivariant negative-definite cobordisms, as in \cite{DHM20}. \\

\noindent
\textbf{Organization.} In Section~\ref{sec:2}, we introduce various topological definitions and constructions related to equivariant bordism. In Sections~\ref{sec:3} and \ref{sec:4}, we develop the main algebraic framework of the paper and define the involutive $r_s$-invariant. Analytic details of the construction are discussed in Section~\ref{sec:5}. In Section~\ref{sec:6}, we prove Theorem~\ref{thm:1.1} and give an extended list of properties of the $r_s$-invariant. Finally, in Section~\ref{sec:7} we establish the remaining applications listed in the introduction. \\

\noindent
\textbf{Acknowledgements.} The authors would like to thank Aliakbar Daemi, Hokuto Konno, Tomasz Mrowka, Daniel Ruberman, and Kouki Sato for helpful conversations. Part of this work was carried out at in the program entitled ``Floer homotopy theory" held at  MSRI/SL-Math (Fall 2022). Thus this work was supported by NSF DMS-1928930. The second author was partially supported by NSF DMS-1902746 and NSF DMS-2303823. The third author was partially supported by MSRI and NSF DMS-2019396. The fourth author was partially supported by JSPS KAKENHI Grant Number 20K22319, 22K13921, and RIKEN iTHEMS Program.

\section{Background and definitions}\label{sec:2}

In this section, we give several preliminary definitions and discuss equivariant bordism.

\subsection{Strong corks}\label{sec:2.1} We begin with the definition of a strong cork, as introduced by Lin, Ruberman, and Saveliev \cite[Section 1.2.2]{LRS18}:

\begin{defn}\label{def:2.1}
Let $Y$ be an integer homology $3$-sphere and $\tau$ be an orientation-preserving involution on $Y$. We say that $(Y, \tau)$ is a \textit{strong cork} if $\tau$ does not extend as a diffeomorphism over any homology ball which $Y$ bounds. We also require that $Y$ bound at least one contractible manifold, so that $Y$ is a cork boundary in the traditional sense. 
\end{defn}

We have the following notion of equivariant cobordism:

\begin{defn}\label{def:2.2}
Let $(Y, \tau)$ and $(Y, \tau')$ be two integer homology spheres equipped with orientation-preserving involutions. We say $(W, \wt{\tau})$ is an \textit{equivariant cobordism} from $(Y, \tau)$ to $(Y', \tau')$ if $W$ is a cobordism from $Y$ to $Y'$ and $\wt{\tau}$ is a self-diffeomorphism of $W$ which restricts to $\tau$ and $\tau'$ on $Y$ and $Y'$, respectively.
\end{defn}
\noindent
We may further specialize Definition~\ref{def:2.1} by requiring $W$ to be a homology cobordism, in which case we refer to $(W, \wt{\tau})$ as an \textit{equivariant homology cobordism}. This notion defines an equivalence relation on the set of pairs $(Y, \tau)$; we denote this by $\sim$. Clearly, $(Y, \tau)$ is a strong cork if and only if $Y$ bounds a contractible manifold and $(Y, \tau) \not\sim (S^3, \id)$. Note that by the third part of Theorem~\ref{thm:1.1}, $r_s(Y, \tau)$ is an invariant of the equivalence class of $(Y, \tau)$. 

As discussed in Section~\ref{sec:1.1.1}, we will sometimes have cause to place homological constraints on the extension $\wt{\tau}$. We define:

\begin{defn}\label{def:2.3}
Let $(Y, \tau)$ and $(Y, \tau')$ be two integer homology spheres equipped with orientation-preserving involutions and $(W, \wt{\tau})$ be an equivariant cobordism between them.
\begin{itemize}
\item We say that $\wt{\tau}$ is \textit{homology-fixing} if $\wt{\tau}_* = \id$ on $H_2(W, \Q)$.
\item We say that $\wt{\tau}$ is \textit{homology-reversing} if $\wt{\tau}_* = - \id$ on $H_2(W, \Q)$.\footnote{We could require that $\wt{\tau}_*$ act as $\pm \id$ on $H_2(W, \Z)$, but here we will use $H_2(W, \Q)$ due to Theorem~\ref{lem:2.9}.}
\end{itemize}
\end{defn}

If $(Y, \tau)$ and $(Y', \tau')$ are two equivariant homology spheres, one can attempt to form their equivariant connected sum. To this end, let $p \in Y$ and $p' \in Y'$ be fixed points for $\tau$ and $\tau'$, respectively, which have equivariant neighborhoods that are identified via a diffeomorphism intertwining $\tau$ and $\tau'$. Then there is an obvious equivariant connected sum operation
\[
(Y, \tau)\#(Y',\tau')=(Y\#Y',\tau\#\tau').
\]
This may depend on the choice (and existence) of $p$ and $p'$. In all of the examples of this paper, the fixed-point set of $\tau$ will be diffeomorphic to $S^1$, in which case it is clear that the connected sum operation is well-defined. 

\begin{rem}
We stress that requiring $\wt{\tau}$ to be an involution is quite different than requiring $\wt{\tau}$ to be a diffeomorphism. Although obstructing the latter clearly obstructs the former, there are many examples in which the two notions are not equivalent. For instance, let $\tau$ be the involution on any Brieskorn sphere $\Sigma(p, q, r)$ coming from the obvious $\Z_2$-subgroup of the $S^1$-action. In \cite[Theorem A]{AH21}, it is shown that $\tau$ cannot extend as an involution over any contractible manifold which $\Sigma(p, q, r)$ bounds. On the other hand, it is easy to produce an extension of $\tau$ as a diffeomorphism by using the fact that $\tau$ is isotopic to the identity. In the present paper, we study the case where $\wt{\tau}$ is required to be a diffeomorphism, so as to preserve the original motivation coming from the theory of corks.
\end{rem}


\subsection{Equivariant surgery}\label{sec:2.2}

One particularly flexible method for constructing a symmetric $3$-manifold is via surgery on an equivariant knot. As these will provide a robust source of examples in this paper, we include a brief discussion here.

Recall that by Smith theory, any orientation-preserving involution on $S^3$ is conjugate to rotation about a standard unknot. We say that a knot $K \subseteq S^3$ is \textit{equivariant} if it is preserved by such an involution $\tau$. If $\tau$ has two fixed points on $K$, then we say that $K$ is \textit{strongly invertible}. If $\tau$ has no fixed points on $K$, then we say that $K$ is \textit{periodic}. In \cite[Section 5.1]{DHM20}, it is shown that if $(K, \tau)$ is an equivariant knot, then any surgered manifold $\smash{S^3_{p/q}(K)}$ inherits an involution from the symmetry on $K$, which we also denote by $\tau$. Note that any symmetric $3$-manifold constructed in this way has fixed-point set diffeomorphic to $S^1$.

Such examples fit naturally into the formalism of this paper, as the following lemmas show:

\begin{lem}\label{lem:knotsA}
Let $K$ be a strongly invertible or periodic knot with symmetry $\tau$. For any $n \in \N$, there is a simply-connected, equivariant negative-definite cobordism $(W, \wt{\tau})$ from 
\[
(S^3_{1/(n+1)}(K), \tau) \quad \text{to} \quad (S^3_{1/n}(K), \tau).
\]
This cobordism has $H_2(W, \Z) = \Z$. Moreover:
\begin{itemize}
\item If $K$ is strongly invertible, then $\widetilde{\tau}$ is homology-reversing.
\item If $K$ is periodic, then $\widetilde{\tau}$ is homology-fixing.
\end{itemize}
\end{lem}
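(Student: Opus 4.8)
The plan is to build the cobordism $W$ directly from the standard trace cobordism between $1/(n+1)$- and $1/n$-surgery on $K$, and then check that the knot symmetry $\tau$ extends over it with the stated homological behavior. Recall that $S^3_{1/(n+1)}(K)$ and $S^3_{1/n}(K)$ differ by a single $(-1)$-framed $2$-handle attachment along a meridian $\mu$ of $K$ (more precisely, along a suitable curve so that the resulting trace $W$ is negative-definite with $H_2(W,\Z)=\Z$ generated by the core-plus-Seifert-disk class; this is the usual way one passes between consecutive $1/n$-surgeries, going back to the surgery description of Brieskorn spheres and used in \cite{Fu90,FS90,NST19}). Concretely, $W$ is obtained from $S^3_{1/(n+1)}(K) \times I$ by attaching a single $2$-handle; since the attaching circle is a small meridian $\mu$, which is unknotted in $S^3_{1/(n+1)}(K)$, the manifold $W$ is simply connected, has $b_2(W)=1$, and is negative-definite by the framing computation. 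This is exactly the cobordism already implicit in \cite[Section 5.1]{DHM20}.

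The first step is to make this construction equivariant. Since $K$ is preserved by $\tau$ (with fixed-point set $S^1$ meeting $K$ in two points if strongly invertible, or disjoint from $K$ if periodic), we may choose the meridian $\mu$ along which the handle is attached to be $\tau$-invariant: pick a point on $K$ away from the fixed set and take the meridian there together with its $\tau$-image — but in fact it is cleaner to choose $\mu$ to lie in a $\tau$-invariant normal disk, so that $\tau(\mu)=\mu$ as a set. Attaching a $\tau$-equivariant $2$-handle along a $\tau$-invariant framed circle then produces a self-diffeomorphism $\wt\tau$ of $W$ restricting to $\tau$ on both ends: one extends $\tau$ over the handle $D^2\times D^2$ by the product of the action on the attaching region with the identity (or the appropriate linear model) on the cocore. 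This gives the equivariant cobordism $(W,\wt\tau)$ from $(S^3_{1/(n+1)}(K),\tau)$ to $(S^3_{1/n}(K),\tau)$ with the asserted topology.

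The second step is to pin down the action $\wt\tau_*$ on $H_2(W,\Z)\cong\Z$. The generator $\Sigma$ of $H_2(W)$ is represented by the core of the $2$-handle capped off with a surface in $S^3_{1/(n+1)}(K)\times I$ bounded by $\mu$; a convenient such surface is (the pushed-in image of) a Seifert surface for $K$, or a disk $\mu$ bounds in the surgered manifold. The key observation is how $\tau$ acts on this bounding surface. If $K$ is \emph{strongly invertible}, the standard model for a strong inversion reverses the orientation of a Seifert-type surface realizing $\Sigma$ (equivalently, $\tau$ reverses the orientation of the meridian-disk when one accounts for the linking data), so $\wt\tau_*(\Sigma)=-\Sigma$ and $\wt\tau$ is homology-reversing. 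If $K$ is \emph{periodic}, $\tau$ preserves the orientation of the corresponding surface (the inversion is free on $K$ and acts compatibly on a tubular neighborhood), so $\wt\tau_*(\Sigma)=\Sigma$ and $\wt\tau$ is homology-fixing. One verifies this most reliably by working in the explicit Kirby picture: represent $S^3_{1/(n+1)}(K)$ by a $(-1/(n+1))$-framed copy of $K$, attach the $0$-framed (relative framing $-1$) meridian $\mu$, identify the $\tau$-action on the link, and compute the induced action on the second homology of the resulting $4$-manifold via the intersection form and the explicit generating surfaces.

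The step I expect to be the genuine obstacle is the last one — correctly identifying the sign of $\wt\tau_*$ on $H_2$ in the two cases. It is easy to get a heuristic answer, but a careful proof requires choosing an honest $\tau$-equivariant surface generating $H_2(W,\Z)$ and tracking orientations through the handle attachment; the subtlety is that $\tau$ fixes a circle in the knot complement and one must be sure the orientation-reversal (resp.\ preservation) is a genuine homological statement and not an artifact of the chosen representative. I would handle this by using the linking form / intersection form description: $H_2(W)$ pairs to $-(n+1)$ or similar with itself, $\wt\tau$ is an isometry of this rank-one lattice hence acts by $\pm 1$, and then a single local computation near the fixed-point set of $\tau$ (distinguishing the strongly invertible case, where the fixed arc meets the handle region, from the periodic case, where it does not) determines the sign. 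All other steps — simple-connectivity, $H_2=\Z$, negative-definiteness, and the mere existence of the equivariant extension — are routine handle-calculus and should be dispatched quickly.
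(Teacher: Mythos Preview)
Your overall strategy matches the paper's: build $W$ as a single equivariant $2$-handle attachment and then read off $\wt\tau_*$ on $H_2$. The concrete gap is that you have misidentified the attaching curve. It is \emph{not} a meridian $\mu$ of $K$, but the core $K'$ of the surgery solid torus in $Y_{n+1}=S^3_{1/(n+1)}(K)$, attached with framing $-1$ relative to an equivariant Seifert framing $\lambda'$. Attaching along a $(-1)$-framed meridian $\mu$ does not produce $S^3_{1/n}(K)$: blowing down $\mu$ changes the coefficient on $K$ from $1/(n+1)$ to $1/(n+1)+1=(n+2)/(n+1)$. The error propagates in two further ways. First, your ``core-plus-Seifert-surface'' description of the generator of $H_2(W,\Z)$ only makes sense when the attaching circle bounds a Seifert surface of $K$, which $K'\simeq\lambda$ does and $\mu$ does not. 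Second, for a \emph{periodic} knot $\tau$ acts freely on $K$, so there is no $\tau$-invariant normal disk and hence no $\tau$-invariant meridian; your equivariance step fails in that case as written.

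The paper proceeds as follows. One first checks that an equivariant Seifert framing $\lambda$ of $K$ exists in both the strongly invertible and periodic cases, then attaches the $2$-handle along $K'$ with framing $-1$ relative to $\lambda'$; a $2\times 2$ surgery-matrix product shows the outgoing boundary is $Y_n$. The intersection form is $(-1)$ because $K'$ and $\lambda'$ can be pushed into $Y_{n+1}-N(K')=S^3-N(K)$, where they bound disjoint Seifert surfaces. Since $\tau$ preserves both $K$ and $\lambda$ setwise, the handle attachment is automatically equivariant in both cases, with no need to find an invariant meridian. Your discussion of the sign of $\wt\tau_*$ via how $\tau$ acts on the capping Seifert surface is the right idea and is essentially what the paper invokes (deferring to \cite[Section~5.1]{DHM20}); once you correct the attaching curve to $K'$, that part of your argument goes through.
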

\begin{proof}
We explicitly construct a $2$-handle attachment cobordism from 
\[
Y_{n+1} = S^3_{1/(n+1)}(K) \quad \text{to} \quad Y_n = S^3_{1/n}(K)
\]
as follows. The reader may check that if $K$ is a strongly invertible or periodic knot, then we may find an equivariant Seifert framing $\lambda$ of $K$. View $\lambda$ as lying inside a solid torus neighborhood $N(K)$ of $K$. We remove $N(K)$ from $S^3$ and re-glue it along the matrix
\[
\left(\begin{array}{cc}1 & 0 \\n+1 & 1\end{array}\right)
\]
to obtain $Y_{n+1}$. For clarity, let $K'$ and $\lambda'$ denote the images of $K$ and $\lambda$, respectively, in $Y_{n+1}$.

We construct our cobordism by attaching a $2$-handle along $K'$, with framing $-1$ relative to $\lambda'$. This means that the outgoing boundary is the surgery along $K \subseteq S^3$ with surgery matrix
\[
\left(\begin{array}{cc}1 & 0 \\n+1 & 1\end{array}\right) \left(\begin{array}{cc}1 & 0 \\-1 & 1\end{array}\right) = \left(\begin{array}{cc}1 & 0 \\n & 1\end{array}\right);
\]
that is, we obtain $Y_n$. It is not hard to check that the intersection form of this cobordism is $(-1)$. Indeed, examining the surgery matrix for $Y_{n+1}$, note that $K'$ and $\lambda'$ may be pushed into $Y_{n+1} - N(K') = S^3 - N(K)$. After doing so, we obtain two Seifert framings for $K$ in $S^3$. Hence $K'$ and $\lambda'$ bound disjoint Seifert surfaces in $Y_{n+1}$. The fact that we attached our $2$-handle with framing $-1$ relative to $\lambda'$ shows that the intersection form is indeed $(-1)$.

Thus our cobordism is evidently simply-connected and negative-definite. If $K$ is strongly invertible, then it is not hard to check that the extension $\wt{\tau}$ discussed in \cite[Section 5.1]{DHM20} reverses orientation on the generator of $H_2(W, \Z)$, while if $K$ is periodic, then $\wt{\tau}$ sends the generator of $H_2(W, \Z)$ to itself. This completes the proof.
\end{proof}

This yields:

\begin{lem}\label{lem:knotsB}
Let $K$ be a strongly invertible or periodic knot with symmetry $\tau$. For any nonzero $n \in \Z$, the surgered manifold $\smash{(S^3_{1/n}(K), \tau)}$ is the boundary of a simply-connected, equivariant definite manifold $\smash{(W, \wt{\tau})}$. Moreover:
\begin{itemize}
\item If $n > 0$, then $W$ is positive definite.
\item If $n < 0$, then $W$ is negative definite.
\end{itemize}
and:
\begin{itemize}
\item If $K$ is strongly invertible, then $\widetilde{\tau}$ is homology-reversing.
\item If $K$ is periodic, then $\widetilde{\tau}$ is homology-fixing.
\end{itemize}
\end{lem}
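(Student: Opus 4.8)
The plan is to realize $(W,\wt{\tau})$ by stacking the elementary cobordisms of Lemma~\ref{lem:knotsA} and then capping off with a standard equivariant four-ball. The first point is that the construction in the proof of Lemma~\ref{lem:knotsA} makes sense for \emph{every} integer $n$, not merely $n \in \N$, once we adopt the convention $S^3_{1/0}(K) = S^3$ with its standard involution: the regluing prescription and the handle attachment with framing $-1$ relative to the equivariant Seifert framing are defined for any $n \in \Z$, and the verifications that the resulting cobordism is simply connected and negative definite, has $H_2 \cong \Z$, and carries an extension which is homology-reversing when $K$ is strongly invertible and homology-fixing when $K$ is periodic never used positivity of $n$. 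Write $W_k$ for the resulting equivariant negative-definite cobordism from $(S^3_{1/(k+1)}(K),\tau)$ to $(S^3_{1/k}(K),\tau)$; in particular $W_{-1}$ runs from $(S^3,\tau)$ to $(S^3_{1/(-1)}(K),\tau)$. I would also record that, since $\tau$ is conjugate to a rotation of $S^3$ about an unknot, it extends linearly over $B^4$ to an involution $\wh{\tau}$ with $\wh{\tau}|_{\partial B^4} = \tau$.

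For $n > 0$, I would form the composite $X = W_{n-1}\cup W_{n-2}\cup\cdots\cup W_0$, an equivariant negative-definite cobordism from $(S^3_{1/n}(K),\tau)$ to $(S^3,\tau)$, glue $(B^4,\wh{\tau})$ onto its $S^3$-end, and take $(W,\wt{\tau})$ to be the orientation reversal of the resulting manifold. For $n < 0$, I would instead form $X = W_{-1}\cup W_{-2}\cup\cdots\cup W_n$, an equivariant negative-definite cobordism from $(S^3,\tau)$ to $(S^3_{1/n}(K),\tau)$, glue $(B^4,\wh{\tau})$ onto its $S^3$-end, and take $(W,\wt{\tau})$ to be the resulting manifold (with no reversal). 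In both cases the self-diffeomorphisms of the constituent pieces restrict to $\tau$ along every gluing region, so they assemble to a self-diffeomorphism $\wt{\tau}$ of $W$ restricting to $\tau$ on $S^3_{1/n}(K)$, and $(W,\wt{\tau})$ is an equivariant filling. Since each gluing occurs along an integer homology sphere, a Mayer--Vietoris computation gives $H_2(W;\Q)\cong\bigoplus_k H_2(W_k;\Q)$, with the summands mutually orthogonal under the intersection form and preserved by $\wt{\tau}_*$; hence the intersection form of $W$ is an orthogonal sum of copies of $(-1)$, which is negative definite when $n < 0$ and (after the reversal) positive definite when $n > 0$, and $\wt{\tau}_*$ acts as the direct sum of the actions on the $H_2(W_k;\Q)$, i.e.\ as $-\id$ when $K$ is strongly invertible and as $\id$ when $K$ is periodic, by Lemma~\ref{lem:knotsA}. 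Finally, iterated applications of van Kampen's theorem — all pieces being simply connected and all gluing $3$-manifolds connected — show that $W$ is simply connected.

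The one step that requires genuine care is the orientation bookkeeping: one has to identify which end of the tower is the $S^3 = S^3_{1/0}(K)$ end and check that capping that end off leaves $+S^3_{1/n}(K)$, rather than its reverse, as the boundary of the resulting manifold. This is exactly what forces the $n > 0$ case to include an extra orientation reversal, and hence to produce a positive-definite filling, while the $n < 0$ case needs no reversal and produces a negative-definite one. Everything else is a formal consequence of Lemma~\ref{lem:knotsA} together with the fact that all the gluings take place along homology spheres.
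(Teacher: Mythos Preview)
Your proof is correct and follows essentially the same stacking-of-cobordisms approach as the paper. The only cosmetic differences are that you extend Lemma~\ref{lem:knotsA} one step further (through $n=0$) so as to cap off with $(B^4,\wh{\tau})$ rather than with the trace of $(\pm 1)$-surgery, and you handle $n<0$ directly rather than by the paper's mirroring trick; the resulting $4$-manifolds coincide.
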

\begin{proof}
Suppose $n > 0$. By repeatedly applying Lemma~\ref{lem:knotsA}, we obtain a simply-connected, equivariant negative-definite cobordism from
\[
(S^3_{1/n}(K), \tau) \quad \text{to} \quad (S^3_{+1}(K), \tau).
\]
Turning this around gives a positive-definite cobordism from $(+1)$-surgery to $1/n$-surgery on $K$. Moreover, $(S^3_{+1}(K), \tau)$ itself bounds a simply-connected, equivariant positive-definite manifold given by the trace of $(+1)$-surgery on $K$. If $K$ is strongly invertible, then $\wt{\tau}$ on each of these handle attachment cobordisms acts as multiplication by $-1$, and hence the composite cobordism clearly satisfies the desired properties. Similarly, if $K$ is periodic, then $\wt{\tau}$ acts as the identity on each handle attachment cobordism. The case $n < 0$ follows by mirroring and negating $n$.
\end{proof}

\subsection{Branched covers}\label{sec:2.3}
Another context in which equivariant manifolds naturally arise is the setting of branched covers. Recall that if $K$ is a knot with slice disk $D$, then the branched double cover $\Sigma_2(K)$ bounds a $\Z_2$-homology ball given by $\Sigma_2(D)$. This provides a well-known obstruction to the sliceness of $K$. 

Following \cite{ASA20, DHM20, dai20222}, we may refine this approach by remembering the branching involution on $\Sigma_2(K)$. This gives $\Sigma_2(K)$ the structure of an equivariant $\Z_2$-homology sphere. If $K$ bounds a slice disk $D$, then $\Sigma_2(D)$ is likewise equivariant and $\Sigma_2(K) = \partial \Sigma_2(D)$ in the sense of Definition~\ref{def:2.2}. Asking for the existence of an \textit{equivariant} homology ball with boundary $\Sigma_2(K)$ is (in principle) stronger than the corresponding obstruction in the nonequivariant category. In \cite{ASA20, DHM20, dai20222} this was used to obtain new linear independence and sliceness results with the help of Heegaard Floer homology; and, in particular, a proof that the $(2, 1)$-cable of the figure-eight knot is not slice \cite{dai20222}. In each case, the corresponding nonequivariant obstruction was trivial and the use of the branching involution proved to be essential.

As discussed in Section~\ref{sec:1.1.4}, we will also have cause to consider more general slice surfaces $F$ for $K$. We thus collect together various results regarding the topology of $\Sigma_2(F)$. We first have the following result of Gordon-Litherland \cite{GL78}: 

\begin{thm}\cite{GL78}\label{thm:2.7}
Let $F$ be a nonorientable slice surface for $K$ in $B^4$. Then
\[
\sigma(K) = \sign(\Sigma_2(F)) + e(F)/2.
\]
\end{thm}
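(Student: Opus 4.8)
The plan is to prove Theorem~\ref{thm:2.7} by relating both sides of the claimed identity to the Gordon--Litherland pairing on $H_1(F;\Z)$ and its behavior under passing to the branched double cover. Recall that for a surface $F$ pushed into $B^4$ with $\partial F = K$, Gordon and Litherland define a symmetric bilinear form $\mathcal{G}_F$ on $H_1(F;\Z)$ by $\mathcal{G}_F(a,b) = \mathrm{lk}(a, b^{++} + b^{--})$, i.e. the symmetrized Seifert-type pairing associated to a (possibly one-sided) spanning surface. Two classical facts drive the argument: first, the signature of $\mathcal{G}_F$ computes $\sigma(K) - e(F)/2$ (this is the original Gordon--Litherland formula for the signature of a knot in terms of a non-orientable spanning surface, where $e(F)$ is the normal Euler number, realized as the self-intersection $b^{++}+b^{--}$ correction term); and second, the double branched cover $\Sigma_2(F)$ — meaning the double cover of $B^4$ branched along $F$ pushed in — has intersection form on $H_2(\Sigma_2(F);\Z)$ isomorphic (up to the usual factor-of-two normalization issues, which wash out in the signature) to $\mathcal{G}_F$.

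First I would set up the branched double cover $\Sigma_2(B^4, F)$ explicitly, using a handle/Mayer--Vietoris description: push $F$ into the interior of $B^4$ except near $\partial$, take the double cover of the complement branched appropriately, and identify $H_2(\Sigma_2(F);\Z)$. The key computation is that a $1$-cycle $a$ in $F$ lifts to a $2$-cycle $\tilde a$ in $\Sigma_2(F)$ (the preimage of a Seifert-type surface bounding a pushoff of $a$), and the intersection number $\tilde a \cdot \tilde b$ in $\Sigma_2(F)$ equals $\mathcal{G}_F(a,b)$. This is the technical heart and is essentially the content of Gordon--Litherland's paper adapted to the $4$-dimensional branched cover; I would either cite it directly or reproduce the linking-number bookkeeping. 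Second, I would invoke the signature/intersection-form dictionary: $\mathrm{sign}(\Sigma_2(F)) = \mathrm{sign}(\mathcal{G}_F)$.

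Third, I would combine with the (one-dimensional) Gordon--Litherland signature formula $\sigma(K) = \mathrm{sign}(\mathcal{G}_F) + e(F)/2$, which rearranges to $\mathrm{sign}(\mathcal{G}_F) = \sigma(K) - e(F)/2$. Substituting the identification from the previous step yields $\mathrm{sign}(\Sigma_2(F)) = \sigma(K) - e(F)/2$, i.e. $\sigma(K) = \mathrm{sign}(\Sigma_2(F)) + e(F)/2$, as claimed. The only subtlety to watch is the normalization of the normal Euler number: $e(F)$ here should be defined so that it agrees with the correction term appearing in the Gordon--Litherland form (equivalently, twice the "framing defect" of the pushed-in surface), and one must check the sign conventions for $\Sigma_2(F)$ (orientation of the branched cover) are consistent with equation~\eqref{eq:1.1}. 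With those conventions pinned down, the formula is a formal consequence.

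The main obstacle I anticipate is the careful identification of the intersection form of $\Sigma_2(F)$ with the Gordon--Litherland pairing, including the correct treatment of the normal Euler number contribution — this is where one-sidedness of $F$ genuinely intervenes, since for a non-orientable surface the "pushoff" is not globally defined and one must work with the local system and count half-lives/half-deaths of the double-cover construction correctly. Everything else (the signature being a bordism invariant of the form, the rearrangement of the one-dimensional formula) is routine linear algebra. Since this theorem is quoted from \cite{GL78}, in the paper itself it would be reasonable to simply cite it; but the proof sketch above is the route I would take to verify it independently.
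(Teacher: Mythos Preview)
The paper does not give a proof of this theorem at all; it simply states the result with the citation \cite{GL78} and moves on. Your proposal correctly anticipates this at the end, and your sketch is a reasonable outline of the Gordon--Litherland argument should one want to verify it independently.
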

\noindent
Next, we have the following facts regarding the basic algebraic topology of $\Sigma_2(F)$: 
\begin{lem}\label{lem:2.8}
Let $F$ be a nonorientable slice surface for $K$ in $B^4$. Then:
\[
H_1(\Sigma_2(F), \Z_2) = 0 \quad \text{and} \quad b_2(\Sigma_2(F)) = b_1(F). 
\]
\end{lem}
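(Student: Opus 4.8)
The plan is to compute the homology of $\Sigma_2(F)$ directly from a handle/CW description of the branched double cover of $B^4$ along $F$, using the fact that $B^4$ is contractible and that $F$ is connected with boundary. First I would recall that $\Sigma_2(F)$ is the double cover of $B^4$ branched along $F$, and that since $K = \partial F \subseteq S^3$, the restriction of this cover to the boundary is exactly the branched double cover $\Sigma_2(K)$, which is a $\Z_2$-homology sphere. The computation proceeds by relating the (co)homology of the branched cover to that of the \emph{unbranched} cover $\wt{X}$ of $X := B^4 \smallsetminus \nu(F)$, together with the neighborhood $\nu(F)$ which is a disk bundle over $F$. Passing to $\Z_2$-coefficients kills the orientation issues of the nonorientable surface $F$ and makes this bookkeeping clean.

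The key steps, in order. (1) Set up the Mayer--Vietoris sequence for $\Sigma_2(F) = \wt{X} \cup_{\wt{\partial \nu(F)}} \wt{\nu(F)}$, where $\wt{\nu(F)}$ is the branched cover of the normal disk bundle; note $\wt{\nu(F)}$ deformation retracts onto $F$ (the branch locus upstairs), and $\wt{\partial\nu(F)}$ is the double cover of the circle bundle $\partial\nu(F)$ determined by the same $\Z_2$-class. (2) Compute $H_*(\wt{X}; \Z_2)$ using the transfer/covering-space argument: over $\Z_2$, $H_*(X;\Z_2) \cong H_*(\wt{X};\Z_2)_{\Z_2}$ only partially helps, so instead I would use the Gysin-type sequence or a direct Wang/transfer argument for the double cover classified by the nontrivial element of $H^1(X;\Z_2) = \Z_2$, exploiting that $X$ itself has the $\Z_2$-homology of a circle (since $B^4$ is contractible and we removed a neighborhood of a surface with connected boundary, $H_*(X;\Z_2)$ agrees with that of $S^1$ by Alexander duality). (3) Assemble the pieces in Mayer--Vietoris; the Euler characteristic of $\Sigma_2(F)$ is forced to be $0$ (it is a closed $4$-manifold obtained by doubling-type constructions, or directly $\Euler(\Sigma_2(F)) = 2\Euler(B^4) - \Euler(F) = 2 - \Euler(F)$ by the standard branched-cover Euler characteristic formula, and since $F$ has boundary and $b_0(F)=1$, $\Euler(F) = 1 - b_1(F)$, giving $\Euler(\Sigma_2(F)) = 1 + b_1(F)$ — wait, this must be reconciled with Poincaré duality, so one must track $b_3$ as well). (4) Use $H_1(\Sigma_2(F);\Z_2) = 0$, which follows because $\pi_1(\Sigma_2(F))$ is generated by loops in the branch locus and meridians, and $H_1$ of a branched double cover of a $\Z_2$-homology ball along a surface vanishes with $\Z_2$-coefficients — this is the classical fact that $\Sigma_2$ of a $\Z_2$-acyclic space branched over anything still has $H_1(\cdot;\Z_2)=0$. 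Combined with Poincaré--Lefschetz duality over $\Z_2$ (which applies since $\partial\Sigma_2(F) = \Sigma_2(K)$ is a $\Z_2$-homology sphere), this pins down $b_2(\Sigma_2(F);\Z_2)$ in terms of $\Euler$, yielding $b_2 = b_1(F)$.

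The main obstacle I expect is step (2)–(3): correctly computing $H_*(\wt{X};\Z_2)$ and gluing in the branch-locus neighborhood without sign or indexing errors, since the nonorientability of $F$ means one cannot use the naive "thicken to a product" picture, and the normal Euler number $e(F)$ enters the circle-bundle cover $\wt{\partial\nu(F)}$. The cleanest route is probably to bypass $\wt X$ entirely: present $\Sigma_2(F)$ directly from a handlebody diagram of $(B^4, F)$ by the standard Akbulut--Kirby double-branched-cover algorithm, observe that the resulting handle decomposition has one $0$-handle, no $1$-handles over $\Z_2$ (because the meridian relations kill $H_1$), exactly $b_1(F)$ two-handles (one for each generator of $H_1(F)$, since the lift of each band of $F$ contributes a $2$-handle), and the corresponding dual handles; then $H_1(\Sigma_2(F);\Z_2) = 0$ is immediate from the handle chain complex and $b_2(\Sigma_2(F)) = b_1(F)$ follows by counting $2$-handles and invoking Poincaré duality with the fact that $\Sigma_2(K)$ bounds. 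I would present the argument this way, citing \cite{GL78} and the standard branched-cover formalism for the handle count.
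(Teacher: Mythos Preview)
The paper does not prove this lemma; it cites \cite[Lemma~3.8]{Na00} and \cite[Lemma~2]{Massey} for the two assertions. So you are supplying an argument where the paper supplies none. Your Euler-characteristic reduction for $b_2$ is correct: once $H_1(\Sigma_2(F);\Z_2)=0$ is established, it forces $b_1(\Q)=0$, Poincar\'e--Lefschetz duality (the cover is orientable, and $\partial\Sigma_2(F)=\Sigma_2(K)$ is a rational homology sphere) gives $b_3(\Q)=0$, and the branched-cover formula $\chi(\Sigma_2(F))=2\chi(B^4)-\chi(F)=1+b_1(F)$ then yields $b_2=b_1(F)$.

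There are two gaps in your argument for $H_1(\Sigma_2(F);\Z_2)=0$. First, $X=B^4\setminus\nu(F)$ does \emph{not} have the $\Z_2$-homology of $S^1$: excision and the Thom isomorphism give $H_i(B^4,X;\Z_2)\cong H_{i-2}(F;\Z_2)$, so the long exact sequence of the pair produces $H_2(X;\Z_2)\cong\Z_2^{\,b_1(F)}$ in addition to $H_1(X;\Z_2)\cong\Z_2$. This is recoverable---the transfer long exact sequence for the free $\Z_2$-cover $\wt X\to X$ still shows that $H_1(\wt X;\Z_2)$ is a quotient of $H_1(X;\Z_2)=\Z_2$, generated by the lifted meridian $\wt\mu$, which bounds a disk once the branch locus is glued back---but the computation is not the one you sketched.

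Second, your handle-decomposition route (``one $0$-handle, no $1$-handles, exactly $b_1(F)$ $2$-handles'') implicitly assumes $F$ has been isotoped so that the radial function has one minimum, $b_1(F)$ saddles, and \emph{no maxima}. That is the assertion that $F$ is ribbon, which is open in general (for disks it is the slice--ribbon conjecture). When $F$ has maxima, the Akbulut--Kirby construction produces additional $2$-handles and $3$-handles, and if $F$ has several minima it produces $1$-handles as well; the vanishing of $H_1$ is then no longer a matter of counting handle indices. So the handle argument as you wrote it does not cover the stated generality, and the corrected transfer argument in the previous paragraph is what actually closes the gap.
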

\begin{proof}
For the first part, see for example \cite[Lemma 3.8]{Na00}. For the second, see for example \cite[Lemma 2]{Massey}.
\end{proof}
\noindent
Note that since $| \sign(\Sigma_2(F)) | \leq b_2(\Sigma_2(F)) = b_1(F)$, Theorem~\ref{thm:2.7} then establishes the inequality (\ref{eq:1.1}) of Section~\ref{sec:1.1.4}. Moreover, equality occurs precisely when $| \sign(\Sigma_2(F)) | = b_2(\Sigma_2(F))$; that is, $\Sigma_2(F)$ is definite. Finally, we have: 
\begin{lem}\cite[Lemma 2]{Massey}\label{lem:2.9}
The branching action $\wt{\tau}$ on $\Sigma_2(F)$ acts as $- \id$ on $H_2(\Sigma_2(F), \Q)$.
\end{lem}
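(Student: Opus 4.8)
\textbf{Proof proposal for Lemma~\ref{lem:2.9}.}

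The plan is to compute the action of $\wt\tau$ on $H_2(\Sigma_2(F),\Q)$ via a transfer/averaging argument combined with a direct identification of the $\wt\tau$-fixed part of the homology. Recall that $\pi\colon\Sigma_2(F)\to B^4$ is the double branched cover, $\wt\tau$ is the covering (deck) involution, and the quotient $\Sigma_2(F)/\wt\tau$ is $B^4$ itself. First I would decompose $H_2(\Sigma_2(F),\Q)$ into the $(+1)$- and $(-1)$-eigenspaces $V^+\oplus V^-$ of $\wt\tau_*$; this is legitimate over $\Q$ since $\wt\tau$ has order $2$. The goal is to show $V^+=0$, which forces $\wt\tau_*=-\id$. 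The standard fact here is that the transfer map identifies $V^+$ with $H_2(\Sigma_2(F)/\wt\tau,\Q)=H_2(B^4,\Q)$: concretely, the projection $\pi_*\colon H_2(\Sigma_2(F),\Q)\to H_2(B^4,\Q)=0$ has kernel containing $V^-$, while the composite $\tfrac12(1+\wt\tau_*)$ followed by $\pi_*$ realizes $V^+$ inside $H_2(B^4,\Q)$, which vanishes. Hence $V^+=0$.

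Slightly more carefully, I would invoke the standard Smith-theory / transfer exact sequence for the $\Z/2$-action: rationally, $H_*(\Sigma_2(F),\Q)^{\wt\tau}\cong H_*(\Sigma_2(F)/\wt\tau,\Q)=H_*(B^4,\Q)$, the isomorphism being induced by $\pi$. (Equivalently, one uses that for a finite group $G$ acting on a space $Z$, $H_*(Z/G,\Q)\cong H_*(Z,\Q)_G=H_*(Z,\Q)^G$ via the transfer, because $|G|$ is invertible in $\Q$.) Applying this in degree $2$ gives $(V^+=)\,H_2(\Sigma_2(F),\Q)^{\wt\tau}\cong H_2(B^4,\Q)=0$. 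Since $b_2(\Sigma_2(F))=b_1(F)$ by Lemma~\ref{lem:2.8}, all of $H_2(\Sigma_2(F),\Q)$ lies in $V^-$, i.e.\ $\wt\tau_*=-\id$ there. This is the content of the lemma, and it is exactly the argument in \cite[Lemma 2]{Massey}, so in the write-up I would either cite Massey directly or reproduce this short transfer argument.

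The one point requiring a little care — and the main (mild) obstacle — is making sure the hypotheses of the rational transfer isomorphism are genuinely met: one needs $\Sigma_2(F)\to B^4$ to be an honest branched cover with deck group $\Z/2$ and quotient $B^4$, and one needs the branch locus (a copy of $F$, together with the interior $F$ pushed in) to not interfere with the degree-$2$ statement. For a branched cover the map on rational homology of the quotient is still computed by the invariants of the total space away from the branch set, and since the branch locus $F$ is a surface (real codimension $2$) it contributes nothing to $H_2$ of $B^4$ beyond what is already zero; so the conclusion is unaffected. Alternatively, and perhaps most cleanly for the paper, I would simply cite \cite[Lemma 2]{Massey} as is done in the statement, noting that this is precisely the assertion proved there, and remark that the rational-coefficient hypothesis (as opposed to $\Z$-coefficients) is exactly what the transfer argument needs — which also explains the footnote attached to Definition~\ref{def:2.3}.
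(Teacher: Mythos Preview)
Your proposal is correct. The paper itself does not supply a proof of this lemma at all; it simply cites \cite[Lemma 2]{Massey} and moves on. Your transfer argument is exactly the standard one: over $\Q$ one has $H_*(X/G,\Q)\cong H_*(X,\Q)^G$ for any finite group $G$ acting on a finite CW complex $X$ (see e.g.\ Bredon, \emph{Introduction to Compact Transformation Groups}, Theorem~III.7.2), and since the quotient $\Sigma_2(F)/\wt\tau$ is $B^4$, the $(+1)$-eigenspace vanishes in degree~$2$.

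One small comment: your discussion of the ``obstacle'' coming from the branch locus is a bit overcautious. The isomorphism $H_*(X/G,\Q)\cong H_*(X,\Q)^G$ does not require the action to be free; it holds for any finite group action on a reasonable space, fixed points or not. So there is no genuine issue to work around here, and the argument can be stated cleanly in two lines without the hedging about codimension of the branch set. In any case, since the paper merely cites Massey, either reproducing this short argument or citing Massey directly (as you suggest) would be appropriate.
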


We thus have:
\begin{lem}
If $K$ bounds an extremal surface $F$, then $\Sigma_2(K)$ bounds a homology-reversing, equivariant definite manifold $W$ with $H_1(W, \Z_2) = 0$.
\end{lem}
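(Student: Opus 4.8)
The plan is to take $W = \Sigma_2(F)$, the double cover of $B^4$ branched along the smoothly and properly embedded surface $F$, equipped with its branching involution $\wt{\tau}$ (the nontrivial deck transformation). Since $\partial(B^4, F) = (S^3, K)$, the boundary of $W$ is the branched double cover $\Sigma_2(K)$, and $\wt{\tau}$ restricts on $\partial W$ to the branching involution $\tau$ on $\Sigma_2(K)$. Thus $(W, \wt{\tau})$ is an equivariant $4$-manifold bounding $(\Sigma_2(K), \tau)$ in the sense of Definition~\ref{def:2.2}, and it remains only to verify the three numerical/homological conditions in the statement.

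First, $H_1(W, \Z_2) = 0$ is precisely the first conclusion of Lemma~\ref{lem:2.8}. Second, to see that $W$ is definite, I would combine extremality with Gordon--Litherland: by hypothesis $|\sigma(K) - e(F)/2| = h(F) = b_1(F)$, while Theorem~\ref{thm:2.7} gives $\sigma(K) - e(F)/2 = \sign(\Sigma_2(F)) = \sign(W)$. Together with the identity $b_2(\Sigma_2(F)) = b_1(F)$ from Lemma~\ref{lem:2.8}, this yields $|\sign(W)| = b_2(W)$, i.e. the intersection form of $W$ is definite (positive or negative according to the sign of $\sigma(K) - e(F)/2$); this is exactly the equivalence observed in the discussion just before Lemma~\ref{lem:2.9}. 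Third, the branching action $\wt{\tau}_*$ acts as $-\id$ on $H_2(W, \Q)$ by Lemma~\ref{lem:2.9}, so $\wt{\tau}$ is homology-reversing in the sense of Definition~\ref{def:2.3}.

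There is no genuine obstacle here: the lemma is essentially a bookkeeping consequence of the results already assembled in Section~\ref{sec:2.3}. The only points requiring (minor) care are the standard facts that $\Sigma_2(F)$ is a smooth $4$-manifold because $F$ is smoothly and properly embedded, and that the deck transformation of $\Sigma_2(F)$ restricts on the boundary to the deck transformation of $\Sigma_2(K)$ — both routine from the construction of double branched covers.
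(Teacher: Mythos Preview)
Your proof is correct and follows the same approach as the paper: take $W = \Sigma_2(F)$ with its branching involution, and read off the three required properties from Lemma~\ref{lem:2.8}, Theorem~\ref{thm:2.7}, and Lemma~\ref{lem:2.9}. The paper's own proof simply says ``Follows immediately from Lemmas~\ref{lem:2.8} and \ref{lem:2.9},'' relying on the paragraph preceding Lemma~\ref{lem:2.9} (which already spells out the definiteness argument you give) to cover the use of Theorem~\ref{thm:2.7}.
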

\begin{proof}
Follows immediately from Lemmas~\ref{lem:2.8} and \ref{lem:2.9}.
\end{proof}

\section{Algebraic preliminaries}\label{sec:3}

In this section, we review the set-up of \cite{NST19} and re-cast it in an algebraic framework which will more easily generalize to the equivariant setting.

\subsection{Instanton-type complexes}\label{sec:3.1}

We begin with a modification to the usual instanton Floer complex \cite{Fl88, Do02} which takes into account the reducible connection, and the Chern-Simons filtration. Throughout the paper, we consider $\Z$-graded, $\R$-filtered chain complexes over the ring $\Z_2[y^{\pm 1}]$ of Laurent polynomials. We denote by $\deg_\Z$ the homological grading, and  by $\deg_I$ the filtration level. Under the differential, $\deg_\Z$ is decreased by one, and $\deg_I$ is nonincreasing. The gradings are so that multiplication by the variable $y$ has 
$\deg_\Z(y) = 8 \text{ and } \deg_I(y) = 1$.

When regarded as a chain complex  $\Z_2[y^{\pm 1}]$ is regarded as the trivial complex spanned by a single generator with $\deg_\Z = \deg_I = 0$, and no differential. We use brackets to denote a shift in homological grading, so that (for example) the generator $1 \in \Z_2[y^{\pm 1}][-3]$ has homological grading $\deg_\Z = -3$, and $\deg_I = 0$.

\begin{defn}\label{def:3.1}
Let $(\un{C},\un{d})$  be a $\Z$-graded, $\R$-filtered, finitely-generated, free chain complex over $\Z_2[y^{\pm1}]$. We say $(\un{C},\un{d})$ is an {\it instanton-type chain complex} if it is equipped with a subcomplex $C \subseteq \un{C}$ such that there is an graded, filtered isomorphism of $\Z_2[y^{\pm 1}]$-complexes
\[
\un{C}/C \cong \Z_2[y^{\pm 1}][-3].
\]
Denote
\[
\pi \colon \un{C} \rightarrow \un{C}/C \cong \Z_2[y^{\pm 1}][-3].
\]
We will often think of $C$ as defining a two-step filtration $C \subseteq \un{C}$. We denote by $\un{C}_i$ or $C_i$ the appropriate complex in homological grading $i$.
\end{defn}

If $\un{C}$ is an instanton-type complex, then we may choose a splitting of graded, filtered $\Z_2[y^{\pm 1}]$-modules
\[
\un{C} = C \oplus \Z_2[y^{\pm 1}][-3].
\]
With respect to this splitting, 
\[
\un{d} = d + D_2, 
\]
where $d$ is the restriction of $\un{d}$ to $C$ and $D_2$ is a filtered map from $\Z_2[y^{\pm 1}][-3]$ to $C$. While such a splitting is not canonical, we often assume that a particular splitting has been fixed whenever we discuss $\un{C}$. We then denote the generator $1 \in \Z_2[y^{\pm 1}][-3]$ by
\[
\theta = (0, 1).
\]
Note that $D_2$ is completely determined by $D_2(\theta) \in C_{-4}$.

\begin{rem} Definition~\ref{def:3.1} should be thought of as the analog of the fact that in Heegaard Floer homology  $\HFm(Y)/U\text{-torsion} \cong \Z_2[U]$ for any integer homology sphere $Y$. In that case we similarly have a noncanonical splitting $\HFm(Y) = (U\text{-torsion}) \oplus \Z_2[U]$. 
\end{rem}

\begin{rem}\label{rem:3.2}
We sketch the relevance of Definition~\ref{def:3.1} to instanton Floer homology. Recall that instanton Floer homology \cite{Fl88, Do02} associates to an integer homology sphere $Y$ equipped with a Riemannian metric a $\Z$-graded chain complex. This is done by looking at the flow lines of a suitable perturbation $f_\pi$ of the Chern-Simons functional
 \[
 f: \mathcal{A}(P)/\mathcal{G}_0(P) \to \R.
 \] 
Here $P\to Y$ denotes the trivial $SU(2)$-bundle over $Y$, $\mathcal{A}(P)$ the affine space of $SU(2)$-connections on $P$, and $\mathcal{G}_0(P)$ the space of degree zero gauge transformations of $P$.  In the usual setting, only irreducible connections on $P$ are considered. We denote the resulting  chain complex by $(C(Y,g,\pi), d)$, where $g$ denotes Riemannian metric on $Y$, and $\pi$ an admissible holonomy perturbation; see \cite[Section (1b)]{Fl88} and \cite[Section 5.5.1]{BD95}. Note that $C(Y,g,\pi)$ has the structure of a $\Z_2[y^{\pm 1}]$-module where multiplication by $y^k$ acts as the gauge action of a gauge transformation of degree $k\in \Z$. 

In order to take into account the reducible connection, we define
\[\un{C}(Y,g,\pi)= C(Y,g,\pi) \oplus \Z_2[y^{\pm1}] \cdot \theta,\]
where $\theta$ is a formal generator (in homological grading $-3$) representing the class of the trivial connection on $P$. The differential on $\un{C}(Y,g,\pi)$ is then $\un{d} = d + D_2$, where $d$ is the usual differential on the irreducible complex $C(Y,g,\pi)$ and $D_2$ counts flow lines out of the reducible connection; see \cite[Section 7.1]{Do02}. We put a filtration on $\un{C}(Y, g, \pi)$ by defining
\[
\deg_I(\bold{x})=f_\pi(\bold{x})
\]
for every irreducible critical point $\bold{x}\in \text{crit}(f_\pi)$ and setting the filtration of $\theta$ to be zero.
\end{rem}

\begin{rem}\label{rem:3.3}
Note that $\un{C}(Y,g,\pi)$ depends on the perturbation $\pi$ (even up to an appropriate notion of filtered homotopy equivalence) and thus is not an invariant of $Y$. In order to remedy this, we introduce the definition of an enriched complex in Section~\ref{sec:4.4}, which will capture the notion of taking a sequence of perturbations converging to zero. We gloss over this subtlety for now.
\end{rem}


The following auxiliary complexes capture information from the Chern-Simons filtration.

\begin{defn}\label{def:3.4}
Let $(\un{C}, \un{d})$ be an instanton-type complex. For any real number $s$, define
\[
(\un{C}^{[-\infty,s]}, \un{d}^{[-\infty,s]}) = (\{ \zeta \in \un{C} | \deg_I (\zeta ) \leq s\} , \un{d}|_{\un{C}^{[-\infty,s]}}). 
\]
This is a subcomplex of $\un{C}$. For a given pair of real numbers $r<s$, we may also consider the quotient complex
\[
\un{C}^{[r,s]} = \un{C}^{[-\infty,s]}/ \un{C}^{[-\infty,r]}.
\]
When discussing $\un{C}^{[r,s]}$, we will usually assume $r < 0 \leq s$. Define complexes $C^{[-\infty, s]}$ and $C^{[r, s]}$ similarly. Denote
\[
\pi^{[r, s]} \colon \un{C}^{[r, s]} \rightarrow \un{C}^{[r, s]}/C^{[r, s]} \cong \Z_2[y^{\pm 1}][-3]^{[r, s]}.
\]
Note that $\Z_2[y^{\pm 1}][-3]^{[r, s]}$ is generated by the cycles $y^{i}$ with $r < \deg_I(y^{i}) \leq s$. (Or, more precisely, the classes of these cycles in the quotient $\Z_2[y^{\pm 1}]^{[-\infty, r]}/\Z_2[y^{\pm 1}]^{[-\infty, s]}$.)
\end{defn}
As in the discussion following Definition~\ref{def:3.1}, we will often assume that a splitting of $\un{C}$ has been chosen. Denote
\[
\theta^{[r, s]} = \pi^{[r, s]}(\theta).
\]
This is nonzero if and only if $r < 0 \leq s$. Sometimes we will continue to write $\theta$ in place of $\theta^{[r, s]}$ when the context is clear.

We now discuss maps between instanton-type chain complexes. 

\begin{defn}\label{def:3.5}
Let $(\un{C}, \un{d})$ and $(\un{C}', \un{d}')$ be two instanton-type complexes. A \textit{morphism} 
\[
f \colon \un{C} \rightarrow \un{C}'
\]
is a $\deg_\Z$-preserving, $\Z_2[y^{\pm 1}]$-linear chain map that preserves the two-step filtration:
\[
f(C) \subseteq C'.
\]
This induces a map
\[
f \colon \un{C}/C \cong \Z_2[y^{\pm 1}][-3] \rightarrow \un{C}'/C' \cong \Z_2[y^{\pm 1}][-3],
\]
which by abuse of notation we also denote by $f$.
\end{defn}

We emphasize that an instanton-type complex has two filtrations: the $\deg_I$-filtration and the two-step filtration afforded by Definition~\ref{def:3.1}. As per Definition~\ref{def:3.5}, in the present work we will only ever consider maps which preserve the two-step filtration. We thus refer to a map as \textit{filtered} (with no qualification) to mean that it is filtered with respect to $\deg_I$. While we will primarily be interested in filtered maps, it will also be useful to have a notion of a chain map that increases the filtration by at most a fixed parameter:

\begin{defn}\label{def:3.6}
Let $(\un{C},\un{d}) $ and $(\un{C}',\un{d}')$ be two instanton-type chain complexes and fix any real number $\delta \geq 0$. We say that a morphism $f \colon \un{C} \rightarrow \un{C}'$ has \textit{level} $\delta$ if 
\[
\deg_I(f(\zeta)) \leq \deg_I(\zeta) + \delta
\]
for all $\zeta \in \un{C}$. Note that a level-zero morphism is filtered in the usual sense. Clearly, a level-$\delta$ morphism induces a morphism:
\[
f^{[r, s]} \colon \un{C}^{[r, s]} \rightarrow \un{C}'^{[r + \delta, s + \delta]}
\]
for any $r < s$.
\end{defn}

We have the obvious notion of homotopy equivalence:

\begin{defn}\label{def:3.7}
Let $(\un{C},\un{d}) $ and $(\un{C}',\un{d}')$ be two instanton-type chain complexes. We say that $\un{C}$ and $\un{C}'$ are \textit{homotopy equivalent} if there exist morphisms $f$ and $g$ between them and $\Z_2[y^{\pm 1}]$-linear homotopies
\[
H_{\un{C}} \colon \un{C}_* \rightarrow \un{C}_{* + 1} \quad \text{and} \quad H_{\un{C}'} \colon \un{C}'_* \rightarrow \un{C}'_{* + 1}
\]
such that
\[
gf + \id = \un{d}H_{\un{C}} + H_{\un{C}}\un{d} \quad \text{and} \quad fg + \id = \un{d}'H_{\un{C}'} + H_{\un{C}'}\un{d}'.
\]
We require that $H_{\un{C}}$ and $H_{\un{C}'}$ preserve the two-step filtrations on $\un{C}$ and $\un{C}'$, respectively. 
\begin{itemize}
\item If $f$, $g$, $H_{\un{C}}$, and $H_{\un{C}'}$ are filtered then we say that $\un{C}$ and $\un{C}'$ are \textit{filtered homotopy equivalent}.
\item If $f$, $g$, $H_{\un{C}}$, and $H_{\un{C}'}$ have level $\delta$, then we say that $\un{C}$ and $\un{C}'$ are \textit{level-$\delta$ homotopy equivalent}.
\end{itemize}
It is clear that filtered homotopy equivalence is an equivalence relation. Note, however, that for a fixed value of $\delta$, level-$\delta$ homotopy equivalence is \textit{not} an equivalence relation, since the composition of two level-$\delta$ morphisms need not have level $\delta$.
\end{defn}

We now turn to the following important notion: 

\begin{defn}\label{def:3.8}
Let $(\un{C},\un{d}) $ and $(\un{C}',\un{d}') $ be two instanton-type chain complexes. We say that a morphism $\lambda \colon \un{C} \rightarrow \un{C}'$ is a \textit{local map} if the induced map on the quotient
\[
\lambda \colon \un{C}/C \cong \Z_2[y^{\pm 1}][-3] \rightarrow \un{C}'/C' \cong \Z_2[y^{\pm 1}][-3] 
\]
is an isomorphism.
\begin{itemize}
\item If $\lambda$ is filtered, then we refer to $\lambda$ as a \textit{filtered local map}.
\item If $\lambda$ has level $\delta$, then we refer to $\lambda$ as a \textit{level-$\delta$ local map}.
\end{itemize}
If we have local maps in both directions between $\un{C}$ and $\un{C}'$, we say that $\un{C}$ and $\un{C}'$ are \textit{locally equivalent}. 
\begin{itemize}
\item If both maps are filtered, then we refer to this as a \textit{filtered local equivalence}.
\item If both maps have level $\delta$, then we refer to this as a \textit{level-$\delta$ local equivalence}.
\end{itemize}
It is clear that filtered local equivalence is an equivalence relation. Note, however, that for a fixed value of $\delta$, level-$\delta$ local equivalence is \textit{not} an equivalence relation, since the composition of two level-$\delta$ morphisms need not have level $\delta$.
\end{defn}

Definition~\ref{def:3.8} should be thought of as the rough analogue of the notion of a local map in Heegaard Floer homology. Recall that in Heegaard Floer theory, a map $f \colon CF^-(Y) \rightarrow CF^-(Y')$ is said to be local if it induces an isomorphism between the localizations
\[
U^{-1} \HFm(Y) \rightarrow U^{-1} \HFm(Y').
\]
We have thus borrowed this terminology, even though $\un{C}/C$ is not really the localization of $\un{C}$ with respect to any variable.

\subsection{$\theta$-supported cycles and the $r_s$-invariant}\label{sec:3.2}

We now review the definition of the $r_s$-invariant of \cite{NST19}. For this, we will need the notion of a $\theta$-supported cycle, which may be regarded as partial information of the special cycles introduced in \cite{DISST22}. Roughly speaking, a $\theta$-supported cycle should be regarded as the analog of a $U$-nontorsion element in $\HFm(Y)$.

\begin{defn}\label{def:3.9}
Let $(\un{C}, d)$ be an instanton-type complex. We say that a cycle $z \in \un{C}$ is a \emph{$\theta$-supported cycle} if $\pi_*([z]) = 1 \in \Z_2[y^{\pm 1}][-3]$, where
\[
\pi_* \colon H_*(\un{C}) \rightarrow H_*(\un{C}/C) \cong \Z_2[y^{\pm 1}][-3].
\]
Similarly, we say that a cycle $z \in \un{C}^{[r, s]}$ is a \emph{$\theta$-supported $[r, s]$-cycle} if $\pi_*^{[r, s]}([z]) = \pm 1 \in \Z_2[y^{\pm 1}][-3]^{[r, s]}$, where
\[
\pi_*^{[r, s]} \colon H_*(\un{C}^{[r, s]}) \rightarrow H_*(\un{C}^{[r, s]}/C^{[r, s]}) \cong \Z_2[y^{\pm 1}][-3]^{[r, s]}.
\]
Here, we require that $r < 0 \leq s$, so that $\pm 1$ is a nonzero element of $\Z_2[y^{\pm 1}][-3]^{[r, s]}$.
\end{defn}

It will be convenient to think of $\theta$-supported cycles explicitly by choosing a splitting of $\un{C}$ as in the discussion after Definition~\ref{def:3.1}. Then any element $z \in \un{C}$ can be written $z=\zeta+c\cdot \theta$ for some $\zeta \in C$ and $c\in\Z_2[y^{\pm1}]$. A $\theta$-supported cycle is a cycle $z$ for which $c =  1$.\footnote{When we work over $\Z$, $c$ is allowed to be $\pm1$.} While the splitting of $\un{C}$ is not canonical, it is straightforward to see that this property is equivalent to Definition~\ref{def:3.9} and thus independent of the choice of splitting. A similar characterization holds regarding $\theta^{[r, s]}$ and $\theta$-supported $[r, s]$-cycles. 

We define the $r_s$-invariant of an instanton-type chain complex using the notion of a filtered $\theta$-supported cycle:
\begin{defn} \label{def:3.10}
Let $\un{C}$ be an instanton-type complex and $s \in [-\infty, 0]$. Define  
\[
r_s ( \un{C})= - \inf_{r < 0} \{ r |
\text{ there exists a $\theta$-supported $[r, -s]$-cycle}\} \in [0, \infty],
\]
with the caveat that if the above set is empty, we set $r_s(\un{C}) = - \infty$.
\end{defn}

The signs in Definition~\ref{def:3.10} are for consistency with the conventions of \cite{NST19}. See Remark~\ref{rem:differenceinrs}.

\begin{ex}
Several examples of instanton-type complexes and graphs of their $r_s$-invariants are given in Figure~\ref{fig:3.1}. 
\begin{itemize}
\item In (a), $\theta$ itself is a $\theta$-supported $[r, -s]$-cycle for all values of $r$ and $s$; hence $r_s(\un{C}) = \infty$ for all $s$. 
\item In (b), there is a $\theta$-supported $[r, -s]$-cycle if and only if $r \geq \beta$; this is again given by $\theta$ itself. 
\item In (c), there is a $\theta$-supported $[r, -s]$-cycle for all values of $r$ and $s$. This is given by $\theta$ if $r \geq \beta$ and $\theta - y$ if $r < \beta$. 
\item The most nontrivial case occurs in (d). If $-s \geq \alpha$, then $\theta - y$ forms a $\theta$-supported $[r, -s]$-cycle for any value of $r$. If $-s < \alpha$, then there is a $\theta$-supported $[r, -s]$-cycle if and only if $r \geq \beta$, which is given by $\theta$.
\end{itemize}
\begin{figure}[h!]
\includegraphics[scale = 1]{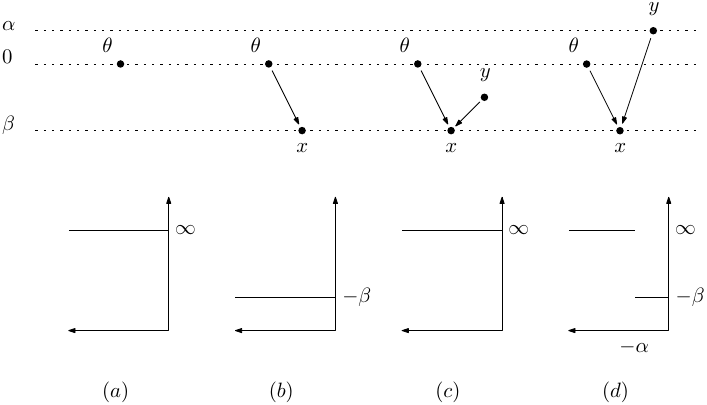}
\caption{Some instanton-type complexes and graphs of their $r_s$-invariants. Dotted lines represent the $\deg_I$-filtration. The differential is given by the black arrows; if no black arrow is drawn, the differential on a given generator is zero.}\label{fig:3.1}
\end{figure}
\end{ex}

We refer the reader to \cite{NST19} for further discussion of $r_s(Y)$.

\subsection{Properties} \label{sec:3.3}
 We now give several important properties of instanton-type complexes and the $r_s$-invariant.
 
\subsubsection{Local maps} In Section~\ref{sec:5.4}, we show that negative-definite cobordisms induces morphisms in instanton Floer homology. Hence it will be useful to have the following:

\begin{lem} \label{under local map filtration}
Let $(\un{C},\un{d}) $ and $(\un{C}',\un{d}')$ be instanton-type complexes and $\lambda \colon \un{C} \rightarrow \un{C}'$ be a local map. 
\begin{itemize}
\item If $z$ is a $\theta$-supported cycle in $\un{C}$, then $\lambda(z)$ is a $\theta$-supported cycle in $\un{C}'$. 
\item If $z$ is a $\theta$-supported $[r, s]$-cycle in $\un{C}$ and $\lambda$ is a level-$\delta$ local map, then $\lambda(z)$ is a $\theta$-supported $[r + \delta, s + \delta]$-cycle in $\un{C}'$.\footnote{Here, we assume that $r < 0 \leq s$ and $r + \delta < 0 \leq s + \delta$.}
\end{itemize}
\end{lem}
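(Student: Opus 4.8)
The plan is to unwind the definitions and track what happens to the relevant quotient classes under $\lambda$. Recall that a $\theta$-supported cycle $z \in \un{C}$ is characterized by $\pi_*([z]) = 1 \in \Z_2[y^{\pm 1}][-3]$, where $\pi \colon \un{C} \to \un{C}/C$ is the projection; equivalently, after fixing a splitting $\un{C} = C \oplus \Z_2[y^{\pm 1}][-3]$, we have $z = \zeta + 1\cdot\theta$ for some $\zeta \in C$. The key point is that, by Definition~\ref{def:3.5}, a morphism $\lambda$ preserves the two-step filtration, i.e.\ $\lambda(C) \subseteq C'$, so it descends to a map of quotients, and by Definition~\ref{def:3.8} the fact that $\lambda$ is \emph{local} means precisely that this descended map $\lambda \colon \Z_2[y^{\pm 1}][-3] \to \Z_2[y^{\pm 1}][-3]$ is an isomorphism. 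Since it is also $\Z_2[y^{\pm 1}]$-linear and $\deg_\Z$-preserving, the only such isomorphisms are multiplication by a unit of $\Z_2[y^{\pm 1}]$ of degree zero, hence $\lambda(1) = 1$ (over $\Z$, one would say $\lambda(1) = \pm 1$).

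For the first bullet: if $z$ is a $\theta$-supported cycle then $\lambda(z)$ is a cycle because $\lambda$ is a chain map, and $\pi'_*([\lambda(z)]) = \lambda(\pi_*([z])) = \lambda(1) = 1$ by naturality of $\pi$ with respect to $\lambda$ together with the computation above. Hence $\lambda(z)$ is a $\theta$-supported cycle in $\un{C}'$. For the second bullet: by Definition~\ref{def:3.6}, a level-$\delta$ morphism induces a morphism $\lambda^{[r,s]} \colon \un{C}^{[r,s]} \to \un{C}'^{[r+\delta, s+\delta]}$ on the truncated complexes, and this again preserves the two-step filtration (since $\lambda(C) \subseteq C'$ and truncation is functorial), so it descends to the quotients $\Z_2[y^{\pm 1}][-3]^{[r,s]} \to \Z_2[y^{\pm 1}][-3]^{[r+\delta, s+\delta]}$. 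One checks that $\pi^{[r,s]}$ is natural, so $\pi'^{[r+\delta,s+\delta]}_*([\lambda^{[r,s]}(z)]) = \bar\lambda(\pi^{[r,s]}_*([z])) = \bar\lambda(\pm 1)$, where $\bar\lambda$ is the induced map on truncated quotients. Since on the untruncated quotients $\lambda$ acts as the identity (multiplication by $1$), it sends the class of $y^i$ to the class of $y^i$, and in particular the generator $\pm 1$ of $\Z_2[y^{\pm 1}][-3]^{[r,s]}$ (which makes sense because $r < 0 \le s$) maps to the generator $\pm 1$ of $\Z_2[y^{\pm 1}][-3]^{[r+\delta,s+\delta]}$ (which makes sense because $r+\delta < 0 \le s+\delta$, as assumed in the footnote). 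Thus $\lambda^{[r,s]}(z)$ is a $\theta$-supported $[r+\delta, s+\delta]$-cycle.

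The only mildly delicate point — and the step I would expect to need the most care — is verifying that the induced map on the \emph{truncated} quotient complexes genuinely carries the distinguished generator to the distinguished generator, i.e.\ that there is no loss of information when passing through the double quotient $\un{C}^{[r,s]}/C^{[r,s]} \cong \Z_2[y^{\pm 1}][-3]^{[r,s]}$. This follows from the commutativity of the square relating $\pi$, $\pi^{[r,s]}$, and the two truncation maps, combined with the fact that the $\deg_I$-filtration on the quotient $\Z_2[y^{\pm 1}][-3]$ is the ``obvious'' one in which $y^i$ sits in filtration level $i$ (so truncation simply remembers which powers $y^i$ survive in the window $(r,s]$, respectively $(r+\delta, s+\delta]$, and multiplication by the unit $1$ commutes with this). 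I would phrase this as a short diagram chase rather than a computation. Everything else is a direct application of Definitions~\ref{def:3.5}, \ref{def:3.6}, \ref{def:3.8}, and \ref{def:3.9}.
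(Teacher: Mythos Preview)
Your proof is correct and follows essentially the same approach as the paper's. The paper's proof is terser---it simply says that $\lambda$ is a chain map (so cycles go to cycles) and that the induced isomorphism on quotients shows $\theta$-supportedness is preserved, with the filtered claim following from the induced map $\un{C}^{[r,s]} \to \un{C}'^{[r+\delta,s+\delta]}$---but your version just unpacks these assertions more explicitly (in particular, your observation that the induced map on $\Z_2[y^{\pm 1}][-3]$ must be the identity is the content behind the paper's one-line invocation of ``isomorphism'').
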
 
\begin{proof}
Since $\lambda$ is a chain map, it maps cycles to cycles; the fact that a local map induces an isomorphism $\un{C}/C \rightarrow \un{C}'/C'$ shows that $\lambda(z)$ is also $\theta$-supported. The second part of the claim follows from the fact that if $\lambda$ has level-$\delta$, then it induces a chain map from $\un{C}^{[r,s]}$ to $\un{C}^{[r+ \delta, s+ \delta]}$.
\end{proof}

This immediately yields:

\begin{lem} \label{lem:3.13}
Let $\lambda \colon \un{C} \rightarrow \un{C}'$ be a level-$\delta$ local map. Then for any $s \in [-\infty, 0]$, we have
\[
r_s (\un{C}) - \delta \leq r_{s - \delta}(\un{C}').
\]
\end{lem}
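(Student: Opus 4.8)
The plan is to deduce \cref{lem:3.13} directly from \cref{under local map filtration}. Fix $s \in [-\infty, 0]$, and let me treat the case $r_s(\un{C}) > 0$; the boundary case $r_s(\un{C}) = -\infty$ or $r_s(\un{C}) = 0$ is either vacuous or follows by an easy limiting argument. By \cref{def:3.10}, $r_s(\un{C})$ is the supremum of all $-r$ (equivalently, the negative of the infimum of all $r < 0$) for which there exists a $\theta$-supported $[r, -s]$-cycle in $\un{C}$. So fix any $r < 0$ with $-r < r_s(\un{C})$, so that a $\theta$-supported $[r, -s]$-cycle $z$ exists; I want to show this forces $-r - \delta \le r_{s-\delta}(\un{C}')$, and then take the supremum over such $r$.

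The key step is to apply the second bullet of \cref{under local map filtration}: since $\lambda$ is a level-$\delta$ local map and $z$ is a $\theta$-supported $[r, -s]$-cycle in $\un{C}$, the image $\lambda(z)$ is a $\theta$-supported $[r + \delta, -s + \delta]$-cycle in $\un{C}'$, \emph{provided} the index conditions $r < 0 \le -s$ and $r + \delta < 0 \le -s + \delta$ hold. The first holds by choice of $r$ and since $s \le 0$. The second needs $r + \delta < 0$ and $-s + \delta \ge 0$; the latter is automatic since $s \le 0$ and $\delta \ge 0$, while the former requires a small amount of care — if $r + \delta \ge 0$ then $\lambda(z)$ need not be $\theta$-supported in the relevant quotient. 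I would handle this by noting that $-s + \delta = -(s - \delta)$, so $\lambda(z)$ being a $\theta$-supported $[r+\delta, -(s-\delta)]$-cycle in $\un{C}'$ is exactly what \cref{def:3.10} needs to conclude $-(r + \delta) \le r_{s-\delta}(\un{C}')$, i.e. $-r - \delta \le r_{s-\delta}(\un{C}')$. In the case $r + \delta \ge 0$, one instead observes that $\un{C}'^{[r', -(s-\delta)]}$ still carries $\theta$-supported cycles for every $r' < 0$ (e.g. one can always decrease $r$ first to make $r + \delta < 0$, or argue that the set defining $r_{s-\delta}(\un{C}')$ is then all of $(-\infty, 0)$ and the infimum is $-\infty$, giving $r_{s-\delta}(\un{C}') = \infty$ and the inequality trivially); this is a routine edge-case check.

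Having shown $-r - \delta \le r_{s - \delta}(\un{C}')$ for every $r < 0$ with $-r < r_s(\un{C})$, I take the supremum over all such $r$: the left-hand side has supremum $r_s(\un{C}) - \delta$, so $r_s(\un{C}) - \delta \le r_{s-\delta}(\un{C}')$, as desired. When $r_s(\un{C}) = \infty$, the same argument applied to arbitrarily negative $r$ shows $r_{s-\delta}(\un{C}') = \infty$ as well.

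The main obstacle — really the only point requiring thought rather than bookkeeping — is the interaction of the level-$\delta$ shift with the index constraints $r < 0 \le s$ built into \cref{def:3.9} and \cref{def:3.10}: after shifting by $\delta$, one must confirm the shifted interval still straddles $0$ so that $\pm 1$ remains a nonzero element of $\Z_2[y^{\pm 1}][-3]^{[r+\delta, -s+\delta]}$, and one must be slightly careful near $r + \delta = 0$. Everything else is a direct unwinding of the infimum in \cref{def:3.10} together with the functoriality already recorded in \cref{under local map filtration}.
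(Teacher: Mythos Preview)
Your proposal is correct and follows exactly the paper's approach: the paper's proof is the single line ``This follows from \cref{under local map filtration} and Definition~\ref{def:3.10},'' and your argument is simply the explicit unpacking of that deduction via the infimum in \cref{def:3.10}. (Your parenthetical handling of the boundary case $r+\delta \geq 0$ is slightly garbled---neither suggested fix quite works as written---but that case amounts to $r_s(\un{C}) - \delta \leq 0$, and the paper does not address it either.)
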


\begin{proof}
This follows from \cref{under local map filtration} and Definition~\ref{def:3.10}. 
\end{proof}

\subsubsection{Tensor products} In order to understand connected sums of $3$-manifolds, we will have to understand tensor products of instanton-type complexes.

\begin{defn} \label{def:3.14}
Let $(\un{C},\un{d}) $ and $(\un{C}',\un{d}')$ be instanton-type complexes. We define their tensor product by taking the usual graded tensor product over $\Z_2[y^{\pm 1}]$, but with an upwards grading shift of three:
\[
\un{C} \otimes \un{C}' = (( \un{C} \otimes \un{C}') [3], \un{d}\otimes \un{d}').
\]
The $\R$- and two-step filtrations on $\un{C} \otimes \un{C}'$ are both given by the usual tensor product of filtrations on $\un{C}$ and $\un{C}'$, respectively. In the latter case, this means that the subcomplex of $\un{C} \otimes \un{C}'$ required by Definition~\ref{def:3.1} is given by
\[
(C \otimes \un{C'} + \un{C} \otimes C')[3].
\]
It is straightforward to check that this makes $\un{C} \otimes \un{C}'$ into an instanton-type complex. Explicitly, if we fix splittings for $\un{C}$ and $\un{C}'$, then the $\theta$-chain in $\un{C} \otimes \un{C}'$ is given by $\theta \otimes \theta'$. 
\end{defn}
Suppose that $z \in \un{C}^{[r, s]}$ and $z' \in \un{C}'^{[r', s']}$ are cycles. Then we may choose representatives (which we also call $z$ and $z'$) in $\un{C}$ and $\un{C}'$ such that 
\[
\deg_I(z) \leq s \text{ and } \deg_I(\un{d}z) \leq r
\]
and
\[
\deg_I(z') \leq s' \text{ and } \deg_I(\un{d}'z') \leq r'.
\]
Then $\deg_I(z \otimes z') \leq s + s'$. Since the boundary of $z \otimes z'$ is given by $(\un{d}z) \otimes z' \pm z \otimes (\un{d}'z')$, we see that the boundary of $z \otimes z'$ has filtration level at most $\max\{r + s', r' + s\}$. Hence $z \otimes z'$ is a cycle in $(\un{C} \otimes \un{C}')^{[\max\{r + s', r' + s\}, s + s']}$.

The behavior of $\theta$-supported cycles under tensor products is straightforward:

\begin{lem} \label{tensor product of theta supp cycle} 
Let $(\un{C},\un{d}) $ and $(\un{C}',\un{d}')$ be instanton-type complexes. 
\begin{itemize}
\item If $z \in \un{C}$ and $z' \in \un{C}'$ are $\theta$-supported cycles, then $z \otimes z'$ is likewise a $\theta$-supported cycle.
\item If $z \in \un{C}$ and $z' \in \un{C}'$ are $\theta$-supported $[r, s]$- and $[r', s']$-cycles, respectively, then $z \otimes z'$ is a $\theta$-supported $[\max\{r + s', r' + s\} , s + s']\text{-cycle}$.
\end{itemize}
\end{lem}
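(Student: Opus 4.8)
The plan is to verify the two bullet points directly from the definitions, using the splitting of $\un{C}$ and $\un{C}'$ fixed after Definition~\ref{def:3.1} together with the filtration estimate established in the paragraph immediately preceding the lemma. For the first bullet, I would write $z = \zeta + 1 \cdot \theta$ and $z' = \zeta' + 1 \cdot \theta'$ with $\zeta \in C$, $\zeta' \in C'$, using the explicit characterization of $\theta$-supported cycles from the discussion after Definition~\ref{def:3.9}. Expanding $z \otimes z' = \zeta \otimes \zeta' + \zeta \otimes \theta' + \theta \otimes \zeta' + \theta \otimes \theta'$, the first three terms lie in the subcomplex $(C \otimes \un{C}' + \un{C} \otimes C')[3]$ of Definition~\ref{def:3.14}, while the last term is exactly the $\theta$-chain $\theta \otimes \theta'$ of the tensor product. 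Hence $\pi(z \otimes z') = 1$, and since $z \otimes z'$ is a cycle (as the tensor product of cycles), it is a $\theta$-supported cycle. One subtlety to flag: the grading shift by $3$ in Definition~\ref{def:3.14} is precisely what makes $\theta \otimes \theta'$ sit in homological grading $-3$ (since $\theta, \theta'$ each have grading $-3$ and $-3 + -3 + 3 = -3$), so the target $\un{C}/C \otimes \un{C}'/C' \cong \Z_2[y^{\pm 1}][-3]$ is the correct one; I would include a sentence confirming this.

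For the second bullet, I would first invoke the filtration computation already carried out in the text: given $\theta$-supported $[r,s]$- and $[r',s']$-cycles, one chooses representatives $z, z'$ in $\un{C}, \un{C}'$ with $\deg_I(z) \leq s$, $\deg_I(\un{d}z) \leq r$, $\deg_I(z') \leq s'$, $\deg_I(\un{d}'z') \leq r'$, whence $\deg_I(z \otimes z') \leq s + s'$ and $\deg_I(\un{d}\otimes\un{d}'(z \otimes z')) \leq \max\{r + s', r' + s\}$. This shows $z \otimes z'$ descends to a cycle in $(\un{C} \otimes \un{C}')^{[\max\{r+s', r'+s\},\, s+s']}$. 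It then remains to check that this cycle is $\theta$-supported in the $[r,s]$-sense, i.e.\ that $\pi^{[\cdot,\cdot]}_*$ of its class is $\pm 1$; but this is immediate by applying the first bullet's argument at the level of the quotient complexes, using that $\pi^{[\cdot,\cdot]}$ is compatible with the tensor product structure and that $\theta^{[r,s]} \otimes (\theta')^{[r',s']}$ maps to the generator $\theta^{[\cdot,\cdot]}$ of the tensor product's quotient. Concretely, $z \otimes z' = \zeta \otimes \zeta' + (\zeta \otimes \theta' + \theta \otimes \zeta') + \theta \otimes \theta'$, the middle parenthesized term lies in the relevant subcomplex, and the projection of $\theta \otimes \theta'$ is $\pm 1$.

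I do not expect a genuine obstacle here; the statement is essentially a bookkeeping consequence of Definition~\ref{def:3.14} and the preceding paragraph. The only point that requires a moment's care is making sure the $[r,s]$-windows match up after the grading/filtration shifts — specifically confirming that when $r < 0 \leq s$ and $r' < 0 \leq s'$ one has $\max\{r+s', r'+s\} < 0 \leq s+s'$ so that $\pm 1$ is indeed a nonzero element of $\Z_2[y^{\pm 1}][-3]^{[\max\{r+s',r'+s\},\,s+s']}$ and the notion of ``$\theta$-supported $[\,\cdot\,,\,\cdot\,]$-cycle'' applies; this follows since $s, s' \geq 0$ and $r, r' < 0$ give $r + s' < s' \leq s + s'$ (and symmetrically), while $s + s' \geq 0$ is clear. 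With that remark in place, the proof is a short formal verification, and I would present it in two compact paragraphs mirroring the two bullets.
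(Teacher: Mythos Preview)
Your main argument for both bullets is correct and is exactly the paper's approach: expand $z \otimes z' = \theta \otimes \theta' + \theta \otimes \zeta' + \zeta \otimes \theta' + \zeta \otimes \zeta'$, observe the last three terms lie in the subcomplex, and for the filtered statement invoke the paragraph immediately preceding the lemma. (Your expansion is in fact slightly more careful than the paper's, which omits the $\zeta \otimes \zeta'$ term.)

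However, your final side remark contains an error. The inequality $\max\{r+s', r'+s\} < 0$ does \emph{not} follow from $r, r' < 0 \leq s, s'$: take $r = r' = -1$ and $s = s' = 5$, so $r + s' = 4 > 0$. Your chain $r + s' < s' \leq s + s'$ only shows the lower endpoint of the window is below the upper endpoint, not that it is negative. The paper does not attempt to verify this condition at all; the lemma is simply stated and used with the implicit understanding that the conclusion is only meaningful when the resulting window satisfies the hypothesis of Definition~\ref{def:3.9}. In the downstream applications (Lemmas~\ref{lem:3.15} and~\ref{lem:4.10}) this is handled by the way the infimum in Definition~\ref{def:3.10} is taken. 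You should either drop this remark or rephrase it as a caveat rather than a claim.
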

\begin{proof}
It is clear that the tensor product of two $\theta$-supported cycles is a $\theta$-supported cycle, since if $z= \theta + \zeta$ and $z'= \theta' + \zeta'$, we have $z \otimes z' = \theta \otimes \theta' + \theta\otimes \zeta' + \zeta \otimes \theta'$. This implies $z \otimes z'$ is a $\theta$-supported cycle in $\un{C} \otimes \un{C}'$. The filtered version of the claim then follows from the preceding paragraph.
\end{proof}

This immediately yields:
\begin{lem}\label{lem:3.15}
Let $(\un{C},\un{d}) $ and $(\un{C}',\un{d}')$ be instanton-type complexes. For any $s$ and $s'$ in $[-\infty, 0]$, we have
\[
r_{s + s'}(\un{C} \otimes \un{C}') \geq \min \{r_{s} (\un{C})  + s' , r_{s'} (\un{C}')  + s\}. 
\]
\end{lem}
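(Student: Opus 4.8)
The plan is to deduce \cref{lem:3.15} directly from \cref{tensor product of theta supp cycle} together with \cref{def:3.10}, by a routine extremization argument. First I would fix $s, s' \in [-\infty, 0]$ and unwind the definition of the $r_s$-invariant: $r_s(\un{C}) = -\inf\{r < 0 : \text{there is a $\theta$-supported $[r, -s]$-cycle in }\un{C}\}$, and similarly for $\un{C}'$. If either of these is $-\infty$ (i.e.\ no such cycle exists for any $r$), the inequality holds trivially with the convention that $-\infty$ is a lower bound, so I may assume both are finite (or $+\infty$), and in particular that $\theta$-supported $[r, -s]$- and $[r', -s']$-cycles exist for suitable $r, r'$.

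Next, given any $r < 0$ admitting a $\theta$-supported $[r, -s]$-cycle $z \in \un{C}$ and any $r' < 0$ admitting a $\theta$-supported $[r', -s']$-cycle $z' \in \un{C}'$, \cref{tensor product of theta supp cycle} tells us that $z \otimes z'$ is a $\theta$-supported $[\max\{r - s', r' - s\}, -s - s']$-cycle in $\un{C} \otimes \un{C}'$ (here the relevant quotient upper endpoint is $-(s+s')$ and the lower endpoint is $\max\{r + (-s'), r' + (-s)\} = \max\{r - s', r' - s\}$, using that $-s, -s' \ge 0$). I should check the sign/endpoint bookkeeping carefully against \cref{def:3.10}, since the invariant is stated in terms of $[r, -s]$-cycles rather than $[r, s]$-cycles; this is the one place where a sign error could creep in, and it is the only mildly delicate point. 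One also needs $\max\{r - s', r' - s\} < 0 \le -s - s'$, which follows since $r, r' < 0$ and $s, s' \le 0$.

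Then I would take infima. Writing $R = r_s(\un{C})$ and $R' = r_{s'}(\un{C}')$, for any $\eps > 0$ we can choose $r$ with $-r < R + \eps$ admitting a $\theta$-supported $[r, -s]$-cycle, and $r'$ with $-r' < R' + \eps$ admitting a $\theta$-supported $[r', -s']$-cycle. Then $z \otimes z'$ is a $\theta$-supported $[\max\{r - s', r' - s\}, -(s+s')]$-cycle, so by definition
\[
r_{s+s'}(\un{C} \otimes \un{C}') \geq -\max\{r - s', r' - s\} = \min\{-r + s', -r' + s\} > \min\{R + s', R' + s\} - \eps.
\]
Letting $\eps \to 0$ gives $r_{s+s'}(\un{C} \otimes \un{C}') \geq \min\{r_s(\un{C}) + s', r_{s'}(\un{C}') + s\}$, as desired. (If $R$ or $R'$ is $+\infty$ the same argument works by taking $r$ or $r'$ arbitrarily negative.)

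I do not expect any real obstacle here: the content is entirely in \cref{tensor product of theta supp cycle}, and what remains is bookkeeping. The only thing to be careful about is matching the $[r, s]$ versus $[r, -s]$ conventions and confirming that the endpoint $\max\{r - s', r' - s\}$ produced by the tensor product is indeed the lower endpoint one feeds into the definition of $r_{s+s'}$; once that is pinned down, the extremization is immediate.
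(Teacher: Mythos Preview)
Your approach matches the paper's: both deduce the inequality directly from \cref{tensor product of theta supp cycle} and \cref{def:3.10}; you have simply made the extremization explicit where the paper leaves it as a one-line remark.

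There is, however, a genuine error in your bookkeeping. You assert that $\max\{r - s', r' - s\} < 0$ ``follows since $r, r' < 0$ and $s, s' \le 0$,'' but this is false: since $s' \le 0$ we have $-s' \ge 0$, so $r - s' = r + (-s')$ can be positive (e.g.\ $r = -1$, $s' = -5$ gives $r - s' = 4$). This is precisely the delicate point you flagged, and it does not go through as written. Your argument \emph{is} valid when $\min\{r_s(\un{C}) + s', r_{s'}(\un{C}') + s\} > 0$, since then $r_s(\un{C}) > -s'$ forces $r < s'$ for $r$ close enough to $-r_s(\un{C})$ from above, whence $r - s' < 0$ (and symmetrically for $r'$); this positive case is the only one relevant to the paper's applications, where $r_s > 0$ always. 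But when the right-hand side is $\le 0$ the argument genuinely breaks, and in fact one can construct abstract instanton-type complexes for which the stated inequality fails (with $r_{s+s'}(\un{C}\otimes\un{C}') = -\infty$ while the right-hand side is a finite negative number). So the gap is real, though confined to a degenerate regime the paper never uses.
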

\begin{proof}
This follows from \cref{tensor product of theta supp cycle} and Definition~\ref{def:3.10}.
\end{proof}

\subsubsection{Dualization} As discussed in Section~\ref{sec:3.1}, Definition~\ref{def:3.1} is motivated by counting flow lines out of the reducible connection $\theta$. One can instead take into account flow lines \textit{into} the reducible connection via the map $D_1$ of \cite[Section 7.1]{Do02}. There are thus actually two flavors of instanton-type complexes. When clarity is needed, we refer to a complex as in Definition~\ref{def:3.1} as a ($D_2$-) instanton-type complex. In contrast:

\begin{defn}\label{defc-ubar}
Let $(\ov{C},\ov{d})$ be a $\Z$-graded, $\R$-filtered, finitely-generated free chain complex over $\Z_2[y^{\pm1}]$. We say $(\ov{C},\ov{d})$ is a ($D_1$-) {\it instanton-type chain complex} if it is equipped with a subcomplex isomorphic to $\Z_2[y^{\pm 1}]$. We think of this as again defining a two-step filtration $\Z_2[y^{\pm 1}] \subseteq \un{C}$; we denote the quotient complex by $C = \ov{C}/\Z_2[y^{\pm 1}]$.
\end{defn}

In this situation, we may again choose a splitting of graded, filtered $\Z_2[y^{\pm 1}]$-modules
\[
\ov{C} = C \oplus \Z_2[y^{\pm 1}],
\]
where the first summand should be interpreted as a graded, filtered $\Z_2[y^{\pm 1}]$-module isomorphic to $C$. With respect to this splitting, 
\[
\ov{d} = d + D_1, 
\]
where $d$ is the differential on $C$ and $D_1$ is some filtered map from $C$ to $\Z_2[y^{\pm 1}]$. Note that $\ov{d}$ is zero on $\Z_2[y^{\pm 1}]$. We again write $\theta = (0, 1)$ for the generator of $\Z_2[y^{\pm 1}]$.

It is not hard to see that up to a grading shift, the dual of a ($D_2$-) instanton type complex may be given the structure of a ($D_1$-) instanton type complex, and vice-versa. Although we will not discuss this here, it turns out that (setting aside the dependence on $g$ and $\pi$ discussed in Remark~\ref{rem:3.3}), 
\[
\ov{C}(Y) \cong \un{C}(-Y)^*.
\]
This mirrors the Heegaard Floer setting, where $\HFp(Y)$ is isomorphic to the dual of $\HFm(-Y)$. However, it turns out that $\ov{C}(Y)$ cannot in general be determined from $\un{C}(Y)$. Indeed, the differential in $\ov{C}$ counts flows into the reducible connection on $Y$, while the differential in $\un{C}$ counts flows out of the reducible connection. In this paper, we will almost exclusively work with $\un{C}$, rather than $\ov{C}$. We will thus often consider both $\un{C}(Y)$ and $\un{C}(-Y)$, which do not contain the same information. See Section~\ref{sec:7.1} for further discussion. \\

\subsubsection{Local triviality} We will often be interested in whether a complex $\un{C}$ admits maps to or from the trivial complex. It turns out that the former is trivial, while the existence of a local map in the other direction is characterized by $r_0(\un{C})$: 

\begin{lem}\label{lem:3.18}
Let $\un{C}$ be an instanton-type complex. Then:
\begin{itemize}
\item There always exists a filtered local map from $\un{C}$ to the trivial complex $\Z_2[y^{\pm 1}][-3]$. 
\item There exists a filtered local map from $\Z_2[y^{\pm 1}][-3]$ to $\un{C}$ if and only if $r_0(\un{C}) = \infty$.
\end{itemize}
\end{lem}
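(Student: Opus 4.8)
\textbf{Proof proposal for Lemma~\ref{lem:3.18}.}

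The plan is to treat the two bullet points separately, in both cases passing to an explicit splitting $\un{C} = C \oplus \Z_2[y^{\pm 1}][-3]$ as in the discussion following Definition~\ref{def:3.1}, so that $\un{d} = d + D_2$ with $D_2$ determined by $D_2(\theta) \in C_{-4}$.

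For the first bullet, I would simply take the projection $\pi \colon \un{C} \to \un{C}/C \cong \Z_2[y^{\pm 1}][-3]$ itself. By Definition~\ref{def:3.1} this is a $\deg_\Z$-preserving, $\Z_2[y^{\pm 1}]$-linear map that respects the two-step filtration trivially (it kills $C$), and it is filtered with respect to $\deg_I$ because $\deg_I(\theta) = 0$ while every element of $\un{C}$ in the image of $\theta \cdot \Z_2[y^{\pm1}]$ component has $\deg_I \geq 0$ — more precisely $\pi$ sends a homogeneous $\zeta + c\theta$ to $c$ and $\deg_I(c) \le \deg_I(c\theta) \le \deg_I(\zeta + c\theta)$ since $\deg_I$ of a sum is the max. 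It is a chain map since $C$ is a subcomplex and the differential on the quotient $\Z_2[y^{\pm 1}][-3]$ is zero (the image of $\un{d}$ lands in $C$ by the form $\un d = d + D_2$). Finally it induces the identity on $\un{C}/C$, so it is a filtered local map. This disposes of the first claim with essentially no work.

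For the second bullet, the ``only if'' direction: suppose $\lambda \colon \Z_2[y^{\pm 1}][-3] \to \un{C}$ is a filtered local map. Then $z := \lambda(1)$ is a cycle in $\un{C}$ with $\pi_*([z]) = 1$, i.e.\ a $\theta$-supported cycle, and since $\lambda$ is filtered with $\deg_I(1) = 0$ we get $\deg_I(z) \le 0$ and $\deg_I(\un{d}z) = \deg_I(0) = -\infty$. Hence for every $r < 0$, the image of $z$ in $\un{C}^{[r,0]}$ is a $\theta$-supported $[r,0]$-cycle, so the infimum in Definition~\ref{def:3.10} (with $s = 0$) is $-\infty$ and $r_0(\un{C}) = \infty$. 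For the ``if'' direction: assume $r_0(\un{C}) = \infty$. By Definition~\ref{def:3.10}, for every $r < 0$ there is a $\theta$-supported $[r,0]$-cycle; I would want to upgrade this family to an honest $\theta$-supported cycle in $\un{C}$ itself with $\deg_I \le 0$. Writing a representative $z_r = \zeta_r + \theta$ with $\deg_I(z_r) \le 0$ and $\deg_I(\un{d} z_r) \le r$, the point is that $\un{d} z_r = d\zeta_r + D_2(\theta)$, and as $r \to -\infty$ this forces $d\zeta_r + D_2(\theta)$ to have arbitrarily negative filtration level; since $\un{C}$ is finitely generated and free over $\Z_2[y^{\pm1}]$, the filtration levels of nonzero homogeneous elements of $\un{C}$ in any fixed homological grading form a discrete set bounded below in each $y$-degree, so in fact for $r$ sufficiently negative we must have $d\zeta_r + D_2(\theta) = 0$ exactly (its $\deg_I$ cannot be finite and less than every integer). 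Thus $z_r$ is a genuine cycle with $\deg_I \le 0$, and setting $\lambda(1) = z_r$ (extended $\Z_2[y^{\pm1}]$-linearly) defines a chain map $\Z_2[y^{\pm1}][-3] \to \un{C}$. It preserves the two-step filtration vacuously (its source has trivial subcomplex), it is filtered since $\deg_I(\lambda(y^k)) = \deg_I(y^k z_r) \le k = \deg_I(y^k)$, and $\pi \circ \lambda = \id$ because the $\theta$-component of $z_r$ is $1$; hence $\lambda$ is a filtered local map.

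The main obstacle is the finiteness/discreteness argument in the ``if'' direction — passing from ``$[r,0]$-cycles for all $r < 0$'' to ``an actual cycle'': one must argue that the candidate boundary $d\zeta_r + D_2(\theta)$, which can be made to have filtration level below any prescribed $r$, is forced to vanish. This uses that $\un{C}$ is finitely generated and free over $\Z_2[y^{\pm1}]$ together with the fact that $\deg_I$ takes values in a set that, in each fixed $(\deg_\Z, y\text{-degree})$, is discrete and bounded below; one should also be slightly careful that subtracting off contributions of the $\Z_2[y^{\pm1}]$-torsion-free structure does not change the $\theta$-component. A clean way to phrase this is: the set of finite values of $\deg_I$ on $C_{-4}$ (a finitely generated free module) intersected with any $(-\infty, N]$ and any fixed $y$-degree is finite, so ``$\deg_I(d\zeta_r + D_2\theta) \le r$ for all $r<0$'' implies the element is $0$. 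Everything else is bookkeeping with the splitting and the definitions of filtered/local map.
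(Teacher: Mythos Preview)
Your proof is correct and follows the same approach as the paper: the projection $\pi$ for the first bullet, and for the second bullet the equivalence between a filtered local map $\Z_2[y^{\pm1}][-3] \to \un{C}$ and the existence of a $\theta$-supported cycle with $\deg_I \le 0$. The paper simply asserts that $r_0(\un{C}) = \infty$ ``easily implies that there is a $\theta$-supported cycle of filtration level zero'' and moves on; you actually supply the argument, using that $C_{-4}$ is a \emph{finite-dimensional} $\Z_2$-vector space (since $\deg_\Z(y)=8$ and $\un{C}$ is finitely generated free over $\Z_2[y^{\pm1}]$, each fixed $\Z$-grading is finite over $\Z_2$), so the possible nonzero values of $\deg_I$ on $C_{-4}$ form a finite set and $\deg_I(d\zeta_r + D_2\theta) \le r$ for all $r<0$ forces vanishing. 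One small wording slip: your sentence ``every element \ldots\ has $\deg_I \ge 0$'' is not literally true (e.g.\ $y^{-1}\theta$), but your ``more precisely'' clause that follows gives the correct inequality $\deg_I(c) \le \deg_I(\zeta + c\theta)$, which is all that is needed.
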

\begin{proof}
The first part of the lemma is trivial, as the quotient map
\[
\pi \colon \un{C} \rightarrow \un{C}/C \cong \Z_2[y^{\pm 1}][-3]
\]
constitutes the desired local map. For the second part, assume that we have a filtered local map $\lambda \colon \Z_2[y^{\pm 1}][-3] \rightarrow \un{C}$. Then $\lambda(1)$ is a $\theta$-supported cycle in $\un{C}$ of filtration level zero; this shows that $r_0(\un{C}) = \infty$. Conversely, suppose that $r_0(\un{C}) = \infty$. This easily implies that there is a $\theta$-supported cycle of filtration level zero. (Note that this is in fact a cycle in $\un{C}$, not just a particular quotient of $\un{C}$.) We construct the desired filtered local map  
\[
\lambda \colon \Z_2[y^{\pm 1}][-3] \to \un{C}
\]
by sending the generator of the left-hand side to the aforementioned cycle. 
\end{proof}

\section{Involutive instanton complexes}\label{sec:4}

We now introduce the main invariants discussed in this paper.

\subsection{Involutive complexes}\label{sec:4.1}

In Section~\ref{sec:5.3}, we will show that an involution on $Y$ induces a homotopy involution on the instanton complex $\un{C}(Y)$. (We set aside the dependence on $g$ and $\pi$ for now; see Section~\ref{sec:4.4}.) We make this notion precise in the following definition:

\begin{defn}\label{def:4.1}
An {\it involutive instanton-type complex}  is a pair $(\un{C}, \tau)$, where $\un{C}$ is an instanton-type complex and 
\[
\tau: \un{C} \to \un{C}
\]
is a filtered morphism satisfying the following:
\begin{itemize}
    \item The induced map on the quotient 
    \[
    	\tau \colon \un{C}/C \cong \Z_2[y^{\pm 1}][-3] \rightarrow \un{C}/C \cong \Z_2[y^{\pm 1}][-3] 
    \]
    is the identity. Note here that we use the same isomorphism in the domain and the range afforded by Definition~\ref{def:3.1}.
    \item $\tau$ is a homotopy involution; that is, there exists a $\Z_2[y^{\pm 1}]$-linear map 
    \[
    H : \un{C}_* \to \un{C}_{*+1}
    \]
    such that
    \[ 
    \un{d} H + H \un{d} = \tau^2+ \id. 
    \]
    We require $H$ to be filtered with respect to both $\deg_I$ and the two-step filtration.
    \end{itemize}
Note that since $\tau$ is filtered with respect to $\deg_I$, we also have the morphism 
\[
\tau^{[r, s]} \colon \un{C}^{[r, s]} \rightarrow \un{C}^{[r, s]}. 
\]
This satisfies the same properties as above with $\un{C}$ replaced by $\un{C}^{[r, s]}$. We will sometimes continue to write $\tau$ in place of $\tau^{[r, s]}$ when the meaning is clear.
\end{defn}

If we choose a splitting $\un{C} = C \oplus \Z_2[y^{\pm 1}]$ as in Section~\ref{sec:3.1}, then it is clear that $\tau \theta = \xi + \theta$ for some $\xi \in C$. Maps between involutive instanton-type complexes are exactly as in Section~\ref{sec:3.1}, but with the additional requirement that they homotopy commute with $\tau$. More precisely:

\begin{defn}\label{def:4.2}
Let $(\un{C}, \tau)$ and $(\un{C}', \tau')$ be two involutive complexes. We say that a morphism $f \colon \un{C} \rightarrow \un{C}'$ is \textit{equivariant} if there exists a $\Z_2[y^{\pm 1}]$-linear map 
 \[
    H : \un{C}_* \to \un{C}'_{*+1}
    \]
    such that
    \[ 
    \un{d}' H + H \un{d} = \tau' f +  f \tau.
    \]
We require $H$ to be filtered with respect to the two-step filtration. If $f$ is filtered with respect to $\deg_I$, then we require $H$ to also be filtered with respect to $\deg_I$; if $f$ has level $\delta$, then we require $H$ to also have level $\delta$. 
\end{defn}

This gives the obvious notion of an equivariant local map, an equivariant homotopy equivalence, and so on. For the sake of being explicit, note that a level-$\delta$ equivariant homotopy equivalence is formed by a pair of level-$\delta$ equivariant morphisms $f$ and $g$, such that $f \circ g \simeq \id$ and $g \circ f \simeq \id$ by level-$\delta$ homotopies. This means that the maps $f$ and $g$, as well as the homotopies witnessing 
\[
\tau' f \simeq f \tau, \quad \tau g \simeq g \tau', \quad f \circ g \simeq \id \quad \text{and} \quad g \circ f \simeq \id,
\]
all have level $\delta$.

\subsection{Equivariant $\theta$-supported cycles and the involutive $r_s$-invariant}\label{sec:4.2}

We now turn to the involutive analog of the $r_s$-invariant. We begin with the following:

\begin{defn}\label{def:4.3}
Let $(\un{C},\tau)$ be an involutive instanton-type complex. A chain $z \in \un{C}$ is called an \textit{equivariant $\theta$-supported cycle} if $z$ is a $\theta$-supported cycle in the sense of Definition~\ref{def:3.9} and $[z]$ is fixed by $\tau_*$; that is, 
\[
\tau_* [z]=[z] \in H_*(\un{C}).
\]
 Similarly, a chain $z \in \un{C}^{[r, s]}$ is called an {\it equivariant $\theta$-supported $[r, s]$-cycle} if $z$ is a $\theta$-supported $[r, s]$-cycle and 
 \[
 \tau^{[r, s]}_* [z] = [z] \in H_* (  \un{C}^{[r,s]}) . 
 \]
\end{defn}

Thus an involutive $\theta$-supported cycle is just a $\theta$-supported cycle whose homology class is fixed by $\tau_*$. This leads immediately to an involutive analog of the $r_s$-invariant: 

\begin{defn}\label{def:4.4}
Let $(\un{C}, \tau)$ be an involutive instanton-type complex and $s \in [- \infty, 0]$. Define
\[
r_s ( \un{C}, \tau)= -\inf_{r < 0} \{ r |
\text{ there exists an equivariant $\theta$-supported $[r, -s]$-cycle} \} \in [0,\infty],
\]
with the caveat that if the above set is empty, we set $r_s(\un{C}, \tau) = - \infty$.
\end{defn}

\begin{ex}\label{ex:4.5}
Several examples of involutive instanton-type complexes and graphs of their involutive $r_s$-invariants are given in Figure~\ref{fig:4.1}. In each case, the noninvolutive $r_s$-invariant is $\infty$ for all $s$. 
\begin{itemize}
\item An archetypal example is given in $(a)$, where $\theta$ itself is a $\theta$-supported $[r, -s]$-cycle for all values of $r$ and $s$, but is equivariant only when $r \geq \beta$. 
\item In $(b)$, we present an example which is nontrivial even though $\theta$ itself is fixed by $\tau$. Indeed, if $r < \beta$ then $\theta$ is not a cycle: the $\theta$-supported $[r, -s]$-cycles are instead $\theta - y$ and $\theta - z$, which are interchanged by $\tau$. If $r \geq \beta$, then $\theta$ forms an equivariant $\theta$-supported $[r, -s]$-cycle. 
\item Finally, in $(c)$, we present an example which depends on $s$. If $-s \geq \alpha$, then $x$ is homologically trivial, so $\theta$ forms an equivariant $\theta$-supported $[r, -s]$-cycle. Otherwise, the analysis of $(c)$ is the same as that of $(a)$.
\end{itemize}

\begin{figure}[h!]
\includegraphics[scale = 1]{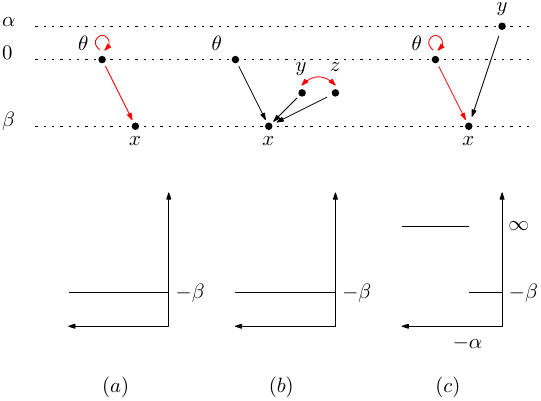}
\caption{Some involutive instanton-type complexes and graphs of their involutive $r_s$-invariants. Dotted lines represent the $\deg_I$-filtration. The differential is given by the black arrows; if no black arrow is drawn, the differential on a given generator is zero. The action of $\tau$ is given by the (sum of) the red arrows; if no red arrow is drawn, the action of $\tau$ on a given generator is the identity.}\label{fig:4.1}
\end{figure}
\end{ex}


\subsection{Properties}\label{sec:4.3}
 The properties of involutive instanton-complexes and the involutive $r_s$-invariant are largely the same as those discussed in Section~\ref{sec:3.3}.
 
 \subsubsection{Local maps} The involutive $r_s$-invariant is functorial under equivariant local maps:

\begin{lem}\label{lem:4.6}
Let $(\un{C}, \tau)$ and $(\un{C}', \tau')$ be two involutive complexes and $\lambda \colon \un{C} \rightarrow \un{C}'$ be an equivariant local map. Then:
\begin{itemize}
\item If $z$ is an equivariant $\theta$-supported cycle in $\un{C}$, then $\lambda(z)$ is an equivariant  $\theta$-supported cycle in $\un{C}'$. 
\item If $z$ is an equivariant $\theta$-supported $[r, s]$-cycle in $\un{C}$ and $\lambda$ has level $\delta$, then $\lambda(z)$ is an equivariant $\theta$-supported $[r + \delta, s + \delta]$-cycle in $\un{C}'$. 
\end{itemize}
\end{lem}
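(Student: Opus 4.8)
The plan is to reduce the statement to \ref{under local map filtration} and then verify that the extra \emph{equivariance} condition is preserved, using the homotopy witnessing $\tau'\lambda \simeq \lambda\tau$. First, since an equivariant local map is in particular a local map, \ref{under local map filtration} immediately gives that $\lambda(z)$ is a $\theta$-supported cycle when $z$ is one, and that $\lambda(z)$ is a $\theta$-supported $[r+\delta,s+\delta]$-cycle when $z$ is a $\theta$-supported $[r,s]$-cycle and $\lambda$ has level $\delta$. So the only thing left to check in each case is that the relevant homology class of $\lambda(z)$ is fixed by $\tau'_*$.

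For the first bullet, recall that by Definition~\ref{def:4.2} equivariance of $\lambda$ supplies a $\Z_2[y^{\pm 1}]$-linear map $H$ with $\un{d}'H + H\un{d} = \tau'\lambda + \lambda\tau$. Passing to homology, this yields $\tau'_*\lambda_* = \lambda_*\tau_*$ on $H_*(\un{C})$. Hence, using that $z$ is equivariant (so $\tau_*[z] = [z]$),
\[
\tau'_*[\lambda(z)] = \lambda_*\tau_*[z] = \lambda_*[z] = [\lambda(z)],
\]
which is exactly the condition that $\lambda(z)$ be an equivariant $\theta$-supported cycle in $\un{C}'$.

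For the second bullet, the point is that when $\lambda$ has level $\delta$ the homotopy $H$ may also be taken to have level $\delta$ (again by Definition~\ref{def:4.2}), and $\tau$ is filtered with respect to $\deg_I$. Consequently $H$ descends, after the appropriate grading shift, to a map $\un{C}^{[r,s]} \to \un{C}'^{[r+\delta,s+\delta]}$, and the homotopy identity descends to
\[
\un{d}'^{[r+\delta,s+\delta]}H + H\un{d}^{[r,s]} = \tau'^{[r+\delta,s+\delta]}\lambda^{[r,s]} + \lambda^{[r,s]}\tau^{[r,s]}
\]
on $\un{C}^{[r,s]}$. Taking homology gives $\tau'^{[r+\delta,s+\delta]}_*\lambda^{[r,s]}_* = \lambda^{[r,s]}_*\tau^{[r,s]}_*$, and repeating the computation of the previous paragraph with $[z] \in H_*(\un{C}^{[r,s]})$ shows $\lambda(z)$ is an equivariant $\theta$-supported $[r+\delta,s+\delta]$-cycle. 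The argument is essentially bookkeeping; the only step requiring care — and the place where one genuinely uses that the homotopy witnessing equivariance has level $\delta$ rather than merely being $\deg_I$-filtered — is checking that the homotopy identity descends to the truncated complexes $\un{C}^{[r,s]}$ and $\un{C}'^{[r+\delta,s+\delta]}$ with the correct shift of filtration levels.
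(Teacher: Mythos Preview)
Your proposal is correct and takes essentially the same approach as the paper: reduce the $\theta$-supported part to Lemma~\ref{under local map filtration}, then use the homotopy witnessing $\tau'\lambda \simeq \lambda\tau$ to show $\tau'_*\lambda_* = \lambda_*\tau_*$ on homology (and on the truncated complexes in the level-$\delta$ case). The paper's proof is a one-line remark to this effect; you have simply spelled out the details, including the check that the level-$\delta$ homotopy descends to $\un{C}^{[r,s]} \to \un{C}'^{[r+\delta,s+\delta]}$.
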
 
\begin{proof}
If $[z]$ is $\tau_*$-invariant, then clearly $\lambda_*[z]$ is $\tau'_*$-invariant. The claim then follows from the proof of Lemma~\ref{under local map filtration}.
\end{proof}

This immediately yields:

\begin{lem}\label{lem:4.7}
Let $(\un{C}, \tau)$ and $(\un{C}', \tau')$ be two involutive complexes and $\lambda \colon \un{C} \rightarrow \un{C}'$ be an equivariant level-$\delta$ local map. Then for any $s \in [-\infty, 0]$, we have
\[
r_s (\un{C}, \tau) - \delta \leq r_{s - \delta}(\un{C}', \tau').
\]
\end{lem}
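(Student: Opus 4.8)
The plan is to deduce \cref{lem:4.7} from \cref{lem:4.6} in exactly the way \cref{lem:3.13} follows from \cref{under local map filtration}, with the only additional ingredient being the $\tau_*$-invariance bookkeeping, which is handled entirely by the first clause of \cref{lem:4.6}. Fix $s \in [-\infty, 0]$. If $r_s(\un{C}, \tau) = -\infty$, there is nothing to prove, since the left-hand side is $-\infty$ and the right-hand side lies in $[-\infty, \infty]$; so we may assume $r_s(\un{C}, \tau) \geq 0$. The strategy is then to show that every $r < 0$ for which $\un{C}$ admits an equivariant $\theta$-supported $[r, -s]$-cycle gives rise to a corresponding $r + \delta < 0$ for which $\un{C}'$ admits an equivariant $\theta$-supported $[(r+\delta), -s+\delta]$-cycle, and then to read off the inequality of infima.

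Concretely, I would first note that, by \cref{def:4.4}, for any $\eps > 0$ there exists $r < 0$ with $r \leq -r_s(\un{C}, \tau) + \eps$ such that $\un{C}$ contains an equivariant $\theta$-supported $[r, -s]$-cycle $z$. (Here I should be slightly careful about the boundary case: if $r_s(\un{C},\tau)$ is attained one may take $\eps = 0$, but working with $\eps > 0$ avoids any need to discuss attainment.) Since $\lambda$ is an equivariant level-$\delta$ local map, \cref{lem:4.6} applies: $\lambda(z)$ is an equivariant $\theta$-supported $[r+\delta, -s+\delta]$-cycle in $\un{C}'$, provided the side conditions $r < 0 \leq -s$ and $r + \delta < 0 \leq -s + \delta$ hold. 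The first holds by construction; the second needs a short remark. For the purposes of \cref{def:4.4}, computing $r_{s-\delta}(\un{C}',\tau')$ only involves the quotient complexes $\un{C}'^{[r', -s+\delta]}$ for $r' < 0$, so we only care about those $r$ with $r + \delta < 0$, i.e.\ $r < -\delta$; for such $r$ the element $\lambda(z)$ is a genuine equivariant $\theta$-supported $[(r+\delta), -(s-\delta)]$-cycle in $\un{C}'$, which witnesses that $r+\delta$ lies in the set defining $r_{s-\delta}(\un{C}',\tau')$. (If one is worried that no $r < -\delta$ works — i.e.\ that all witnessing $r$ satisfy $-\delta \leq r < 0$ — then $r_s(\un{C},\tau) \leq \delta$, and since $r_{s-\delta}(\un{C}',\tau') \geq 0$ always, the desired inequality $r_s(\un{C},\tau) - \delta \leq r_{s-\delta}(\un{C}',\tau')$ is immediate.)

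Putting this together: for every sufficiently negative $r$ witnessing $r_s(\un{C},\tau)$ we obtain a witness $r+\delta$ for $r_{s-\delta}(\un{C}',\tau')$, so
\[
-\inf_{r' < 0}\{\, r' \mid \text{equivariant $\theta$-supported $[r', -(s-\delta)]$-cycle exists in }\un{C}'\,\} \;\geq\; -\inf_{r<0}\{\, r \mid \cdots \,\} - \delta,
\]
which is precisely $r_{s-\delta}(\un{C}',\tau') \geq r_s(\un{C},\tau) - \delta$. Taking $\eps \to 0$ at the level of infima finishes the argument.

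I do not expect a genuine obstacle here: the lemma is a formal consequence of \cref{lem:4.6} together with unwinding \cref{def:4.4}, exactly parallel to the noninvolutive \cref{lem:3.13}. The only point requiring a modicum of care is the interplay between the sign conventions in \cref{def:4.4} (the invariant is $-\inf$ over negative $r$ of the filtration levels, with the second slot being $-s$) and the level shift $\delta$, together with the side condition $r + \delta < 0$ needed to apply \cref{lem:4.6}; this is the same bookkeeping subtlety already present in the proof of \cref{lem:3.13}, and it is resolved by the boundary-case remark above.
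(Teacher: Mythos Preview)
Your proof is correct and is exactly the approach the paper takes: the paper's own proof is the single line ``This follows from \cref{lem:4.6} and Definition~\ref{def:4.4},'' and you have simply unpacked what that sentence means. One minor quibble: your boundary-case assertion that $r_{s-\delta}(\un{C}',\tau') \geq 0$ ``always'' tacitly assumes the defining set for $r_{s-\delta}$ is nonempty (otherwise the value is $-\infty$ by convention); neither you nor the paper addresses this pathology at the level of abstract complexes, but it does not arise in the geometric applications (cf.\ Remark~\ref{rem:rangeofrs}).
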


\begin{proof}
This follows from Lemma~\ref{lem:4.6} and Definition~\ref{def:4.4}. 
\end{proof}

\subsubsection{Tensor products} As in Section~\ref{sec:3.3}, we have tensor products of involutive complexes:

\begin{defn}\label{def:4.8}
Let $(\un{C}, \tau) $ and $(\un{C}', \tau')$ be involutive complexes. We define their tensor product by taking the tensor product as in Definition~\ref{def:3.14}, equipped with the obvious tensor product of $\tau$ and $\tau'$:
\[
(\un{C}, \tau) \otimes (\un{C}', \tau') = (( \un{C} \otimes \un{C}') [3], \tau \otimes \tau').
\]
It is straightforward to check that $\tau \otimes \tau'$ satisfies all the conditions of Definition~\ref{def:4.1}.
\end{defn}

We have:

\begin{lem} \label{lem:4.9} 
Let $(\un{C}, \tau) $ and $(\un{C}', \tau')$ be involutive complexes. 
\begin{itemize}
\item If $z \in \un{C}$ and $z' \in \un{C}'$ are equivariant $\theta$-supported cycles, then $z \otimes z'$ is likewise an equivariant $\theta$-supported cycle. 
\item If $z \in \un{C}$ and $z' \in \un{C}'$ are equivariant $\theta$-supported $[r, s]$- and $[r', s']$-cycles, respectively, then $z \otimes z'$ is an equivariant $\theta$-supported $[\max\{r + s', r' + s\} , s + s']\text{-cycle}$.
\end{itemize}
\end{lem}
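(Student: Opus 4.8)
The plan is to reduce \cref{lem:4.9} to the corresponding nonequivariant fact \cref{tensor product of theta supp cycle} together with an elementary observation about how $\tau\otimes\tau'$ acts on homology. By \cref{tensor product of theta supp cycle}, we already know that if $z$ and $z'$ are $\theta$-supported cycles (resp.\ $\theta$-supported $[r,s]$- and $[r',s']$-cycles), then $z\otimes z'$ is a $\theta$-supported cycle in $\un{C}\otimes\un{C}'$ (resp.\ a $\theta$-supported $[\max\{r+s',r'+s\},s+s']$-cycle). So the only thing left to verify is the invariance condition: that the homology class of $z\otimes z'$ is fixed by $(\tau\otimes\tau')_*$, both in $H_*(\un{C}\otimes\un{C}')$ and in the relevant truncated homology group.

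First I would recall that for any two chain maps $f\colon C\to C$ and $g\colon C'\to C'$, the Künneth/algebraic fact is that the induced map $(f\otimes g)_*$ on $H_*(C\otimes C')$ satisfies $(f\otimes g)_*[z\otimes z'] = [f(z)\otimes g(z')]$ whenever $z,z'$ are cycles; over the field $\Z_2$ there are no Tor terms, so this is simply $f_*[z]\otimes g_*[z']$ under the Künneth isomorphism. Here $\tau$ and $\tau'$ are only homotopy involutions, but that is irrelevant for this step --- they are honest chain maps, and that is all that is needed for $(\tau\otimes\tau')_*[z\otimes z'] = \tau_*[z]\otimes\tau'_*[z']$. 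Since by hypothesis $\tau_*[z]=[z]$ and $\tau'_*[z']=[z']$, we conclude $(\tau\otimes\tau')_*[z\otimes z'] = [z]\otimes[z'] = [z\otimes z']$ in $H_*(\un{C}\otimes\un{C}')$. (The three-fold grading shift in \cref{def:3.14} plays no role, since it is just a relabeling.)

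For the filtered version, I would run the identical argument with $\un{C}$ replaced by $\un{C}^{[r,s]}$ and $\un{C}'$ by $\un{C}'^{[r',s']}$, using that $\tau^{[r,s]}$ and $\tau'^{[r',s']}$ are the truncations of $\tau$ and $\tau'$ (which exist precisely because $\tau,\tau'$ are filtered with respect to $\deg_I$, as recorded in \cref{def:4.1}). The only subtlety is to match the filtration levels: the paragraph preceding \cref{tensor product of theta supp cycle} shows that $z\otimes z'$, viewed with the chosen representatives, defines a cycle in $(\un{C}\otimes\un{C}')^{[\max\{r+s',r'+s\},s+s']}$, and $(\tau\otimes\tau')^{[\cdot,\cdot]}$ is exactly the truncation of $\tau\otimes\tau'$ to this window; so the computation $(\tau\otimes\tau')^{[\cdot,\cdot]}_*[z\otimes z'] = \tau^{[r,s]}_*[z]\otimes\tau'^{[r',s']}_*[z'] = [z\otimes z']$ goes through verbatim. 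Combining with the $\theta$-supportedness from \cref{tensor product of theta supp cycle} gives the claim.

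I do not anticipate a genuine obstacle here --- the lemma is essentially bookkeeping, exactly parallel to \cref{tensor product of theta supp cycle} with the added adjective ``equivariant''. The only point requiring mild care is to be explicit that commuting $(\tau\otimes\tau')_*$ past the Künneth decomposition is legitimate, i.e.\ that working over $\Z_2$ (a field) kills all Tor contributions so that $H_*(\un{C}\otimes\un{C}') \cong H_*(\un{C})\otimes H_*(\un{C}')$ naturally in the pair $(\tau,\tau')$. With that in hand, the proof is a two-line invocation of \cref{tensor product of theta supp cycle}.
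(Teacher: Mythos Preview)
Your proposal is correct and follows essentially the same approach as the paper. The paper's proof is even terser: it simply states that if $\tau_*[z]=[z]$ and $\tau'_*[z']=[z']$ then $(\tau\otimes\tau')_*[z\otimes z']=[z\otimes z']$, and invokes \cref{tensor product of theta supp cycle}; your explicit appeal to K\"unneth is more than strictly needed (the identity $[f(z)\otimes g(z')]=[z\otimes z']$ when $[f(z)]=[z]$ and $[g(z')]=[z']$ follows directly on the chain level without decomposing $H_*(\un{C}\otimes\un{C}')$), but it is not wrong.
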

\begin{proof}
If $\tau_*[z] = [z]$ and $\tau'_*[z'] = [z']$, then $(\tau \otimes \tau')_*[z \otimes z'] = [z \otimes z']$. The claim then follows from the proof of Lemma~\ref{tensor product of theta supp cycle}.
\end{proof}

This immediately yields:
\begin{lem}\label{lem:4.10}
Let $(\un{C},\un{d}) $ and $(\un{C}',\un{d}')$ be instanton-type complexes. For any $s$ and $s'$ in $[-\infty, 0]$, we have
\[
r_{s + s'}(\un{C} \otimes \un{C}') \geq \min \{r_{s} (\un{C})  + s' , r_{s'} (\un{C}')  + s\}. 
\]
\end{lem}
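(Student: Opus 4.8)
This is the involutive analogue of Lemma~\ref{lem:3.15} for the tensor product of involutive complexes introduced in Definition~\ref{def:4.8}, i.e.\ the assertion $r_{s+s'}\big((\un C,\tau)\otimes(\un C',\tau')\big)\geq\min\{r_s(\un C,\tau)+s',\,r_{s'}(\un C',\tau')+s\}$, and the plan is to run the proof of Lemma~\ref{lem:3.15} verbatim, replacing \cref{tensor product of theta supp cycle} by Lemma~\ref{lem:4.9} and Definition~\ref{def:3.10} by Definition~\ref{def:4.4}. Write $a=r_s(\un C,\tau)$ and $a'=r_{s'}(\un C',\tau')$. If either is $-\infty$ the right-hand side is $-\infty$ and nothing is to prove, so assume both lie in $(0,\infty]$ and first treat the case where both are finite. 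Fixing $\epsilon>0$, I would use Definition~\ref{def:4.4} for each factor to select $r,r'<0$ with $-r>a-\epsilon$ and $-r'>a'-\epsilon$ together with equivariant $\theta$-supported cycles $z\in\un C^{[r,-s]}$ and $z'\in\un C'^{[r',-s']}$; here $-s,-s'\geq 0$ since $s,s'\leq 0$, so the sign requirement $r<0\leq -s$ of Definition~\ref{def:4.3} is met.

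The heart of the argument is then a single application of the second bullet of Lemma~\ref{lem:4.9}, with its two cycles taken to be an $[r,-s]$- and an $[r',-s']$-cycle: this shows that $z\otimes z'$ is an equivariant $\theta$-supported $[\rho,-(s+s')]$-cycle in $(\un C,\tau)\otimes(\un C',\tau')$ with $\rho=\max\{r-s',\,r'-s\}$. Provided $\rho<0$, Definition~\ref{def:4.4} applied to the tensor product (with parameter $s+s'$) immediately gives
\[
r_{s+s'}\big((\un C,\tau)\otimes(\un C',\tau')\big)\ \geq\ -\rho\ =\ \min\{-r+s',\,-r'+s\}\ >\ \min\{a+s',\,a'+s\}-\epsilon,
\]
and letting $\epsilon\to 0$ completes this case. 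If no admissible choice of $r,r'$ yields $\rho<0$, then pushing $r\to -a$ and $r'\to -a'$ forces $\min\{a+s',a'+s\}\leq 0$, and since the left-hand side is nonnegative the inequality is automatic. Finally, if $a=\infty$ (or $a'=\infty$) I would instead invoke the first bullet of Lemma~\ref{lem:4.9}: an honest equivariant $\theta$-supported cycle of $\un C$ of filtration $\leq -s$, tensored with the near-optimal cycle of $\un C'$, produces an equivariant $\theta$-supported $[\rho,-(s+s')]$-cycle with $\rho$ arbitrarily negative, so the left-hand side is $\infty$; and $a=a'=\infty$ is immediate.

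Once Lemma~\ref{lem:4.9} is in hand the argument is purely formal, so I do not anticipate a genuine obstacle. The only thing demanding care is the index bookkeeping: one must line up the conventions of Definitions~\ref{def:4.3}--\ref{def:4.4}, in which the second filtration slot carries $-s$ rather than $s$, with the raw output $[\max\{r+s',r'+s\},\,s+s']$ of Lemma~\ref{lem:4.9}; keep the strict sign conditions ($r,r'<0$, and $\rho<0$ where it is used) honest; and dispose of the $\pm\infty$ endpoints as above --- none of which is a real difficulty.
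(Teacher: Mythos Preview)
Your approach is exactly the paper's: the paper's proof is the single line ``This follows from Lemma~\ref{lem:4.9} and Definition~\ref{def:4.4},'' and your detailed expansion correctly unwinds that citation in the same way the non-involutive Lemma~\ref{lem:3.15} is proved.

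There is one small slip in your treatment of the case $a=r_s(\un C,\tau)=\infty$ with $a'=r_{s'}(\un C',\tau')<\infty$. You claim that tensoring an honest equivariant $\theta$-supported cycle $z$ (of filtration $\leq -s$) with a near-optimal $[r',-s']$-cycle $z'$ yields a $[\rho,-(s+s')]$-cycle with $\rho$ \emph{arbitrarily negative}, hence the left-hand side is $\infty$. But $\rho=\max\{r-s',\,r'-s\}$: taking $r\to-\infty$ only kills the first term, leaving $\rho\to r'-s$, which is bounded below by $-a'-s$. So the left-hand side need not be $\infty$; what you actually obtain is $r_{s+s'}\big((\un C,\tau)\otimes(\un C',\tau')\big)\geq -r'+s\to a'+s$, which is precisely the right-hand side $\min\{\infty+s',\,a'+s\}=a'+s$. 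In other words, the infinite case does not need a separate argument at all --- it is subsumed by your main finite-case computation (simply allow $-r$ to exceed any prescribed bound) --- and your invocation of the first bullet of Lemma~\ref{lem:4.9} is both unnecessary and slightly misstated, since that bullet concerns two genuine cycles rather than one genuine and one relative cycle.
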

\begin{proof}
This follows from Lemma~\ref{lem:4.9} and Definition~\ref{def:4.4}.
\end{proof}

\subsubsection{Dualization} As discussed in Section~\ref{sec:3.3}, the dual of ($D_2$-) instanton-type complex is not a ($D_2$-) instanton-type complex, but rather a ($D_1$-) instanton-type complex.

\begin{defn}\label{def:4.11}
A {\it involutive $(D_1$-$)$ instanton-type complex}  is a pair $(\ov{C}, \tau)$, where $\ov{C}$ is a $(D_1$-$)$ instanton-type chain complex and $\tau: \ov{C} \to \ov{C}$ is a grading-preserving, $\Z_2[y^{\pm 1}]$-linear chain map satisfying the following:
\begin{itemize}
    \item $\tau$ is filtered with respect to $\deg_I$.
    \item $\tau$ preserves the two-step filtration; that is, it sends the subcomplex $\Z_2[y^{\pm 1}][-3] \subseteq \ov{C}$ to itself. We moreover require that $\tau$ act as the identity on $\Z_2[y^{\pm 1}][-3]$.
    \item $\tau$ is a homotopy involution; that is, there exists a $\Z_2[y^{\pm 1}]$-linear map 
    \[
    H : \un{C}_* \to \un{C}_{*+1}
    \]
    such that
    \[ 
    \ov{d} H + H \ov{d} = \tau^2+\id. 
    \]
    We require $H$ to be filtered with respect to both $\deg_I$ and the two-step filtration.
    \end{itemize}
\end{defn}

Note that in the $D_1$-setting, the action of $\tau$ always fixes $\theta$, whereas in the $D_2$-setting, we may have $\tau \theta = \xi + \theta$ for some $\xi \in C$. Conversely, in the $D_1$-setting the action of $\tau$ may send a chain in $C$ to a chain which is supported by $\theta$, whereas in the $D_2$ setting, the action of $\tau$ preserves $C$. Once again, the dual of a ($D_2$-) involutive complex may be given the structure of a ($D_1$-) involutive complex, and 
\[
\ov{C}(Y) \cong \un{C}(-Y)^*.
\]
as involutive complexes.

\subsubsection{Local triviality} We have the following analog of Lemma~\ref{lem:3.18}:

\begin{lem}\label{lem:4.12}
Let $(\un{C}, \tau)$ be an involutive complex. Then:
\begin{itemize}
\item There always exists a filtered equivariant local map from $(\un{C}, \tau)$ to the trivial complex $(\Z_2[y^{\pm 1}][-3], \id)$. 
\item There exists a filtered equivariant local map from $(\Z_2[y^{\pm 1}][-3], \id)$ to $(\un{C}, \tau)$ if and only if $r_0(\un{C}, \tau) = \infty$.
\end{itemize}
\end{lem}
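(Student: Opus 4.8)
The plan is to mimic the proof of Lemma~\ref{lem:3.18}, adding the observation that all the maps and homotopies appearing there can be taken to be equivariant (in the sense of Definition~\ref{def:4.2}) essentially for free, since the target/source trivial complex carries the identity action.

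For the first bullet, I would take the quotient map $\pi \colon \un{C} \to \un{C}/C \cong \Z_2[y^{\pm 1}][-3]$ as in Lemma~\ref{lem:3.18}. This is a filtered local map by that lemma. To see it is equivariant with respect to $\tau$ on the source and $\id$ on the target, I would invoke the first condition of Definition~\ref{def:4.1}: the induced map $\bar\tau$ on $\un{C}/C$ is the identity, so $\id \circ \pi = \pi \circ \bar\tau$ holds already at the chain level after we identify $\pi$ with the projection $\un{C} \to \un{C}/C$. Concretely, $\pi\tau = \bar\tau\pi = \pi$, so we may take the homotopy $H = 0$ in Definition~\ref{def:4.2}. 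Hence $\pi$ is a filtered equivariant local map to $(\Z_2[y^{\pm 1}][-3], \id)$.

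For the second bullet, the ``only if'' direction is identical to Lemma~\ref{lem:3.18}: an equivariant filtered local map $\lambda \colon \Z_2[y^{\pm 1}][-3] \to \un{C}$ sends $1$ to a $\theta$-supported cycle $z = \lambda(1)$ of filtration level zero, and the equivariance condition gives a $\Z_2[y^{\pm 1}]$-linear, filtered homotopy $H$ with $\un{d}H + H\un{d} = \tau\lambda + \lambda\id$; evaluating on the generator $1$ (a cycle) yields $\un{d}(H(1)) = \tau(z) + z$, so $\tau_*[z] = [z]$ in $H_*(\un{C})$. Thus $z$ is an \emph{equivariant} $\theta$-supported cycle of filtration level zero, which by Definition~\ref{def:4.4} forces $r_0(\un{C}, \tau) = \infty$. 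For the ``if'' direction, suppose $r_0(\un{C}, \tau) = \infty$. As in Lemma~\ref{lem:3.18}, this produces an equivariant $\theta$-supported cycle $z \in \un{C}$ of filtration level zero (one must note, as there, that $z$ is a genuine cycle in $\un{C}$, not merely in a quotient $\un{C}^{[r,s]}$, by taking $r \to -\infty$). Define $\lambda \colon \Z_2[y^{\pm 1}][-3] \to \un{C}$ by $1 \mapsto z$; this is a filtered local map exactly as before. The point requiring a short argument is that $\lambda$ can be made \emph{equivariant}: since $\tau_*[z] = [z]$, there exists some $\Z_2[y^{\pm 1}]$-linear chain $w \in \un{C}_{-2}$ with $\un{d}w = \tau(z) + z$, and one checks $w$ may be chosen filtered with respect to both $\deg_I$ (since $\deg_I(\tau(z) + z) \leq 0$ and we can take $w$ in the relevant filtration level) and the two-step filtration (since $z$ is $\theta$-supported, $\tau(z)+z$ lies in $C$, and the subcomplex $C$ is closed under $\un{d}$, so $w$ may be taken in $C$). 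Then $H \colon \Z_2[y^{\pm 1}][-3] \to \un{C}$ defined by $1 \mapsto w$ and extended $\Z_2[y^{\pm 1}]$-linearly is the desired homotopy witnessing $\tau\lambda + \lambda\id = \un{d}H + H\un{d}$, completing the construction.

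The main obstacle — though it is a mild one — is verifying that the homotopy $w$ witnessing $\tau_*[z]=[z]$ can simultaneously be chosen to respect both filtrations demanded by Definition~\ref{def:4.2}; this is where one must use that $\tau$ is filtered, that $z$ has filtration level zero, and that $C$ (equivalently the two-step filtration) is preserved by $\un{d}$ and by $\tau$ on $\theta$-supported classes. Everything else is a direct transcription of Lemma~\ref{lem:3.18} with the extra bookkeeping of the $\tau$-action, which is trivial on the model complex $\Z_2[y^{\pm 1}][-3]$.
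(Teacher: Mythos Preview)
Your approach is the same as the paper's, and the first bullet is handled exactly as they do. There is one small but genuine imprecision in your treatment of the second bullet.

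When you argue that the homotopy $w$ can be chosen with $\deg_I(w)\le 0$, you write ``since $\deg_I(\tau(z)+z)\le 0$ and we can take $w$ in the relevant filtration level.'' This does not follow: in general, an element of $\un{C}^{[-\infty,0]}$ that is a boundary in $\un{C}$ need not be a boundary in $\un{C}^{[-\infty,0]}$. The correct reason is already built into Definition~\ref{def:4.3}: an equivariant $\theta$-supported $[r,-s]$-cycle satisfies $\tau^{[r,-s]}_*[z]=[z]$ in $H_*(\un{C}^{[r,-s]})$, not merely in $H_*(\un{C})$. Since $\un{C}$ is finitely generated, taking $r$ sufficiently negative gives $\un{C}^{[r,0]}=\un{C}^{[-\infty,0]}$, and the hypothesis $r_0(\un{C},\tau)=\infty$ then yields $\tau_*[z]=[z]$ in $H_*(\un{C}^{[-\infty,0]})$. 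This directly produces $w\in\un{C}^{[-\infty,0]}$ with $\un{d}w=\tau(z)+z$, so $\deg_I(w)\le 0$ comes for free from the definition rather than from a separate argument.

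A minor side remark: your discussion of the two-step filtration on $w$ is unnecessary. The source complex $\Z_2[y^{\pm1}][-3]$ has subcomplex $C=0$, so any $\Z_2[y^{\pm1}]$-linear map out of it vacuously preserves the two-step filtration. (Your conclusion that $w$ can be taken in $C$ happens to be true for grading reasons, but the justification you give --- that $C$ is closed under $\un{d}$ --- does not by itself force $w\in C$.)
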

\begin{proof}
For the first part of the lemma, observe that the quotient map
\[
\pi \colon \un{C} \rightarrow \un{C}/C \cong \Z_2[y^{\pm 1}][-3]
\]
still constitutes the desired local map. The fact that $\tau$ acts as the identity on $\un{C}/C$ shows that $\pi$ is equivariant. The second part of the lemma follows as in Lemma~\ref{lem:3.18}, noting that all cycles in this context are equivariant.
\end{proof}

\subsection{Enriched involutive complexes} \label{sec:4.4}
The formalism discussed so far adequately reflects the analytic situation in the setting where no perturbation of the Chern-Simons functional is needed. In general, however, several technical modifications to the preceding sections are required. 

Firstly, it turns out that the action of $\tau$ may not quite be filtered with respect to $\deg_I$. This necessitates the following mild modification of Definition~\ref{def:4.1} in which $\tau$ and all related homotopies are only required to be level-$\delta$ maps:

\begin{defn}\label{def:4.13}
A {\it level-$\delta$ involutive instanton-type complex}  is a pair $(\un{C}, \tau)$, where $\un{C}$ is an instanton-type complex and 
\[
\tau: \un{C} \to \un{C}
\]
is a level-$\delta$ morphism satisfying the following:
\begin{itemize}
    \item The induced map on the quotient 
    \[
    	\tau \colon \un{C}/C \cong \Z_2[y^{\pm 1}][-3] \rightarrow \un{C}/C \cong \Z_2[y^{\pm 1}][-3] 
    \]
    is the identity. Note here that we use the same isomorphism in the domain and the range afforded by Definition~\ref{def:3.1}.
    \item $\tau$ is a homotopy involution; that is, there exists a $\Z_2[y^{\pm 1}]$-linear map 
    \[
    H : \un{C}_* \to \un{C}_{*+1}
    \]
    such that
    \[ 
    \un{d} H + H \un{d} = \tau^2+\id. 
    \]
    We require $H$ to be filtered with respect to the two-step filtration and have level $\delta$ with respect to $\deg_I$.
    \end{itemize}
A level-zero involutive complex is of course an involutive complex in the previous sense of Definition~\ref{def:4.1}.
\end{defn}

We may still speak of equivariant morphisms between involutive instanton-type complexes of different levels. For example, let $(\un{C}_1, \tau_1)$ be a level-$\delta_1$ involutive complex and $(\un{C}_2, \tau_2)$ be a level-$\delta_2$ involutive complex. A level-$\delta$ equivariant morphism 
\[
f \colon \un{C}_1 \rightarrow \un{C}_2
\]
is still simply a level-$\delta$ morphism in the sense of Definition~\ref{def:3.5} which commutes with $\tau_1$ and $\tau_2$ up to a level-$\delta$ homotopy $H$. Note that the parameter $\delta$ is independent from $\delta_1$ and $\delta_2$, even though the homotopies which make $\tau_1$ and $\tau_2$ into homotopy involutions have levels $\delta_1$ and $\delta_2$. In practice, it will not really be crucial to keep track of the different level shifts; here, we are explicit for the sake of completeness.

More importantly, as discussed in Remark~\ref{rem:3.2}, instead of associating a single involutive complex to $(Y, \tau)$, we will need to associate a sequence of complexes which represents taking a sequence of perturbations converging to zero. The following definition captures this notion:

\begin{defn}\label{def:4.14}
An {\it enriched involutive instanton complex} $\un{\mathfrak{E}}_\tau$ is a sequence of involutive instanton complexes (of varying levels)
\[
(\un{C}_i,\un{d}_i, \tau_i) \text{ of level } \delta_i
\]
for $i \in \Z^{\geq 0}$, together with a sequence of equivariant local maps (of varying levels) 
\[
\psi^j_i : (\un{C}_i,\un{d}_i, \tau_i) \to (\un{C}_j,\un{d}_j, \tau_j) \text{ of level } \delta_{i,j}
\]
for $i, j \in \Z^{\geq 0}$, satisfying the following conditions: 

\begin{itemize}

\item (Clustering condition): The $\deg_I$-levels of homogenous chains in the $\un{C}_i$ cluster around a discrete subset of $\R$. More precisely, we require that there exists a discrete subset $\mathfrak{K} \subseteq \R$ such that for any $\delta>0$, there exists $N$ such that $\forall i >N$ and $\zeta \in \un{C}_i$, 
    \[
    \deg_I(\zeta) \in B_\delta(\mathfrak{K}).
    \]
Note that necessarily $0 \in \mathfrak{K}$.
\item (Composition of maps): Each $\psi_i^i$ is the identity and each $\psi_j^k \circ \psi_i^j$ is homotopic to $\psi^k_i$ via a homotopy of level $\smash{\delta_{i,j,k}}$.

\item (Perturbations converging to zero): We have 
\[
\delta_i \rightarrow 0, \quad \delta_{i,j} \rightarrow 0, \quad \text{and} \quad \delta_{i,j,k} \rightarrow 0
\]
as $i, j, k \rightarrow \infty$. More precisely, for any $\delta > 0$, there exists $N$ such that $\forall i, j, k > N$, we have $\delta_i, \delta_{i,j}, \delta_{i,j,k} < \delta$.
\end{itemize}
\end{defn}
\noindent
Usually we will suppress writing the subscripts on the differentials $\un{d}_i$. There is an unfortunate collision of notation, in that the subscript of $\un{C}_i$ has (up to this point) meant the $\Z$-grading on $\un{C}$, while we now use it to refer to the index of $\un{C}_i$ in a sequence of involutive complexes. However, the former will be comparatively rare moving forward; the distinction will be clear from context.

One can similarly define a \textit{$(D_1$-$)$ enriched complex} $\ov{\mathfrak{E}}_\tau$ by replacing $\un{C}$ with $\ov{C}$. The import of Definition~\ref{def:4.14} is that for sufficiently large indices, all of the maps and homotopies considered in Definition~\ref{def:4.14} are ``almost" filtered.


We generalize the notion of homotopy equivalences and local maps: 
\begin{defn} \label{def:4.15}
An {\it (enriched) homotopy equivalence} between two enriched involutive complexes $\un{\mathfrak{E}}_\tau$ and $\un{\mathfrak{E}}'_\tau$ is a sequence of equivariant homotopy equivalences $f_i$ and $g_i$ making
\[
\un{C}_i \simeq \un{C}'_i \text{ of level } \epsilon_i
\]
for $i \in \Z$, such that $\epsilon_i \rightarrow 0$ as $i \rightarrow \infty$. We require $f_j \psi_i^j \simeq (\psi')_i^j f_i$ via a chain homotopy whose level goes to zero as $i, j \rightarrow \infty$, and similarly for the $g_i$.
\end{defn}

\begin{defn} \label{def:4.16}
An {\it (enriched) local map} between two involutive complexes $\un{\mathfrak{E}}_\tau$ to $\un{\mathfrak{E}}'_\tau$ is a sequence of equivariant local maps 
\[
\lambda_i :\un{C}_i \to \un{C}'_i \text{ of level } \epsilon_i
\]
for $i \in \Z$, such that $\epsilon_i \rightarrow 0$ as $i \rightarrow \infty$. 
If enriched local maps in both directions exist, then we say $\mathfrak{E}_\tau$ and $\mathfrak{E}'_\tau$ are \textit{(enriched) locally equivalent}. Note that we do not require any commutation requirement with the $\psi_i^j$.
\end{defn}

Finally, we define the tensor product and dualization operations. The following are easily checked to be enriched complexes:

\begin{defn} \label{def:4.17}
Let $\un{\mathfrak{E}}_\tau$ and $\un{\mathfrak{E}'}_\tau$ be two enriched complexes. Then we define
\[
\un{\mathfrak{E}}_\tau \otimes \un{\mathfrak{E}}_\tau' = \{ \un{C}_i\otimes  \un{C}'_i, \tau\otimes \tau'  \}
\]
with the following data: 
\begin{itemize}
\item the maps $\psi^j_i \otimes (\psi')^j_i$
\item the discrete set $\mathfrak{K}^\otimes $ is given by 
\[
\mathfrak{K}^\otimes = \{ r_1 + r_2 | r_1 \in \mathfrak{K}, r_2 \in \mathfrak{K}'\}. 
\]
\item various homotopies obtained as tensor products of the homotopies from $\un{\mathfrak{E}}_\tau$ and $\un{\mathfrak{E}'}_\tau$. 
\end{itemize}
\end{defn}

\begin{defn} \label{def:4.18}
Let $\ov{\mathfrak{E}}_\tau$ be a $(D_1$-$)$ enriched complex. Then we define 
\[
\ov{\mathfrak{E}}_\tau^* = \{ (\ov{C}_i)^* , \tau^* \} 
\]
with the following data: 
\begin{itemize}
\item the maps $(\psi^j_i)^*$
\item the discrete set $\mathfrak{K}^* $ is given by 
\[
\mathfrak{K}^* = \{ -r | r \in \mathfrak{K}\}. 
\]
\item various homotopies obtained as duals of the homotopies from $\ov{\mathfrak{E}}_\tau$. 
\end{itemize}
\end{defn}

\subsection{The involutive $r_s$-invariant for an enriched complex} \label{sec:4.5}

We now define the involutive $r_s$-invariant for an enriched complex. Note that if $(\un{C}, \tau)$ is a level-$\delta$ involutive complex with $\delta > 0$, then $\tau$ does \textit{not} induce an automorphism of $\un{C}^{[r, s]}$ and hence Definition~\ref{def:4.4} is not quite valid. We thus need the following important lemma: 

\begin{lem} \label{lem:4.19} 
Let $\un{\mathfrak{E}}_\tau$ be an enriched involutive complex. Fix any $r, s \in [-\infty, \infty] \setminus \mathfrak{K}$. Then for all $i$ sufficiently large, $\tau_i$ induces a homotopy involution
\[
\tau_i^{[r, s]} \colon \un{C}_i^{[r, s]} \rightarrow \un{C}_i^{[r, s]}.
\]
Moreover, the equivariant chain homotopy type of 
\[
(\un{C}_i^{[r,s]}, \tau_i^{[r, s]})
\]
is independent of $i$ for $i$ sufficiently large. 
\end{lem}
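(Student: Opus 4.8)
The plan is to use the clustering condition to show that, once we truncate at the levels $r$ and $s$, the ``almost filtered'' maps of Definition~\ref{def:4.14} become honest chain maps of the truncated complexes, so that the operation $\un{C}\mapsto\un{C}^{[r,s]}$ becomes a \emph{functor} on all the data in play; the two assertions then follow by routine homological algebra. For the setup, we may assume $r<s$ (otherwise $\un{C}^{[r,s]}=0$ and there is nothing to prove). Since $r,s\notin\mathfrak{K}$ and $\mathfrak{K}$ is closed and discrete, the quantity
\[
\eta=\min\{\,\mathrm{dist}(r,\mathfrak{K}),\ \mathrm{dist}(s,\mathfrak{K})\,\}
\]
is positive, with the convention $\mathrm{dist}(\pm\infty,\mathfrak{K})=+\infty$. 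By the clustering condition applied with parameter $\eta/2$, together with $\delta_i,\delta_{i,j},\delta_{i,j,k}\to 0$, one can fix an $N$ such that for all $i,j,k>N$: every homogeneous generator $\zeta$ of $\un{C}_i$ satisfies $\mathrm{dist}(\deg_I(\zeta),\mathfrak{K})<\eta/2$, and $\delta_i,\delta_{i,j},\delta_{i,j,k}<\eta/2$.

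The key observation is that for $i>N$ no generator of $\un{C}_i$ has $\deg_I$ in the open interval $(r-\eta/2,\ r+\eta/2)$ (a value there has distance $>\eta/2$ from $\mathfrak{K}$), and likewise at $s$; hence the subcomplex $\un{C}_i^{[-\infty,r]}$ is literally unchanged if $r$ is replaced by any $r'\in[r,\ r+\eta/2)$, and similarly at $s$. Consequently, whenever $i,j>N$, each of the maps $\tau_i$, the homotopy $H_i$ of Definition~\ref{def:4.13}, the $\psi^j_i$, and the homotopies witnessing equivariance of the $\psi^j_i$ and the relations $\psi^k_j\circ\psi^j_i\simeq\psi^k_i$ — each of which has level $<\eta/2$ — carries $\un{C}_i^{[-\infty,r]}$ into $\un{C}_j^{[-\infty,r]}$ and $\un{C}_i^{[-\infty,s]}$ into $\un{C}_j^{[-\infty,s]}$; the differential $\un{d}$ does so as well, being $\deg_I$-nonincreasing. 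Therefore every such map descends to $\un{C}^{[r,s]}=\un{C}^{[-\infty,s]}/\un{C}^{[-\infty,r]}$, and this passage takes composites to composites and the defining chain-homotopy identities to the corresponding identities. Applying this to $\tau_i$ and $H_i$ produces the homotopy involution $\tau_i^{[r,s]}$ together with a homotopy $H_i^{[r,s]}$ satisfying $(\tau_i^{[r,s]})^2+\id=\un{d}H_i^{[r,s]}+H_i^{[r,s]}\un{d}$; that $\tau_i^{[r,s]}$ acts as the identity on the quotient by $C_i^{[r,s]}$ and preserves the two-step filtration is inherited from $\tau_i$. This establishes the first assertion.

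For the second assertion, fix $i,j>N$. By the above, $\psi^j_i$ and $\psi^i_j$ descend to equivariant morphisms $(\psi^j_i)^{[r,s]}\colon\un{C}_i^{[r,s]}\to\un{C}_j^{[r,s]}$ and $(\psi^i_j)^{[r,s]}$ in the reverse direction, equivariance being inherited by truncating the level-$<\eta/2$ homotopies witnessing $\tau_j\psi^j_i\simeq\psi^j_i\tau_i$ and $\tau_i\psi^i_j\simeq\psi^i_j\tau_j$. By the composition condition, $\psi^i_j\circ\psi^j_i\simeq\psi^i_i=\id$ and $\psi^j_i\circ\psi^i_j\simeq\id$ via homotopies of level $<\eta/2$; truncating these and using functoriality of the truncation yields $(\psi^i_j)^{[r,s]}\circ(\psi^j_i)^{[r,s]}\simeq\id$ and $(\psi^j_i)^{[r,s]}\circ(\psi^i_j)^{[r,s]}\simeq\id$. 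Hence $(\psi^j_i)^{[r,s]}$ is an equivariant homotopy equivalence $(\un{C}_i^{[r,s]},\tau_i^{[r,s]})\simeq(\un{C}_j^{[r,s]},\tau_j^{[r,s]})$, which gives independence of $i$ for $i>N$. (Truncating the triple-composition homotopies of Definition~\ref{def:4.14} moreover shows these equivalences are coherent, so that the $(\un{C}_i^{[r,s]},\tau_i^{[r,s]})$ form a stable system.)

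The only genuinely nontrivial point is the key observation: that the clustering condition forces truncation at $r$ to coincide with truncation at $r+\delta$ once $\delta<\eta/2$, which is exactly what upgrades the level-$\delta$ maps of the enriched complex to honest chain maps between truncated complexes and makes truncation functorial. Everything after that is bookkeeping — in particular, choosing $N$ so that $\delta_i$, the pairwise levels $\delta_{i,j}$, and the triple levels $\delta_{i,j,k}$ are simultaneously below $\eta/2$, which is immediate from the ``perturbations converging to zero'' clause. I expect this uniform choice of $N$, and the care needed to check that the relevant \emph{composite} maps (rather than just the individual $\psi^j_i$'s) still preserve $\un{C}^{[-\infty,r]}$ and $\un{C}^{[-\infty,s]}$, to be where the proof requires the most attention.
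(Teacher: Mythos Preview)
Your proof is correct and follows essentially the same approach as the paper's: both use the clustering condition to show that no generators have $\deg_I$ within $\eta/2$ of $r$ or $s$, so that all the level-$<\eta/2$ maps and homotopies of the enriched complex preserve the subcomplexes $\un{C}^{[-\infty,r]}$ and $\un{C}^{[-\infty,s]}$ and hence descend to the quotient. If anything, you are slightly more careful than the paper in choosing $N$ uniformly so that $\delta_i$, $\delta_{i,j}$, and $\delta_{i,j,k}$ are all simultaneously below $\eta/2$, and in explicitly tracking the equivariance homotopies.
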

\begin{proof}
Let $r \in [-\infty, \infty] \setminus \mathfrak{K}$. Then $\tau_i$ induces a map
\begin{equation}\label{eq:taui}
\tau_i \colon \un{C}_i^{[-\infty, r]} \rightarrow \un{C}_i^{[-\infty, r + \delta_i]}.
\end{equation}
Let $\delta = d(r, \mathfrak{K}) > 0$. By the clustering condition, we know that for $i$ sufficiently large, every homogenous chain in $\un{C}_i$ lies within distance $\delta/2$ of $\mathfrak{K}$ and hence is at least distance $\delta/2$ from $r$. In this situation, it follows that 
\[
\smash{\un{C}_i^{[-\infty, r]} = \un{C}_i^{[-\infty, r - \delta/2]}}. 
\]
If moreover $\delta_i < \delta/2$, then clearly the image of (\ref{eq:taui}) lies in $\smash{\un{C}_i^{[-\infty, r]}}$. For $i$ sufficiently large, we thus have that $\tau_i$ may be considered as a map
\[
\tau_i \colon \un{C}_i^{[-\infty, r]} \rightarrow \un{C}_i^{[-\infty, r]}.
\]
Note that $\tau_i$ is a homotopy involution; by taking $i$ sufficiently large, we may likewise assume that the homotopy in question sends $\smash{\un{C}_i^{[-\infty, r]}}$ to itself. Since 
\[
\un{C}_i^{[r, s]} = \un{C}_i^{[- \infty, s]}/\un{C}_i^{[- \infty, r]},
\]
a similar argument for $s$ gives the desired construction of $\smash{\tau_i^{[r, s]}}$.

The second part of the lemma is proven similarly. From Definition~\ref{def:4.14}, we have equivariant morphisms 
\[
\psi_i^j : \un{C}_i \to  \un{C}_j 
\]
such that $\psi_j^i \psi_i^j \simeq \id$ and $\psi_i^j \psi_j^i \simeq \id$. Although these morphisms and homotopies are not filtered, by taking $i$ and $j$ sufficiently large and applying the same argument as the previous paragraph, we may assume that they in fact map $\un{C}_i^{[r, s]}$ and $\un{C}_j^{[r, s]}$ to themselves/each other.
\end{proof}

For $i$ sufficiently large, we thus obtain a complex $\un{C}_i^{[r, s]}$ with a well-defined involution $\tau_i^{[r, s]}$. (At least under the assumption that $r, s \in [-\infty, \infty] \setminus \mathfrak{K}$.) Moreover, the homotopy type of this pair stabilizes as $i \rightarrow \infty$. We denote this stable value by:

\begin{defn}\label{def:4.20}
Let $\un{\mathfrak{E}}_\tau$ be an enriched involutive complex. For $r, s \in [-\infty, \infty] \setminus \mathfrak{K}$, define the equivariant chain homotopy type
\[
\un{\mathfrak{E}}_\tau^{[r, s]} = (\un{C}_i^{[r, s]}, \tau_i^{[r, s]})
\]
for $i$ sufficiently large as in Lemma~\ref{lem:4.19}.
\end{defn}

It will be helpful to have the following lemma:

\begin{lem}\label{lem:4.21}
Let $\un{\mathfrak{E}}_\tau$ be an enriched involutive complex. Suppose 
\[
[r, r'] \subseteq [-\infty, \infty] \setminus \K \quad \text{and} \quad [s, s'] \subseteq [-\infty, \infty] \setminus \K. 
\]
Then
\[
\un{\mathfrak{E}}_\tau^{[r, s]} \simeq \un{\mathfrak{E}}_\tau^{[r', s']}.
\]
\end{lem}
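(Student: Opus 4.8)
The plan is to prove something slightly stronger than stated: for all $i$ sufficiently large, the truncated involutive complexes $(\un{C}_i^{[r,s]}, \tau_i^{[r,s]})$ and $(\un{C}_i^{[r',s']}, \tau_i^{[r',s']})$ are literally \emph{equal} (not merely homotopy equivalent), from which the conclusion is immediate by Definition~\ref{def:4.20}.

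First I would record the topological input. Since $[r,r']$ and $[s,s']$ are closed intervals disjoint from the closed discrete set $\mathfrak{K}$, there is a $\delta_0 > 0$ such that \[ \bigl([r-\delta_0,\, r'+\delta_0] \cup [s-\delta_0,\, s'+\delta_0]\bigr) \cap \mathfrak{K} = \emptyset. \] This remains true when an endpoint is infinite: if, say, $r = -\infty$, then $\mathfrak{K} \cap (-\infty, r'] = \emptyset$ forces $\mathfrak{K}$ to be bounded below and hence --- being closed with no accumulation point --- to attain a minimum, which must exceed $r'$; one then takes $\delta_0 \leq \min\mathfrak{K} - r'$. The case $r' = +\infty$ is symmetric, and the case $r = -\infty$, $r' = +\infty$ does not arise since $0 \in \mathfrak{K}$. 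A short computation shows that then $B_{\delta_0}(\mathfrak{K})$ is disjoint from $[r,r'] \cup [s,s']$.

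Next I would invoke the clustering condition of Definition~\ref{def:4.14}: there is an $N$ so that for every $i > N$ and every $\zeta \in \un{C}_i$ we have $\deg_I(\zeta) \in B_{\delta_0}(\mathfrak{K})$, hence $\deg_I(\zeta) \notin [r,r'] \cup [s,s']$. For such $i$ this means $\deg_I(\zeta) \leq r'$ forces $\deg_I(\zeta) \leq r$ (the only alternative, $\deg_I(\zeta) \in (r,r']$, being excluded), and similarly $\deg_I(\zeta) \leq s'$ forces $\deg_I(\zeta) \leq s$. Therefore \[ \un{C}_i^{[-\infty,r]} = \un{C}_i^{[-\infty,r']} \qquad\text{and}\qquad \un{C}_i^{[-\infty,s]} = \un{C}_i^{[-\infty,s']} \] as subcomplexes of $\un{C}_i$. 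Passing to the quotient $\un{C}_i^{[r,s]} = \un{C}_i^{[-\infty,s]}/\un{C}_i^{[-\infty,r]}$ gives $\un{C}_i^{[r,s]} = \un{C}_i^{[r',s']}$ as chain complexes; moreover, since $\tau_i^{[r,s]}$ and $\tau_i^{[r',s']}$ are both defined (by Lemma~\ref{lem:4.19}, as $r, r', s, s' \notin \mathfrak{K}$) simply as the map induced by $\tau_i$ on this common quotient, they agree, and the same holds for the homotopies witnessing $\tau_i^2 \simeq \id$.

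Finally I would fix a single $i$ large enough that $i > N$, that Lemma~\ref{lem:4.19} applies to both of the intervals $[r,s]$ and $[r',s']$, and that the equivariant chain homotopy type of each truncation has stabilized. For such an $i$ the identical pair $(\un{C}_i^{[r,s]}, \tau_i^{[r,s]}) = (\un{C}_i^{[r',s']}, \tau_i^{[r',s']})$ represents both $\un{\mathfrak{E}}_\tau^{[r,s]}$ and $\un{\mathfrak{E}}_\tau^{[r',s']}$, so these equivariant chain homotopy types coincide, which proves the lemma. I do not anticipate a real obstacle: the only thing requiring care is collecting the finitely many lower bounds on $i$ coming from the clustering estimate, from Lemma~\ref{lem:4.19} for each interval, and from the stabilization, together with the treatment of infinite endpoints --- all of which is routine.
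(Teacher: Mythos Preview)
Your argument is correct and is essentially the same as the paper's own proof, which also observes that for $i$ sufficiently large one has $\un{C}_i^{[-\infty,r]} = \un{C}_i^{[-\infty,r']}$ and $\un{C}_i^{[-\infty,s]} = \un{C}_i^{[-\infty,s']}$ by the clustering argument of Lemma~\ref{lem:4.19}. You have simply spelled out the details (the choice of $\delta_0$, the handling of infinite endpoints) that the paper leaves implicit.
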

\begin{proof}
The same argument as in Lemma~\ref{lem:4.19} shows that for $i$ sufficiently large, we have
\[
\un{C}_i^{[- \infty, r]} = \un{C}_i^{[- \infty, r']}.
\]
The analogous observation for $s$ and $s'$ gives the claim.
\end{proof}

Note that $\un{\mathfrak{E}}_\tau^{[r, s]}$ inherits a two-step filtration, since all of our maps are filtered with respect to the two-step filtration on each $\un{C}_i$. If $r < 0 < s$, this means that we still have the notion of an equivariant $\theta$-supported cycle in $\smash{\un{\mathfrak{E}}_\tau^{[r, s]}}$. As before, we use this to define the $r_s$-invariant:

\begin{defn}\label{def:4.22}
Let $\un{\mathfrak{E}}_\tau$ be an enriched involutive complex and $s \in [- \infty, 0]$. If $-s \notin \mathfrak{K}$, define 
\[
r_s( \un{\mathfrak{E}}_\tau) = -\inf_{r < 0 \text{ and } r \notin \mathfrak{K}} \{ r |
\text{ there exists an equivariant $\theta$-supported cycle in }\mathfrak{E}_\tau^{[r, -s]} \} \in [0,\infty],
\]
with the caveat that if the above set is empty, we set $r_s(\un{\mathfrak{E}}_\tau) = - \infty$. For $-s \in \mathfrak{K}$, we define 
\[
r_s( \un{\mathfrak{E}}_\tau) = \lim_{t\to s^-}r_t( \un{\mathfrak{E}}_\tau).
\]
Note that the right-hand side is eventually constant due to Lemma~\ref{lem:4.21}.
\end{defn}

By Lemma~\ref{lem:4.21}, it is clear that $r_s(\un{\E}_\tau)$ is valued in $-\K \cup \{\pm \infty\}$. Moreover, it is not hard to see that (as a function of $s$) $r_s$ is continuous from the right and is constant on each connected component of $[- \infty, 0] \setminus - \K$. Note that due to Lemma~\ref{lem:4.21}, we may exclude any discrete collection of points from the infimum in the definition of $\smash{r_s( \un{\mathfrak{E}}_\tau)}$ without changing its value. The reader may check that if $\un{\E}_\tau$ consists of a constant sequence of level-zero involutive complexes, then Definition~\ref{def:4.22} coincides with Definition~\ref{def:4.4}. We have the analogs of Lemmas~\ref{lem:4.7} and \ref{lem:4.10}:

\begin{lem}\label{monotonicity of involtuive rs}\label{lem:4.23}
If there is an enriched local map from $\un{\mathfrak{E}}_\tau$ to $\un{\mathfrak{E}}'_\tau$, then 
\[
r_s( \un{\mathfrak{E}}_\tau) \leq r_s( \un{\mathfrak{E}}'_\tau)
\]
for every $s \in [-\infty, 0]$.
\end{lem}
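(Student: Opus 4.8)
The plan is to prove this enriched statement by reducing it, truncation by truncation, to the functoriality recorded in \cref{lem:4.6}; it is the enriched analogue of \cref{lem:4.7}, and the only real work is checking that the enriched structure permits this reduction. Write $\mathfrak{K}$, $\mathfrak{K}'$ for the clustering sets of $\un{\mathfrak{E}}_\tau$, $\un{\mathfrak{E}}'_\tau$ and let $\lambda_i \colon \un{C}_i \to \un{C}'_i$, of level $\epsilon_i \to 0$, be the given enriched local map. I would first treat an $s \in [-\infty, 0]$ with $-s \notin \mathfrak{K} \cup \mathfrak{K}'$ and fix some $r < 0$, $r \notin \mathfrak{K}\cup\mathfrak{K}'$, for which $\un{\mathfrak{E}}_\tau^{[r,-s]}$ admits an equivariant $\theta$-supported cycle; the goal is to show $\un{\mathfrak{E}}'^{[r,-s]}_\tau$ does as well. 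By \cref{lem:4.19} and \cref{def:4.20}, for $i$ sufficiently large both $\un{C}_i^{[r,-s]}$ and $\un{C}_i'^{[r,-s]}$ carry well-defined homotopy involutions $\tau_i^{[r,-s]}$, $\tau_i'^{[r,-s]}$ and represent $\un{\mathfrak{E}}_\tau^{[r,-s]}$, $\un{\mathfrak{E}}'^{[r,-s]}_\tau$ respectively; fix an equivariant $\theta$-supported cycle $z_i \in \un{C}_i^{[r,-s]}$. Because $\epsilon_i \to 0$ while $r$ and $-s$ sit at positive distance from $\mathfrak{K}\cup\mathfrak{K}'$, the mechanism in the proof of \cref{lem:4.19} applies verbatim to $\lambda_i$: for $i$ large it carries $\un{C}_i^{[-\infty,r]}$ into $\un{C}_i'^{[-\infty,r]}$ and $\un{C}_i^{[-\infty,-s]}$ into $\un{C}_i'^{[-\infty,-s]}$, hence induces a chain map
\[
\lambda_i^{[r,-s]} \colon \un{C}_i^{[r,-s]} \longrightarrow \un{C}_i'^{[r,-s]}
\]
that preserves the two-step filtration, is the identity on $\Z_2[y^{\pm1}][-3]^{[r,-s]}$, and commutes with $\tau_i^{[r,-s]}$, $\tau_i'^{[r,-s]}$ up to the (truncated) homotopy witnessing equivariance of $\lambda_i$. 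Thus $\lambda_i^{[r,-s]}$ is an equivariant local map, and the argument of \cref{lem:4.6} shows $\lambda_i^{[r,-s]}(z_i)$ is an equivariant $\theta$-supported cycle in $\un{C}_i'^{[r,-s]}$; hence $\un{\mathfrak{E}}'^{[r,-s]}_\tau$ admits one.

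Consequently, among $r < 0$ avoiding $\mathfrak{K}\cup\mathfrak{K}'$, the set on which $\un{\mathfrak{E}}_\tau^{[r,-s]}$ carries an equivariant $\theta$-supported cycle is contained in the corresponding set for $\un{\mathfrak{E}}'_\tau$. By the remark after \cref{def:4.22} (a consequence of \cref{lem:4.21}), deleting the discrete set $\mathfrak{K}'\setminus\mathfrak{K}$ leaves the infimum defining $r_s(\un{\mathfrak{E}}_\tau)$ unchanged, and symmetrically for $r_s(\un{\mathfrak{E}}'_\tau)$; comparing the two infima over these restricted sets gives $r_s(\un{\mathfrak{E}}_\tau) \le r_s(\un{\mathfrak{E}}'_\tau)$ whenever $-s \notin \mathfrak{K}\cup\mathfrak{K}'$. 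The degenerate cases are subsumed: if the $\un{\mathfrak{E}}_\tau$-set is nonempty, the paragraph above exhibits a point of the $\un{\mathfrak{E}}'_\tau$-set, so $r_s(\un{\mathfrak{E}}'_\tau) = -\infty$ forces $r_s(\un{\mathfrak{E}}_\tau) = -\infty$.

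To handle $-s \in \mathfrak{K}\cup\mathfrak{K}'$, I would use that $\mathfrak{K}\cup\mathfrak{K}'$ is discrete and that $r_t(\un{\mathfrak{E}}_\tau)$, $r_t(\un{\mathfrak{E}}'_\tau)$ are each constant on the connected components of $[-\infty,0]\setminus(-\mathfrak{K})$, respectively $[-\infty,0]\setminus(-\mathfrak{K}')$: there is $\delta > 0$ such that every $t \in (s - \delta, s)$ satisfies $-t \notin \mathfrak{K}\cup\mathfrak{K}'$, $r_t(\un{\mathfrak{E}}_\tau) = r_s(\un{\mathfrak{E}}_\tau)$, and $r_t(\un{\mathfrak{E}}'_\tau) = r_s(\un{\mathfrak{E}}'_\tau)$ — the last two equalities being exactly the definition of $r_s$ over $-\mathfrak{K}$ in \cref{def:4.22}. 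Applying the previous case to any such $t$ yields $r_s(\un{\mathfrak{E}}_\tau) = r_t(\un{\mathfrak{E}}_\tau) \le r_t(\un{\mathfrak{E}}'_\tau) = r_s(\un{\mathfrak{E}}'_\tau)$, which completes the proof.

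I expect the main difficulty to be organizational rather than conceptual: one must confirm in the first step that $\lambda_i$ genuinely descends to the truncations and is compatible with the \emph{stabilized} equivariant chain homotopy type $\un{\mathfrak{E}}_\tau^{[r,-s]}$ of \cref{def:4.20} — not merely with some unstable representative — and that the auxiliary homotopies (equivariance of $\lambda_i$, the involution homotopies for the $\tau_i$) all survive truncation for $i$ large. Since the genuine analytic and homological content is already isolated in \cref{lem:4.6} and \cref{lem:4.19}, the rest is formal; in particular, no compatibility of the $\lambda_i$ with the structure maps $\psi_i^j$ is required, in accordance with \cref{def:4.16}.
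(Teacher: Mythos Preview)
Your proposal is correct and follows essentially the same approach as the paper's proof: restrict $r$ and $-s$ to lie outside $\mathfrak{K}\cup\mathfrak{K}'$, use the clustering argument from \cref{lem:4.19} to make $\lambda_i$ descend to the truncation for $i$ large, transfer equivariant $\theta$-supported cycles via \cref{lem:4.6}, invoke \cref{lem:4.21} to handle the mismatch between excluding $\mathfrak{K}$ versus $\mathfrak{K}\cup\mathfrak{K}'$ from the infimum, and finish with the limiting argument for $-s \in \mathfrak{K}\cup\mathfrak{K}'$. Your write-up is more explicit about the equivariance homotopy and degenerate cases, but the skeleton is identical.
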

\begin{proof}
Fix any $r, s\in [-\infty, \infty] \setminus (\K \cup \K')$. Let $i$ be large enough so that
\[
\E_\tau^{[r, s]} = (\un{C}_i^{[r, s]}, \tau_i^{[r, s]}) \quad \text{and} \quad (\E'_\tau)^{[r, s]} = ((\un{C}'_i)^{[r, s]}, (\tau'_i)^{[r, s]}).
\] 
By the same argument as in the proof of Lemma~\ref{lem:4.19}, by increasing $i$ we may in fact assume that $\lambda_i$ maps
\[
\lambda_i \colon \un{C}_i^{[r, s]} \rightarrow (\un{C}'_i)^{[r, s]}.
\]
Thus, if $\E_\tau^{[r, s]}$ admits an equivariant $\theta$-supported cycle, then $\smash{(\E'_\tau)^{[r, s]}}$ admits an equivariant $\theta$-supported cycle. It follows that if $-s \in [-\infty, \infty] \setminus (\K \cup \K')$, then 
\[
r_s( \un{\mathfrak{E}}_\tau) \leq r_s( \un{\mathfrak{E}}'_\tau).
\]
Here, there is a slight technicality: we have assumed $r \in [- \infty, \infty] \setminus (\K \cup \K')$ throughout, but in the definition of $r_s$ on each side of the above inequality, $r$ ranges over $[-\infty, \infty] \setminus \K$ or $[-\infty, \infty] \setminus \K'$, respectively. However, due to Lemma~\ref{lem:4.21}, we may exclude any discrete collection of points from the infimum in the definition of $\smash{r_s( \un{\mathfrak{E}}_\tau)}$ without changing its value. For $-s \in \mathfrak{K} \cup \mathfrak{K}'$, a limiting argument with $t \rightarrow s^-$ gives the same inequality. 
\end{proof}

\begin{lem}\label{conn sum of rs for inv enriched}\label{lem:4.24}
Let $\un{\mathfrak{E}}_\tau$ and $\un{\mathfrak{E}'}_\tau$ be two enriched complexes. For any $s$ and $s'$ in $[-\infty, 0]$, we have
 \[
r_{s+s'}( \un{\mathfrak{E}}_ \tau \otimes \un{\mathfrak{E}}_ \tau') \geq \min \{r_{s} (\un{\mathfrak{E}}_ \tau) + s' , r_{s'} (\un{\mathfrak{E}}_ \tau')  + s\}. 
\]
\end{lem}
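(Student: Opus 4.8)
The plan is to bootstrap the enriched inequality from the non-enriched case (Lemma~\ref{lem:4.10})---equivalently, from the fact (Lemma~\ref{lem:4.9}) that a tensor product of equivariant $\theta$-supported cycles is again an equivariant $\theta$-supported cycle---by passing to a sufficiently large common stage of the three enriched complexes involved. The two genuinely enriched issues---that $\tau \otimes \tau'$ only descends to a truncation after such a stabilization, and that $r_s$ at a point of $\mathfrak{K}$ is defined as a one-sided limit---are handled exactly as in Lemma~\ref{lem:4.19} and via a limiting argument, respectively.

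First I would treat the generic case $s, s' \in [-\infty, 0]$ with $-s \notin \mathfrak{K}$, $-s' \notin \mathfrak{K}'$, and $-(s+s') \notin \mathfrak{K}^\otimes$, where $\mathfrak{K}^\otimes = \{r_1 + r_2 : r_1 \in \mathfrak{K},\ r_2 \in \mathfrak{K}'\}$ as in Definition~\ref{def:4.17}. Fix admissible $r < 0$ (with $r \notin \mathfrak{K}$) and $r' < 0$ (with $r' \notin \mathfrak{K}'$) for which $\mathfrak{E}_\tau^{[r, -s]}$ and $(\mathfrak{E}'_\tau)^{[r', -s']}$ carry equivariant $\theta$-supported cycles $z$ and $z'$; perturbing $r, r'$ slightly (permissible by Lemma~\ref{lem:4.21}) we may assume $\max\{r - s', r' - s\} \notin \mathfrak{K}^\otimes$. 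Using Lemma~\ref{lem:4.19} for $\un{\mathfrak{E}}_\tau$, $\un{\mathfrak{E}}'_\tau$, and $\un{\mathfrak{E}}_\tau \otimes \un{\mathfrak{E}}'_\tau$ simultaneously, pick a single index $i$ large enough that the truncations $(\un{C}_i^{[r, -s]}, \tau_i^{[r, -s]})$, $((\un{C}'_i)^{[r', -s']}, (\tau'_i)^{[r', -s']})$, and the truncation of $\un{\mathfrak{E}}_\tau \otimes \un{\mathfrak{E}}'_\tau$ at $[\max\{r - s', r' - s\},\, -(s+s')]$ all realize the stable equivariant homotopy types and all three truncated involutions genuinely descend. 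Lift $z$ and $z'$ to cycles in $\un{C}_i$ and $\un{C}'_i$ with $\deg_I(z) \le -s$, $\deg_I(\un{d}z) \le r$, $\deg_I(z') \le -s'$, $\deg_I(\un{d}z') \le r'$. The filtration bookkeeping recalled in the paragraph preceding Lemma~\ref{tensor product of theta supp cycle} then shows $z \otimes z'$ is a cycle of $(\un{C}_i \otimes \un{C}'_i)^{[\max\{r - s', r' - s\},\, -(s+s')]}$, and the argument of Lemma~\ref{lem:4.9}---that $\theta \otimes \theta' + \theta \otimes \zeta' + \zeta \otimes \theta'$ is $\theta$-supported, combined with $(\tau_i \otimes \tau'_i)_*[z \otimes z'] = [z \otimes z']$ (valid in the truncated homology for $i$ large, since $\tau_{i*}[z] = [z]$ and $\tau'_{i*}[z'] = [z']$ hold there)---shows $z \otimes z'$ is moreover equivariant and $\theta$-supported. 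By Definition~\ref{def:4.22} this yields $r_{s+s'}(\un{\mathfrak{E}}_\tau \otimes \un{\mathfrak{E}}'_\tau) \ge -\max\{r - s', r' - s\} = \min\{(-r) + s',\ (-r') + s\}$.

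Next I would take the supremum over all admissible $r$ and $r'$. Since the two terms of the $\min$ depend on the independent parameters $r$ and $r'$, one has $\sup_{r, r'} \min\{(-r) + s',\ (-r') + s\} = \min\{\sup_r(-r) + s',\ \sup_{r'}(-r') + s\}$, which by Definition~\ref{def:4.22} equals $\min\{r_s(\un{\mathfrak{E}}_\tau) + s',\ r_{s'}(\un{\mathfrak{E}}'_\tau) + s\}$ (the degenerate cases $r_s = \pm\infty$ being immediate from the conventions). This establishes the inequality for generic $s, s'$. To remove the genericity hypothesis I would choose values $t < s$ and $t' < s'$ approaching $s$ and $s'$ through points with $-t \notin \mathfrak{K}$, $-t' \notin \mathfrak{K}'$, $-(t+t') \notin \mathfrak{K}^\otimes$, lying in the connected components of the respective complements immediately to the left of $s$ and $s'$, apply the generic inequality to $r_{t+t'}(\un{\mathfrak{E}}_\tau \otimes \un{\mathfrak{E}}'_\tau) \ge \min\{r_t(\un{\mathfrak{E}}_\tau) + t',\ r_{t'}(\un{\mathfrak{E}}'_\tau) + t\}$, and pass to the limit (noting $t+t' \to s+s'$ from below), using the definition of $r_s$ at points of $\mathfrak{K}$ as a left limit (Definition~\ref{def:4.22}) and the continuity of $\min$.

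The hard part will be the purely organizational step of checking that a single index $i$ can be chosen to do everything at once---to stabilize all three truncated complexes and to make $\tau_i$, $\tau'_i$, and $\tau_i \otimes \tau'_i$ honest homotopy involutions on the relevant truncations. This is exactly where the clustering condition and the requirements $\delta_i, \delta_{i,j}, \delta_{i,j,k} \to 0$ of Definition~\ref{def:4.14} are used, in the same way as in the proof of Lemma~\ref{lem:4.19}; one also needs $\mathfrak{K}^\otimes$ to be discrete (part of the data in Definition~\ref{def:4.17}) so that the limiting argument in the last step runs through regular values. Once this bookkeeping is in place, the algebra is formally identical to the proof of Lemma~\ref{lem:4.10}.
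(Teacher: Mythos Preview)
Your proposal is correct and follows essentially the same approach as the paper's proof: reduce to Lemma~\ref{lem:4.9} by choosing $i$ large enough (via Lemma~\ref{lem:4.19}) for generic values of the parameters, then handle the non-generic case by a limiting argument. The paper's proof is much terser---it simply says ``by taking $i$ sufficiently large, we obtain the conclusion of Lemma~\ref{lem:4.9}'' and ``otherwise, we apply a limiting argument as in Lemma~\ref{lem:4.23}''---but your more detailed treatment of the supremum over $r,r'$ and of the left-limit convention is a faithful (and arguably clearer) elaboration of exactly the same argument.
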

\begin{proof}
First suppose 
\[
r, s \in [-\infty, \infty] \setminus \K, \quad r', s' \in [-\infty, \infty] \setminus \K', \quad \text{and} \quad r + s', s + r', s + s' \in [-\infty, \infty] \setminus \K^\otimes. 
\]
By taking $i$ sufficiently large, we obtain the conclusion of Lemma~\ref{lem:4.9} with $\un{C}$ and $\un{C}'$ replaced by $\un{\E}_\tau$ and $\un{\E}'_\tau$. This gives the desired inequality when $s, s'$, and $s + s'$ are not in $\K \cup \K' \cup \K^\otimes$; otherwise, we apply a limiting argument as in Lemma~\ref{lem:4.23}.
\end{proof}

For completeness, we record the following formal definition:

\begin{defn}\label{def:4.25}
Let
\[
\Theta^\mathfrak{E}_\tau = \{\text{all enriched complexes}\}/\text{local equivalence}.
\]
By Lemma~\ref{lem:4.23}, $r_s$ defines a function
\[
r_s: \Theta^\mathfrak{E}_\tau \to [0, \infty].
\]
The operation of $\otimes$ makes $\Theta^\mathfrak{E}_\tau$ into a commutative monoid, with the identity element being the trivial enriched complex $\mathfrak{T}$ consisting of the constant sequence $\Z_2[y^{\pm}][-3]$ and each $\psi^j_i = \id$. We call $\smash{\Theta^\mathfrak{E}_\tau}$ the \textit{(enriched) local equivalence monoid}.
\end{defn}

Finally, we have the analog of Lemma~\ref{lem:4.12}:

\begin{lem}\label{lem:4.26}
Let $\un{\E}_\tau$ be an enriched complex. Then:
\begin{itemize}
\item There always exists an enriched local map from $\un{\E}_\tau$ to $\mathfrak{T}$.
\item There exists an enriched local map from $\mathfrak{T}$ to $\un{\E}_\tau$ if and only if $r_0(\un{\E}_\tau) = \infty$.
\end{itemize}
\end{lem}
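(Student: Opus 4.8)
The plan is to transcribe the proof of Lemma~\ref{lem:4.12} into the enriched framework, the only genuinely new feature being the need to keep the vanishing level parameters $\delta_i,\delta_{i,j},\epsilon_i$ and the clustering hypothesis under control. For the first bullet I would argue termwise: for each $i$ the quotient projection $\pi_i\colon \un{C}_i\to \un{C}_i/C_i\cong \Z_2[y^{\pm1}][-3]$ is a filtered (hence level-zero) local map, and it is equivariant since $\tau_i$ acts as the identity on the quotient --- exactly as in the first half of Lemma~\ref{lem:4.12}. Since Definition~\ref{def:4.16} imposes no compatibility between an enriched local map and the structure maps $\psi_i^j$, the sequence $\{\pi_i\}$ is automatically an enriched local map $\un{\E}_\tau\to\mathfrak{T}$, all of whose levels vanish.

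For the second bullet, the forward direction is the easy one. Given an enriched local map $\{\lambda_i\colon\mathfrak{T}\to\un{\E}_\tau\}$ of levels $\epsilon_i\to 0$, set $z_i=\lambda_i(1)$. Being a chain map, $\lambda_i$ makes $z_i$ an honest cycle; being local, $\lambda_i$ sends the generator of $\Z_2[y^{\pm1}][-3]$ to the generator, so $\pi_i(z_i)=1$ and $z_i$ is $\theta$-supported; being equivariant with $\tau=\id$ on $\mathfrak{T}$, it forces $(\tau_i)_*[z_i]=[z_i]$. Moreover $\deg_I(z_i)\le\epsilon_i$. Hence, for any $r<0<-t$ with $r,-t\notin\K$ and any $i$ large enough that $\E_\tau^{[r,-t]}=(\un{C}_i^{[r,-t]},\tau_i^{[r,-t]})$ and $\epsilon_i+\delta_i<-t$, the image of $z_i$ is an equivariant $\theta$-supported cycle in $\E_\tau^{[r,-t]}$. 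As $r$ is arbitrary this gives $r_t(\un{\E}_\tau)=\infty$ for all $t$ in a left-neighborhood of $0$, and letting $t\to0^-$ yields $r_0(\un{\E}_\tau)=\infty$ by Definition~\ref{def:4.22}.

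The converse is the substantive direction. Since $r_0=\lim_{t\to0^-}r_t=\infty$ and $r_s$ is valued in the discrete set $-\K\cup\{\pm\infty\}$, we have $r_t(\un{\E}_\tau)=\infty$ for all $t$ in some interval $[t_0,0)$; fix such a $t$ with $-t\notin\K$. Then for every $r<0$ with $r\notin\K$ the stable type $\E_\tau^{[r,-t]}$ carries an equivariant $\theta$-supported cycle, realized in $\un{C}_i^{[r,-t]}$ once $i$ is large. The plan is to promote this to an honest equivariant $\theta$-supported cycle $z_i\in\un{C}_i$ of filtration level $\epsilon_i\to 0$ (together with a primitive of $\tau_iz_i+z_i$ of comparable filtration, supplying the equivariance homotopy); then $\lambda_i\colon 1\mapsto z_i$ is the required enriched local map $\mathfrak{T}\to\un{\E}_\tau$. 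For the promotion one uses that each truncation $\un{C}_i^{[-\infty,-t]}$ is finitely generated over $\Z_2[y^{-1}]$ and that the $\deg_I$-spectrum of $\un{C}_i$ is a finite union of cosets of $\Z$, so that driving $r\to-\infty$ through successive gaps of $\K$ kills the boundary and yields $\un{d}_iz_i=0$ with $\deg_I(z_i)\le -t$; the clustering condition then guarantees that, for $i$ large, the level-$\delta_i$ map $\tau_i$ and the relevant homotopies do not carry $z_i$ or its primitive across the gap at $-t$, so the filtration estimates persist verbatim. Taking $t=t_i\to0^-$ in step with $i\to\infty$ makes $\epsilon_i\to0$.

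I expect this promotion step to be the main obstacle. It is the enriched incarnation of the one-line claim in Lemma~\ref{lem:3.18} that ``$r_0=\infty$ easily implies there is a $\theta$-supported cycle of filtration level $0$'', but now one must simultaneously cope with the varying complexes $\un{C}_i$, with $\tau_i$ and the equivariance homotopies being only level-$\delta_i$ rather than filtered, and with the requirement that the filtration bound $\epsilon_i$ decay. Keeping the interlocking parameters $\delta_i,\delta_{i,j},\delta_{i,j,k}$ and the truncation points $t_i$ shrinking in tandem --- in the style of the proof of Lemma~\ref{lem:4.19} --- is the real content; everything else is a routine transcription of the non-enriched arguments of Sections~\ref{sec:3} and~\ref{sec:4}.
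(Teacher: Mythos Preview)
Your treatment of the first bullet and of the forward implication in the second bullet matches the paper essentially verbatim. For the converse, your outline is correct, but it diverges from the paper in one structural respect, and you significantly overestimate the difficulty of the step you flag as ``the main obstacle.''

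The paper does \emph{not} attempt to synchronize a choice of $t_i\to 0^-$ with each index $i$ so as to build $\lambda_i$ directly for every $i$. Instead it fixes a sequence $t_k\to 0^-$ with $-t_k\notin\K$, chooses for each $k$ an index $i_k$ large enough that the stable type $\un{\E}_\tau^{[-\infty,-t_k]}$ is realized by $\un{C}_{i_k}^{[-\infty,-t_k]}$, extracts an equivariant $\theta$-supported cycle there, and sets $\lambda_{i_k}(1)$ equal to that cycle. This defines the enriched local map only along the increasing subsequence $(i_k)$; for an arbitrary index $i$, the paper takes the greatest $i_k\le i$ and sets $\lambda_i=\psi_{i_k}^i\circ\lambda_{i_k}$. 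The level of this composite is bounded by $-t_k+\delta_{i_k,i}$, and both summands go to zero. Using the structure maps $\psi_i^j$ to extend from a subsequence to all $i$ is the one genuinely new ingredient beyond Lemma~\ref{lem:4.12}; you do not mention it.

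Conversely, the ``promotion'' you worry about is much simpler than you suggest. Since $\un{C}_{i_k}$ is finitely generated over $\Z_2[y^{\pm1}]$ (not $\Z_2[y^{-1}]$) and $\deg_\Z(y)=8$, each fixed homological grading is finite-dimensional over $\Z_2$. Thus in gradings $-4,-3,-2$ the filtration levels occurring in $\un{C}_{i_k}^{[-\infty,-t_k]}$ form a finite set, and for $r$ below that set (and not in $\K$) the quotient $\un{C}_{i_k}^{[r,-t_k]}$ coincides with the subcomplex $\un{C}_{i_k}^{[-\infty,-t_k]}$ in those gradings. An equivariant $\theta$-supported $[r,-t_k]$-cycle is then automatically an honest equivariant $\theta$-supported cycle in the subcomplex. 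There is no need to ``drive $r\to-\infty$ through successive gaps of $\K$'' or to track the parameters $\delta_i,\delta_{i,j},\delta_{i,j,k}$ in tandem; the paper dispatches this in a single sentence.
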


\begin{proof}
The first part of the lemma is obvious, as the sequence of filtered local maps
\[
\pi_i \colon \un{C}_i \rightarrow \un{C}_i/C_i \cong \Z_2[y^{\pm 1}][-3]
\]
gives the claim. For the second part of the lemma, assume that there is an enriched local map from $\mathfrak{T}$ to $\un{\E}_\tau$. This means that we have a sequence of equivariant local maps
\[
\lambda_i \colon \Z_2[y^{\pm 1}][-3] \rightarrow \un{C}_i \text{ of level } \epsilon_i.
\]
Then $\lambda_i(1)$ is an equivariant $\theta$-supported cycle in $\un{C}_i^{[-\infty, \epsilon_i]}$ for each $i$. Now fix any $-s \notin \K$. Since in particular $-s > 0$, there is some $i$ such that $\epsilon_i < -s$. For $i$ sufficiently large, we thus obtain an equivariant $\theta$-supported cycle in
\[
\un{\E}_\tau^{[-\infty, -s]} = \un{C}_i^{[-\infty, -s]}.
\]
This shows $r_s(\un{\E}_\tau) = \infty$ for all such $s$, and hence $r_0(\un{\E}_\tau) = \infty$ by Definition~\ref{def:4.22}.

Conversely, assume $r_0(\un{\E}_\tau) = \infty$. Then we have a sequence $t_k \rightarrow 0^-$ with each $t_k \notin \K$ and $r_{t_k}(\un{\E}_\tau) = \infty$. For each $k$, select an index $i_k$ sufficiently large such that
\[
\un{\E}_\tau^{[-\infty, t_k]} = \un{C}_{i_k}^{[-\infty, t_k]}
\]
as in Definition~\ref{def:4.20}. Then there is an equivariant $\theta$-supported cycle in $\smash{\un{C}_{i_k}^{[-\infty, t_k]}}$. Construct an equivariant local map
\[
\lambda_{i_k} \colon \Z_2[y^{\pm 1}][-3] \rightarrow \un{C}_{i_k} \text{ of level } t_k
\]
by setting $\lambda_{i_k}(1)$ equal to this cycle. This partially defines an enriched local map from $\mathfrak{T}$ to $\un{\E}_\tau$, in the sense that we have defined local maps $\lambda_{i_k}$ for all $k$. Without loss of generality, we may assume the $i_k$ form an increasing sequence $\mathfrak{S}$. To define a local map $\lambda_i$ for every $i$, recall that we have local maps
\[
\psi_i^j \colon \un{C}_i \rightarrow \un{C}_j \text{ of level } \delta_{i,j}.
\]
For arbitrary $i$, let $i_k$ thus be the greatest element of $\mathfrak{S}$ which is less than or equal to $i$ and set
\[
\lambda_i = \psi_{i_k}^i \circ \lambda_{i_k}.
\]
This is an equivariant local map of level $t_k + \delta_{i_k, i}$. Since $t_k \rightarrow 0$ and the $\delta_{i,j} \rightarrow 0$, it is clear that the set of $\lambda_i$ constitute an enriched local map, as desired.
\end{proof}


\section{The analytic construction}\label{sec:5}
In this section, we review the construction of instanton Floer homology and show that a homology sphere equipped with an involution gives rise to an enriched involutive complex. We then discuss equivariant cobordisms and give some results involving connected sums. 

\subsection{Notation} \label{sec:5.1}
We begin with some notation. Our discussion here is from \cite[Section 2]{NST19}.

\subsubsection{Holonomy perturbations} \label{sec:5.1.1}

Let $Y$ by an oriented integer homology $3$-sphere. Denote:
\begin{itemize}
\item the product $SU(2)$-bundle by $P_Y$,
\item the product connection on $P_Y$ by $\theta$; and,
\item the set of $SU(2)$-connections on $P_Y$ by $\A(Y)$.
\end{itemize}
We fix a preferred trivialization of $P_Y$ in order to form the product connection $\theta$. Recall that the \textit{gauge group} $\mathrm{Map}(Y, SU(2))$ is the set of smooth maps from $Y$ into $SU(2)$. We have the usual gauge action of $\mathrm{Map}(Y, SU(2))$ on $\A(Y)$ given by 
\[
a\cdot g = g^{-1} dg + g^{-1} ag. 
\]
The \textit{degree-zero gauge group} is the subgroup $\mathrm{Map}_0(Y, SU(2))$ of $\mathrm{Map}(Y, SU(2))$ consisting of gauge transformations with mapping degree zero. In this paper, we consider the quotient of $\A(Y)$ by the degree-zero gauge group, rather than the full gauge group. Note that former is in $\Z$-to-$1$ correspondence with the latter. Denote:
\begin{itemize}
\item $\smash{\wt{B}(Y) = \A(Y) /\mathrm{Map}_0(Y, SU(2))}$; and,
\item $\wt{B}^*(Y) =\{[a] \in  \wt{B}(Y) \ | \ a \text{ is irreducible} \}$. 
\end{itemize}
Here, a connection is said to be \textit{irreducible} if its stabilizer under the action of $\mathrm{Map}_0(Y, SU(2))$ consists of the two constant gauge transformations $\pm I$.

Given a fixed trivialization of $P_Y$, the {\it Chern-Simons functional on $\A(Y)$} is the map $cs_Y$ from $\A(Y)$ to $\R$ defined by 
\[
cs_Y(a) = \frac{1}{8\pi^2}\int_Y \Tr\left(a\wedge da +\frac{2}{3}a\wedge a\wedge a\right).
\]
It is a standard fact that 
\begin{equation}\label{eq:csgauge}
cs_Y(a \cdot g) - cs(a)= \text{deg} (g)
\end{equation}
for any $g \in \mathrm{Map}(Y,SU(2))$, where $\text{deg} (g)$ is the degree of $g$. Thus $cs_Y$ descends to a map 
\[
cs_Y \colon \wt{B}(Y)\ri \R, 
\]
which we also denote by $cs_Y$. We write $\mathfrak{K}_Y$ for the set of critical values of $cs_Y$.

\begin{defn}
For any $d \in \Z^{\geq 0}$ and fixed $l \gg 2$, define the set of orientation-preserving embeddings of $d$ disjoint solid tori into $Y$:
\[
\mathcal{F}_d=  \left\{ (f_i \colon S^1\times D^2 \hookrightarrow Y )_{i = 1}^d 
\  \middle| \  f_i(S^1 \times D^2) 
\text{ are mutually disjoint} \right\}
\]
and denote by $C^{l}(SU(2)^d,\R)_{\mathrm{ad}}$ 
the set of adjoint-invariant $C^l$-class functions on $SU(2)^d$. The \it{set of holonomy perturbations on $Y$} is defined by
\[
\mathcal{P}(Y)= \bigcup_{d \in \Z^{\geq 0}}\mathcal{F}_d\times C_{\mathrm{ad}}^{l}(SU(2)^d,\R).
\]
\end{defn}

A holonomy perturbation gives rise to a perturbation of $cs_Y$, constructed as follows: 

\begin{defn}
Fix a 2-form $d\mathcal{S}$ on $D^2$ supported in the interior of $D^2$ with $\int_{D^2}d\mathcal{S}=1$. For any $\pi = (f,h)\in \mathcal{P}(Y)$, 
we define the {\it $\pi$-perturbed Chern-Simons functional}
\[
cs_{Y,\pi} \colon\widetilde{\B}(Y) \ri \R
\]
by
\[
cs_{Y,\pi}(a)= cs_Y(a) + h_\pi(a) = cs_Y(a)
+ \int_{x \in D^2} h(\mathrm{hol}_{f_1(-,x)}(a), \dots, \mathrm{hol}_{f_d(-,x)}(a)) d\mathcal{S},
\]
where $\mathrm{hol}_{f_i(-,x)}(a)$ is the holonomy around the loop $s \mapsto f_i(s,x)$ for each $1 \leq i \leq d$. 
\end{defn}

We denote $\|h\|_{C^l}$ by $\|\pi\|$. 


\subsubsection{Gradient-line trajectories} \label{sec:5.1.2}

The gradient-line equation of $cs_{Y,\pi}$ with respect to the $L^2$-metric is given by: 
\begin{align}\label{grad}
 \frac{\partial}{\partial t} a_t=\grad_{a_t} cs_{Y,\pi} = *_{g_Y}F(a_t) + \grad_{a_t} h_\pi ,
\end{align}
where $*_{g_Y}$ is the Hodge star operator. For the precise form of $\grad_{a_t} h_\pi $, see for instance \cite[Section 2.1.3]{NST19}. Let
\[
\widetilde{R}_\pi(Y) = \left\{a \in \widetilde{\B}(Y) \Biggm |F(a)+\grad_a h_\pi  =0  \right\}
 \]
 and 
 \[
 \widetilde{R}_\pi^*(Y) = \widetilde{R}_\pi(Y) \cap \widetilde{\B}^*(Y).
 \]
We furthermore assume that $\pi$ has been chosen such that $\widetilde{R}_\pi(Y) = \widetilde{R}_\pi^*(Y) \cup ([\theta] \times \Z)$, i.e. we shall take a perturbation $h$ so that $h$ is $0$ near a neighborhood of $(\id, \ldots, \id)$. 

\begin{defn}
The solutions of \eqref{grad} correspond to $SU(2)$-connections $A$ on the trivial $SU(2)$-bundle over $\R \times Y$ which satisfy the \textit{perturbed ASD equations}:
\begin{align}\label{pASD}
F^+(A)+ \pi^+(A) = 0,
\end{align}
where $\pi(A)$ is a particular $\su$-valued $2$-form over $Y \times \R$. See \cite[Section 2.1.3]{NST19} for the explicit form of $\pi(A)$. The superscript $+$ is the self-dual part of a 2-form with respect to the product metric on $\R \times Y$; that is, $(1+*)/2$ where $*$ is the Hodge star operator. 
\end{defn}

Fix a holonomy perturbation $\pi$ on $Y$. For $a$ and $b$ in  $\wt{R}^*_\pi(Y)$, define the \textit{moduli space of trajectories} $M_\pi(a,b)$ as follows. Let $A_{a,b}$ be an $SU(2)$-connection on $Y \times \R$ satisfying 
\[
A_{a,b}|_{Y\times (-\infty,-1]}=p^*a \quad \text{and} \quad A_{a,b}|_{Y\times [1,\infty)}=p^*b,
\]
where $p$ is the projection $\R \times Y \ri Y$. Then we define
\begin{align}\label{*}
M_\pi(a,b) =\left\{A = A_{a,b}+c \ \middle | \ c \in \Omega^1(\R \times Y)\otimes \su_{L^2_q}\text{ with } A \text{ satisfying } \eqref{pASD} \right\}/ \G(a,b),
\end{align}
where $\G(a,b)$ is given by 
\[
\G(a,b)=\left\{ g \in \aut(P_{\R \times Y})\subset { \Gamma (\R \times Y; \underline{\End(\mathbb{C}^2)})    }_{L^2_{q+1,\text{loc}}} \ \middle | \ \nabla_{A_{a,b}}(g) \in L^2_q \right\}. 
\]
Here, 
 \[
 \|f\|^2_{L^2_q}=\sum_{0\leq j \leq q} \int_{\R \times Y} |\nabla^j_{A_{a,b}}f|^2
 \]
for $f \in \Om^i(\R \times Y) \otimes \su$ with compact support, where $|-|$ is the product metric on $\R \times Y$, $q \geq 3$ and $\underline{\End(\mathbb{C}^2)}$ is the product bundle whose fiber is $\End(\mathbb{C}^2)$ on $ \R \times Y$. The action of $g \in \G(a,b)$ on the numerator of (\ref{*}) is given by pulling back connections along $g$. 

We also allow $a = \theta$ so long as $b$ is irreducible. In this setting, we construct a slightly different moduli space $\smash{M_{\pi, \delta}(\theta, b)}$ by replacing $A_{a, b}$ with a similarly-defined reference connection $A_{\theta, b}$ and using the $\smash{L^2_{q,\delta}}$-norm in (\ref{*}) instead of $L^2_q$-norm.
The $\smash{L^2_{q,\delta}}$-norm is given by
\[
  \|f\|^2_{L^2_{q,\delta}}=\sum_{0\leq j \leq q} \int_{\R \times Y}e^{\delta \sigma} |\nabla^j_{A_{\theta, b}}f|^2
\]
for some small $\delta > 0$. Here, $\sigma \colon \R \times Y \ri \R$ is a smooth function with 
\[
\sigma(y, t) = 
\begin{cases}
- t &\text{ for } t < 0\\
0 &\text{ for } t > 1.
\end{cases}
\]
For convenience of notation, we continue to write $\smash{M_{\pi}(a, b)}$ in place of $\smash{M_{\pi, \delta}(\theta, b)}$ when $a = \theta$. A similar construction holds when $a$ is a $y^i$-multiple of $\theta$, or when $a$ is irreducible and $b$ is a $y^i$-multiple of $\theta$, although we will not need the latter. In each case, we have an $\R$-action on $\smash{M_{\pi}(a, b)}$ given by translation.


We say that a perturbation $\pi$ is \textit{nondegenerate} if at each critical point of $cs_{Y, \pi}$, there is no kernel in the formal Hessian of $cs_{Y, \pi}$. We say that a perturbation $\pi$ is \textit{regular} if for any pair of critical points $a$ and $b$ and $A \in  M_\pi(a,b)$, the linearization 
\begin{align*}
D_A(F^+(A)+ \pi^+(A)) : \Omega^1(\R \times Y)\otimes \su_{L^2_{q} } \to \Omega^+ (\R \times Y)\otimes \su_{L^2_{q-1}}
\end{align*}
is surjective, with the understanding that the $\smash{L^2_q}$- and $\smash{L^2_{q-1}}$-norms should be replaced with the $\smash{L^2_{q, \delta}}$- and $\smash{L^2_{q-1, \delta}}$-norms if $a = \theta$. 
For more detailed arguments involving holonomy perturbations (such as questions regarding smoothness), see \cite[Proposition 7]{Kr05} and \cite[Proposition D.1]{SaWe08}. 


\subsubsection{ASD moduli spaces} \label{sec:5.1.3}
Now let $W$ be a negative-definite, connected cobordism from $Y$ to $Y'$ with $b_1(W)=0$. Suppose $Y$ and $Y'$ are connected. 
 Throughout, we fix nondegenerate regular holonomy perturbations $\pi$ and $\pi'$ on $Y$ and $Y'$, respectively.
 Let $W^*$ be the cylindrical-end $4$-manifold
\[
W^*= \left(   (-\infty, 0]  \times Y  \right) \cup W \cup \left( [0, \infty)  \times Y' \right).  
\]
Choose a metric $g_{W^*}$ on $W^*$ which coincides with the product metric on $Y \times (-\infty, 0] $ and $Y \times [0, \infty)$. 

\begin{defn}
For any $d \in \Z^{\geq 0}$ and fixed $l \gg 2$, define the set of orientation-preserving embeddings of the set of $d$ disjoint copies of $S^1 \times D^3$ into $W$: 
\[
\mathcal{F}_d(W) =  \left\{ (f_i \colon S^1\times D^3 \hookrightarrow W )_{i = 1}^d 
\  \middle| \  f_i(S^1 \times D^3)
\text{ are mutually disjoint} \right\}.
\]
The \textit{set of holonomy perturbations on $W$} is given by: 
\[
\mathcal{P}^* (W) = \bigcup_{d \in \Z^{\geq 0}} \mathcal{F}_d(W) \times C^l_{\text{ad}}(SU(2), \R)^d \times C^l (\R^d, \R).
\]
\end{defn}

Given $\pi_W \in \mathcal{P}^*(W)$, one can define a perturbation $2$-form $\pi_W(A)$ of the usual ASD equations on $W$, just as in Section~\ref{sec:5.1.2}. See \cite[Equation (14)]{Ta22} for the explicit form of $\pi_W(A)$. More generally, we consider perturbations of the ASD equations on $W^*$ taking into account a choice of holonomy perturbation on the ends:

\begin{defn}
Let $\pi_W$ be a holonomy perturbation on $W$ and $\pi$ and $\pi'$ be holonomy perturbations on $Y$ and $Y'$. We define the \textit{perturbed ASD equations on $W^*$} to be 
\begin{equation}\label{cob}
F^+(A)+ \pi_W^+(A) + \rho_- \cdot \pi^+(A) +   \rho_+ \cdot (\pi')^+(A) =0.
\end{equation}
Here, $\rho_\pm$ are cutoff functions on $Y \times (-\infty, 0]$ and $Y \times [0, \infty)$, respectively, satisfying 
\begin{align*}
\rho_- ( t,y)=
\begin{cases} 
1 \text{ if } t<-1\\ 
0 \text{ if } -\frac{1}{2} <t \leq 0 
\end{cases} 
\text{and} \quad
\rho_+ ( t,y)= 
\begin{cases} 
0 \text{ if }  0 \leq t < \frac{1}{2} \\
1 \text{ if } 1<t.
\end{cases}  
\end{align*} 
The terms $\pi^+(A)$ and $(\pi')^+(A)$ are the gradient-line perturbations discussed in Section~\ref{sec:5.1.2}, applied to the restriction of $A$ over the ends $Y \times (-\infty, 0]$ and $Y \times [0, \infty)$. We refer to the triple $(\pi_W, \pi, \pi')$ as a \textit{holonomy perturbation on $W^*$ with ends $\pi$ and $\pi'$}. We often only write $\pi_W$, leaving $\pi$ and $\pi'$ implicit. Note that the perturbation term $\smash{\pi_W^+(A)}$ is supported in the interior of $W$, while the terms $\smash{\rho_- \cdot \pi^+(A)}$ and $\smash{\rho_+ \cdot (\pi')^+(A)}$ are supported on the ends of $W^*$.
\end{defn}

As in the case of holonomy perturbations on $Y$, we define the norm $\| \pi_W \|$ of $\pi_W$ in terms of the induced perturbation of the ASD equations on $W$.  
When we speak of the norm of a holonomy perturbation on $W^*$, we will mean the maximum of $\| \pi_W \|$, $\| \pi \|$, and $\| \pi' \|$, which are defined as the $C^k$ norms of $ j_W \circ h_W$, $h$ and $h'$ if $\pi_W= (g_W, h_W, j_W)$, $\pi= (f, h)$ and $\pi'=(f', h')$.


Fix a holonomy perturbation on $W^*$. For $a\in \wt{R}_{\pi}(Y)$ and $b\in  \wt{R}_{\pi'}(Y')$, define the \textit{ASD moduli space} by
\[
{M}_{\pi_W} (a,W^* ,b)= \left\{ A = A_{a,b} + c \ \middle | \ c \in \Om^1(W^*) \otimes \su _{L^2_q} \text{ with } A \text{ satisfying } \eqref{cob} \right\} /\G (a,W^*, b).
\]
Here, we have suppressed the data of $\pi$ and $\pi'$ in the subscript. The reference connection $A_{a,b}$ and the group $\G (a,W^*, b)$ are defined in a similar way as in the gradient-line case. As before, we allow $a$ to be a $y^i$-multiple of $\theta$, in which case we must replace the $\smash{L^2_{q}}$-norm with the $\smash{L^2_{q, \delta}}$-norm.

We say that a given moduli space ${M}_{\pi_W} (a,W^* ,b)$ is \textit{regular} if for any point $A \in {M} (a,W^* ,b)$, the linearization 
\begin{align*}
D_{A}( F^+(A)+ \pi_W^+(A) + \rho_- \pi(A)^ + +   \rho_+ \pi'(A)^ + ) : &\\
\Omega^1 (W^*)\otimes \su_{L^2_q}  \to &  \Omega^+ (W^*)\otimes \su_{L^2_{q-1}}
\end{align*}
is surjective. If $a$ is reducible, then we use the weighted $\smash{L^2_{q, \delta}}$-norm instead of the $\smash{L^2_q}$-norm, as in the case of $\R \times Y$. 

We will also need to consider instanton moduli spaces associated to a family of perturbations. Let $B$ be a smooth manifold with boundary or corners; we will usually have $B=[0,1]$ or $B= [0,1]^2$. For the family setting, we usually fix a collection of embeddings ${\bf f} = (f_i) \in \mathcal{F}_d(W)$. 
For a smooth map  ${\bf h}: B\to C^l_{\text{ad}}(SU(2), \R)^d \times C^l (\R^d, \R)$, define the \textit{moduli space with family $B$} by 
\[
{\bf M}_{  \pi_W = ({\bf f}, {\bf h} ) } (a, W^*, b) = \bigcup_{ ({\bf f} ,{\bf h}(b) ) \  b \in B} M_{({\bf f} , {\bf h}(b) )} (a, W^*, b)  , 
\]
where, for each $b \in  B$, we consider the moduli space of the solutions to
\[
F^+(A)+ \pi_{W, b}^+(A) + \rho_- \cdot \pi^+(A) +   \rho_+ \cdot (\pi')^+(A) =0
\]
and $\pi_{W, b}^+$ is the 4-dimensional perturbation determined by $({\bf f} ,{ \bf h}(b) )$. 
We say that $(b, A) \in {\bf M}_{\bf \pi} (a, W^*, b)$ is \textit{regular} if the linearization 
\begin{align*}
D_{(b,A)}( F^+(A)+ \pi_W^+ (A) + \rho_- \pi(A)^ + +   \rho_+ \pi'(A)^ + ) : & \\
\Om^1 (W^*) \otimes \su_{L^2_q} \oplus T_{b} B   \to & \Om^+ (W^*) \otimes \su_{L^2_{q-1}} 
\end{align*}
is surjective. If every point in ${\bf M}_{\bf \pi} (a, W^*, b)$ is regular, we say that ${\bf M}_{\bf \pi} (a, W^*, b)$ is regular. 
Note that the formal dimension of ${\bf M}_{\bf \pi} (a, W^*, b)$ is $\dim B + \ind(a)-\ind(b)$ when $a$ and $b$ are irreducible and  $\dim B -3 -\ind(b)$ when $a$ is reducible. 

\begin{lem}\label{exi of family per} Let $C$ be a positive real number. 
Let $\pi$ and $\pi'$ be nondegenerate regular perturbations on $Y$ and $Y'$, respectively. Let $B$ be a manifold with boundary or corners and 
\[
\pi_{W, \partial}  = ({\bf f_\partial } = (f_i)_{(1 \leq i \leq n)} , {\bf h})  : \partial B\to \mathcal{P}^* (W)
\]
 be a regular perturbation as a family for ${\bf M}_{\pi_{W, \partial}   } (a, W^*, b)$ with respect to critical points $a$ and $b$ of $cs_{Y, \pi}$ and $cs_{Y', \pi'}$ respectively satisfying $\ind(a) - \ind(b) \leq C$. (We assume one of $a$ and $b$ is irreducible. )

Then, there is an extension 
\[
\pi_W =  ({\bf f } = (f_i)_{(1 \leq i \leq m)} ,  {\bf h},  {\bf h} ') : B \to \mathcal{P}^* (W)
\]
such that $\pi_{W, \partial, b }^+(A) =  \pi_{W, b }^+(A)$ for $b \in \partial B$ and the moduli space  ${\bf M}_{\pi_W} (a, W^*, b)$ is regular for given $a$ and $b$ satisfying $\ind(a) - \ind(b) \leq C$, where $n \leq m$.
\end{lem}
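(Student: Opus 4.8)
The plan is to prove the lemma by a parametrized Sard--Smale argument, carried out relative to the boundary data $\pi_{W,\partial}$, using the standard fact that holonomy perturbations supported in the interior of $W$ are rich enough to achieve transversality. The first observation is that only finitely many pairs $(a,b)$ matter: since $\pi$ and $\pi'$ are nondegenerate, the critical sets $\wt{R}_\pi(Y)$ and $\wt{R}_{\pi'}(Y')$ are discrete, and up to the $y$-translation there are only finitely many pairs with $\ind(a)-\ind(b)\le C$. It therefore suffices to arrange regularity of ${\bf M}_{\pi_W}(a,W^*,b)$ for each such pair separately and then take a perturbation that works for all of them at once.

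First I would pick an arbitrary smooth extension of ${\bf h}\colon\partial B\to C^l_{\text{ad}}(SU(2),\R)^n\times C^l(\R^n,\R)$ over all of $B$; call the resulting extension of $\pi_{W,\partial}$ by $\pi_W^0$. It need not be regular. To correct it, I would adjoin $m-n$ further embeddings $f_{n+1},\dots,f_m\colon S^1\times D^3\inc W$, mutually disjoint and disjoint from the $f_i(S^1\times D^3)$ with $i\le n$, whose supporting loops are chosen dense enough in the interior of $W$, and allow the perturbation to vary over the Banach space of maps ${\bf h}'\colon B\to C^l_{\text{ad}}(SU(2),\R)^{m-n}\times C^l(\R^{m-n},\R)$ whose restriction to $\partial B$ is zero, considering perturbations $\pi_W^0+({\bf f}',{\bf h}')$ with ${\bf f}'=(f_i)_{n<i\le m}$. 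Every such perturbation agrees with $\pi_{W,\partial}$ over $\partial B$, so the requirement $\pi_{W,\partial,b}^+(A)=\pi_{W,b}^+(A)$ for $b\in\partial B$ is built in.

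The main step is the transversality of the universal moduli space
\[
{\bf M}^{\text{univ}}(a,W^*,b)=\{\, (b_0,A,{\bf h}') : b_0\in B,\ {\bf h}'|_{\partial B}=0,\ A \text{ solves } \eqref{cob} \text{ for } \pi_{W,b_0}^0+({\bf f}',{\bf h}'(b_0)) \,\}/\G,
\]
with the end perturbations $\pi,\pi'$ held fixed and the weighted $L^2_{q,\delta}$-norm used whenever $a$ or $b$ is reducible. At points lying over $\partial B$ the variations of ${\bf h}'$ are trivial, but the derivative of the defining section in the $T_{b_0}B\supseteq T_{b_0}(\partial B)$ directions already makes it surjective, by the hypothesis that $\pi_{W,\partial}$ is a regular family over $\partial B$. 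At points over the interior of $B$, where ${\bf h}'$ is unconstrained, surjectivity follows from the usual argument: an element of the cokernel of the linearized perturbed ASD operator at a solution satisfies an elliptic equation, hence by unique continuation is nonzero on an open set, and since the loops $f_i$ with $i>n$ can be chosen dense in the interior of $W$, some variation of ${\bf h}'$ pairs nontrivially with it. This is the standard richness property of holonomy perturbations; compare \cite[Proposition 7]{Kr05}, \cite[Proposition D.1]{SaWe08}, and \cite{Ta22}. Granting this, the forgetful map $(b_0,A,{\bf h}')\mapsto{\bf h}'$ is Fredholm, so by the Sard--Smale theorem its regular values form a residual subset of the Banach space of admissible ${\bf h}'$. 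Intersecting the finitely many residual sets, one for each relevant pair $(a,b)$, produces a single ${\bf h}'$, hence a perturbation $\pi_W=({\bf f},{\bf h},{\bf h}')$ with $m\ge n$, which restricts to $\pi_{W,\partial}$ over $\partial B$ and for which ${\bf M}_{\pi_W}(a,W^*,b)$ is regular for every $a,b$ with $\ind(a)-\ind(b)\le C$.

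The hard part will be the transversality step: verifying that holonomy perturbations supported in the interior of $W$ (and vanishing over $\partial B$) suffice to fill the cokernel. This rests on the unique continuation argument for the perturbed ASD equations and needs extra care near the reducible connection $\theta$, where the weighted norms must be used and the expected dimension formula changes (to $\dim B-3-\ind(b)$, as recorded just before the lemma). A minor additional point is that $B$ is in general a manifold with corners, so the extension ${\bf h}$ and the Banach space of corrections ${\bf h}'$ should be set up compatibly with the corner structure; this is routine. Everything else follows the corresponding unparametrized and one-parameter transversality statements of \cite{NST19} and \cite{Ta22}.
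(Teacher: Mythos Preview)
Your proposal is correct and follows exactly the standard parametrized Sard--Smale argument that the paper itself defers to: the paper does not give a self-contained proof but simply observes that, since one of $a$ and $b$ is irreducible, every connection in the moduli space is irreducible, and then cites \cite[Corollary 14]{Kr05}, \cite[Theorem 8.3]{SaWe08}, and \cite{KM11, Sc15} for the family version. Your sketch is a faithful unpacking of that argument; the one point the paper emphasizes more sharply is that irreducibility of every solution (not merely of one endpoint) is what makes the holonomy-perturbation transversality go through without obstruction, so the ``extra care near $\theta$'' you mention is only about the weighted Fredholm setup and not about the cokernel-filling step.
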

The bound $\leq C$ is not essential, but in this paper, we only use finite components of moduli spaces, so we only use this weaker statement. (For the transversality for infinite many moduli spaces using holonomy perturbations, see \cite[Definition 9]{Kr05}.) 
Since every point in the moduli spaces is irreducible, the proof is essentially the same as the poof of the usual transversality argument to prove invariance of chain homotopy type of instanton complexes, see for example \cite[Corollary 14]{Kr05} and \cite[Theorem 8.3]{SaWe08}. Also, the family ASD-moduli spaces with family holonomy perturbations are treated in several preceding studies, for example, see \cite{KM11, Sc15}. 


\subsection{Instanton Floer homology} \label{sec:5.2}
We now show that choosing a holonomy perturbation on $Y$ gives rise to an instanton-type complex in the sense of Definition~\ref{def:3.1}. This construction is due to Donaldson and may be found in \cite[Section 7]{Do02}. The involvement of the Chern-Simons filtration parallels the formalism of enriched instanton knot Floer theory developed in \cite{DS19}. 

\subsubsection{The instanton chain complex.} \label{sec:5.2.1} Let $Y$ be an oriented homology 3-sphere equipped with a Riemannian metric. Fix a nondegenerate regular holonomy perturbation $\pi$ on $Y$. We define the \textit{(irreducible) instanton chain group} to be the formal $\Z_2$-span of points in $\smash{\wt{R}_\pi^*(Y)}$: 
\[
C(Y, \pi) = \mathrm{span}_{\Z_2} \{ [a] \in \wt{R}_{\pi}^*(Y) \}.
\]
The relative index difference $\ind(a) - \ind(b) = \dim M_\pi(a, b)$ remains well-defined after quotienting out by the degree-zero gauge group and hence descends to an absolute $\Z$-grading on $C(Y, \pi)$.
We convert this to an absolute $\Z$-grading by setting $\ind_- \theta = -3$\footnote{There are two abusolute $\Z$-degrees for $\theta$, $\ind_+ (\theta)=0$ and $\ind_- (\theta)=-3$. The dimension of the moduli space $M(a, \theta)$ can be computed by $\ind(a) - \ind_+(\theta) = \ind (a)$.} and defining $\ind_-(\theta) - \ind(b) = \dim M_{\pi}(\theta, b)$. We thus denote $\ind$ by $\deg_\Z$. The differential is given by
\[
da = \sum_{\deg_\Z (a) - \deg_\Z(b) =1} \# (M_{\pi} (a, b)/\R) \cdot b,
\]
extending $\Z_2$-linearly. When we work over $\Z$, we must orient each moduli space $M_{\pi} (a, b)/\R$ to obtain a signed count of points; see \cite[Section 2.2]{NST19} for details.
There is a free action of $\Z_2[y^{\pm 1}]$ on $C(Y, \pi)$ where $y^{\pm 1}$ represents a degree-$(\pm 1)$ gauge transformation; it is well-known that $\deg_\Z(y) = 8$. It is not hard to check that $d$ is in fact $\Z_2[y^{\pm 1}]$-equivariant and hence $C(Y, \pi)$ is $\Z/8\Z$-periodic. Finally, we also have a map $D_2: \Z_2[y^{\pm 1}][-3] \to C_{-4} (Y, \pi)$ given by 
\[
D_2 (1) = \sum_{\deg_\Z(b) =-4} \# (M_{\pi} (\theta , b)/\R) \cdot b. 
\]

\begin{defn} Define
\[
\un{C}(Y, \pi)  = C(Y, \pi) \oplus \Z_2[y^{\pm 1}][-3] \quad \text{and} \quad \un{d} = d + D_2.
\]
We put a filtration on $C(Y, \pi)$ by setting $\deg_I(a) = cs_{\pi} (a)$; it follows from (\ref{eq:csgauge}) that $\deg_I(y) = 1$. Similarly, we define $\deg_I(y^k) = k$ on $\Z_2[y^{\pm 1}][-3]$. This gives an instanton-type complex in the sense of Definition~\ref{def:3.1}, with $C(Y, \pi) \subseteq \un{C}(Y, \pi)$ forming the desired subcomplex.
\end{defn}

Now let $W$ be a negative-definite cobordism from $Y$ to $Y'$ with $b_1(W) = 0$. Choose a regular holonomy perturbation $\pi_W$ on $W^*$ with ends $\pi$ and $\pi'$. We obtain a cobordism map
\[
F_{\pi_W} : \un{C} (Y, \pi) \to \un{C} (Y', \pi')
\] 
as follows. For any $a \in \un{C}(Y, \pi)$, we let
\[
F_{\pi_W} a = \sum_{\substack{b\in \un{C}(Y', \pi') \\ \deg_\Z(a) = \deg_\Z(b)}} \# M_{\pi_W}(a,W^*, b) \cdot b
\]
with the additional convention that
\[
\# M_{\pi_W}(a, [0,1] \times Y ^*, b) =
\begin{cases}
| H_1(W, \Z) |  \operatorname{mod} 2 & \text{ if } a = \theta \text{ and } b = \theta \\
0 & \text{ if } a \neq \theta \text{ and } b = \theta.
\end{cases}
\]
The case where $a = \theta$ and $b$ is irreducible was already covered in the discussion of Section~\ref{sec:5.1.3}. In order to see $F_{\pi_W}$ is a two step filtered chan map, we need to identify $M_{\pi_W}(\theta, [0,1] \times Y ^*, \theta)$ with sufficiently perturbed flat connections over $W$. This is already done in the non-equivariant setting. See \cite[Section 2.2]{Da20} and \cite{NST19} for more precise arguments.  
Note that in the present formalism, we thus have $\smash{F_{\pi_W} \theta = z + | H_1(W, \Z) | \cdot \theta}$ for some $z \in C(Y', \pi')$, while $\theta$ does not appear in the image of any generator in $C(Y, \pi)$. It is thus clear that if $H_1(W, \Z_2) = 0$, then $| H_1(W, \Z)| $ is odd, therefore $\smash{F_{\pi_W}}$ is a local map. It can be shown that the level of $\smash{F_{\pi_W}}$ is a function of $\| \pi_W \|$ (and $\| \pi \|$ and $\| \pi' \|$), which goes to zero as $\| \pi_W \|$ (and $\| \pi \|$ and $\| \pi' \|$) go to zero. Again the norm of a holonomy perturbation on $W^*$, we mean the maximum of $\| \pi_W \|$, $\| \pi \|$, and $\| \pi' \|$, which are defined as the $C^k$ norms of $ j_W \circ h_W$, $h$ and $h'$ if $\pi_W= (g_W, h_W, j_W)$, $\pi= (f, h)$ and $\pi'=(f', h')$.

\subsubsection{Enriched complexes.} \label{sec:5.2.2} Now suppose that we have chosen a sequence of nondegenerate regular holonomy perturbations $\pi_i$ on $Y$ with $\| \pi_i \| \rightarrow 0$. For any pair of perturbations in this sequence, let $\pi^j_i$ be a regular holonomy perturbation on $Y \times [0,1]^*$ with ends $\pi_i$ and $\pi_j$. Treating $Y \times [0, 1]$ as a cobordism from $Y$ to itself, this gives a local map
\[
\psi_i^j: \un{C}(Y, \pi_i) \to \un{C}(Y, \pi_j ).
\]
We may also assume that $\|\pi_i^j\| \to 0$ as $i, j \rightarrow \infty$, so that the level of $\smash{\psi_i^j}$ goes to zero. Setting aside the action of $\tau$, it is shown in \cite[Section 2.3]{NST19} that the sequence $\un{C}(Y, \pi_i)$ together with the maps $\smash{\psi_i^j}$ defines an enriched complex.

We similarly claim that a cobordism $W$ from $Y$ to $Y'$ induces a map of enriched complexes. Let $\pi_i$ and $\pi'_i$ be sequences of nondegenerate regular holonomy perturbations on $Y$ and $Y'$ with $\| \pi_i \|, \| \pi'_i \| \rightarrow 0$. For each $i$, let $(\pi_W)_i$ be a regular holonomy perturbation on $W^*$ with ends $\pi_i$ and $\pi'_i$. Then we obtain a sequence of cobordism maps
\[
F_i = F_{(\pi_W)_i} : \un{C} (Y, \pi_i) \to \un{C} (Y', \pi'_i).
\]
Moreover, we may choose $(\pi_W)_i$ such that $\|(\pi_W)_i \| \rightarrow 0$. Setting aside the action of $\tau$, this defines a map of enriched complexes in the sense of \cref{def:4.16}. 

\subsection{Construction of $\tau$} \label{sec:5.3}

Now suppose that $Y$ is equipped with an orientation-preserving involution $\tau$. Henceforth, we assume that we have chosen a $\tau$-invariant metric on $Y$. Let $\pi_i$ be a sequence of holonomy perturbations on $Y$ as in the previous subsection. For each $i$, let $\pi_i^\tau$ be a holonomy perturbation on $[0, 1] \times Y ^*$ with ends $\pi_i$ and $\tau^* \pi_i$. We construct a chain map
\begin{equation}\label{eq:deftau}
\tau_i: \un{C}(Y, \pi_i)  \to \un{C}(Y, \tau^* \pi_i) = \un{C}(Y, \pi_i) 
\end{equation}
as follows. The map from $\un{C}(Y, \pi_i)$ to $\un{C}(Y, \tau^* \pi_i)$, which by abuse of notation we also denote by $\tau_i$, is just the cobordism map associated to $ [0, 1] \times Y $ with the perturbation $\pi^\tau_i$. Explicitly, for any $a \in \un{C}(Y, \pi_i)$, let
\[
\tau_i a = \sum_{\substack{b \in \un{C}(Y, \tau^* \pi_i) \\ \deg_\Z(a) = \deg_\Z(b)}} \# M_{\pi^\tau_i}(a, Y \times [0,1]^*, b) \cdot b
\]
with the additional convention that
\[
\# M_{\pi^\tau_i}(a, Y \times [0,1]^*, b) =
\begin{cases}
1 & \text{ if } a = \theta \text{ and } b = \theta \\
0 & \text{ if } a \neq \theta \text{ and } b = \theta.
\end{cases}
\]
The case where $a = \theta$ and $b$ is irreducible was already covered in the discussion of Section~\ref{sec:5.1.3}. The sign of the moduli spaces are also given as the usual cobordism map. 
Note that this means $\tau_i \theta = z + \theta$ for some $z \in C(Y, \tau^* \pi_i)$, while $\theta$ does not appear in the image of any generator in $C(Y, \pi_i)$. We then complete (\ref{eq:deftau}) by composing with the tautological identification $\un{C}(Y, \tau^* \pi_i) = \un{C}(Y, \pi_i)$ given by the pullback of connections. If we furthermore assume $\| \pi_i^\tau \| \rightarrow 0$, then we see that $\tau_i$ is a local map whose level goes to zero as $i \rightarrow \infty$

We now show that the sequence $\tau_i$ makes the family $\un{C}(Y, \pi_i)$ into an enriched complex in the sense of Definition~\ref{def:4.14}. Let $\smash{\pi_i^j}$ and $\smash{\psi^j_i}$ be defined as in the previous subsection. We claim:

\begin{lem}\label{ex prf of enriched local inv str}
The following hold:
\begin{itemize}
\item [(i)]
For each $i$ and $j$, we have
\[
\psi^j_i \tau_i \simeq \tau_j \psi^j_i
\]
via a chain homotopy $H_i^j$ of level $\delta_{i, j}$, where $\delta_{i, j} \rightarrow 0$.
\item[(ii)] For each $i$, we have
\[
\tau_i^2 \simeq \id
\]
via a chain homotopy $H_i$ of level  $\delta_i$, where $\delta_i \rightarrow 0$.
\end{itemize}
\end{lem}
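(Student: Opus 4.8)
The plan is to prove both statements by the standard cobordism-map template: rewrite each composite as the cobordism map of a glued cobordism, apply the composition law for instanton cobordism maps from \cite{NST19, Da20}, and then use that the cobordism map of the cylinder $Y\times[0,1]^*$ depends on its holonomy perturbation only up to $\Z_2[y^{\pm1}]$-equivariant chain homotopy. The requisite homotopies would be produced exactly as in the proof of invariance of the chain homotopy type of the instanton complex: choose a path of holonomy perturbations between the two relevant ones (keeping the ends fixed), make the associated $1$-parameter family of ASD moduli spaces regular using Lemma~\ref{exi of family per}, and count boundary points of its $1$-dimensional components. The one structural input I would lean on repeatedly is that $\tau_i$ is, by construction, the cylinder cobordism map with ends $\pi_i$ and $\tau^*\pi_i$ followed by the tautological identification $\iota_\tau\colon \un{C}(Y,\tau^*\pi_i) = \un{C}(Y,\pi_i)$ given by pullback of connections along $\tau$. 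By naturality of ASD moduli spaces under the isometry $\tau\times\id$ of $Y\times[0,1]$, precomposing a cylinder cobordism map with $\iota_\tau$ simply replaces its holonomy perturbation and its ends by their $\tau\times\id$-pullbacks, and since $\|\pi\| = \|\tau^*\pi\|$ this changes no norms.

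For (i), I would write $\tau_i = \iota_\tau F_i$ and $\tau_j = \iota_\tau F_j$, with $F_i, F_j$ the cylinder maps with ends $(\pi_i,\tau^*\pi_i)$ and $(\pi_j,\tau^*\pi_j)$, and $\psi_i^j = G_i^j$ the cylinder map with ends $(\pi_i,\pi_j)$. Pushing $\iota_\tau$ past $G_i^j$ via the naturality above gives $\psi_i^j\tau_i = \iota_\tau\,(G_i^j)'\,F_i$, where $(G_i^j)'$ is the cylinder map with ends $(\tau^*\pi_i,\tau^*\pi_j)$, while $\tau_j\psi_i^j = \iota_\tau\,F_j\,G_i^j$. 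The composition law then identifies $(G_i^j)'F_i$ and $F_jG_i^j$, up to chain homotopy, with cylinder cobordism maps $F_\Pi$ and $F_{\Pi'}$ for perturbations $\Pi$ and $\Pi'$ of $Y\times[0,2]\cong Y\times[0,1]^*$ having the same ends $(\pi_i,\tau^*\pi_j)$. A path of perturbations from $\Pi$ to $\Pi'$ with fixed ends, together with Lemma~\ref{exi of family per}, then supplies a chain homotopy $F_\Pi\simeq F_{\Pi'}$; postcomposing with $\iota_\tau$ yields the homotopy $H_i^j$ witnessing $\psi_i^j\tau_i\simeq\tau_j\psi_i^j$.

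For (ii), the same moves give $\tau_i^2 = \iota_\tau F_i\iota_\tau F_i = F_i'F_i$, since the two copies of $\iota_\tau$ compose to pullback by $\tau^2 = \id$ and hence to the identity; here $F_i'$ is the cylinder map with ends $(\tau^*\pi_i,\pi_i)$. By the composition law, $F_i'F_i$ is chain homotopic to the cylinder cobordism map for the glued perturbation on $Y\times[0,2]\cong Y\times[0,1]^*$, which has \emph{both} ends equal to $\pi_i$. Finally, the cylinder cobordism map with both ends $\pi_i$ is chain homotopic to the identity — this is precisely the statement used in \cite[Section 2.3]{NST19} to build the enriched structure (in effect $\psi_i^i\simeq\id$), again proven by a $1$-parameter family of ASD moduli spaces interpolating to the product perturbation. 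Concatenating these homotopies produces $H_i$ with $\tau_i^2\simeq\id$.

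Two points remain, and the second is where the real care lies. First, the reducible connection: since $Y$ is a homology sphere, $|H_1(Y\times[0,1],\Z)| = 1$ is odd, so every cylinder cobordism map above is local, sending $\theta\mapsto\theta + (\mathrm{irreducible})$ with no $\theta$ in the image of any irreducible generator; this is preserved under the composition law and under the $1$-parameter families, the reducible counts being governed by perturbed flat connections over the cylinder as in \cite[Section 2.2]{Da20} and \cite{NST19}. In particular all the homotopies $H_i$ and $H_i^j$ preserve the two-step filtration and are $\Z_2[y^{\pm1}]$-linear. Second, and the main obstacle, is the level bookkeeping: the level of each cylinder cobordism map and of each homotopy above is controlled by the norms of the perturbations entering it, so one must verify that $\pi_i^\tau$ and its $\tau$-pullback, the glued perturbations, the interpolating families supplied by Lemma~\ref{exi of family per}, and the interpolation to the product perturbation can all be chosen with norms tending to $0$ as $i,j\to\infty$; granting this, the levels $\delta_i$ and $\delta_{i,j}$ of $H_i$ and $H_i^j$ tend to $0$. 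This is a uniformity statement — one needs that neither the neck-stretching in the composition law nor the family-transversality construction forces a lower bound on perturbation norms — and it holds because both are local modifications supported away from the necks and can be taken arbitrarily $C^l$-small; tracking it carefully is the bulk of the work.
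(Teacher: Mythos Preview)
Your proposal is correct and follows essentially the same approach as the paper: both arguments push the tautological pullback identification $\iota_\tau$ past cylinder cobordism maps via naturality under $\tau\times\id$, glue the resulting compositions to obtain two holonomy perturbations on $Y\times[0,1]^*$ with the same ends ($(\pi_i,\tau^*\pi_j)$ for (i), $(\pi_i,\pi_i)$ for (ii)), interpolate by a $1$-parameter family made regular via Lemma~\ref{exi of family per}, and read off the chain homotopy from the boundary of the associated $1$-dimensional parameterized moduli space. The paper writes this out by explicitly displaying the product moduli spaces and the glued perturbations $\pi_i^j\#\pi_j^\tau$ and $\pi_i^\tau\#\tau^*\pi_i^j$ (respectively $\pi_i^\tau\#\tau^*\pi_i^\tau$ and the constant $\pi_i$), whereas you package the same content as ``composition law'' plus ``naturality''; your added remarks on the reducible connection and the level bookkeeping are accurate and match what the paper asserts more tersely.
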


\begin{proof}
Let $a \in \un{C}(Y, \pi_i)$ and $c \in \un{C}(Y, \pi_j)$. The coefficient of $c$ in $(\tau_j \psi^j_i)(a)$ is easily seen to be the number of points in the product
\begin{equation}\label{eq:581}
\bigcup_{b \in \un{C}(Y, \pi_j) } M_{\pi_i ^j} (a,Y \times [0,1]^*, b) \times  M_{\pi_j^\tau} (b, Y \times [0,1]^* , \tau^*c),
\end{equation}
where $\tau^* c$ is the pullback of $c$. Likewise, the coefficient of $c$ in $(\psi^j_i \tau_i)(a)$ is easily seen to be the number of points in the product
\begin{equation}\label{eq:582}
\bigcup_{b \in \un{C}(Y, \tau^* \pi_i)} M_{\pi_i ^\tau} (a,Y \times [0,1]^*, b) \times  M_{\pi_i^j} (\tau^* b, Y \times [0,1]^* , c),
\end{equation}
where $\tau^* b$ is the pullback of $b$. Note that we clearly have a bijection between 
\[
M_{\pi_i^j} (\tau^* b, Y \times [0,1]^* , c) \quad \text{and} \quad M_{\tau^* \pi_i^j} (b, Y \times [0,1]^* , \tau^*c)
\]
by taking the pullback of the entire moduli space along $\tau \times \id$ on $Y \times [0, 1]$, which by abuse of notation we also denote by $\tau^*$. Hence (\ref{eq:582}) is in bijection with
\begin{equation}\label{eq:583}
\bigcup_{b \in \un{C}(Y, \tau^* \pi_i)} M_{\pi_i ^\tau} (a,Y \times [0,1]^*, b) \times  M_{\tau^* \pi_i^j} (b, Y \times [0,1]^* , \tau^*c).
\end{equation}
By gluing theory, (\ref{eq:581}) and (\ref{eq:583}) are in bijection with 
\[
M_{\pi_i^j \# \pi_j ^\tau} (a,Y \times [0,1]^*, \tau^* c) \quad \text{ and } \quad M_{\pi_i^\tau \# \tau^* \pi_i^j} (a,Y \times [0,1]^*, \tau^*c),
\]
respectively, where the perturbations $\pi_i^j \# \pi_j ^\tau$ and $\pi_i^\tau \# \tau^* \pi_i^j$ are defined by 
\[
\pi_i^j \# \pi_j ^\tau|_{Y \times (-\infty , -T_0]} = \pi_i^j \quad \text{and} \quad \pi_i^j \# \pi_j ^\tau|_{Y \times [T_0, \infty)}  = \pi_j ^\tau
 \]
 and
\[
\pi_i^\tau \# \tau^* \pi_i^j|_{Y \times (-\infty , -T_0]} = \pi_i \quad \text{and} \quad  \pi_i^\tau \# \tau^* \pi_i^j|_{Y \times [T_0, \infty)}  = \tau^*\pi_i^j
\]
for sufficiently large $T_0$. Here, we mean (for example) that $\smash{\pi_i^j \# \pi_j ^\tau}$ agrees with a $t$-shifted copy of $\pi_i^j$ for $t \ll 0$ and a $t$-shifted copy of $\pi_j^\tau$ for $t \gg 0$. Strictly speaking, this means that $\smash{\pi_i^j \# \pi_j ^\tau}$ is a holonomy perturbation on some $Y \times [-T, T]^*$, but we continue to write $Y \times [0,1]^*$.


Since both $\smash{\pi_i^j \# \pi_j ^\tau}$ and $\smash{\pi_i^\tau \# \tau^* \pi_i^j}$ have ends $\pi_i$ and $\tau^* \pi_j$, we may take a one-parameter family of holonomy perturbations on $Y \times [0, 1]^*$ that interpolates between them and has fixed ends. Denote this family by $\pi(s)$, where
\[
\pi(0) = \pi_i^j \# \pi_j ^\tau \quad \text{and} \quad \pi(1) = \pi_i^\tau \# \tau^* \pi_i^j.
\] 
For $a \in \un{C}(Y, \pi_i)$ and $c \in \un{C}(Y, \pi_j)$, we have the instanton moduli space associated to the family $\{\pi(s)\}$ discussed in Section~\ref{sec:5.1.3}:
\[
{\bf M} _{\{\pi(s)\}}(a,Y \times [0,1]^*,  \tau^* c ) = \bigcup_{t \in [ 0,1] } M_{\pi(s)}(a,Y \times [0,1]^*,  \tau^*c ). 
\]
If $\deg_\Z(a) = \deg_\Z(c)$, then generically this moduli space has the structure of a one-dimensional manifold. Gluing theory tells us that after compactifying and orienting, there are four kinds of endpoints:
\[
\displaystyle \bigcup_{\substack{b \in \un{C}(Y, \tau^* \pi_j) \\ \deg_\Z(b) = \deg_\Z(a) + 1}} \mathbf{M}_{\{\pi(s)\}}(a, Y \times [0,1]^*, b) \times \left(M_{\tau^* \pi_j}(b, \tau^*c) /\R\right)
\]
and
\[
\displaystyle \bigcup_{\substack{b \in \un{C}(Y, \pi_i) \\ \deg_\Z(b) = \deg_\Z(c) - 1}} \left(M_{\tau_i}(a, b)/\R\right) \times \mathbf{M}_{\{\pi(s)\}}(b, Y \times [0,1]^*, \tau^* c) 
\]
together with
\[
M_{\pi(0)}(a, Y \times [0,1]^*, \tau^*c ) \quad \text{and} \quad M_{\pi(1)}(a, Y \times [0,1]^*, \tau^*c). 
\]
After appropriately introducing signs, this gives the equality
\[
\psi^j_i \tau_i + \tau_j \psi^j_i = \un{d} H_i^j + H_i^j \un{d}, 
\]
where the homotopy $H_i^j$ is defined by counting points in $\smash{{\bf M} _{\{\pi(t)\}}(a,Y \times [0,1]^*,  b)}$ whenever $\deg_\Z(b) = \deg_\Z(a) + 1$. We may moreover assume that $\| \{\pi(t)\} \| \rightarrow 0$ as $i, j \rightarrow \infty$. This means that the level of $H_i^j$ goes to zero, as desired.

The proof of the second part of the lemma is similar. Let $a$ and $c$ be in $\un{C}(Y, \pi_i)$. The coefficient of $c$ in $\tau_i^2 a$ is easily seen to be the number of points in the product
\[
\bigcup_{b \in \un{C}(Y, \tau^* \pi_i) } M_{\pi_i ^\tau} (a,Y \times [0,1]^*, b) \times  M_{\pi_i^\tau} (\tau^* b, Y \times [0,1]^* , \tau^* c),
\]
which is in bijection with 
\[
\bigcup_{b \in \un{C}(Y, \tau^* \pi_i) } M_{\pi_i ^\tau} (a,Y \times [0,1]^*, b) \times  M_{\tau^* \pi_i^\tau} (b, Y \times [0,1]^* , c).
\]
The family of homotopies between $\tau_i^2$ and $\id$ is obtained by taking an interpolating family of perturbations between $\pi_i^\tau  \# \tau^* \pi_i^\tau$ and the constant family $\pi_i$. 
\end{proof}

Putting everything together, we obtain:

\begin{defn}
Let $Y$ be an oriented homology 3-sphere equipped with an orientation-preserving involution $\tau$. We obtain an enriched complex $\un{\E}(Y, \tau)$  by taking any sequence of nondegenerate regular holonomy perturbations $\pi_i$ on $Y$ with $\| \pi_i \| \rightarrow 0$ and considering the family 
\[
(\un{C}(Y, \pi_i), \un{d}, \tau_i)
\]
together with the maps $\smash{\psi^j_i}$ of Section~\ref{sec:5.2} and the homotopies of Lemma~\ref{ex prf of enriched local inv str}. The clustering subset is given by the set of critical points $\mathfrak{K}_Y$ of the Chern-Simons functional. We refer to $\un{\E}(Y, \tau)$ as the \textit{enriched involutive complex associated to $(Y, \tau)$}.
\end{defn}

\begin{defn}
Let $Y$ be an oriented homology 3-sphere equipped with an orientation-preserving involution $\tau$. Define
\[
r_s (Y, \tau) =r_s  (\un{\E}(Y, \tau)). 
\]
\end{defn} 

\begin{rem} \label{rem:rangeofrs}
According to Definition~\ref{def:4.22}, the abstract $r_s$-invariant takes values in $[0, \infty] \cup \{- \infty\}$. However, it is not hard to show that in fact $r_s(Y, \tau) > 0$ by essentially the same argument as in \cite{NST19, DISST22}. The point here is that the component of $\tau$ from the reducible to irreducible part of $\un{C}$ strictly decreases the Chern-Simons filtration. The same holds for $D_2$, which should be thought of as the component of $\un{d}$ from the reducible to irreducible part of $\un{C}$. It follows from this that there is always some $\epsilon < 0$ such that $\theta$ itself constitutes an equivariant $\theta$-supported cycle in $\un{C}^{[\epsilon, -s]}$.
\end{rem}

\subsection{Cobordism maps} \label{sec:5.4}

We now show that an equivariant cobordism induces a map of enriched involutive complexes. Let $(W, \wt{\tau})$ be an equivariant negative-definite cobordism from $(Y, \tau)$ to $(Y', \tau')$ with $b_1(W) = 0$. Choose holonomy perturbations
\[
\pi_i^\tau \text{ on } Y \times [0, 1]^* \text{ with ends } \pi_i \text{ and } \tau^* \pi_i 
\]
and
\[
\pi_i^{\tau'} \text{ on } Y' \times [0, 1]^* \text{ with ends } \pi'_i \text{ and } (\tau')^* \pi'_i
\]
which define $\tau_i$ and $\tau'_i$, respectively. In addition, choose a sequence of holonomy perturbations 
\[
\pi_{W, i} \text{ on } W^* \text{ with ends } \pi_i \text{ and }\pi'_i.
\]
This defines a sequence of cobordism maps $F_i = F_{W, i}$. The norms of all perturbations go to zero as $i \rightarrow \infty$.

\begin{lem}\label{involutive local map} We have
\[
F_i \tau_i \simeq \tau'_i F_i
\]
via a homotopy $H_i$ of level $\delta_i$, where $\delta_i \rightarrow 0$ as $i \rightarrow \infty$. 
\end{lem}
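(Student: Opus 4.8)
The plan is to adapt the gluing-and-one-parameter-family argument used in the proof of Lemma~\ref{ex prf of enriched local inv str}(i), replacing the trivial cobordism $Y \times [0,1]$ in the first slot by the equivariant cobordism $W$. First I would fix $a \in \un{C}(Y,\pi_i)$ and $c \in \un{C}(Y',\pi'_i)$, and compute the coefficient of $c$ in $(\tau'_i F_i)(a)$ and in $(F_i \tau_i)(a)$ as counts of points in fibered products of moduli spaces: for $\tau'_i F_i$ one gets
\[
\bigcup_{b} M_{\pi_{W,i}}(a, W^*, b) \times M_{\pi_i^{\tau'}}(b, Y' \times [0,1]^*, (\tau')^* c),
\]
while for $F_i \tau_i$ one gets
\[
\bigcup_{b} M_{\pi_i^\tau}(a, Y \times [0,1]^*, b) \times M_{\pi_{W,i}}(\tau^* b, W^*, c).
\]

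Next I would use the equivariant diffeomorphism $\wt{\tau}$ on $W$ (together with $\tau$, $\tau'$ on the ends) to identify $M_{\pi_{W,i}}(\tau^* b, W^*, c)$ with $M_{\wt{\tau}^* \pi_{W,i}}(b, W^*, (\tau')^* c)$ by pulling the entire moduli space back along $\wt{\tau}$; this is the step that crucially uses that the cobordism is equivariant and that the metric and the perturbation may be transported by $\wt{\tau}$. After this identification, both sides become counts of points in glued moduli spaces over $W^*$ (stretched along the neck), namely $M_{\pi_{W,i} \# \pi_i^{\tau'}}(a, W^*, (\tau')^* c)$ and $M_{\pi_i^\tau \# \wt{\tau}^* \pi_{W,i}}(a, W^*, (\tau')^* c)$; since both composite perturbations have the same ends $\pi_i$ and $(\tau')^* \pi'_i$, I can choose a one-parameter family $\pi(s)$ of holonomy perturbations on $W^*$ interpolating between them with fixed ends, with $\|\pi(s)\| \to 0$ as $i \to \infty$, and such that the parametrized moduli space ${\bf M}_{\{\pi(s)\}}(a, W^*, (\tau')^* c)$ is regular (Lemma~\ref{exi of family per}). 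Counting the boundary of this $1$-manifold when $\deg_\Z(a) = \deg_\Z(c)$ and applying gluing theory, the four types of ends (broken flows off each end of $W^*$, plus the two endpoints $s=0,1$) yield precisely the chain-homotopy identity
\[
F_i \tau_i + \tau'_i F_i = \un{d}' H_i + H_i \un{d},
\]
where $H_i$ counts points in ${\bf M}_{\{\pi(s)\}}(a, W^*, b)$ with $\deg_\Z(b) = \deg_\Z(a) + 1$. The level estimate $\delta_i \to 0$ follows from $\|\pi(s)\| \to 0$, exactly as in Lemma~\ref{ex prf of enriched local inv str}.

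One technical point must be handled carefully: the reducible $\theta$. Because $W$ is negative-definite with $b_1(W)=0$, the only reducible in $M_{\pi_{W,i}}(\theta, W^*, \theta)$ is the perturbed flat connection, and by the conventions fixed in Section~\ref{sec:5.2.1} we have $F_i \theta = z + |H_1(W,\Z)| \cdot \theta$ and $\tau_i \theta = z' + \theta$, $\tau'_i \theta = z'' + \theta$, so the $\theta \to \theta$ coefficients of $F_i \tau_i$ and $\tau'_i F_i$ agree (both equal $|H_1(W,\Z)| \bmod 2$), and no reducible flows obstruct the homotopy since the relevant parametrized moduli spaces with a reducible input and irreducible output are cut out transversally using the weighted $L^2_{q,\delta}$-norm as in Section~\ref{sec:5.1.2}. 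The main obstacle I anticipate is bookkeeping the neck-stretching/gluing argument together with the family-perturbation transversality (Lemma~\ref{exi of family per}) in the presence of the reducible — i.e.\ making sure the compactification of ${\bf M}_{\{\pi(s)\}}(a, W^*, (\tau')^* c)$ has exactly the claimed four types of boundary points with no extra reducible strata and with consistent orientations (over $\Z$; over $\Z_2$ signs are irrelevant). This is essentially the same analytic input already invoked for Lemma~\ref{ex prf of enriched local inv str} and for the non-equivariant cobordism maps in \cite{NST19, Da20}, so I would cite those for the detailed gluing and transversality statements rather than reproving them.
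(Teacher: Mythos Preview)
Your proposal is correct and follows essentially the same approach as the paper's proof: both identify the relevant fibered-product moduli spaces for $\tau'_i F_i$ and $F_i \tau_i$, use the pullback along $\wt{\tau}$ to rewrite $M_{\pi_{W,i}}(\tau^* b, W^*, c)$ as $M_{\wt{\tau}^* \pi_{W,i}}(b, W^*, (\tau')^* c)$, glue to obtain the composite perturbations $\pi_{W,i} \# \pi_i^{\tau'}$ and $\pi_i^\tau \# \wt{\tau}^* \pi_{W,i}$, and then interpolate by a one-parameter family to produce the homotopy $H_i$. Your additional discussion of the reducible $\theta$ and the level estimate is more explicit than the paper, which simply defers these points to the earlier argument.
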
 
\begin{proof}
The proof is similar to that of \cref{ex prf of enriched local inv str}, so we restrict ourselves to listing the moduli spaces involved. Let $a \in \un{C}(Y, \pi_i)$ and $c \in \un{C}(Y', \pi'_i)$. Then the coefficient of $c$ in $(\tau'_i F_i)(a)$ is given by counting points in
\begin{equation}\label{eq:5101}
\bigcup_{b \in \un{C}(Y', \pi'_i)} M_{\pi_{W, i}} (a,W^*, b) \times  M_{\pi_i^{\tau'}} (b, Y \times [0,1]^* , (\tau')^* c),
\end{equation}
while the coefficient of $c$ in $(F_i \tau_i)(a)$ is given by counting points in
\begin{equation}\label{eq:5102}
\bigcup_{b \in \un{C}(Y, \tau^* \pi_i )} M_{\pi_i^\tau} (a,Y \times [0,1]^*, b) \times  M_{\pi_{W, i}} (\tau^* b, W^* , c).
\end{equation}
Pulling back under the self-diffeomorphism $\wt{\tau}$ on $W$, we have a bijection
\[
M_{\pi_{W, i}} (\tau^* b, W^* , c) = M_{\wt{\tau}^* \pi_{W, i}} (b, W^* , (\tau')^* c).
\]
Thus (\ref{eq:5102}) is in bijection with
\begin{equation}\label{eq:5103}
\bigcup_{b \in \un{C}(Y, \tau^* \pi_i )} M_{\pi_i^\tau} (a,Y \times [0,1]^*, b) \times  M_{\wt{\tau}^* \pi_{W, i}} (b, W^* , (\tau')^*c).
\end{equation}
We stress that this requires the existence of $\wt{\tau}$. As in the proof of Lemma~\ref{ex prf of enriched local inv str}, the moduli spaces (\ref{eq:5101}) and (\ref{eq:5103}) are then in bijection with ASD moduli spaces over $W^*$ corresponding to regular holonomy perturbations
\[
\pi_{W, i} \# \pi_i^{\tau'} \quad \text{and} \quad \pi_i^\tau \# \wt{\tau}^* \pi_{W, i},
\]
respectively. These are defined in a similar way as the perturbations in Lemma~\ref{ex prf of enriched local inv str}. Take a one-parameter family of holonomy perturbations on $W^*$ which interpolates between $\smash{\pi_{W, i} \# \pi_i^{\tau'}}$ and $\smash{\pi_i^\tau \# \wt{\tau}^* \pi_{W, i}}$. The same argument as in Lemma~\ref{ex prf of enriched local inv str} produces the desired homotopy.
\end{proof}

In particular:

\begin{lem}\label{lem:localmaps}
Let $(W, \wt{\tau})$ be an equivariant negative-definite cobordism from $(Y, \tau)$ to $(Y', \tau')$ with $H_1(W, \Z_2) = 0$. Then we obtain an enriched local map
\[
\lambda_W \colon \un{\E}(Y, \tau) \rightarrow \un{\E}(Y', \tau').
\]
\end{lem}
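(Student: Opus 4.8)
The plan is to take the cobordism maps $F_i = F_{(\pi_W)_i}$ constructed in Section~\ref{sec:5.2.2} as the components of the enriched local map $\lambda_W$. First I would fix the data: let $\pi_i$ and $\pi'_i$ be the sequences of nondegenerate regular holonomy perturbations on $Y$ and $Y'$ used to define $\un{\E}(Y,\tau)$ and $\un{\E}(Y',\tau')$ (so that they also determine the $\tau_i$, the $\tau'_i$, the maps $\psi_i^j$ of Section~\ref{sec:5.2}, and the homotopies of Lemma~\ref{ex prf of enriched local inv str}), and for each $i$ choose a regular holonomy perturbation $(\pi_W)_i$ on $W^*$ with ends $\pi_i$ and $\pi'_i$ and with $\|(\pi_W)_i\| \to 0$. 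This yields chain maps $F_i \colon \un{C}(Y,\pi_i) \to \un{C}(Y',\pi'_i)$, and I claim the sequence $\{F_i\}$ is the desired $\lambda_W$.

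The verification then proceeds in two steps. First, I would check that each $F_i$ is a local map in the sense of Definition~\ref{def:3.8}: recall from Section~\ref{sec:5.2.1} that $F_{\pi_W}\theta = z + |H_1(W,\Z)| \cdot \theta$ for some $z$ in the irreducible subcomplex $C(Y',\pi'_i)$, while $\theta$ does not occur in the image of any generator of $C(Y,\pi_i)$; since $H_1(W,\Z_2) = 0$ forces $|H_1(W,\Z)|$ to be odd, $F_i$ induces an isomorphism on $\un{C}/C \cong \Z_2[y^{\pm1}][-3]$, and it is a $\Z_2[y^{\pm1}]$-linear chain map preserving the two-step filtration by construction. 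Moreover, as recorded in Section~\ref{sec:5.2.1}, the level of $F_i$ is controlled by the norms $\|(\pi_W)_i\|$, $\|\pi_i\|$, $\|\pi'_i\|$ and hence tends to zero as $i \to \infty$. Second, I would invoke Lemma~\ref{involutive local map} to see that each $F_i$ is equivariant in the sense of Definition~\ref{def:4.2}: that lemma supplies a homotopy $H_i$ of level $\delta_i$, with $\delta_i \to 0$, satisfying $\un{d}' H_i + H_i \un{d} = \tau'_i F_i + F_i \tau_i$. One should also confirm that $H_i$ preserves the two-step filtration, but this is immediate from the construction: $H_i$ counts points in one-dimensional parametrized ASD moduli spaces over $W^*$, and by the usual convention $\theta$ never appears in the image of an irreducible generator, so $H_i(C) \subseteq C'$. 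Since both the level of $F_i$ and the level $\delta_i$ of $H_i$ converge to zero, Definition~\ref{def:4.16} shows that $\{F_i\}$ is an enriched local map $\lambda_W \colon \un{\E}(Y,\tau) \to \un{\E}(Y',\tau')$; note that no compatibility with the $\psi_i^j$ is required.

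Essentially all of the analytic content is already packaged in Lemma~\ref{involutive local map} (and the transversality and gluing inputs it uses, in particular Lemma~\ref{exi of family per}); what remains here is purely formal bookkeeping. The mild point to be careful about is tracking the various level parameters and making sure they all converge to zero — which follows from the standing requirement that the norms of all holonomy perturbations tend to zero — together with the two-step filtration statements for $F_i$ and $H_i$. I do not anticipate a genuine obstacle at this stage: the real subtlety was already handled in the proof of Lemma~\ref{involutive local map}, namely the observation that an \emph{equivariant} cobordism $(W,\wt\tau)$ (rather than merely a cobordism $W$ together with abstract involutions on the boundary) is precisely what is needed to produce the bijection $M_{\pi_{W,i}}(\tau^* b, W^*, c) = M_{\wt{\tau}^* \pi_{W,i}}(b, W^*, (\tau')^* c)$ via pullback along $\wt\tau$.
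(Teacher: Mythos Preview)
Your proposal is correct and follows exactly the approach intended by the paper, which simply records the proof as ``Immediate from the definitions.'' You have carefully unpacked precisely those definitions---taking the $F_i$ from Section~\ref{sec:5.2.2}, verifying locality via the $H_1(W,\Z_2)=0$ hypothesis, and invoking Lemma~\ref{involutive local map} for equivariance with levels tending to zero---so there is nothing to add.
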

\begin{proof}
Immediate from the definitions.
\end{proof}

\subsection{Connected sums} \label{sec:5.5}
 
We now study the behavior of our complexes under connected sums. Let $(Y, \tau)$ and $(Y', \tau')$ be two equivariant homology spheres such that the equivariant connected sum $(Y \# Y', \tau \# \tau')$ is defined. Then we may form the pair-of-pants cobordism $W^\#$ from $Y\# Y'$ to $Y \cup Y'$ obtained by attaching a $3$-handle along the connected sum sphere of $Y \# Y' $. It is clear that $W^\#$ may be made equivariant. 
For 4-manifolds with disconnected boundary, we can also similarly define ASD-moduli spaces as in the discussion in \cref{sec:5.1.3}. However, to obtain a local map, we to need care about the number of components, see \cref{local pants}. 
 
\begin{lem}\label{connected sum local map }
Let $(Y, \tau)$ and $(Y', \tau')$ be two equivariant homology spheres such that the equivariant connected sum $(Y \# Y', \tau \# \tau')$ is defined. Then we have enriched local maps: 
\[
\ov{\lambda}_{W^\#} : \ov{\mathfrak{E}}(Y\# Y', \tau\# \tau' ) \to  \ov{\mathfrak{E}}(Y,\tau ) \otimes   \ov{\mathfrak{E}}( Y' ,  \tau' )
\]
and 
\[
\un{\lambda}_{W^\#}  :  \un{\mathfrak{E}}(Y,\tau ) \otimes   \un{\mathfrak{E}}( Y' ,  \tau' )
 \to \un{\mathfrak{E}}(Y\# Y', \tau\# \tau' ). 
\]
\end{lem}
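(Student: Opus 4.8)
The plan is to construct the two enriched local maps by counting points in ASD moduli spaces over the (equivariant) pair-of-pants cobordism $W^\#$ and its orientation-reversal, exactly in parallel to the construction of cobordism maps in Section~\ref{sec:5.4}, and then to check (i) that the resulting maps are local in the sense of Definition~\ref{def:3.8}, (ii) that they are equivariant (i.e.\ commute with the involutions up to level-$\delta$ homotopy), and (iii) that their levels tend to zero, so that the sequences assemble into enriched local maps. First I would recall that $W^\#$ is obtained from $(Y\#Y')\times[0,1]$ by attaching a single $3$-handle along the connected-sum sphere, and that $W^\#$ carries a self-diffeomorphism $\widetilde{\tau\#\tau'}$ restricting to $\tau\#\tau'$ on one end and $\tau\cup\tau'$ on the other; this uses the hypothesis that the equivariant connected sum is defined, so that $\tau$ and $\tau'$ agree on the equivariant neighborhoods being glued. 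Since $b_1(W^\#)=0$ and $W^\#$ is negative semi-definite with $b_2=0$, the analytic package of Section~\ref{sec:5.1.3} (transversality for families of holonomy perturbations, Lemma~\ref{exi of family per}) applies; the only subtlety is that $W^\#$ has disconnected outgoing (resp.\ incoming) boundary, which is why the statement invokes $\un{C}(Y)\otimes\un{C}(Y')$ and its $\theta$-chain $\theta\otimes\theta'$, cf.\ Definition~\ref{def:3.14}.

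For the $D_2$-map $\un\lambda_{W^\#}\colon\un{\mathfrak E}(Y,\tau)\otimes\un{\mathfrak E}(Y',\tau')\to\un{\mathfrak E}(Y\#Y',\tau\#\tau')$, I would fix sequences of nondegenerate regular perturbations $\pi_i$ on $Y$, $\pi_i'$ on $Y'$ with norms going to zero, regular perturbations $\pi_{W^\#,i}$ on $(W^\#)^*$ with the appropriate ends, and define the $i$-th map by counting zero-dimensional ASD moduli spaces $M_{\pi_{W^\#,i}}(a\#a',(W^\#)^*,c)$ for $a\in\un C(Y,\pi_i)$, $a'\in\un C(Y',\pi_i')$, $c\in\un C(Y\#Y',\pi_{W\#Y',i})$, together with the usual convention $\#M(\theta\otimes\theta',[0,1]\times(Y\cup Y')^*,\theta)=|H_1(W^\#,\Z)|\bmod 2=1$ and $0$ when the incoming generator is not $\theta\otimes\theta'$. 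This forces $\un\lambda_{W^\#}(\theta\otimes\theta')=z+\theta$ with $z\in C(Y\#Y',\pi)$, and hence the induced map on $\un C/C\cong\Z_2[y^{\pm1}][-3]$ is the identity, so it is a local map; that the level is a function of the perturbation norms going to zero is standard (as in Section~\ref{sec:5.2.1}). The chain-map property and equivariance are proved exactly as in Lemmas~\ref{ex prf of enriched local inv str} and \ref{involutive local map}: one pulls back moduli spaces along $\widetilde{\tau\#\tau'}$ on $W^\#$ and along $\tau\times\id$, $\tau'\times\id$ on the cylindrical ends, identifies the two broken configurations by gluing, and interpolates the two glued perturbations through a one-parameter family whose one-dimensional moduli space yields the homotopy; the level of that homotopy again tends to zero. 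The clustering and composition-of-maps conditions of Definition~\ref{def:4.14}/\ref{def:4.16} need only be checked for the $\lambda_i$ themselves, and since we impose no commutation with the $\psi_i^j$ for an enriched local map, this is immediate once the levels $\epsilon_i\to0$. The $D_1$-map $\ov\lambda_{W^\#}$ is constructed the same way using the reversed cobordism and the $D_1$-formalism of Definitions~\ref{defc-ubar} and \ref{def:4.11}, or equivalently by dualizing via $\ov{\mathfrak E}(Z)\cong\un{\mathfrak E}(-Z)^*$ and $\overline{W^\#}$ viewed as a pair-of-pants cobordism the other way.

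The main obstacle I expect is the \emph{locality} of the pair-of-pants map in the presence of disconnected boundary: one must verify that the reducible-to-reducible count $\#M(\theta\#\theta',[0,1]\times(Y\cup Y')^*,\theta)$ equals $|H_1(W^\#,\Z)|\bmod 2$, which here is $1$, so that the induced map on $\un C/C$ is genuinely an isomorphism rather than zero; this is the content alluded to in the reference to ``\cref{local pants}'' and requires identifying the relevant reducible ASD moduli space over $W^\#$ with (perturbed) flat connections, just as in the connected-boundary case treated in \cite[Section 2.2]{Da20} and \cite{NST19}. A secondary point is purely bookkeeping: the grading shift of three in Definition~\ref{def:3.14} must be matched against the dimension formula for ASD moduli spaces over $(W^\#)^*$ (the $3$-handle contributes the expected shift), so that $\un\lambda_{W^\#}$ is genuinely degree-preserving; this is routine but must be stated. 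Everything else — transversality for the families, the homotopy-commutation with $\tau$, the vanishing of levels — runs in exact parallel with Sections~\ref{sec:5.3} and \ref{sec:5.4}, so I would present those parts tersely and concentrate the writing on the locality computation.
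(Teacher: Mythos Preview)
Your construction of $\ov{\lambda}_{W^\#}$ is essentially correct and matches the paper: the pair-of-pants cobordism $W^\#$ (in its natural direction from $Y\#Y'$ to $Y\cup Y'$) induces the $D_1$-map, the nonequivariant version is in \cite{NST19}, and equivariance follows exactly as in Lemma~\ref{involutive local map}.

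However, your direct construction of $\un{\lambda}_{W^\#}$ has a genuine gap. You propose to count ASD moduli spaces over a cobordism from $Y\cup Y'$ to $Y\#Y'$, i.e.\ over the upside-down pair-of-pants $(W^\#)^\dagger$, and you correctly flag the reducible-to-reducible count as the main issue. But the problem is not merely verifying that this count equals $|H_1(W^\#,\Z)|\bmod 2$. On $(W^\#)^\dagger$ the formal dimension of the reducible solution is $-6$ while the stabilizer is only $3$-dimensional, so the reducible moduli space is genuinely obstructed and the naive count is not well-defined; see Remark~\ref{local pants}. There is consequently no natural cobordism map associated to $(W^\#)^\dagger$ in the direction you want, and the argument of \cite[Section 2.2]{Da20} identifying the reducible moduli space with perturbed flat connections does not go through.

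The paper's fix is to avoid $(W^\#)^\dagger$ entirely: first construct $\ov{\lambda}_{W^\#}$ directly (which you do correctly), then obtain $\un{\lambda}_{W^\#}$ as the dual of the $\ov{\lambda}$-map after reversing the orientations of all boundaries, using $\ov{\mathfrak E}(-Z)^*\cong\un{\mathfrak E}(Z)$. You mention dualization as an alternative route for $\ov{\lambda}_{W^\#}$, but it is for $\un{\lambda}_{W^\#}$ that dualization is essential rather than optional.
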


\begin{proof}
The existence of the first map was essentially established in \cite{NST19}, in which it is shown that $W^\#$ induces an enriched local map 
\[
\ov{\lambda}_{W^\#}:  \ov{\mathfrak{E}}(Y\# Y') \rightarrow \ov{\mathfrak{E}}(Y ) \otimes   \ov{\mathfrak{E}}( Y'  ). 
\]
The verification that $\ov{\lambda}_{W^\#}$ is homotopy equivariant is essentially the same as the proof of \cref{involutive local map}. The second map is obtained as the dual of the first map after reversing orientation of the boundaries. 
\end{proof}

\begin{rem}\label{local pants}
Note that we do \textit{not} have a local map 
\[
\overline{\lambda}_{W^\#}  : \overline{\mathfrak{E}}(Y,\tau ) \otimes   \overline{\mathfrak{E}}( Y' ,  \tau' ) \to   \overline{\mathfrak{E}}(Y\# Y', \tau\# \tau' ) . 
\]
This is due to certain subtleties involving the notion of a reducible connection in the case of a disconnected manifold. 
 The essential reason why we cannot have a local map is: the formal dimension of the reducible solution on the upside-down cobordism $(W^\#)^\dagger$ of $W^\#$ is $-6$, although the dimension of the stabilizer is $3$. Therefore, the moduli space is obstructed in this case and there is no natural map associated with the upside-down cobordism $(W^\#)^\dagger$. 
It turns out that this allows for good behavior for cobordism maps with domain $\overline{\mathfrak{E}}(Y,\tau ) \otimes   \overline{\mathfrak{E}}( Y' ,  \tau' )$, but not range $\overline{\mathfrak{E}}(Y,\tau ) \otimes   \overline{\mathfrak{E}}( Y' ,  \tau' )$. 
\end{rem}

\section{Properties of $r_s(Y, \tau)$} \label{sec:6}

We now prove Theorem~\ref{thm:1.1}, together with several other properties of the involutive $r_s$-invariant. We then explain the connection between the action of $\tau$ and the effect of the cork twist on the Donaldson polynomial. 

\subsection{Proof of Theorem~\ref{thm:1.1}} \label{sec:6.1}

Having constructed the involutive $r_s$-invariant, it remains to establish its claimed behavior under negative-definite cobordisms. For the convenience of the reader, we recall: \\
\\
\noindent
\textbf{Theorem 1.1.} \textit{
Let $Y$ be an oriented integer homology $3$-sphere and $\tau$ be a smooth, orientation-preserving involution on $Y$. For any $s \in [-\infty, 0]$, we define a real number 
\[
r_s(Y, \tau) \in (0, \infty]
\]
which is an invariant of the diffeomorphism class of $(Y, \tau)$. Moreover, let $(W, \wt{\tau})$ be an equivariant negative-definite cobordism from $(Y, \tau)$ to $(Y', \tau')$ with $H_1(X, \Z_2)=0$. Then
\[
r_s(Y,\tau) \leq r_s(Y', \tau'). 
\]
If $r_s(Y, \tau)$ is finite and $W$ is simply connected, then in fact
\[
r_s(Y,\tau) < r_s(Y', \tau').
\]}
\begin{proof}
The first inequality is straightforward. Since there is an equivariant negative-definite cobordism $(W, \wt{\tau})$ from $(Y, \tau)$ to $(Y', \tau')$ with $H_1(W; \Z_2)=0$, 
\cref{involutive local map} gives an enriched local map from $\un{\mathfrak{E}}(Y, \tau)$ to $\un{\mathfrak{E}}(Y', \tau')$. \cref{monotonicity of involtuive rs} then implies $r_s(Y ,\tau) \leq r_s(Y', \tau')$, as desired. 

The strict inequality is more subtle. The proof is an equivariant analog of \cite[Theorem 3]{Da20} and \cite[Theorem 1.1 (1)]{NST19}; we give a sketch here for the reader. Let $W$ be simply connected and suppose for the sake of contradiction that $r_s(Y,\tau) = r_s(Y', \tau') < \infty$. 

Fix any sequence of analytic data used to define the enriched complexes $\un{\E}(Y, \tau)$ and $\un{\E}(Y', \tau')$ and the enriched map between them. This consists of:

\begin{enumerate}
\item A Riemannian metric $g$ (resp.\ $g'$) on $Y$ (resp.\ $Y'$);
\item A cylindrical-end Riemannian metric on $W^*$, as in Section~\ref{sec:5.1.3};
\item A sequence of nondegenerate regular perturbations $\pi_i$ (resp.\ $\pi'_i$) such that $\|\pi_i\| \to 0$ (resp.\ $\|\pi'_i\| \to 0$). This gives sequences of complexes
\[
\un{C}(Y, \pi_i) \quad \text{and} \quad \un{C}(Y', \pi'_i)
\]
with homotopy involutions $\tau_i$ and $\tau'_i$; and,
\item A sequence of perturbations $\pi_i^W$ of the ASD equations on $W^*$ such that $\|\pi_i^W\| \to 0$. This gives a sequence of local maps 
\[
\lambda_i \colon \un{C}(Y, \pi_i) \rightarrow \un{C}(Y', \pi'_i) \text{ of level } \delta_i
\]
with homotopies $H_i$ (also of level $\delta_i$) such that
\[
\lambda_i \tau_i + \tau'_i \lambda_i = \un{d} H_i + H_i \un{d}
\]
for each $i$, where $\delta_i \rightarrow 0$.
\end{enumerate}

In Lemma~\ref{lem:technical} below, we construct an increasing sequence $(n_k)_{k = 1}^\infty$ such that for each $i$ in $(n_k)_{k = 1}^\infty$, we have a pair of chains
\[
\alpha_i \in C^{[-\infty, s]}(Y, \pi_i) \quad \text{and} \quad \alpha'_i \in C^{[-\infty, s]}(Y', \pi'_i)
\]
for which the following hold:
\begin{enumerate}
\item \label{item:lem611} We have
\[
\lim \deg_I(\alpha_i) = - r_s(Y, \tau) = -r_s(Y', \tau') = \lim \deg_I(\alpha'_i)
\]
along $(n_k)_{k = 1}^\infty$; and,
\item\label{item:lem612} Either 
\[
\alpha'_i = \lambda_i \alpha_i \text{ along } (n_k)_{k = 1}^\infty \quad \text{or} \quad \alpha'_i = H_i \alpha_i \text{ along } (n_k)_{k = 1}^\infty.
\]
\end{enumerate}
For convenience of notation, we restrict our attention to $(n_k)_{k = 1}^\infty$ and speak of the above holding for every $i$.

The proof and motivation behind Lemma~\ref{lem:technical} is rather involved, so we defer it until after the proof of Theorem~\ref{thm:1.1}. However, a modification of the proof of \cite[Theorem 1.1 (1)]{NST19} quickly completes the argument. Indeed, recall that $\alpha_i$ (respectively, $\alpha'_i$) is a linear combination of generators corresponding to critical points of the (perturbed) Chern-Simons functional. By (\ref{item:lem612}) of Lemma~\ref{lem:technical}, each generator $a_i'$ from $\alpha'_i$ must appear in the image $\lambda_i a_i$ or $H_i a_i$ of some generator $a_i$ from $\alpha_i$. In particular, it is then easily checked that
\begin{equation} \label{eq:11proofA}
M(a_i, W^*, a_i') \neq \emptyset
\end{equation}
for some sequence of generators $a_i \in C^{[-\infty, s]}(Y, \pi_i)$ and $a_i' \in C^{[-\infty, s]}(Y', \pi'_i)$ satisfying
\begin{equation} \label{eq:11proofB}
\lim \deg_I(a_i) = -r_s(Y, \tau) \quad \text{and} \quad \lim \deg_I(a_i') = -r_s(Y', \tau'). 
\end{equation}
Here, by $M(a_i, W^*, a_i')$ we either mean the usual ASD-moduli space used to define $\lambda_i$, or we mean the 1-parameter family moduli space used to define $H_i$ in \cref{involutive local map}, depending on which case of (\ref{item:lem612}) is applicable. Note that since $r_s(Y, \tau) = r_s(Y', \tau') > 0$, in the limit $a_i$ and $a'_i$ have filtration level bounded above by a nonzero negative constant. 

For each $i$, now choose any $A_i \in M(a_i, W^*, a'_i)$ using (\ref{eq:11proofA}). Then (\ref{eq:11proofB}), together with the fact that $r_s(Y, \tau) = r_s(Y', \tau')$, shows that the energy of $A_i$ goes to zero. After possibly passing to a subsequence, it follows that the $A_i$ converge to a flat connection with respect to the $\smash{L^2_{k, loc}}$-topology (for arbitrarily large $k$), up to a sequence of gauge transformations obtained by pasting local Coulomb gauge transformations. Denote this limit by $A_\infty$. Then $A_\infty$ is a flat $SU(2)$-connection which we claim is irreducible.

To see this, first observe that we may select a finite number of disjoint neighborhoods $\{U_\beta\}$ of the components of $R(Y)$ in $\B(Y)$ such that:
\begin{itemize}
\item Each $U_\beta$ is open with respect to the $C^\infty$-topology; and, 
\item There exists $\epsilon > 0$ such that any $a \in \B(Y)$ with $\|F_{a}\|_{L^2} \leq \epsilon$ must lie in some $U_\beta$. 
\end{itemize}
The existence of $\{U_\beta\}$ follows from Uhlenbeck's compactness theorem. We denote by $U_{\beta_0}$ the neighborhood containing $\theta$; by passing to a subsequence, we may assume all of the $a_i$ and $a'_i$ from (\ref{eq:11proofA}) lie in a single $U_{\beta}$ for some fixed $\beta \neq \beta_0$. (Note that  $a_i$ and $a'_i$ are certainly irreducible, as they have filtration level which is bounded away from zero.) Next, we claim that there is some $\epsilon' >0$ such that: \begin{itemize}
\item For any $SU(2)$-connection $A$ on $Y \times [0,1]$ in temporal gauge and any sufficiently small perturbation $\pi$, if $A$ satisfies
\[
F^+_A + \pi^+(A) =0 \quad \text{and} \quad \|F_A\|_{L^2} \leq \epsilon', 
\]
then $A|_{\{t\}\times Y} \in \bigcup_\beta U_\beta$ for each $t$. 
\end{itemize}
This claim also follows from Uhlenbeck's compactness theorem. Writing each $A_i$ in temporal gauge, we thus have that 
\[
A_i |_{[-T_0, -T_0 +1]\times Y  } \in \bigcup_\beta U_\beta
\]
In fact, since $A_i$ limits to $a_i$, we have
\[
A_i |_{[-T_0, -T_0 +1]\times Y  } \in \displaystyle U_\beta \quad \text{for} \quad \beta \neq \beta_0.
\]
This implies $A_\infty$ is irreducible on $Y \times (-\infty, 0]$. The holonomy of $A_\infty$ thus gives rise to an irreducible $SU(2)$-representation of $\pi_1(W)$. This contradicts the assumption that $W$ is simply connected.
\end{proof}

We now establish Lemma~\ref{lem:technical}. Fix analytic data as in the proof of Theorem~\ref{thm:1.1}. 



\begin{lem}\label{lem:technical}
Let $(W, \wt{\tau})$ be an equivariant negative-definite cobordism from $(Y, \tau)$ to $(Y', \tau')$ with $H_1(W, \Z_2)=0$. Suppose that there is some $s$ such that 
\[
r_s(Y, \tau) = r_s(Y', \tau') < \infty. 
\]
Then there exists an increasing sequence $(n_k)_{k = 1}^\infty$ such that for each $i$ in $(n_k)_{k = 1}^\infty$, we have chains
\[
\alpha_i \in C^{[-\infty, s]}(Y, \pi_i) \quad \text{and} \quad \alpha'_i \in C^{[-\infty, s]}(Y', \pi'_i)
\]
for which the following hold:
\begin{enumerate}
\item We have
\[
\lim \deg_I(\alpha_i) = - r_s(Y, \tau) = -r_s(Y', \tau') = \lim \deg_I(\alpha'_i)
\]
along $(n_k)_{k = 1}^\infty$; and,
\item Either 
\[
\alpha'_i = \lambda_i \alpha_i \text{ along } (n_k)_{k = 1}^\infty \quad \text{or} \quad \alpha'_i = H_i \alpha_i \text{ along } (n_k)_{k = 1}^\infty.
\]
\end{enumerate}
\end{lem}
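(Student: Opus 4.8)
The idea is to extract, from the equality $r_s(Y,\tau) = r_s(Y',\tau') < \infty$, witnesses for the two $r_s$-invariants living at filtration level approximately $-r_s$, and then use the equivariant local map (together with its homotopy to $\tau'_i\lambda_i$) to connect them. First I would unpack Definition~\ref{def:4.22}: since $r_s(Y,\tau)$ is finite, for each $\epsilon > 0$ there is $r$ with $-r < r_s(Y,\tau) + \epsilon$ admitting an equivariant $\theta$-supported cycle in $\un{\E}(Y,\tau)^{[r,-s]}$, which by Definition~\ref{def:4.20} is realized as an equivariant $\theta$-supported $[r,-s]$-cycle $z_i = \zeta_i + \theta$ in $\un{C}(Y,\pi_i)$ for all $i$ large (in the appropriate stable range). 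Here $\zeta_i \in C^{[-\infty,s]}(Y,\pi_i)$, and crucially $\deg_I(\un{d}z_i) \le r$, i.e.\ $\deg_I(\un{d}\zeta_i + D_2\theta) \le r \approx -r_s(Y,\tau)$. The minimality in the infimum defining $r_s$ forces that one cannot do better: there is a component of $\un{d}\zeta_i$ (or of the correction cycle) sitting at filtration level converging to $-r_s(Y,\tau)$ from above. I would set $\alpha_i$ to be precisely this "lowest" piece — concretely, the sum of the generators appearing in $\un{d}\zeta_i$ (equivalently in $\un{d}z_i$) whose $\deg_I$ is closest to $-r_s(Y,\tau)$; one checks $\lim \deg_I(\alpha_i) = -r_s(Y,\tau)$ along a subsequence, using the clustering condition to guarantee the limit exists and equals exactly $-r_s(Y,\tau)$ rather than something smaller.

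Next I would do the same on $Y'$, but cleverly: instead of choosing an independent witness for $r_s(Y',\tau')$, I would push $z_i$ forward. By Lemma~\ref{lem:localmaps} we have the enriched local map $\lambda_W$, realized by level-$\delta_i$ maps $\lambda_i$ with $\lambda_i\tau_i + \tau'_i\lambda_i = \un{d}H_i + H_i\un{d}$. Then $\lambda_i z_i$ is a $\theta$-supported $[r+\delta_i, -s+\delta_i]$-cycle by Lemma~\ref{under local map filtration}; moreover $[\lambda_i z_i]$ is $\tau'_{i*}$-fixed modulo the failure measured by $H_i$, so $\lambda_i z_i$ (or its $\tau'_i$-average, absorbing an $H_i$-term) is the desired equivariant witness. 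Because $\delta_i \to 0$, this produces an equivariant $\theta$-supported cycle in $\un{\E}(Y',\tau')^{[r,-s]}$ in the limit; hence by minimality for $r_s(Y',\tau')$ — and this is exactly where the hypothesis $r_s(Y,\tau) = r_s(Y',\tau')$ is used — the piece of $\un{d}(\lambda_i z_i)$ at lowest filtration must sit at level converging to $-r_s(Y',\tau') = -r_s(Y,\tau)$, and cannot be any higher. I would then take $\alpha'_i$ to be that lowest piece of $\un{d}(\lambda_i z_i)$, and trace through $\lambda_i\tau_i + \tau'_i\lambda_i = \un{d}H_i + H_i\un{d}$ to see that every generator of $\alpha'_i$ appears either in $\lambda_i(\text{generator of }\alpha_i)$ or in $H_i(\text{generator of }\alpha_i)$, giving conclusion (2) after passing to the relevant further subsequence $(n_k)$.

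The main obstacle I expect is conclusion (2) in its clean dichotomous form — showing that $\alpha'_i$ is \emph{entirely} $\lambda_i\alpha_i$ or \emph{entirely} $H_i\alpha_i$, rather than a mix. The natural output of the chain-homotopy identity is $\lambda_i(\un{d}\zeta_i) = \un{d}(\lambda_i\zeta_i)$ plus, after applying $\tau'_i$-equivariance, cross-terms involving $H_i$; the generators at the critical filtration level $\approx -r_s$ could a priori be fed by both $\lambda_i$ and $H_i$ simultaneously. I would handle this by a pigeonhole/subsequence argument: since the set of generators involved is finite for each $i$ and their filtration levels cluster (clustering condition), after passing to a subsequence the configuration of which generators lie at the exact limiting level stabilizes, and for each such generator one of the two sources ($\lambda_i$ or $H_i$) supplies it; refining once more, one of the two cases occurs for \emph{all} the relevant generators simultaneously (this is why the statement says "along $(n_k)$" rather than for a fixed decomposition). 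A secondary technical point is ensuring the level shifts $\delta_i$ (and the two-step-filtration bookkeeping distinguishing $C$ from $\un C$) do not corrupt the limits — but since all of $\delta_i, \delta_{i,j} \to 0$ and $r_s > 0$ by Remark~\ref{rem:rangeofrs}, the critical level $-r_s$ stays bounded away from $0$ and from $\mathfrak{K}_Y$-ambiguities, so this is routine.
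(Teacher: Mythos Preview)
Your approach has the right skeleton --- choose witnesses $z_i$ realizing $r_s(Y,\tau)$, push them forward via $\lambda_i$, and use the chain homotopy --- but there is a genuine gap in what you track. An equivariant $\theta$-supported $[r,-s]$-cycle carries \emph{two} obstructions at filtration level $r$: the obstruction $\un{d}z_i$ to being a cycle, and the obstruction $\xi_i$ to being $\tau$-equivariant (coming from writing $z_i + \tau_i z_i = \un{d}h_i + \xi_i$ with $\deg_I(\xi_i)\le r$). The minimality of $r_s$ forces only that $\max\{\deg_I(\un{d}z_i),\deg_I(\xi_i)\}\to -r_s(Y,\tau)$, and it is entirely possible that $\un{d}z_i = 0$ (or has much lower filtration) while $\xi_i$ sits at level $-r_s$. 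Your definition of $\alpha_i$ as the top piece of $\un{d}z_i$ then fails: you could get $\alpha_i = 0$. You allude to a ``correction cycle'' parenthetically but never carry it through; it is essential.

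Once you track $\xi_i$, the dichotomy in (2) falls out much more cleanly than your generator-by-generator pigeonhole. Setting $z'_i = \lambda_i z_i$ and computing directly gives $\un{d}z'_i = \lambda_i(\un{d}z_i)$ and $z'_i + \tau'_i z'_i = \un{d}(H_i z_i) + \xi'_i$ with $\xi'_i = \lambda_i\xi_i + H_i(\un{d}z_i)$. The minimality on the $Y'$ side forces $\max\{\deg_I(\un{d}z'_i),\deg_I(\xi'_i)\} \to -r_s(Y',\tau')$, and since $\xi'_i$ is a sum of exactly two terms, passing to a subsequence yields one of three cases: $(\alpha_i,\alpha'_i) = (\un{d}z_i,\lambda_i\un{d}z_i)$, $(\xi_i,\lambda_i\xi_i)$, or $(\un{d}z_i,H_i\un{d}z_i)$, each with the full chain (no truncation needed) and with $\alpha'_i$ literally equal to $\lambda_i\alpha_i$ or $H_i\alpha_i$. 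Your truncation to generators at a specific filtration level is both unnecessary and would destroy this exact chain-level relation.
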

\begin{proof}
Roughly speaking, the idea for producing $\alpha_i$ and $\alpha'_i$ is the following: we start by selecting a sequence of chains $\smash{z_i \in \un{C}^{[-\infty, s]}(Y, \pi_i)}$ realizing $r_s(Y, \tau)$. As we explain below, this means that the obstruction to $z_i$ being an equivariant cycle will be a chain $\alpha_i$ lying in some filtration level $\rho_i$, where $\rho_i$ limits to $- r_s(Y, \tau)$ as $i \rightarrow \infty$. We then consider the sequence of chains $z'_i = \lambda_i z_i$. The obstruction to $z_i'$ being an equivariant cycle will be a chain $\alpha'_i$ lying in some filtration level $\rho'_i$; we show that $\rho_i'$ likewise limits to $- r_s(Y', \tau')$. We then prove that $\alpha'_i$ is the image of $\alpha_i$ under $\lambda_i$ or $H_i$, giving the claim. 

We now make this precise. For simplicity, assume $-s \notin \K \cup \K'$; the general case follows by a limiting argument. By Definition~\ref{def:4.22} we have a sequence of negative real numbers $r_i \notin \K$ with
\[
\displaystyle \lim_{i\to \infty} r_i = - r_s(Y, \tau)
\]
from above, such that each stable complex $\un{\E}^{[r_i, -s]}(Y, \tau)$ admits an equivariant $\theta$-supported cycle in the sense of Definition~\ref{def:4.22}. For each $i$, fix a pertubation $\pi_{n_i}$ realizing $\smash{\un{\E}^{[r_i, -s]}(Y, \tau)}$, so that
\[
\un{\E}^{[r_i, -s]}(Y, \tau) = \un{C}^{[r_i, -s]}(Y, \pi_{n_i}).
\]
Without loss of generality, assume that the sequence $n_i$ is increasing with $i$. For each $i$, we have an explicit $\theta$-supported cycle $z_i$ in $\un{C}^{[r_i, -s]}(Y, \pi_i)$ whose homology class is fixed by $\smash{(\tau_i^{[r_i, -s]})_*}$. View $z_i$ as a chain in the complex $\un{C}^{[-\infty, -s]}(Y, \pi_{n_i})$ with
\[
\deg_I(\un{d} z_i) \leq r_i.\footnote{Here and throughout, we use $\deg_I(x)$ when $x$ is not homogenous to refer to the filtration level of $x$.}
\]
To say that the homology class of $z_i \in \un{C}^{[r_i, -s]}(Y, \pi_i)$ is fixed by $\smash{(\tau_i^{[r_i, -s]})_*}$ means that we have $h_i$ and $\xi_i$ in $\un{C}^{[-\infty, -s]}(Y, \pi_{n_i})$ such that 
\begin{equation}\label{eq:xi}
\un{d} h_i = (z_i + \tau_i z_i) + \xi_i \quad \text{with} \quad \deg_I(\xi_i) \leq r_i. 
\end{equation}
Let 
\[
\rho_i = \max\{ \deg_I(\un{d} z_i), \deg_I(\xi_i) \}.\footnote{By convention, $\deg_I(0) = - \infty$.} 
\]
We claim that
\begin{equation}\label{eq:6.1a}
\limsup \rho_i = -r_s(Y, \tau).
\end{equation}
Indeed, suppose not. Then for some positive $\delta > 0$, there exists an infinite sequence of the $z_i$ with $\deg_I(\un{d}z_i) \leq -r_s(Y, \tau) - \delta$ and $\deg_I(\xi_i) \leq -r_s(Y, \tau) - \delta$. It is straightforward to check that this produces an equivariant $\theta$-supported cycle in the stable complex $\un{\E}^{[-r(Y, \tau) - \delta, -s]}(Y, \tau)$, contradicting the infimum in Definition~\ref{def:4.22}. For notational convenience, henceforth we assume that $n_i = i$ and we restrict to a subsequence so that in fact $\lim \rho_i = -r_s(Y, \tau)$.

Now let
\[
z'_i = \lambda_i z_i.
\]
Then we may compute
\[
\un{d} z'_i = \un{d} \lambda_i z_i = \lambda_i \un{d} z_i
\]
and
\begin{align*}
z'_i + \tau'_i z'_i &= \lambda_i z_i + \tau'_i \lambda_i z_i \\
&= \lambda_i z_i + \lambda_i \tau_i z_i + (\un{d} H_i + H_i\un{d}) (z_i) \\
&= \un{d} H_i z_i + \left( \lambda_i \xi_i +  H_i \un{d}z_i \right).
\end{align*}
If we write
\begin{equation} \label{eq:6.1b}
\xi'_i = \lambda_i \xi_i + H_i \un{d} z_i, 
\end{equation}
then the resulting equation $z'_i + \tau'_i z'_i = \un{d} H_i z_i + \xi'_i$ is analogous to the defining equation (\ref{eq:xi}) for $\xi$. Define
\[
\rho'_i = \max\{ \deg_I(\un{d} z'_i), \deg_I(\xi'_i) \}. 
\]
The same argument as in the previous paragraph then shows that $\limsup \rho'_i \geq - r_s(Y', \tau')$. On the other hand, $\rho'_i \leq \rho_i + \delta_i$. Since $\delta_i \rightarrow 0$, this shows that $\limsup \rho'_i \leq \limsup \rho_i = -r_s(Y, \tau) = -r_s(Y', \tau')$. Hence
\begin{equation} \label{eq:6.1c}
\limsup \rho'_i = -r_s(Y', \tau').
\end{equation}
For notational convenience, we again pass to a subsequence so that $\lim \rho'_i = -r_s(Y', \tau')$.

With (\ref{eq:6.1c}) in hand, observe that up to passing to a further subsequence, we either have
\begin{enumerate}
\item \label{item:61alt1} $\lim \deg_I(\un{d} z_i') = -r_s(Y', \tau')$; or,
\item \label{item:61alt2} $\lim \deg_I(\xi_i') = -r_s(Y', \tau')$.
\end{enumerate}
Suppose that (\ref{item:61alt1}) is the case. Since
\[
\deg_I(\un{d} z_i') = \deg_I (\lambda_i \un{d} z_i) \leq \deg_I(\un{d} z_i) + \delta_i
\]
with $\delta_i \rightarrow 0$, the fact that (\ref{eq:6.1a}) and (\ref{eq:6.1c}) are equal shows that $\lim \deg_I(\un{d} z_i) = -r_s(Y, \tau)$. Now suppose (\ref{item:61alt2}) is the case. Again up to passing to a subsequence, examining (\ref{eq:6.1b}) we see that either
\[
\lim \deg_I ( \lambda_i \xi_i) = -r_s(Y', \tau') \quad \text{or} \quad \lim \deg_I (H_i \un{d} z_i) = -r_s(Y', \tau').
\]
In the former case, the same argument as in (\ref{item:61alt1}) shows that $\lim \deg_I( \xi_i ) = - r_s(Y, \tau)$. In the latter, the fact that $H_i$ is of level $\delta_i$ shows that $\lim \deg_I( \un{d} z_i ) = -r_s(Y, \tau)$. 

In either case, we obtain the chains claimed in the statement of the lemma. That is, there is some increasing sequence $(n_k)_{k = 1}^\infty$ such that for all $i$ in $(n_k)_{k = 1}^\infty$, we may define chains $\alpha_i \in \un{C}(Y, \pi_i)$ and $\alpha_i \in \un{C}(Y', \pi'_i)$ such that
\[
\lim \deg_I(\alpha_i) = - r_s(Y, \tau) = -r_s(Y', \tau') = \lim \deg_I(\alpha'_i)
\]
and
\[
f_i(\alpha_i) = \alpha'_i
\]
along $(n_k)_{k = 1}^\infty$, where $f_i$ is formed by counting instantons on $W^*$. Indeed, either $\alpha_i = \un{d}z_i$ and $\alpha'_i = \lambda_i \un{d}z_i$; or $\alpha_i = \xi_i$ and $\alpha'_i = \lambda_i \xi_i$; or $\alpha_i = \un{d}z_i$ and $\alpha'_i = H_i \un{d}z_i$. The reader may refer to Figure~\ref{fig:11cases} for a schematic depiction of these three cases.
\end{proof}

\begin{ex}
Figure~\ref{fig:11cases} gives a schematic depiction of the three possibilities in the proof of Lemma~\ref{lem:technical}. In each case, we display two complexes $C$ and $C'$ with the same involutive $r_s$-invariant, together with examples of maps $\lambda$ and $H$ between them. 
\begin{itemize}
\item In $(a)$, the obstruction to $z$ being a cycle is given by $\un{d} z = x$. The obstruction to $z'$ being a cycle is likewise given by $\un{d} z' = x'$; and we have $\lambda x = x'$. 
\item In $(b)$, the obstruction to $z$ being equivariant is given by $z + \tau z = y$. The obstruction to $z'$ being equivariant is likewise given by $z' + \tau' z' = y'$; and we have $\lambda y = y'$. For simplicity, we suppose $h = 0$ in (\ref{eq:xi}), so that $\xi = z + \tau z$.
\item In $(c)$, we have displayed the most complicated situation. Here, the obstruction to $z$ being an equivariant cycle is given by $\un{d} z = x$, while the obstruction to $z'$ being an equivariant cycle is given by $z' + \tau' z' = y'$. Unlike in $(a)$ and $(b)$, however, these two cycles are not related by $\lambda$, but rather by $H$. Note that in this example, $\lambda$ is not $\tau$-equivariant, as
\[
\lambda \tau z = \lambda (z + y) = z' \quad \text{while} \quad \tau' \lambda z = \tau' z' = z' + y'.
\]
These two are equal only up to addition of the term $(\un{d} H + H \un{d})(z) = H \un{d} z = H x = y'$.
\end{itemize}

\begin{figure}[h!]
\includegraphics[scale = 1]{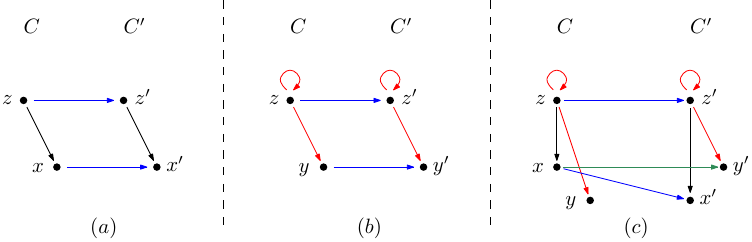}
\caption{Schematic depiction of the possibilities in the proof of Lemma~\ref{lem:technical}. Generators are arranged by height according to their filtration level. The differential is given by the black arrows; if no black arrow is drawn, the differential on a given generator is zero. The action of $\tau$ is given by the (sum of) the red arrows; if no red arrow is drawn, the action of $\tau$ on a given generator is the identity. The map $\lambda$ is in blue and the map $H$ is in green.}\label{fig:11cases}
\end{figure}
\end{ex}

We end this subsection by establishing several other properties of the involutive $r_s$-invariant. We first record the following straightforward corollary of Theorem~\ref{thm:1.1}:

\begin{thm}[Equivariant definite bounding]\label{thm:definitebounding}
Let $(Y, \tau)$ be an equivariant homology sphere. Suppose that $(Y, \tau)$ bounds an equivariant definite manifold $(W, \tilde{\tau})$ with $H_1(W, \Z_2) = 0$.
\begin{itemize}
\item If $W$ is positive definite, then $r_s(-Y, \tau) = \infty$ for all $s$.
\item If $W$ is negative definite, then $r_s(Y, \tau) = \infty$ for all $s$.
\end{itemize}
In particular, suppose that $(Y, \tau)$ is obtained by $1/n$-surgery on an equivariant knot.
\begin{itemize}
\item If $n > 0$, then $r_s(-Y, \tau) = \infty$ for all $s$.
\item If $n < 0$, then $r_s(Y, \tau) = \infty$ for all $s$.
\end{itemize}
\end{thm}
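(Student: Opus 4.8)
\textbf{Proof plan for Theorem~\ref{thm:definitebounding}.} The plan is to reduce everything to Theorem~\ref{thm:1.1} together with the explicit surgery cobordisms of Lemma~\ref{lem:knotsB}, using the trivial complex $\mathfrak{T}$ (which has $r_s(\mathfrak{T}) = \infty$ for all $s$) as a reference point. First I would treat the general statement. Suppose $(Y,\tau)$ bounds an equivariant negative-definite manifold $(W,\wt{\tau})$ with $H_1(W,\Z_2)=0$. Then, after removing a $\wt\tau$-invariant ball around a fixed point of $\wt\tau$, I view $W$ as an equivariant negative-definite cobordism from $(S^3,\id)$ to $(Y,\tau)$ with $H_1(W,\Z_2)=0$; here I use that any orientation-preserving involution on $S^3$ is standard (Smith theory, as recalled in Section~\ref{sec:2.2}) so that the restriction of $\wt\tau$ to the boundary sphere is conjugate to the identity-fixing situation needed, and that the fixed-point set of $\tau$ on $Y$ is $S^1$ so the equivariant ball-removal is unambiguous. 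Applying Theorem~\ref{thm:1.1} to this cobordism gives $r_s(S^3,\id) \leq r_s(Y,\tau)$. Since $\un{\E}(S^3,\id)$ is (locally equivalent to) the trivial enriched complex $\mathfrak{T}$, we have $r_s(S^3,\id) = \infty$, and hence $r_s(Y,\tau) = \infty$ for all $s$. For the positive-definite case, observe that if $(Y,\tau)$ bounds an equivariant positive-definite $(W,\wt\tau)$, then $(-Y,\tau)$ bounds the equivariant negative-definite manifold $(\ov W, \wt\tau)$ with the same $H_1(\ov W,\Z_2) = 0$, and the previous paragraph applied to $(-Y,\tau)$ gives $r_s(-Y,\tau) = \infty$.

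For the surgery statement, I would simply invoke Lemma~\ref{lem:knotsB}: if $(Y,\tau) = (S^3_{1/n}(K),\tau)$ with $n < 0$, then $(Y,\tau)$ bounds a simply-connected (in particular $H_1(\,\cdot\,,\Z_2)=0$) equivariant negative-definite manifold, so the general statement gives $r_s(Y,\tau) = \infty$; and if $n > 0$, the same lemma produces an equivariant positive-definite bounding, so $r_s(-Y,\tau) = \infty$. Strictly speaking Lemma~\ref{lem:knotsB} is stated for strongly invertible or periodic $K$, which is exactly the class of equivariant knots considered, so there is nothing further to check.

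The one point requiring a little care — and what I expect to be the only real obstacle — is the ball-removal step: Theorem~\ref{thm:1.1} is phrased for a cobordism between two equivariant homology spheres, not for a bounding manifold, so I must argue that deleting an equivariant open ball $B$ around a fixed point $p$ of $\wt\tau$ yields an honest equivariant cobordism $(W \setminus \mathrm{int}(B), \wt\tau)$ from $(S^3,\tau|_{\partial B})$ to $(Y,\tau)$, with $H_1$ unchanged mod $2$ and the definiteness preserved. Here $\wt\tau$ genuinely has a fixed point: any smooth involution on a $\Z_2$-homology ball (or, more to the point, a manifold built as in Lemma~\ref{lem:knotsB}) has nonempty fixed set by Smith theory, and in our applications the fixed set is $1$-dimensional, meeting the boundary in the $S^1$ that is the fixed set of $\tau$ on $Y$; choosing $p$ on this arc in the interior gives an invariant ball. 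The boundary involution $\tau|_{\partial B}$ on $S^3$ is then orientation-preserving and fixes at least the two points where the fixed arc meets $\partial B$, so it is conjugate to a standard rotation — but in fact for the argument we only need $r_s(S^3, \tau|_{\partial B}) = \infty$, which holds because $\un{\E}(S^3,\tau|_{\partial B})$ is locally equivalent to $\mathfrak{T}$ (the representation variety of $S^3$ is trivial, so the complex is just $\Z_2[y^{\pm1}][-3]$ with the identity action, regardless of $\tau$). So one may alternatively phrase the whole proof as: glue $W$ to the trace of the equivariant surgery/handle attachments so as to realize $(Y,\tau)$ or $(-Y,\tau)$ as the outgoing end of an equivariant definite cobordism starting at $(S^3,\mathrm{std})$, then apply Theorem~\ref{thm:1.1} and $r_s(S^3,\mathrm{std}) = \infty$. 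Once this reduction is set up cleanly, the rest is immediate.
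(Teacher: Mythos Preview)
Your overall strategy---puncture $W$ to obtain an equivariant negative-definite cobordism from $(S^3,\cdot)$ to $(Y,\tau)$, then apply Theorem~\ref{thm:1.1} together with $r_s(S^3,\cdot)=\infty$, and invoke Lemma~\ref{lem:knotsB} for the surgery case---is exactly the paper's approach. The difference lies entirely in how you handle the puncturing step, and here you are making things harder than necessary.

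You look for a genuine fixed point of $\wt\tau$ via Smith theory and then remove an invariant ball around it. But recall that in this paper an ``equivariant'' bounding only means $\wt\tau$ is a self-\emph{diffeomorphism} of $W$ restricting to $\tau$ on $\partial W$ (Definition~\ref{def:2.2}); it is not assumed to be an involution on $W$. So Smith theory does not apply, and in any case $W$ is definite, not a $\Z_2$-homology ball, so the hypothesis you cite is not met for the general statement. Your fallback observation---that $r_s(S^3,\sigma)=\infty$ for any $\sigma$ since the complex of $S^3$ is just $\Z_2[y^{\pm1}][-3]$---is correct and useful, but it still presupposes you have produced a $\wt\tau$-invariant ball to remove.

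The paper sidesteps all of this with a single move: since $\wt\tau$ is merely a diffeomorphism, one may \emph{isotope} it (rel $\partial W$) so that it fixes a small ball in the interior pointwise; puncturing at that ball then yields an equivariant cobordism from $(S^3,\id)$ to $(Y,\tau)$ directly. No fixed-point theorem is needed. Once you make this adjustment, your argument and the paper's coincide.
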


\begin{proof}
Assume $W$ is negative definite. Isotope $\tilde{\tau}$ so that it fixes a ball in the interior of $W$; puncturing $W$ then gives an equivariant cobordism from $(S^3, \id)$ to $(Y, \tau)$. It is easily checked that $r_s(S^3, \id) = \infty$ for all $s$. Applying Theorem~\ref{thm:1.1} then establishes the claim for $r_s(Y, \tau)$ and reversing orientation gives the positive-definite case. The last two claims then follow from applying Lemma~\ref{lem:knotsB}.
\end{proof}

We now have the connected sum inequality:

\begin{thm}[Connected sum inequality]\label{thm:connectedsum}
Let $(Y, \tau)$ and $(Y', \tau')$ be two equivariant homology spheres such that the equivariant connected sum $(Y \# Y', \tau \# \tau')$ is defined. Then for any $s$ and $s'$ in $[-\infty, 0]$, we have
 \[
r_{s+s'}( Y \# Y', \tau \# \tau' ) \geq \min \{r_{s} (Y, \tau) + s' , r_{s'} (Y', \tau')  + s\}. 
\]
In particular,
\[
r_0(Y \# Y', \tau \# \tau') \geq \min \{ r_0(Y, \tau), r_0(Y', \tau') \}.
\]
\end{thm}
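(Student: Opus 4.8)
\textbf{Proof proposal for Theorem~\ref{thm:connectedsum}.}

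The plan is to deduce this as an immediate consequence of the connected-sum behavior of the enriched involutive complexes established in Lemma~\ref{connected sum local map }, together with the tensor-product inequality for the abstract invariant in Lemma~\ref{conn sum of rs for inv enriched}. The key observation is that $r_s(Y,\tau) = r_s(\un{\E}(Y,\tau))$ by definition, so everything reduces to a statement purely about enriched complexes and their local maps. First I would recall from Lemma~\ref{connected sum local map } that the equivariant pair-of-pants cobordism $W^\#$ induces an enriched local map
\[
\un{\lambda}_{W^\#} \colon \un{\E}(Y,\tau) \otimes \un{\E}(Y',\tau') \longrightarrow \un{\E}(Y \# Y', \tau \# \tau').
\]
Then, by the monotonicity of the involutive $r_s$-invariant under enriched local maps (Lemma~\ref{monotonicity of involtuive rs}), applied with this map, we obtain
\[
r_t\bigl(\un{\E}(Y,\tau) \otimes \un{\E}(Y',\tau')\bigr) \leq r_t\bigl(\un{\E}(Y \# Y', \tau \# \tau')\bigr) = r_t(Y \# Y', \tau \# \tau')
\]
for every $t \in [-\infty,0]$.

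Next I would apply the tensor-product inequality of Lemma~\ref{conn sum of rs for inv enriched} to the left-hand side: for $s, s' \in [-\infty,0]$,
\[
r_{s+s'}\bigl(\un{\E}(Y,\tau) \otimes \un{\E}(Y',\tau')\bigr) \geq \min\{ r_s(\un{\E}(Y,\tau)) + s',\ r_{s'}(\un{\E}(Y',\tau')) + s \} = \min\{ r_s(Y,\tau) + s',\ r_{s'}(Y',\tau') + s \}.
\]
Chaining this with the previous display at $t = s+s'$ gives
\[
r_{s+s'}(Y \# Y', \tau \# \tau') \geq \min\{ r_s(Y,\tau) + s',\ r_{s'}(Y',\tau') + s \},
\]
which is the first assertion. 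The second assertion is the special case $s = s' = 0$, since $r_0(Y,\tau) + 0 = r_0(Y,\tau)$ and similarly for $Y'$.

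The only point requiring a small amount of care — and what I expect to be the main (minor) obstacle — is the bookkeeping for the identity element of $\Theta^{\mathfrak{E}}_\tau$: one should check that the enriched complex $\un{\E}(S^3, \id)$ associated to $(S^3,\id)$, or more precisely the trivial enriched complex $\mathfrak{T}$, behaves as the monoidal unit under $\otimes$, and that the grading shift of three in Definition~\ref{def:4.8} is consistent across the two applications above; but these are exactly the normalizations chosen in Section~\ref{sec:4}, so no genuine difficulty arises. One should also note that the equivariant connected sum $(Y\# Y', \tau\#\tau')$ being defined is precisely the hypothesis ensuring $W^\#$ is equivariant, so Lemma~\ref{connected sum local map } applies. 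This completes the proof.
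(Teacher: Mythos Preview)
Your proposal is correct and follows essentially the same approach as the paper: invoke Lemma~\ref{connected sum local map } to obtain the enriched local map $\un{\lambda}_{W^\#}$, apply Lemma~\ref{monotonicity of involtuive rs} to get monotonicity of $r_s$, and then combine with Lemma~\ref{conn sum of rs for inv enriched}. The paper's proof is exactly these three steps; your additional remarks about the monoidal unit and the grading shift are harmless but unnecessary for the argument.
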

\begin{proof}
\cref{connected sum local map } implies there is an enriched local map 
\[
\lambda_{W^\#}^*  :  \un{\mathfrak{E}}(Y,\tau ) \otimes   \un{\mathfrak{E}}( Y' ,  \tau' )
 \to \un{\mathfrak{E}}(Y\# Y', \tau\# \tau' ). 
 \]
Thus, \cref{monotonicity of involtuive rs} implies 
\[
r_s ( \un{\mathfrak{E}}(Y,\tau ) \otimes   \un{\mathfrak{E}}( Y' ,  \tau' ) ) \leq  r_s( \un{\mathfrak{E}}(Y\# Y', \tau\# \tau' ) ). 
\]
Combining this with \cref{conn sum of rs for inv enriched} gives the claim.  
\end{proof}

We also clarify the range of the $r_s$-invariant:

\begin{thm}[Range of $r_s$]\label{thm:range}
The involutive $r_s$-invariant is a nonincreasing function of $s$. Moreover, the range of the involutive $r_s$-invariant is a subset of 
\[
-\{\text{critical values of }cs_Y\} \cup \{\infty\}, 
\]
where $cs_Y$ is the $SU(2)$-Chern-Simons functional.
\end{thm}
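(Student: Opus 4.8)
The plan is to derive both assertions formally from Definition~\ref{def:4.22}, \cref{lem:4.19}, \cref{lem:4.21}, and \cref{rem:rangeofrs}; no new analytic input is required, as all the needed moduli-theoretic content is already contained in Sections~\ref{sec:4} and~\ref{sec:5}.

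For the monotonicity statement, I would fix $s_1 \leq s_2$ in $[-\infty, 0]$, first assuming $-s_1, -s_2 \notin \mathfrak{K}$. For all $i$ sufficiently large (as in \cref{lem:4.19}), the tautological inclusion $\un{C}_i^{[-\infty, -s_2]} \subseteq \un{C}_i^{[-\infty, -s_1]}$ descends, after quotienting by the common subcomplex $\un{C}_i^{[-\infty, r]}$, to an inclusion
\[
\un{C}_i^{[r, -s_2]} \hookrightarrow \un{C}_i^{[r, -s_1]}
\]
which is filtered with respect to both $\deg_I$ and the two-step filtration and which intertwines $\tau_i^{[r, -s_2]}$ with $\tau_i^{[r, -s_1]}$. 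It therefore carries equivariant $\theta$-supported cycles to equivariant $\theta$-supported cycles, so the set $S_{s_2}$ of admissible $r$ in Definition~\ref{def:4.22} for $s_2$ is contained in the set $S_{s_1}$ for $s_1$. Taking infima and negating gives $r_{s_2}(\un{\E}_\tau) \leq r_{s_1}(\un{\E}_\tau)$; the cases $-s_j \in \mathfrak{K}$ follow from the limiting clause of Definition~\ref{def:4.22}.

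For the range statement, I would run the same argument in the $r$-direction: for $r_1 < r_2 < 0$ lying outside $\mathfrak{K}$, the quotient map $\un{C}_i^{[r_1, -s]} \twoheadrightarrow \un{C}_i^{[r_2, -s]}$ is again an equivariant, doubly-filtered chain map, so the set $S_s$ is \emph{upward closed}: $r_1 \in S_s$ implies $(r_1, 0) \setminus \mathfrak{K} \subseteq S_s$. I then claim $\inf S_s \in \mathfrak{K} \cup \{-\infty\}$. Indeed, if $\inf S_s = r^*$ were finite and not in $\mathfrak{K}$, choose $\delta > 0$ with $(r^* - \delta, r^* + \delta) \cap \mathfrak{K} = \emptyset$ and $r^* + \delta < 0$; by upward closure $r^* + \tfrac{\delta}{2} \in S_s$, while \cref{lem:4.21} gives an equivariant homotopy equivalence $\un{\E}_\tau^{[r^* - \delta/2, -s]} \simeq \un{\E}_\tau^{[r^* + \delta/2, -s]}$, forcing $r^* - \tfrac{\delta}{2} \in S_s$ and contradicting $r^* = \inf S_s$. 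By \cref{rem:rangeofrs}, $\theta$ itself is an equivariant $\theta$-supported cycle in $\un{\E}_\tau^{[\epsilon, -s]}$ for some $\epsilon < 0$, so $S_s$ is nonempty and $\inf S_s \neq +\infty$; hence $r_s(\un{\E}_\tau) = -\inf S_s \in (-\mathfrak{K}) \cup \{+\infty\}$. Applying this to $\un{\E}(Y, \tau)$, whose clustering set $\mathfrak{K}$ is by construction $\mathfrak{K}_Y$, the set of critical values of $cs_Y$ (the clustering condition holding because the critical values of $cs_{Y,\pi_i}$ converge to those of $cs_Y$ as $\|\pi_i\| \to 0$; cf.\ \cite{NST19}), yields the stated range, with the case $-s \in \mathfrak{K}_Y$ handled by the limiting clause of Definition~\ref{def:4.22} together with \cref{lem:4.21}.

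The only real subtlety is the bookkeeping in the enriched category: none of the comparison maps above are literally filtered, so each inequality must be established at a finite stage $i$ — chosen large enough that \cref{lem:4.19} makes $\tau_i^{[r,s]}$ well defined and that the argument of \cref{lem:4.21} applies — and then transported to the stable equivalence class $\un{\E}_\tau^{[r,s]}$; one must also dispatch the edge cases $s = -\infty$, $r_s(\un{\E}_\tau) = \infty$, and $-s \in \mathfrak{K}$. I do not anticipate any genuine difficulty beyond this; the content is entirely formal once the foundational results of Sections~\ref{sec:4} and~\ref{sec:5} are in place.
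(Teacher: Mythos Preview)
Your proposal is correct and follows essentially the same approach as the paper's proof. The paper's argument is extremely terse---it simply observes that the inclusion $\un{\E}_\tau^{[r,s]} \to \un{\E}_\tau^{[r,s']}$ (for $s \leq s'$) gives monotonicity, and then cites the observation made immediately after Definition~\ref{def:4.22} (which is exactly your upward-closure-plus-\cref{lem:4.21} argument) together with \cref{rem:rangeofrs} for the range claim; your write-up unpacks these steps explicitly but uses no different ideas.
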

\begin{proof}
The fact that $r_s$ is nonincreasing easily follows from the observation that there is a local map 
\[
{\un{\E}_\tau^{[r, s]} \rightarrow \un{\E}_\tau^{[r, s']}} 
\]
whenever $s \leq s'$. For the second claim, note that Lemma~\ref{lem:4.21} implies $r_s(\un{\E}_\tau)$ is valued in $-\K \cup \{\pm \infty\}$. In our case, $\mathfrak{K} = \mathfrak{K}_Y$ is the set of critical values of the $SU(2)$-Chern-Simons functional. Combined with Remark~\ref{rem:rangeofrs}, this completes the proof.
\end{proof}

Finally, we compare the involutive $r_s$-invariant with its usual (nonequivariant) counterpart. While technically we have only defined the latter for abstract instanton-type complexes, it is straightforward to carry out the enriched complex construction of Section~\ref{sec:4.5} to obtain an invariant for homology $3$-spheres, which we denote by $r_s(Y)$.

\begin{thm}[Comparison with the nonequivariant $r_s$-invariant]\label{thm:nonequivariantrs}
Let $r_s(Y)$ be the nonequivariant $r_s$-invariant constructed using Definition~\ref{def:3.10}. Then
\[
r_s(Y, \tau) \leq r_s(Y)
\]
for every $s$. Moreover, if $\tau$ is isotopic to the identity, then
\[
r_s(Y, \tau) = r_s(Y). 
\]
\end{thm}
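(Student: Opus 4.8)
\textbf{Proof proposal for Theorem~\ref{thm:nonequivariantrs}.}

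The plan is to derive both statements from the local-map monotonicity results of Section~\ref{sec:4}, together with a comparison of the forgetful relationship between the equivariant and nonequivariant enriched complexes. First I would observe that $\un{\E}(Y,\tau)$ and $\un{\E}(Y)$ are built from the \emph{same} underlying sequence of complexes $\un{C}(Y,\pi_i)$ with the same maps $\psi_i^j$; the only difference is that the equivariant theory additionally carries the homotopy involutions $\tau_i$. Consequently the identity maps on each $\un{C}(Y,\pi_i)$ assemble into an ``enriched local map'' of the underlying (nonequivariant) enriched complexes in both directions. The key point for the first inequality is that an equivariant $\theta$-supported $[r,s]$-cycle is in particular a $\theta$-supported $[r,s]$-cycle in the sense of Definition~\ref{def:3.9}: the constraint $\tau_*^{[r,s]}[z]=[z]$ is an \emph{extra} condition, so the infimum in Definition~\ref{def:4.22} (equivariant version) is taken over a subset of the set appearing in the nonequivariant version of Definition~\ref{def:4.22}. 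Taking infima over a smaller set can only make $-\inf\{r\mid\cdots\}$ smaller or equal, which after negating gives $r_s(Y,\tau)\le r_s(Y)$. I would phrase this cleanly by noting that the identity sequence is a (level-zero) enriched local map $\un{\E}(Y,\tau)\to\un{\E}(Y)$ in a suitable sense, or more directly by invoking Lemma~\ref{lem:4.19} (so that $\tau_i^{[r,s]}$ is defined) and then comparing the two sets of cycles termwise.

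For the second statement, suppose $\tau$ is isotopic to the identity. Then one expects that $\tau$, viewed as a cobordism self-map of $Y\times[0,1]$, induces a map chain-homotopic to the identity on $\un{C}(Y,\pi_i)$ — indeed, the cobordism $(Y\times[0,1],\,\wt\tau)$ with $\wt\tau$ the track of an isotopy from $\tau$ to $\id$ is diffeomorphic rel structure to the product cobordism with the identity self-map, so the associated cobordism maps $\tau_i$ and $\id$ agree up to a (level-$\delta_i$, $\delta_i\to0$) chain homotopy compatible with the filtration data. Granting this, every $\theta$-supported $[r,s]$-cycle $z$ automatically satisfies $\tau_i^{[r,s]}{}_*[z]=[z]$ because $\tau_i^{[r,s]}\simeq\id$ on the stable complex $\un{\E}_\tau^{[r,s]}$ (Lemma~\ref{lem:4.19} ensures the homotopy descends to the truncations for $i$ large, since $r,s\notin\K$ and the homotopy has level $\to0$). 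Hence the two sets of cycles in Definition~\ref{def:4.22} coincide for all $r,s\notin\K$, and the equivariant and nonequivariant $r_s$-invariants agree for $-s\notin\K$; the remaining values of $s$ follow from the limiting clause $r_s=\lim_{t\to s^-}r_t$ in Definitions~\ref{def:3.10} and~\ref{def:4.22}. Combined with the first inequality this gives equality.

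The main obstacle I anticipate is the claim that an isotopy $\tau\simeq\id$ forces $\tau_i\simeq\id$ as morphisms of enriched complexes respecting the two-step filtration and with levels converging to zero. This is where the analytic input of Section~\ref{sec:5} must be used carefully: one needs that the cobordism map of $(Y\times[0,1],\wt\tau)$ is computed by counting instantons on $Y\times[0,1]^*$ with a perturbation interpolating between $\pi_i^\tau$ and the constant perturbation $\pi_i$, and that a diffeomorphism of $Y\times[0,1]$ isotopic (rel ends in the appropriate sense) to the identity does not change the chain-homotopy type of the induced map — the standard ``diffeomorphisms isotopic to the identity act trivially up to chain homotopy'' principle, here upgraded to track the Chern--Simons filtration (which behaves well because the isotopy can be taken through $\tau$-type diffeomorphisms, or because the relevant moduli spaces limit to flat connections as $\|\pi\|\to0$, keeping levels small). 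I would either cite the analogous nonequivariant statement from \cite{Do02, NST19} and note the filtered refinement is identical in spirit to Lemma~\ref{ex prf of enriched local inv str}, or sketch the one-parameter family of perturbations realizing the homotopy, exactly as in the proof of Lemma~\ref{involutive local map}. Everything else is formal bookkeeping with the definitions of $\theta$-supported cycles and the infima.
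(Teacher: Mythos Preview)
Your proposal is correct and follows essentially the same approach as the paper. For the inequality $r_s(Y,\tau)\le r_s(Y)$, the paper simply says to compare Definitions~\ref{def:3.10} and~\ref{def:4.4}, which is exactly your observation that equivariant $\theta$-supported cycles form a subset of all $\theta$-supported cycles; for the equality when $\tau$ is isotopic to the identity, the paper also argues that the induced action of $\tau$ on the complex is isotopy-invariant (your ``$\tau_i\simeq\id$'' argument), though it offers a slicker alternative you did not mention: the track of an isotopy from $\tau$ to $\id$ is an equivariant homology cobordism from $(Y,\tau)$ to $(Y,\id)$, so Theorem~\ref{thm:1.1} applied in both directions immediately gives $r_s(Y,\tau)=r_s(Y,\id)=r_s(Y)$ without any direct analysis of the chain-level map.
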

\begin{proof}
It is clear that if $\tau$ is isotopic to the identity, then $r_s(Y, \tau) = r_s(Y, \id) = r_s(Y)$. Indeed, the isotopy-invariance of the induced action of $\tau$ can easily be shown directly. Alternatively, note that if $\tau_1$ is isotopic to $\tau_2$, then the track of isotopy gives an equivariant homology cobordism from $(Y, \tau_1)$ to $(Y, \tau_2)$. For the more general claim, note that a comparison of Definitions~\ref{def:3.10} and \ref{def:4.4} shows $r_s(\un{C}, \tau) \leq r_s(\un{C})$.
\end{proof}

\begin{rem}\label{rem:differenceinrs}
Note, however, that the $r_s$-invariant constructed using Definition~\ref{def:3.10} is not quite the same as the $r_s$-invariant of \cite[Definition 3.2]{NST19}: the two agree when working over field coefficients. (It is straightforward to carry out the present paper using more general coefficient rings.) To see this, we re-write Definition~\ref{def:3.10} as
\begin{align*}
r_s ( \un{C}) & =  -\inf \{   \deg_I (d \alpha - D_2 (1) ) \ | \ \alpha \in C_*, \deg_I(\alpha) \leq -s \} \\ 
& = \sup \{ \deg_I (f ) \ | \ f \in C^*(-Y), \deg_I(\alpha) \leq -s, d^* f(\alpha) - D_1^* (f) \neq 0 \}\\
& =   \inf \{ r \leq0 \ | \ 0\neq [D_1] \in H^* (C^{\leq -s}(-Y) / C^{\leq r} (-Y) )   \} \\ 
& =   \sup \{ r \leq0 \ | \ 0 =  [D_1] \in H^* (C^{\leq -s}(-Y) / C^{\leq r} (-Y) )   \}.
\end{align*}
In the second line, we have used the fact that we are working with field coefficients to invoke the duality $\smash{(C_*^{\leq s }(Y, \tau) )^* = C^*_{\geq  -s} (-Y, \tau)}$ and $\smash{D_2^* = D_1}$. The last line is the definition of the $r_s$-invariant in \cite[Definition 3.2]{NST19} for $-Y$. However, in \cite{NST19}, the Chern-Simons functional is defined with an additional negative sign in comparison to Section~\ref{sec:5.1.1}. Hence Definition~\ref{def:3.10} agrees with that of \cite[Definition 3.2]{NST19}. 
\end{rem}

\begin{rem}\label{rem:differentcoefficients}
While the formalism of this paper may be carried out over any coefficient ring, in order to obtain a nontrivial equivariant invariant it is best to either use $\Z$ or a field with characteristic equal to the order of $\tau$. For instance, suppose that $\tau$ is of order three and suppose we work over $\Z_2$. Then if $z$ is any $\theta$-supported cycle, we may average over the orbit of $\tau$ to form the equivariant $\theta$-supported cycle $z + \tau z + \tau^2 z$. Hence in this case there is no distinction between the existence of a $\theta$-supported cycle and an equivariant $\theta$-supported cycle.
\end{rem}

\subsection{Relation with the Donaldson invariant} \label{sec:6.2}
We now discuss the relation between the action of $\tau$ and the $SO(3)$-Donaldson polynomial. Let $X$ be a closed, simply-connected oriented 4-manifold with $b^+(X) \geq 3$. Assume $X=X_1 \cup_Y X_2$ along an oriented homology 3-sphere $Y$, where:
\begin{itemize}
    \item[(i)] $X_1$ is a negative-definite 4-manifold with $H_1(X_1; \Z_2)=0$,   
    \item[(ii)] $X_2$ is a 4-manifold with $b^+(X_2)>1$. 
\end{itemize}
Fix a cohomology class $w \in H^2(X; \Z_2 )$; this will correspond to the second Stiefel-Whitney class of an $SO(3)$-bundle. We will assume $w^2 \neq 0 \bmod 4$ in order to guarantee compactness of various moduli spaces. 

Denote the $SO(3)$-Donaldson polynomial invariant by 
\[
\Psi (X) : \Lambda^* H_2(X;\Z) \to \Z.
\]
This is formally defined by 
\[
\Psi (X)(x_1, \cdots , x_n ) =  \int_{\ov{M(X, g, m,w)} } \mu ( x_1) \wedge \cdots \wedge \mu (x_n)  , 
\]
where $\mu : H_2 (X) \to H^2 (\B_{m,w} (X))$ is as in \cite[Section 6.3]{Do02} and $\ov{M(X, g, m,w)}$ is the compactified ASD-moduli space with respect to the $SO(3)$-bundle $P_{m,w}$ satisfying $p_1=m$ and $w_2=w$. Here, $\B_{m,w} (X)$ is the space of $SO(3)$ connections over $P_{m,w}$ divided by gauge transformations and $m$ is chosen so that 
\[
-2m-3(1+b^+(X)) = 2n.
\]
In order to make this definition rigorous, we realize $\mu ( x_1), \ldots,  \mu (x_n)$ as divisors $V(x_1), \ldots , V(x_n)$ in the ASD-moduli space $\ov{M_k(X, g, m,  \omega)}$ and set
\[
 \int_{\ov{M(X, g, m,w)} } \mu ( x_1) \wedge \cdots \wedge \mu (x_n)   = \# \left( V(x_1 ) \cap \cdots \cap V(x_n) \cap \ov{M(X, g, m,w)} \right).
 \]

In this paper, we focus on the case $X=K3$ or $X=3\CP \# 20 \oCP$. 

\begin{rem}
Although we have used $SU(2)$ as the structure group in our definition of instanton Floer homology, one can equally well use $SO(3)$. For integer homology spheres, the groups defined in this way are naturally isomorphic; hence we will sometimes identify $SO(3)$- and $SU(2)$-instanton Floer homology.
\end{rem}

Counting relative ASD moduli spaces over $X_1$ defines a cycle $\un{\psi}(X_1)$ in the chain complex $\un{C}(Y)$, this defines 
\[
  \un{\Psi}(X_1): \Lambda^* H_2(X_1; \Z) \to \un{I}(Y),
\]
where $\un{I}_*(Y)$ is regarded as absolutely $\Z/8\Z$-graded. We similarly have 
\[
\ov{\Psi}(X_2): \Lambda^* H_2(X_2; \Z) \to \ov{I}(Y). 
\]
This gives the pairing formula:

\begin{thm}\cite[Section 7]{Do02}
In the above setting, we have
\[
\langle \un{\Psi}(X_1), \ov{\Psi}(X_2) \rangle = \Psi(X). 
\]
\end{thm}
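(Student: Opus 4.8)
The plan is to follow Donaldson's neck-stretching argument \cite[Section 7]{Do02}, adapted to the present setting. First I would fix a family of metrics $g_T$ on $X$ each containing an isometric copy of the cylinder $Y \times [-T, T]$ in a neighborhood of the separating hypersurface $Y$, and study the compactified moduli spaces $\ov{M(X, g_T, m, w)}$ as $T \to \infty$. Since $Y$ is an integer homology sphere, Mayer--Vietoris gives $H_2(X;\Z) = H_2(X_1;\Z) \oplus H_2(X_2;\Z)$, so the classes $x_1, \dots, x_n$ entering the Donaldson polynomial split into those coming from $X_1$ and those coming from $X_2$, and the geometric representatives $V(x_i)$ may be chosen supported in the interiors of the respective pieces, away from the neck. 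The intersection number $\#\big(V(x_1) \cap \cdots \cap V(x_n) \cap \ov{M(X, g_T, m, w)}\big)$ is independent of $T$ by cobordism invariance of $\Psi(X)$, so it suffices to compute its $T \to \infty$ limit.

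Next I would invoke the standard compactness/gluing package for ASD connections on a manifold with a long neck. As $T \to \infty$, any sequence of ASD connections of bounded energy on $X$ converges, after gauge transformations and bubbling, to a broken configuration: an ASD connection on the cylindrical-end manifold $X_1^* = X_1 \cup (Y \times [0,\infty))$ limiting to some (perturbed) critical point $a$ of $cs_{Y,\pi}$; a chain of gradient trajectories on $\R \times Y$; and an ASD connection on $X_2^* = X_2 \cup ((-\infty,0] \times Y)$ limiting to the same $a$. Conversely, gluing theory produces a genuine ASD connection on $X$ for $T$ large from each such broken configuration with charges adding correctly and matching limits. The divisors $V(x_i)$ supported in $X_1$ (resp.\ $X_2$) descend to divisors on the $X_1^*$- (resp.\ $X_2^*$-) factor. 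Counting dimensions forces the middle gradient-trajectory part to be rigid, so in the limit one is left summing over pairs consisting of a $0$-dimensional relative moduli space over $X_1^*$ cut down by its divisors and a relative moduli space over $X_2^*$ cut down by its divisors, with opposite limiting connections on $Y$.

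The contribution of the irreducible limits $a$ is exactly the pairing of the chain $\un{\psi}(X_1) \in \un{C}(Y)$ --- obtained by counting the $0$-dimensional relative moduli spaces over $X_1^*$ after intersecting with the $V(x_i)$ from $X_1$ --- against the cochain $\ov{\psi}(X_2) \in \ov{C}(Y)$, using the identification $\ov{C}(Y) \cong \un{C}(-Y)^*$ and the fact that reversing the orientation of the neck interchanges ``flow in'' and ``flow out.'' The reducible connection $\theta$ on $Y$ is incorporated through precisely this enlarged complex: the map $D_2$ on the $X_1$-side and $D_1$ on the $X_2$-side record the broken configurations passing through $\theta$, and the hypotheses $H_1(X_1;\Z_2) = 0$ together with negative-definiteness of $X_1$ are what make $\un{\psi}(X_1)$ a genuine cycle (so the pairing descends to homology) and control the reducible ASD connections over $X_1$. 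Passing to homology classes, this identifies the $T$-independent count with $\langle \un{\Psi}(X_1), \ov{\Psi}(X_2) \rangle$, which therefore equals $\Psi(X)(x_1, \dots, x_n)$.

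The main obstacle is the gluing/compactness analysis at the neck, and in particular the bookkeeping of the reducible connection: one must rule out unwanted degenerations (bubbling concentrating on $Y$, or non-transverse families of reducibles on $X_1$ or $X_2$) and verify that orientations and the $D_1$/$D_2$ terms assemble correctly into the algebraic pairing. This is where the conditions $w^2 \not\equiv 0 \bmod 4$ (compactness, no reducibles on $X$), $b^+(X_2) > 1$ (generic absence of reducible ASD connections on $X_2$), and $H_1(X_1;\Z_2) = 0$ with $X_1$ negative definite (so that only the single reducible $\theta$ on $Y$ enters) all get used. Since the Chern--Simons filtration plays no role in counting $0$-dimensional moduli spaces, I would rely on the detailed treatment in \cite[Section 7]{Do02}, checking only that none of the filtered refinements of the preceding sections disturb the argument.
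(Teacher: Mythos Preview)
The paper does not prove this theorem at all: it is quoted as a result from \cite[Section 7]{Do02} and used as a black box. Your sketch is a faithful summary of Donaldson's neck-stretching argument and is consistent with what the citation points to, so there is nothing to compare against on the paper's side.
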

Note that this pairing formula holds in $\Z$ coefficient, although we will use it in $\Z_2$ coefficient. 

We now assume $b^+(X)=3$. Let us summarize the computations of the $SO(3)$-Donaldson polynomial invariants of $\smash{X=K3\# \oCP}$ or $\smash{X=3\CP \# 20 \oCP}$. 
We focus on the $ SO(3)$-bundle $P_{m,w}$ satisfying 
\[
m=6 \quad \text{and} \quad w^2 = 1 \bmod 4.
\]
In this case, the ASD moduli space $\ov{M(X, g, m,w)} =M(X, g, m,w) $ has dimension zero and is compact. Thus, we do not need to cut down the moduli space with any divisor $V_{x_i}$. We denote by $\Psi (X)(1) \in \Z$ the $SO(3)$-Donaldson invariant, defined as the signed count of points in $M(X, g, m,w)$ with respect to a homology orientation of $X$.
The invariant $\Psi (X)(1) \in \Z$ is called the {\it simple invariant}; see \cite[Section 9.1.1]{DK90}. 

In the simple invariant case, the corresponding relative Donaldson invariants $\un{\Psi}(X_1)$ and $ \ov{\Psi}(X_2)$ are concentrated in a single degree within $\un{I}(Y)$ and $\un{I}(Y)$, respectively. By combining known computations of simple invariants of $K3$ with the blow-up formula \cite[Proposition 9.3.14]{DK90}, we see the following: 
\begin{itemize}
\item(\cite{K91} and \cite[Proposition 9.3.14]{DK90}) Suppose $X= K3 \# \oCP$. Then there is a cohomology class $w\in H^2(X; \Z_2)$ with $w^2 = 1 \bmod 4$ whose Donaldson invariant is computed as
\[
\Psi (X)(1)=  \pm 1 \in \Z. 
\]
\item (\cite[Theorem 9.3.4]{DK90}) Suppose $X= 3\CP \# 20 \oCP $. Then for every cohomology class $w\in H^2(X; \Z_2)$ with $w^2 = 1 \bmod 4$, the simple Donaldson invariant of $X$ is 
\[
\Psi (X)(1)=  0 \in \Z. 
\]
\end{itemize}

Now consider the decomposition 
\[
X = K3 \# \oCP = X_1 \cup_Y X_2,
\]
where $X_1$ is the usual Akbulut cork. Then cutting out and re-gluing $X_1$ via the involution $\tau \colon Y \rightarrow Y$ of \cite{Ak91_cork} gives 
\[
X' = 3\CP \# 20 \oCP = X_1 \cup_{\tau} X_2. 
\]
By the paring formula, we have 
\begin{align*}
\Psi (X')(1) = \langle \tau_* \un{\Psi}(X_1) , \ov{\Psi}(X_2) \rangle =0 \neq  \pm 1 = \langle  \un{\Psi}(X_1) , \ov{\Psi}(X_2) \rangle = \Psi(X)(1).
\end{align*}
Note that $\un{\Psi}(X_1)$ induces a local map from $\un{C}(S^3 )$ to $\un{C}(Y)$. 

\begin{lem}\label{lem:6.11}
The action 
$\tau_* : \un{I}(Y) \to \un{I}(Y) $ induced by tau satisfies the following condition: 
There is a $\theta$-supported element $\psi(X_1)$ and a function $T:  \un{I}(Y) \to \Z_2$  such that $T\tau_* \psi(X_1)= 0$ and $T\psi(X_1)=1$.

\end{lem}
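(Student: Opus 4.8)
The plan is to read the lemma off directly from the pairing formula together with the two computations of simple $SO(3)$-Donaldson invariants recalled just above. First I would set $\psi(X_1)$ to be the homology class $[\un{\Psi}(X_1)] \in \un{I}(Y)$ of the cycle obtained by counting the relative ASD moduli space over the Akbulut cork $X_1$ for the $SO(3)$-bundle $P_{m,w}$ with $m = 6$ and $w^2 \equiv 1 \bmod 4$. Since $X_1$ is negative definite with $H_1(X_1;\Z_2) = 0$, the reducible contribution over $X_1$ is odd, exactly as in \cref{sec:5.2.1}, so $\un{\Psi}(X_1)$ induces a local map $\un{C}(S^3) \to \un{C}(Y)$ and hence $\psi(X_1)$ is $\theta$-supported in the sense of \cref{def:3.9}. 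Dually, I would define $T \colon \un{I}(Y) \to \Z_2$ by $T(x) = \langle x, \ov{\Psi}(X_2)\rangle$, the pairing with the relative invariant of $X_2$; compactness of the relevant moduli space is guaranteed by $b^+(X_2) > 1$ and $w^2 \not\equiv 0 \bmod 4$, and in the simple-invariant case both relative invariants sit in a single $\Z/8\Z$-degree, so this pairing is precisely the one appearing in the pairing formula.

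With these choices the two required identities are exactly the two computations displayed just before the lemma. On one hand, the pairing formula gives $T\psi(X_1) = \langle \un{\Psi}(X_1), \ov{\Psi}(X_2)\rangle = \Psi(K3 \# \oCP)(1)$, which by the blow-up formula applied to the known simple invariant of $K3$ equals $\pm 1$, hence $1$ over $\Z_2$. On the other hand, cutting out $X_1$ and re-gluing it by $\tau$ converts $K3 \# \oCP$ into $X' = 3\CP \# 20\oCP$, and the pairing formula for this re-glued manifold reads $T\tau_*\psi(X_1) = \langle \tau_* \un{\Psi}(X_1), \ov{\Psi}(X_2)\rangle = \Psi(X')(1) = 0$. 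Together these give $T\psi(X_1) = 1$ and $T\tau_*\psi(X_1) = 0$, as desired.

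The only substantive point is the identification of $\tau_*$ — defined in \cref{sec:5.3} by counting instantons on the twisted cylinder $Y \times [0,1]$ — with the geometric cork-twist operation on relative Donaldson invariants, i.e.\ the statement that the Donaldson invariant of $X_1 \cup_\tau X_2$ is computed by $\langle \tau_* \un{\Psi}(X_1), \ov{\Psi}(X_2)\rangle$. This follows from the standard neck-stretching package: insert a long neck $Y \times [-T,T]$ into $X'$ and factor the count of points in $M(X', g, m, w)$ through the relative invariants of the two pieces, with the $\tau$-twist absorbed into the mapping-cylinder cobordism inserted along the neck. The bookkeeping to be careful with is choosing $w \in H^2(X';\Z_2)$ that restricts compatibly to $X_1$ and to $X_2$, the orientation conventions entering the pairing, and the index count keeping the relevant moduli space zero-dimensional and compact; granting these, the passage from $\Z$- to $\Z_2$-coefficients is immediate since $\pm 1 \equiv 1$.
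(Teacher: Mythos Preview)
Your proposal is correct and follows exactly the approach the paper has in mind: the paper's proof is the single sentence ``Immediate from the pairing formula,'' and the relevant identities $\langle \tau_* \un{\Psi}(X_1), \ov{\Psi}(X_2)\rangle = 0$ and $\langle \un{\Psi}(X_1), \ov{\Psi}(X_2)\rangle = \pm 1$ are already displayed in the paragraph preceding the lemma. Your choices $\psi(X_1) = \un{\Psi}(X_1)$ and $T = \langle -, \ov{\Psi}(X_2)\rangle$ are precisely what is intended, and your extra discussion (why $\psi(X_1)$ is $\theta$-supported, and the neck-stretching justification that $\tau_*$ matches the cork-twist) fills in details the paper leaves implicit.
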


\begin{proof}
Immediate from the pairing formula.
\end{proof}

This trivially gives:

\begin{lem}\label{lem:6.12}
There exists a $\theta$-supported cycle in $\un{I}_{-3}(Y)$ which is not fixed by $\tau_*$.
\end{lem}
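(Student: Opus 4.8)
The plan is to derive Lemma~\ref{lem:6.12} as an immediate logical consequence of Lemma~\ref{lem:6.11}. Recall that Lemma~\ref{lem:6.11} provides a $\theta$-supported element $\psi(X_1) \in \un{I}_{-3}(Y)$ together with a $\Z_2$-linear function $T \colon \un{I}(Y) \to \Z_2$ satisfying $T\psi(X_1) = 1$ and $T\tau_*\psi(X_1) = 0$. The key observation is that if $\tau_*$ were to fix every $\theta$-supported cycle in $\un{I}_{-3}(Y)$, then in particular $\tau_*\psi(X_1) = \psi(X_1)$, whence $T\tau_*\psi(X_1) = T\psi(X_1) = 1 \neq 0$, a contradiction.

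First I would note that $\psi(X_1)$, being a $\theta$-supported element, is by definition the homology class of a $\theta$-supported cycle $z \in \un{C}_{-3}(Y)$ in the sense of Definition~\ref{def:3.9}; here I am passing between the chain-level and homology-level pictures, using that $\tau_*$ acting on $\un{I}(Y) = H_*(\un{C}(Y))$ fixes the class $[z]$ precisely when $z$ is an equivariant $\theta$-supported cycle in the sense of Definition~\ref{def:4.3}. Then I would simply invoke Lemma~\ref{lem:6.11}: the function $T$ distinguishes $\psi(X_1)$ from $\tau_*\psi(X_1)$ in $\un{I}(Y)$, so these two classes are distinct, i.e. $\tau_*\psi(X_1) \neq \psi(X_1)$. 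Hence $z$ is a $\theta$-supported cycle in $\un{I}_{-3}(Y)$ whose homology class is not fixed by $\tau_*$, which is exactly the assertion of Lemma~\ref{lem:6.12}.

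There is essentially no obstacle here — this is a one-line deduction, which is why the excerpt labels it "This trivially gives". The only mild care needed is bookkeeping: ensuring that $\psi(X_1)$ genuinely lives in homological degree $-3$ (it does, since $\un{\Psi}(X_1)$ takes values in $\un{I}(Y)$ regarded as $\Z/8\Z$-graded and the relevant relative Donaldson invariant is concentrated in the degree of $\theta$, namely $-3$), and that "$\theta$-supported" is the appropriate nontriviality condition guaranteeing $[z]$ is not simply zero. All the genuine content — the computation of the Donaldson invariants of $K3 \# \oCP$ versus $3\CP \# 20\oCP$, the pairing formula, and the cork-twist comparison — has already been carried out in establishing Lemma~\ref{lem:6.11}. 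The main "work" of this subsection is thus upstream; Lemma~\ref{lem:6.12} is merely the clean packaging of that work into a statement about the $\tau$-action on instanton homology, phrased so as to feed directly into the construction of the involutive $r_s$-invariant for the Akbulut--Mazur cork.
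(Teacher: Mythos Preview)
Your proposal is correct and follows essentially the same approach as the paper: both deduce the lemma directly from Lemma~\ref{lem:6.11} by observing that $\psi(X_1)$ itself is a $\theta$-supported class not fixed by $\tau_*$, since $T$ distinguishes $\psi(X_1)$ from $\tau_*\psi(X_1)$. The paper phrases this as a one-line contradiction argument, but the content is identical to yours.
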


\begin{proof}
If not, we would have $\tau_* \psi(X_1) = \psi(X_1)$ in Lemma~\ref{lem:6.11}, which would clearly be a contradiction.
\end{proof}

\section{Examples and applications}\label{sec:7}

In this section, we provide several computations and partial computations of our invariants. We use these to establish the remaining claims of Section~\ref{sec:1}.

\subsection{The Akbulut-Mazur cork} \label{sec:7.1}
Our first (and most fundamental) example is the Akbulut-Mazur cork $Y = S_{+1}(\ov{9}_{46})$. In \cite[Theorem 1]{Sa03}, it is shown that the (irreducible) instanton Floer homology 
\[
I(Y) = H_* (C(Y), d ) 
\]
is isomorphic to $\Z$ in odd gradings and is zero in even gradings. Moreover, since all critical points are known to be nondegenerate, we do not need to take holonomy perturbations to make the Chern-Simons functional Morse. For trajectories, we take small perturbations which are zero near the critical points and make all moduli spaces regular. We claim that the action of $\tau$ on the instanton homology
\[
\un{I}(Y) = H_* (\un{C}(Y), \un{d} ) 
\]
is locally nontrivial. 

To see this, observe that $I(Y)$ is one-dimensional in grading $-3$. Together with the fact that the Fr\o yshov invariant of $Y$ is zero, this shows that $\un{I}(Y)$ is two-dimensional in grading $-3$. Let $x$ be a generator of $\un{I}(Y)$ in grading $-3$ whose image generates the quotient $\un{I}(Y)/I(Y)$ and let $a$ be the class of a cycle in $C(Y) \subset \un{C}(Y)$ with the same grading as $x$. Note that $x$ is the class of a $\theta$-supported cycle. The fact that $\tau_*$ preserves $I(Y)$ and $\tau_*^2 = \id$ shows that $\tau_* a = a$. It follows immediately from Lemma~\ref{lem:6.12} that 
\[
\tau_* x = x + a.
\]
The condition $\tau_*^2 = \id$ then shows that in fact
\[
\tau_* a = a.
\]
This is easily seen to be locally nontrivial just from the action of $\tau$ on homology. Hence:

\begin{lem}\label{cal for Akbulut}
The involutive chain complex $(\un{C}(Y),\tau)$ is not locally trivial. In fact, we have:
\[
r_0(Y, \tau) < \infty \quad \text{and} \quad r_s(-Y, \tau)=\infty.
\] 
\end{lem}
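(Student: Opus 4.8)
The plan is to deduce both assertions from the structural information about $\un{I}(Y)$ established immediately above, combined with the equivariant definite bounding theorem (Theorem~\ref{thm:definitebounding}) applied to the orientation reversal. For the first claim, recall that we have just shown $\tau_* x = x + a$, where $x$ is the class of a $\theta$-supported cycle in grading $-3$ and $a$ is the class of a cycle in $C(Y) \subseteq \un{C}(Y)$ with $\tau_* a = a$. Since $\un{I}(Y)$ is two-dimensional in grading $-3$ with basis $\{x, a\}$ (using that the Fr\o yshov invariant of $Y$ vanishes and $I(Y)$ is one-dimensional in grading $-3$), any $\theta$-supported cycle is homologous to $x + ca$ for some $c \in \Z_2$, and $\tau_*(x + ca) = x + a + ca = (x+ca) + a \neq x + ca$. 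Hence no $\theta$-supported cycle in $\un{I}_{-3}(Y)$ is $\tau_*$-fixed. Translating this to the level of the enriched complex $\un{\E}(Y, \tau)$: by Lemma~\ref{lem:4.26}, $r_0(Y, \tau) = \infty$ would force the existence of an enriched local map $\mathfrak{T} \to \un{\E}(Y, \tau)$, hence an equivariant $\theta$-supported cycle of filtration level arbitrarily close to $0$; but since the perturbations may be taken to be zero (all critical points are nondegenerate), such a cycle would be an honest equivariant $\theta$-supported cycle in $\un{C}(Y)$, contradicting what we just proved. Therefore $r_0(Y, \tau) < \infty$. (This also re-proves, via the $r_s$-invariant, that $(\un{C}(Y), \tau)$ is not locally trivial, by Lemma~\ref{lem:4.26}.)

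For the second claim, $r_s(-Y, \tau) = \infty$, the key observation is that $Y = S^3_{+1}(\ov{9}_{46})$ is $(+1)$-surgery on a strongly invertible knot. By Lemma~\ref{lem:knotsB} (with $n = 1 > 0$), $(Y, \tau)$ bounds a simply-connected, equivariant \emph{positive}-definite manifold $(W, \wt{\tau})$ with $H_1(W, \Z_2) = 0$. Theorem~\ref{thm:definitebounding} then applies directly: in the positive-definite case it yields $r_s(-Y, \tau) = \infty$ for all $s$.

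The main obstacle here is not in the logical structure — both halves follow quickly from results already in place — but in being careful about the passage between the abstract enriched-complex statement (Definition~\ref{def:4.22}, Lemma~\ref{lem:4.26}) and the concrete chain-level computation with $\tau_*$ on $\un{I}_{-3}(Y)$. Specifically, one must check that because $\ov{9}_{46}$ has only nondegenerate critical points, the enriched complex $\un{\E}(Y,\tau)$ may be taken to have level-zero structure maps in a neighborhood of filtration $0$, so that ``$r_0(Y,\tau) = \infty$'' is genuinely equivalent to the existence of a $\tau_*$-fixed $\theta$-supported class in $\un{I}_{-3}(Y)$ rather than merely an approximate one; this is exactly the content of Remark~\ref{rem:rangeofrs} and the discussion of Lemma~\ref{lem:4.19}, applied in the favorable case $\mathfrak{K}_Y$ discrete with the relevant critical values bounded away from $0$. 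Once this identification is made, the contradiction with Lemma~\ref{lem:6.12} and the sign refinement $\tau_* x = x + a$ closes the argument.
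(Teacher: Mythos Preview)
Your proposal is correct and follows essentially the same approach as the paper: the first claim comes from the homological computation $\tau_* x = x + a$ together with Lemma~\ref{lem:4.26}, and the second from Theorem~\ref{thm:definitebounding} via the positive-definite equivariant bounding of $(+1)$-surgery on a strongly invertible knot. Your extra care about the passage from the enriched complex to the honest chain complex (exploiting that the critical points of $cs_Y$ are already nondegenerate, so perturbations vanish near them and the filtration levels are unperturbed) is a welcome elaboration of what the paper compresses into ``the discussion at the beginning of this subsection.''
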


\begin{proof}
By \cref{lem:4.26}, $r_0(Y, \tau)$ characterizes local triviality. Hence the first inequality follows from the discussion at the beginning of this subsection. The claim regarding $r_0(-Y, \tau)$ follows from the fact that $Y$ is $(+1)$-surgery on the strongly invertible knot $\ov{9}_{46}$, combined with Theorem~\ref{thm:definitebounding}.
\end{proof} 

Although not strictly necessary, for completeness we describe the chain complex $\un{C}(Y)$. The irreducible part of this complex was computed by Saveliev in \cite{Sa03} via an analysis of the representation variety of $Y$; see also \cite{RS04}. As shown in \cite[Section 2]{Sa03}, the irreducible representation variety of $Y$ (modulo conjugation) is transversely cut out and consists of six points, denoted by $\beta_1, \beta_2, \alpha_1, \alpha_2, \alpha_3$, and $\alpha_4$. The induced action of $\tau$ on the representation variety interchanges $\beta_1$ and $\beta_2$ and leaves each $\alpha_i$ fixed. Together with the aforementioned computation of $I(Y)$, this shows that one generator of $C(Y)$ (say the generator corresponding to $\alpha_1$) lies in even grading and has
\[
\un{d} \alpha_1 = \beta_1 + \beta_2, 
\]
while all other generators lie in odd grading and have zero differential; see Figure~\ref{fig:7.1}. Hence
\[
\tau \beta_1 = \beta_2, \quad \tau \beta_2 = \beta_1, \quad \text{and} \quad \tau \alpha_i = \alpha_i \text{ for all } i.
\]
Here, we abuse notation by writing $\beta_1$ to mean the generator of $C(Y)$ corresponding to the representation $\beta_1$, and so on. See \cite[Section 9.3]{RS04} for the relevant computation over $\Z$.

We thus have that
\[
\un{C}(Y) = \mathrm{span}\{\theta, \beta_1, \beta_2, \alpha_1, \alpha_2, \alpha_3, \alpha_4\}.
\]
It is somewhat difficult to determine the absolute gradings of these generators, although we know that $\beta_1, \beta_2, \alpha_2, \alpha_3$, and $\alpha_4$ lie in the same mod two grading as $\theta$. It follows from this that either $D_2 \theta = 0$ or $D_2 \theta = \alpha_1$; the latter is impossible as $\alpha_1$ is not a cycle. There are two possibilities for the action of $\tau$ on $\un{C}(Y)$ consistent with our analysis of $\un{I}(Y)$. If some $\alpha_i$ (for $i = 2, 3, 4$) is in the same grading as $\theta$, then
\[
\tau \theta = \theta + \alpha_i.
\]
If $\beta_1$ and $\beta_2$ are in the same grading as $\theta$, then up to filtered homotopy equivalence we have
\[
\tau \theta = \theta + \beta_1.\footnote{To see that the homotopy equivalence is filtered, write $\tau \theta = \theta + m \beta_1 + n \beta_2$. Our previous observation involving the Donaldson invariant implies $m + n \equiv 1 \bmod 2$; in particular, $m \neq n$. It follows that $\tau$ is not literally an involution. The fact that $\tau$ is a homotopy involution in the filtered sense then implies that $\deg_I(\alpha_1) \leq 0$.} 
\]
These two possibilities are displayed in Figure~\ref{fig:7.1}. 

\begin{figure}[h!]
\includegraphics[scale = 1.1]{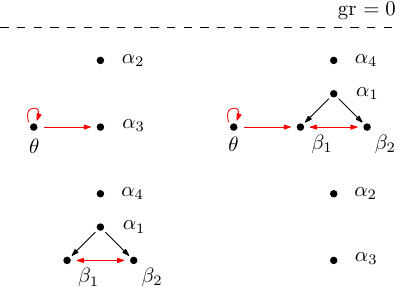}
\caption{Possibilities for the complex of $Y = S_{+1}(\ov{9}_{46})$. Generators are arranged by height according to their homological grading. The differential is given by the black arrows; if no black arrow is drawn, the differential on a given generator is zero. The action of $\tau$ is given by the (sum of) the red arrows; if no red arrow is drawn, the action of $\tau$ on a given generator is the identity.}\label{fig:7.1}
\end{figure}

Finally, we describe the chain complex $\un{C}(-Y)$ to illustrate the fact that $\un{C}(Y)$ and $\un{C}(-Y)$ do not necessarily contain the same information. The generators of $\un{C}(-Y)$ are easily calculated to be
\[
\un{C}(-Y) = \mathrm{span}\{\theta, \beta_1^\vee, \beta_2^\vee, \alpha_1^\vee, \alpha_2^\vee, \alpha_3^\vee, \alpha_4^\vee\}.
\]
However, now the only generator in the same mod two grading as $\theta$ is $\alpha_1^\vee$. Since $-Y$ bounds a homology ball, $D_2 \theta$ is a nullhomologous cycle in $\un{C}(-Y)$. It is easily checked that $D_2 \theta = 0$ from the fact that the Fr\o yshov invariant is zero; see Figure~\ref{fig:7.2}. There are again two possibilities for the action of $\tau$ on $\theta$. If there are no other generators in the same grading as $\theta$, then clearly
\[
\tau \theta = \theta. 
\]
Otherwise, we have $\tau \theta = \theta + n \alpha_1^\vee$ for $n= 0$ or $1$. Note that on the level of homology, we still have $\tau_*[\theta] = [\theta]$; moreover, one can check that this action of $\tau$ is filtered homotopy equivalent to the trivial action on $\theta$.\footnote{If $n \neq 0$, then $\tau$ will not literally be an involution. The fact that $\tau$ is a homotopy involution in the filtered sense then implies $\deg_I(\beta_1^\vee) = \deg_I(\beta_2^\vee) \leq 0$.} In either case, it is straightforward to see that the resulting complex is locally trivial.

\begin{figure}[h!]
\includegraphics[scale = 1.1]{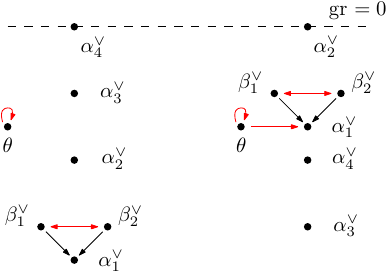}
\caption{Possibilities for the complex of $-Y$. Generators are arranged by height according to their homological grading. The differential is given by the black arrows; if no black arrow is drawn, the differential on a given generator is zero. The action of $\tau$ is given by the (sum of) the red arrows; if no red arrow is drawn, the action of $\tau$ on a given generator is the identity.}\label{fig:7.2}
\end{figure}

\subsection{Brieskorn homology spheres}\label{sec:brieskorn}
We now discuss the involutive $r_{s}$-invariant for Brieskorn homology spheres $\Sigma(p, q, r)$. In all cases other than $S^3$ or $\Sigma(2, 3, 5)$, this has mapping class group $\mathbb{Z}/2\mathbb{Z}$. (For an exposition of this fact, see \cite[Theorem 7.6]{DHM20}.) The generator of $\textit{MCG}(\Sigma(p, q, r))$ is the involution $\tau$ obtained by viewing $\Sigma(p, q, r)$ as the link of a complex singularity and restricting the complex conjugation action to $\Sigma(p, q, r)$. 

We first explain the importance of this analysis. In \cite{DHM20}, it was shown that the Heegaard-Floer-theoretic invariants associated to a pair $(Y, \tau)$ are monotonic under certain types of equivariant negative-definite cobordisms. To constrain the invariants associated to $(Y_1, \tau_1)$, it thus suffices to produce a cobordism of this kind from $(Y_1, \tau_1)$ to another pair $(Y_2, \tau_2)$ whose invariants are already understood. This strategy formed a crucial technique for establishing many of the examples presented in \cite{DHM20}. In particular, the set of Brieskorn spheres (equipped with the complex conjugation involution) formed an essential class of target pairs $(Y_2, \tau_2)$ in the discussion of \cite{DHM20}, since in \cite{ASA20} the invariants of these manifolds were computed and shown to be appropriately nontrivial.

While in this paper we likewise use equivariant negative-definite cobordisms to compare pairs, a crucial difference is that the involutive instanton-theoretic invariants are trivial for Brieskorn spheres. In particular, the methods of this paper cannot immediately be used to re-establish the examples of \cite{DHM20}.

We have:

\begin{thm}\label{thm:Brieskorn}
Let $Y = \Sigma(p, q, r)$ be a Brieskorn sphere with $h(Y) = 0$ and $\tau$ be the complex conjugation action on $Y$. Then
\[
r_s(Y, \tau) = \infty \quad \text{and} \quad r_s(-Y, \tau) = \infty.
\]
Here, $Y$ is oriented as the link of its corresponding singularity and $h(Y)$ is the instanton Fr\o yshov invariant of $Y$. 
\end{thm}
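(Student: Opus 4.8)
\textbf{Proof proposal for Theorem~\ref{thm:Brieskorn}.}

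The plan is to reduce the computation of $r_s(\pm Y, \tau)$ to the nonequivariant invariant $r_s(\pm Y)$ by showing that the complex-conjugation involution $\tau$ on a Brieskorn sphere acts trivially (up to filtered homotopy) on the enriched instanton complex $\un{\E}(\pm Y, \tau)$, and then invoking the known vanishing of $r_s$ for Seifert-fibered spaces with $h = 0$. First I would recall the structure of the instanton chain complex of a Brieskorn sphere: the flat $SU(2)$-connections on $\Sigma(p,q,r)$ are isolated and nondegenerate, so no holonomy perturbation is needed to define $\un{C}(Y)$, and hence $\un{\E}(Y,\tau)$ may be taken to be a constant sequence of honest (level-zero) involutive complexes in the sense of Definition~\ref{def:4.1}. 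The key input is that $\tau$ is isotopic to the identity on $\Sigma(p,q,r)$: indeed, the complex conjugation involution is the restriction of the $\mathbb{Z}/2 \subseteq S^1$ action coming from the Seifert $S^1$-fibration, and any such $\mathbb{Z}/2$ generated by an $S^1$-action is isotopic to the identity through the circle action. (This is exactly the observation made in Remark~2.6 of the excerpt, where it is attributed to the $S^1$-symmetry.) By the last part of Theorem~\ref{thm:nonequivariantrs}, it then follows that
\[
r_s(Y, \tau) = r_s(Y) \quad \text{and} \quad r_s(-Y, \tau) = r_s(-Y).
\]

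Next I would appeal to the nonequivariant computation of $r_s$ for Brieskorn spheres. By the main results of Nozaki--Sato--Taniguchi \cite{NST19}, for a Seifert homology sphere $Y$ with instanton Fr\o yshov invariant $h(Y) = 0$, one has $r_s(Y) = \infty$ for all $s$; the same holds for $-Y$ since $h(-Y) = -h(Y) = 0$. Concretely, the vanishing of $h(Y)$ means that the trivial connection $\theta$ itself (or an explicitly constructed $\theta$-supported cycle built from the irreducible generators, using that the differential out of $\theta$ lands in a summand that does not obstruct) furnishes a $\theta$-supported cycle of arbitrarily negative filtration level; this forces $r_s = \infty$ by Definition~\ref{def:3.10}. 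Combining this with the identification $r_s(\pm Y,\tau) = r_s(\pm Y)$ from the previous paragraph gives the theorem.

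The main obstacle I anticipate is making the isotopy-to-the-identity argument fully rigorous at the level of the induced map on the \emph{filtered} complex. While it is classical that the complex conjugation involution on $\Sigma(p,q,r)$ is smoothly isotopic to the identity, one must be careful that the isotopy can be taken through metrics and perturbations so that the induced chain map $\tau_i$ is filtered-homotopic to the identity \emph{with control on the Chern-Simons levels} — i.e., that the track of the isotopy gives an equivariant homology cobordism as claimed in the proof of Theorem~\ref{thm:nonequivariantrs}, and that applying Lemma~\ref{monotonicity of involtuive rs} in both directions yields the equality of enriched invariants rather than merely an inequality. An alternative, perhaps cleaner route that avoids this is to use Theorem~\ref{thm:definitebounding} directly: $Y = \Sigma(p,q,r)$ with $h(Y)=0$ bounds a definite equivariant $4$-manifold (the canonical negative-definite plumbing, which carries an extension of $\tau$ since $\tau$ is induced by the complex conjugation on the resolution), and similarly $-Y$ bounds the positive-definite plumbing equivariantly; Theorem~\ref{thm:definitebounding} then gives $r_s(Y,\tau) = \infty$ and $r_s(-Y,\tau) = \infty$ immediately, provided one checks the $H_1(W,\Z_2) = 0$ hypothesis for the plumbing and that $\wt\tau$ restricts correctly to $\tau$ on the boundary. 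I would present the isotopy argument as the primary proof and remark on the plumbing argument as an alternative, since the latter sidesteps the filtration bookkeeping entirely.
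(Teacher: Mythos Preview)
Your primary argument contains a fatal error: the complex conjugation involution $\tau$ on $\Sigma(p,q,r)$ is \emph{not} isotopic to the identity. You have confused it with the involution coming from the $\Z/2$-subgroup of the Seifert $S^1$-action (the one discussed in Remark~2.4). Complex conjugation reverses the orientation of the Seifert fibers, whereas the $S^1$-action preserves it; in fact, as the paper states at the start of Section~\ref{sec:brieskorn}, for $\Sigma(p,q,r)$ other than $S^3$ and $\Sigma(2,3,5)$ the mapping class group is $\Z/2$, and $\tau$ is exactly its nontrivial generator. So the appeal to the last part of Theorem~\ref{thm:nonequivariantrs} is unjustified, and the reduction $r_s(\pm Y,\tau)=r_s(\pm Y)$ cannot be obtained this way.

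Your alternative (plumbing) argument is half right. The resolution of the singularity is an equivariant negative-definite filling of $(Y,\tau)$, and this indeed gives $r_s(Y,\tau)=\infty$; this is precisely the paper's argument for that orientation. But your claim for the other orientation does not yield what you want: if $-Y$ bounds the positive-definite plumbing equivariantly, Theorem~\ref{thm:definitebounding} applied to the pair $(-Y,\tau)$ with positive-definite $W$ gives $r_s(-(-Y),\tau)=r_s(Y,\tau)=\infty$, which is the same conclusion again. To obtain $r_s(-Y,\tau)=\infty$ by bounding you would need an equivariant \emph{negative}-definite filling of $-Y$ (equivalently, an equivariant positive-definite filling of $Y$) with $H_1(\,\cdot\,;\Z_2)=0$, and no such filling is provided.

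The paper handles $-Y$ by a direct chain-level argument instead: by Fintushel--Stern the irreducible complex of $-Y$ is supported in even gradings with zero differential, and $h(-Y)=0$ forces $D_2\theta=0$. Since no irreducible generator sits in the grading of $\theta$, one must have $\tau\theta=\theta$ on the nose, so $\theta$ is itself an equivariant $\theta$-supported cycle for every $[r,-s]$ and $r_s(-Y,\tau)=\infty$.
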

\begin{proof}
Resolving the singularity gives an equivariant negative-definite manifold with boundary $(Y, \tau)$. Puncturing this manifold gives a cobordism from $S^3$ to $(Y, \tau)$; the first claim then follows from Theorem~\ref{thm:1.1}. To prove the second claim, observe that by \cite[Proposition 3.10]{FS90}, the irreducible instanton chain complex for $-Y$ (with our orientation conventions) is supported in even gradings and has no differential. Since $h(Y) = h(-Y) = 0$, it follows that the $D_2$-map is zero on the chain level. Since there are no other generators in the same grading as the reducible $\theta$, the action of $\tau$ must send $\theta$ to itself. It is then easily seen that $r_s(-Y, \tau)$ is identically $\infty$.
\end{proof}

Readers should contrast the above with \cite[Lemma 7.7]{DHM20}. Note that as discussed in Remark~\ref{rem:1.8}, it is also possible to define the involutive $r_s$-invariant using $\Z$-coefficients. However, the same argument as above shows that the involutive $r_s$-invariant with $\Z$-coefficients is also trivial for Brieskorn spheres with $h(Y) = 0$ . (The analogue of Lemma~\ref{lem:4.26} likewise applies to show that the local equivalence class is then trivial.)


\begin{rem} Our local equivalence formalism roughly corresponds to Donaldson's Theorem A; that is, local maps are induced by equivariant negative-definite cobordisms. We expect that a theory such as \cite[Theorem 4.1]{Sa13} more analogous to Donaldson's Theorem B or C might capture the nontriviality of the local equivalence class of $\Sigma(2,3,7)$ with the complex conjugation action. (Such variants of instanton Floer theory were first developed by Fukaya, Furuta and Ohta.)
\end{rem}

\subsection{Definite bounding} \label{sec:7.3}

We now turn to the proof of Theorem~\ref{thm:1.2}. As shown in Figure~\ref{fig:7.4}, the knot $\ov{9}_{46}$ admits two strong inversions, which we denote by $\tau$ and $\sigma$. Let
\[
Y_i = S^3_{1/i}(\ov{9}_{46}).
\]
Write $(Y_i, \tau)$ and $(Y_i, \sigma)$ to mean these surgered $3$-manifolds equipped with the involutions induced by $\tau$ and $\sigma$, respectively; see \cite[Section 5.1]{DHM20}. Note that $(Y_1, \tau)$ is the Abkulut cork. It will also be useful to point out that Lemma~\ref{cal for Akbulut} holds with $(Y_1, \tau)$ replaced by $(Y_1, \sigma)$. Indeed, $\tau$ and $\sigma$ differ by an involution on $Y_1$ which extends over the contractible $4$-manifold $X_1$ discussed in Section~\ref{sec:6.2}. Hence the cork twist along $\sigma$ gives the same $4$-manifold (up to diffeomorphism) as the cork twist along $\tau$. In particular, the Donaldson invariants of the two resulting cork twists agree. Thus the analysis of Section~\ref{sec:7.1} applies to calculate the action of $\sigma$.

\begin{figure}[h!]
\includegraphics[scale=0.5]{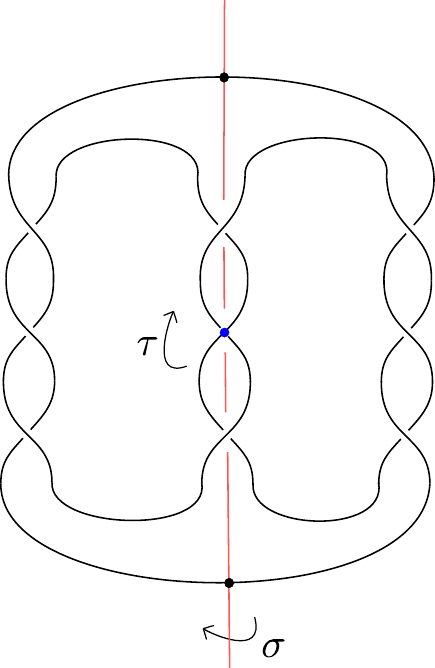}
\caption{Strong inversions on $\ov{9}_{46}$.}\label{fig:7.4}
\end{figure}

The cork $Y$ of Theorem~\ref{thm:1.2} will be constructed as an equivariant connected sum of pairs $(Y_i, \tau)$ and $(Y_i, \sigma)$. For convenience, we use addition to denote the operation of connected sum and negation to denote orientation reversal. For the convenience of the reader, we recall Theorem~\ref{thm:1.2}: \\

\noindent
\textbf{Theorem 1.2}
There exists a cork $Y = \partial W$ such that the boundary involution $\tau$:
\begin{enumerate}
\item Does not extend as a diffeomorphism over any negative-definite $4$-manifold $W^-$ with $H_1(W^-, \Z_2)=0$ bounded by $Y$; and,
\item Does not extend as a homology-fixing or homology-reversing diffeomorphism over any positive-definite $4$-manifold $W^+$ with $H_1(W^+, \Z_2)=0$ bounded by $Y$.
\end{enumerate}

The first part of Theorem~\ref{thm:1.2} is an application of the present paper:

\begin{proof}[Proof of Theorem~\ref{thm:1.2} (1)]
We show that at least one of 
\begin{equation}\label{eq:combA}
(Y, \tau) = - 2(Y_1, \tau) - 2(Y_1, \sigma) + (Y_2, \tau)
\end{equation}
and 
\begin{equation}\label{eq:combB}
(Y, \tau) = -2(Y_1, \tau) - 2(Y_1, \sigma) + (Y_2, \sigma)
\end{equation}
constitutes the desired example. Indeed, note that if $(Y, \tau)$ bounds an equivariant negative-definite manifold $W^-$with $H_1(W^-, \Z_2)=0$, then the inequality of Theorem~\ref{thm:1.1} implies $r_0(Y, \tau) = \infty$. It thus suffices to see that $r_0(Y, \tau) < \infty$ for at least one of \eqref{eq:combA} and \eqref{eq:combB}. 

We know from Lemma~\ref{cal for Akbulut} and the discussion at the beginning of this subsection that $r_0(Y_1, \tau)$ and $r_0(Y_1, \sigma)$ are both finite. Moreover, by Lemma~\ref{lem:knotsA}, we have a simply-connected, negative-definite equivariant cobordism from $(Y_2, \tau)$ to $(Y_1, \tau)$, and similarly from $(Y_2, \sigma)$ to $(Y_1, \sigma)$. Hence
\[
r_0(Y_2, \tau) < r_0(Y_1, \tau) \quad \text{and} \quad r_0(Y_2, \sigma) < r_0(Y_1, \sigma).
\] 
Now suppose that 
\[
r_0(Y_1, \tau) \leq r_0(Y_1, \sigma). 
\]
In this situation we let $(Y, \tau)$ be given by (\ref{eq:combA}). The connected sum inequality for the involutive $r_s$-invariant gives
\[
r_0 (Y_2, \tau)  \geq \min \{ r_0( Y, \tau) , r_0( 2(Y_1, \tau) + 2(Y_1, \sigma))\}. 
\]
Under the supposition that $r_0(Y_1, \tau) \leq r_0(Y_1, \sigma)$, repeatedly applying the connected sum inequality as in the proof of Theorem~\ref{key:sequence} gives
\[
r_0 (Y_2, \tau)  < r_0(Y_1, \tau) = \min \{r_0(Y_1, \tau), r_0(Y_1, \sigma)\} \leq r_0( 2(Y_1, \tau) + 2(Y_1, \sigma)). 
\]
Combining these two inequalities shows that $r_0( Y, \tau) \leq r_0 (Y_2, \tau) < \infty$, as desired. If instead $r_0(Y_1, \sigma) \leq r_0(Y_1, \tau)$, we let $(Y, \tau)$ be given by (\ref{eq:combB}). A similar argument with $r_0 (Y_2, \sigma)$ in place of $r_0 (Y_2, \tau)$ then gives the result.
\end{proof}

We now establish the second part of Theorem~\ref{thm:1.2}. This will require a general understanding of the results of \cite{DHM20}. In \cite[Section 4]{DHM20}, it is shown that an involution $\tau$ on $Y$ induces a homotopy involution (which we also denote by $\tau$) on the Heegaard Floer complex $\CFm(Y)$. The pair $(\CFm(Y), \tau)$ defines an equivalence class in the \textit{local equivalence group} $\mathfrak{I}$ introduced by Hendricks-Manolescu-Zemke \cite[Secion 8.3]{HMZ}. We refer to the local equivalence class of $(\CFm(Y), \tau)$ as the \textit{$\tau$-class} of $(Y, \tau)$, even in the case that the involution on $Y$ is denoted by something other than $\tau$, such as $\sigma$.

The most salient feature of $\mathfrak{I}$ is that it admits a partial order which restricts the existence of equivariant negative-definite cobordisms, as follows. Suppose that $(Y_1, \tau_1)$ and $(Y_2, \tau_2)$ are two homology spheres equipped with involutions and let $(W, \wt{\tau})$ be an equivariant negative-definite cobordism between them with $b_1(W) = 0$. Suppose that there is a $\spinc$-structure $\s$ on $W$ such that the Heegaard Floer grading shift $\Delta_{W, \s}$ (associated to the cobordism map $F_{W, \s}$) is zero and $\wt{\tau}_*(\s) = \s$. Then
\[
\CFm(Y_1, \tau_1) \leq \CFm(Y_2, \tau_2).
\]
We call such a cobordism a \textit{$\spinc$-fixing cobordism}. Thus, if $(\CFm(Y), \tau) > 0$, then there does not exist any $\spinc$-fixing, equivariant negative-definite manifold with boundary $(Y, \tau)$. Here, we write $0$ to mean the local equivalence class of the trivial complex.

A crucial insight of \cite{DHM20} was that the action of $\tau$ may also be combined with the involution $\iota$ defined by Hendricks and Manolescu \cite{HM}. The composition $\iota \tau$ also constitutes a homotopy involution on $\CFm(Y)$ and hence defines another class $(\CFm(Y), \iota \tau)$ in the local equivalence group. We refer to this class as the \textit{$\iota \tau$-class} of $(Y, \tau)$. Crucially, this class has slightly different functoriality properties with respect to the partial order. In particular, let $(W, \wt{\tau})$ be a negative-definite equivariant cobordism as before, but with a $\spinc$-structure $\s$ such that the Heegaard Floer grading shift $\Delta_{W, \s}$ is zero and $\wt{\tau}_*(\s) = \bar{\s}$. Then
\[
\CFm(Y_1, \iota_1\tau_1) \leq \CFm(Y_2, \iota_2\tau_2).
\]
We call such a cobordism a \textit{$\spinc$-reversing cobordism}. Thus, if $(\CFm(Y), \iota \tau) > 0$, then there does not exist any $\spinc$-reversing, equivariant negative-definite manifold with boundary $(Y, \tau)$.

The fact that the invariants of \cite{DHM20} are functorial only in the presence of a $\spinc$-fixing or $\spinc$-reversing cobordism leads to the conditions on $\wt{\tau}$ in the second part of Theorem~\ref{thm:1.2}:



\begin{proof}[Proof of Theorem~\ref{thm:1.2} (2)]
We show that a positive-definite $W^+$ as in the statement of Theorem~\ref{thm:1.2} cannot exist for either of the possibilities (\ref{eq:combA}) or (\ref{eq:combB}) given in the proof of the first part of Theorem~\ref{thm:1.2}.

Suppose for the sake of contradiction that such a $W^+$ did exist. Reversing orientation gives a negative-definite cobordism $\smash{-W^+}$ from $S^3$ to $-Y$ (or, equivalently, a negative-definite cobordism from $Y$ to $S^3$). Since $\pm Y$ bounds a homology ball, it has $d$-invariant zero. It follows from the argument of \cite[Section 9.1]{ozsvath2003absolutely} that any definite manifold with boundary $\pm Y$ must have diagonalizable intersection form. This easily implies the existence of a $\spinc$-structure $\s$ such that the Heegaard Floer grading shift $\Delta_{-W^+, \s}$ is zero. If $\wt{\tau}$ is homology-fixing, as in the statement of the theorem, then it is not hard to see that $\smash{-W^+}$ is a $\spinc$-fixing cobordism as defined previously. If $\wt{\tau}$ is homology-reversing, then $-W^+$ will be a $\spinc$-reversing cobordism.\footnote{The second part of Theorem~\ref{thm:1.2} can thus actually be strengthened to exclude all $\spinc$-fixing and $\spinc$-reversing equivariant positive-definite cobordisms; we have suppressed this for the sake of clarity of the theorem statement.} To produce a contradiction, it thus suffices to prove
\[
(\CFm(Y), \tau) > 0 \quad \text{and} \quad (\CFm(Y), \iota \tau) > 0
\]
for both (\ref{eq:combA}) and (\ref{eq:combB}), as reversing orientation then gives the desired result.

We begin by understanding the Heegaard Floer homologies of $Y_1$ and $Y_2$. The Heegaard Floer homology of $Y_1$ is displayed on the left in Figure~\ref{fig:7.5}. The action of $\tau$ was computed in \cite[Lemma 7.5]{DHM20} up to a change-of-basis, this is likewise shown on the left in Figure~\ref{fig:7.1}. On the right in Figure~\ref{fig:7.5}, we have displayed the Heegaard Floer homology of $Y_2$, which may be calculated via the rational surgery formula \cite{OS11}.
\begin{figure}[h!]
\includegraphics[scale = 1]{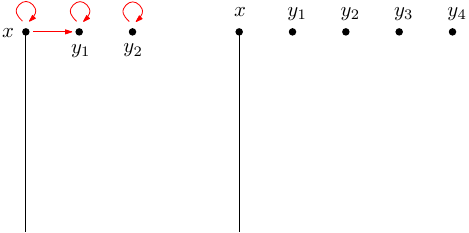}
\caption{The Heegaard Floer homologies $\HFm(Y_1)$ (left) and $\HFm(Y_2)$ (right). The action of $\tau$ is displayed on the left as the sum of red arrows.}\label{fig:7.5}
\end{figure}

We first analyze the actions of $\tau$ and $\sigma$ on $Y_1$. The following was established \cite[Lemma 7.5]{DHM20}:
\begin{equation}\label{eq:y1A}
(\CFm(Y_1), \tau) < 0 \quad \text{and} \quad (\CFm(Y_1), \sigma) = 0
\end{equation}
and
\begin{equation}\label{eq:y1B}
(\CFm(Y_1), \iota \tau) = 0 \quad \text{and} \quad (\CFm(Y_1), \iota \sigma) < 0.
\end{equation}
In fact, we have $(\CFm(Y_1), \tau) = (\CFm(Y_1), \iota\sigma)$. 

The actions of $\tau$ and $\sigma$ on $Y_2$ are not so immediate. However, it is not hard to constrain their possibilities up to local equivalence. Note that if $C_1$ and $C_2$ are two Heegaard Floer complexes whose homologies are supported in even gradings, then chain maps between $C_1$ and $C_2$ (up to homotopy) are determined by their maps on homology. (See for example the argument of \cite[Lemma 4.4]{dai2019involutive}.) Let $\omega$ be any involution on $\HFm(Y_2)$. There are two possibilities:
\begin{itemize}
\item First suppose that $\omega$ fixes the $U$-nontorsion generator $x$. Note that $\omega y_i$ cannot be supported by $x$ for any $i$, since $\omega$ cannot map a $U$-torsion element of homology to a $U$-nontorsion element of homology. Then $(\CFm(Y_2), \omega)$ is easily seen to be locally trivial.
\item Now suppose that $\omega$ does not fix $x$. Perform a change-of-basis on $\HFm(Y_2)$ so that 
\[
y_1 = x + \omega x
\]
and $y_2$, $y_3$, and $y_4$ are still $U$-torsion. Then $(\CFm(Y_1), \tau)$ admits an equivariant local map into $(\CFm(Y_2), \omega)$ by sending $x$ and $y_1$ in $\HFm(Y_1)$ to $x$ and $y_1$ in $\HFm(Y_2)$. (Up to local equivalence, $y_2$ may be discarded.)
\end{itemize}
Thus, in either of the above two cases, we see that
\begin{equation}\label{eq:y1C}
(\CFm(Y_1), \tau) = (\CFm(Y_1), \iota\sigma) \leq (\CFm(Y_2), \omega)
\end{equation}
for any involution $\omega$ on $\HFm(Y_2)$. In particular, we may take $\omega \in \{\tau, \sigma, \iota\tau, \iota\sigma\}$.


Now consider the complex of (\ref{eq:combA}). The $\tau$-class of (\ref{eq:combA}) is given by
\begin{align*}
- 2(\CFm(Y_1), \tau) - 2(\CFm(Y_1), \sigma) + (\CFm(Y_2), \tau) &= -2(\CFm(Y_1), \tau) +  (\CFm(Y_2), \tau) \\
& \geq -(\CFm(Y_1), \tau) \\
&> 0
\end{align*}
where the first equality follows from (\ref{eq:y1A}), the second inequality follows from (\ref{eq:y1C}), and the final inequality follows from (\ref{eq:y1A}). Similarly, the $\iota \tau$-class of (\ref{eq:combA}) is given by
\begin{align*}
- 2(\CFm(Y_1), \iota\tau) - 2(\CFm(Y_1), \iota \sigma) + (\CFm(Y_2), \iota \tau) &= -2(\CFm(Y_1), \iota \sigma) +  (\CFm(Y_2), \iota \tau) \\
& \geq -(\CFm(Y_1), \iota \sigma) \\
&> 0
\end{align*}
where the first equality follows from (\ref{eq:y1B}), the second inequality follows from (\ref{eq:y1C}), and the final inequality follows from (\ref{eq:y1B}). Thus, both the $\tau$-class and the $\iota\tau$-class of (\ref{eq:combA}) are strictly greater than zero. A similar computation holds for (\ref{eq:combB}). This completes the proof.
\end{proof}

Having established Theorem~\ref{thm:1.2}, the proof of Corollary~\ref{cor:1.3} is immediate: 

\begin{proof}[Proof of Corollary~\ref{cor:1.3}]
Consider the cork obtained from Theorem~\ref{thm:1.2}. The first part of Theorem~\ref{thm:1.2} shows that there is no extension of $\tau$ over any negative-definite $X$ as in the corollary statement. On the other hand, any extension of $\tau$ over a positive-definite such $X$ would clearly be either homology-fixing or homology-reversing. Hence such an extension is ruled out by the second part of Theorem~\ref{thm:1.2}. Since $Y$ is a homology sphere, these are the only two possibilities for $X$. 
\end{proof}

\begin{rem}
In principle, it may be possible to replicate Corollary~\ref{cor:1.3} using the methods of \cite{DHM20}. It suffices to find an example of a cork $(Y, \tau)$ such that 
\[
\un{d}_\tau(Y) < 0 < \ov{d}_\tau(Y) \quad \text{and} \quad \un{d}_{\iota \tau}(Y) < 0 < \ov{d}_{\iota \tau}(Y);
\]
see \cite[Remark 4.5]{DHM20}. In the involutive Heegaard Floer setting, such algebraic examples can be constructed through connected sums of Brieskorn spheres of varying sign; see \cite[Corollary 1.7]{DaiStoff19}. Although the corks considered in \cite[Theorem 1.13]{DHM20} are conjecturally similar in behavior to these, at present we do not have the computational tools necessary to verify this. The authors do not expect the specific examples presented in this paper can be re-established using any of the methods of  \cite{LRS18, ASA20, DHM20, KMT23A}.
\end{rem}

In the case that $X$ is of the form $W' \# n \CP$ or $W' \# \oCP$ for a homology ball $W'$, it turns out that we can remove the constraint on the action of $\wt{\tau}$ in Theorem~\ref{thm:1.2}. This leads to the proof of Corollary~\ref{cor:cp2}:

\begin{proof}[Proof of Corollary~\ref{cor:cp2}]
Let $(Y, \tau)$ be any pair for which $r_0(Y, \tau) < \infty$ and $(\CFm(Y), \tau) > 0$. For instance, we may consider the strong cork $(Y, \tau)$ established in the proof of Corollary~\ref{cor:1.3}. (The additional property that $(\CFm(Y), \iota\tau) > 0$ will not be needed; simpler examples are thus possible.) The same argument as before shows $\tau$ does not extend as a diffeomorphism over any negative-definite $X$ with $H_1(X, \Z_2) = 0$ and does not extend as a homology-fixing diffeomorphism over any positive-definite $X$ with $H_1(X, \Z_2) = 0$.

Clearly, the first claim shows $\tau$ does not extend over any $W' \# n \oCP$ as in the corollary statement. Now suppose that there was an extension $\wt{\tau}$ over some $W' \# n \CP$. It is not difficult to see that any automorphism of the intersection form of $n \CP$ can be realized by a self-diffeomorphism of $n \CP$; see for example \cite[Section 2]{Ba23}. After isotopy, we may assume that such a self-diffeomorphism fixes a ball. It follows that by postcomposing $\wt{\tau}$ with an appropriate self-diffeomorphism of (punctured) $n \CP$, we obtain a new extension $\wt{\tau}'$ of $\tau$ which acts as the identity on second homology. This contradicts the second claim in the previous paragraph.
\end{proof}

\begin{rem}\label{rem:7.4}
If the contractible manifold $W$ is fixed in Corollary~\ref{cor:cp2}, then it is not difficult to give several families of corks which survive definite stabilization of either sign. One method (following the formalism of \cite{DHM20}) is as follows: let $W$ be any cork for which the Heegaard Floer relative invariant $F_W(1)$ is not fixed by the action of $\tau$. It is not hard to choose $W$ so that $F_W(1)$ is also not fixed by $\iota \tau$.\footnote{This follows from computations of \cite{DHM20}. As an explicit example, we may take $W$ to be boundary connected sum of two copies of the Akbulut-Mazur cork with the involution $\tau \# \sigma$.} Then the blow-up formula for the Heegaard Floer cobordism maps shows that $\tau$ does not extend over $W \# \oCP$. If we now take any two such corks $W_1$ and $W_2$, then the tensor product formula easily establishes the same for $W_1 \# -W_2$ and $W_2 \# - W_1$; hence $W_1 \# -W_2$ is a cork that survives stabilization by both $\CP$ and $\oCP$ (at least in the case $n = 1$). Many other arguments are possible (see also the techniques of \cite{Y19}); the authors thank Anubhav Mukherjee and Kouichi Yasui for related discussions.
\end{rem}


Finally, we list an additional motivation for Theorem~\ref{thm:1.2}. If $K$ is a strongly invertible or periodic knot with involution $\tau$, then any $1/n$-surgery on $K$ inherits an involution coming from $\tau$. It is natural to ask whether every symmetry on an integer homology sphere arises in this manner: 

\begin{ques}\label{qn:surgery}
Does there exist a pair $(Y, \tau)$ such that $Y$ is surgery on a knot, but $(Y, \tau)$ does not arise via surgery on any equivariant knot?
\end{ques}

The question of whether a manifold is Dehn surgery on a knot is well-studied; see for example \cite{HKL16} for a Floer-theoretic obstruction. Question~\ref{qn:surgery} may be viewed as the natural equivariant version of this problem. The relation between Question~\ref{qn:surgery} and Theorem~\ref{thm:1.2} is given by Lemma~\ref{lem:knotsB}. This shows that Theorem~\ref{thm:1.2} can be used to produce pairs $(Y, \tau)$ which do not arise via surgery on any equivariant knot. 

Unfortunately, it is not difficult to check that the example given in the proof of Theorem~\ref{thm:1.2} does not even arise as (nonequivariant) Dehn surgery on a knot. The obstruction provided by Theorem~\ref{thm:1.2} actually applies to any pair in the equivariant homology cobordism class of $(Y, \tau)$; the authors do not know if this equivariant homology cobordism class contains a representative which arises as (nonequivariant) Dehn surgery on a knot.

\subsection{Proof of Theorems~\ref{thm:1.4} and \ref{thm:1.5}} \label{sec:7.4}
We now prove Theorems~\ref{thm:1.4} and \ref{thm:1.5}. We begin with a general method for establishing the linear independence of a sequence of equivariant homology spheres. For convenience, we use addition to denote the operation of connected sum and negation to denote orientation reversal throughout.

 
\begin{thm}\label{key:sequence}
Let $\{(Y_i, \tau_i)\}_{i = 1}^\infty$ be a sequence of oriented integer homology $3$-spheres equipped with orientation-preserving involutions $\tau_i$. Assume the fixed-point set of each $\tau_i$ is a copy of $S^1$, so that the equivariant connected sum operation is well-defined. Suppose that:
\begin{enumerate}
\item \label{it:seq1} $r_0(Y_1, \tau_1) > r_0( Y_2, \tau_2 ) > \cdots > r_0(Y_i, \tau_i) > \cdots$;
\item \label{it:seq2} $r_0(Y_1, \tau_1) < \infty$; and,
\item \label{it:seq3} $r_0(- Y_i, \tau_i) = \infty$ for each $i$.
\end{enumerate}
Then any nontrivial linear combination of elements from $\{(Y_i, \tau_i)\}_{i = 1}^\infty$ yields a strong cork. 
\end{thm}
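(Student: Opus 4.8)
The plan is to reduce the statement to the single inequality $r_0(Y,\tau)<\infty$, where $(Y,\tau)=\#_i a_i(Y_i,\tau_i)$ denotes the total space of a nontrivial linear combination (a negative $a_i$ meaning connected sum with $-Y_i$, keeping the involution). Positivity $r_0(Y,\tau)>0$ is automatic by Remark~\ref{rem:rangeofrs}, so once $r_0(Y,\tau)<\infty$ is established, the fact that $r_0$ is an invariant of equivariant homology cobordism (Theorem~\ref{thm:1.1}) together with $r_0(S^3,\id)=\infty$ forces $(Y,\tau)\not\sim(S^3,\id)$. Since each $Y_i$ bounds a contractible manifold (as in all of our applications, where the $Y_i$ are surgeries on slice knots), boundary connected sums and orientation reversals of contractible manifolds show that $Y$ bounds a contractible manifold, so $(Y,\tau)$ is a strong cork by Definition~\ref{def:2.1}. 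Thus the whole content is the upper bound on $r_0$. Throughout I write $\rho_i:=r_0(Y_i,\tau_i)$, so that $\rho_1>\rho_2>\cdots$ and $\rho_1<\infty$.

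The upper bound will come from an isolation argument built on the connected sum inequality (Theorem~\ref{thm:connectedsum}) and two elementary facts. First, $(A,\sigma)$ is a strong cork if and only if $(-A,\sigma)$ is (reverse every homology ball and cobordism involved), so we may freely replace $(Y,\tau)$ by $(-Y,\tau)$. Second, for any equivariant homology sphere $(A,\sigma)$ whose fixed set is a circle, the ``double'' satisfies $(A\#-A,\sigma\#\sigma)\sim(S^3,\id)$: removing a $\sigma$-invariant ball from $A$ yields a homology ball $A_0$, and $A_0\times[0,1]$ with the product involution is an equivariant homology ball whose boundary is $(A\#-A,\sigma\#\sigma)$. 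Now let $M$ be the largest index with $a_M\neq0$. After replacing $(Y,\tau)$ by $(-Y,\tau)$ if necessary---which by the first fact does not affect the conclusion---I may assume that the index-$M$ summands of the combination are $k\ge1$ copies of $(Y_M,\tau_M)$ rather than of $(-Y_M,\tau_M)$. Hence $Y=(Y_M,\tau_M)^{\#k}\#G$, where $G$ is a (possibly empty) connected sum of blocks, each of the form $(Y_i,\tau_i)$ or $(-Y_i,\tau_i)$ with $i<M$.

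Next I would set $R=(Y_M,\tau_M)^{\#(k-1)}\#G$ and $Z=-R$, so that $Y\#Z=(Y_M,\tau_M)\#(R\#-R)$. By the second fact applied to $R$ this is equivariantly homology cobordant to $(Y_M,\tau_M)$, hence $r_0(Y\#Z)=\rho_M<\infty$. On the other hand, every summand of $Z=-R$ is either some $(-Y_i,\tau_i)$ (possibly with $i=M$), which has $r_0=\infty$ by the third hypothesis, or some $(Y_i,\tau_i)$ with $i<M$, which has $r_0=\rho_i>\rho_M$ by the strict decrease of $r_0$ along the sequence. Iterating the connected sum inequality, $r_0(Z)$ is therefore bounded below by a minimum of finitely many quantities each strictly larger than $\rho_M$ (interpreting an empty $Z$ as having $r_0=\infty$); so $r_0(Z)>\rho_M$. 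Feeding this into $\rho_M=r_0(Y\#Z)\geq\min\{r_0(Y,\tau),r_0(Z)\}$, the minimum cannot be realized by $r_0(Z)$, so it is realized by $r_0(Y,\tau)$; that is, $r_0(Y,\tau)\leq\rho_M<\infty$, as required.

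I expect the genuine work to be bookkeeping rather than analysis: checking that the product involution on $A_0\times[0,1]$ restricts on the boundary double to exactly $\sigma\#\sigma$ (so that the double really is trivial in the equivariant homology cobordism monoid), and verifying that passing from $Y$ to $-Y$ interchanges the roles of the $(Y_i,\tau_i)$- and $(-Y_i,\tau_i)$-blocks precisely enough that the normal form $Y=(Y_M,\tau_M)^{\#k}\#G$ can always be arranged. Once these are in place the remainder is a direct application of Theorem~\ref{thm:connectedsum} and the homology-cobordism invariance of $r_0$; indeed this is, in a special case, exactly the chain of connected sum inequalities invoked in the proof of Theorem~\ref{thm:1.2}(1).
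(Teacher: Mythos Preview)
Your proposal is correct and follows essentially the same route as the paper: both isolate the maximal-index summand $(Y_M,\tau_M)$, bound $r_0$ of the complementary piece from below via the iterated connected-sum inequality together with hypotheses~(\ref{it:seq1}) and~(\ref{it:seq3}) (your $Z=-R$ is exactly the paper's right-hand side $-\sum_{i<k}n_i(Y_i,\tau_i)-(n_k-1)(Y_k,\tau_k)$), and then compare with $\rho_M$. The only cosmetic difference is that you establish $r_0(Y,\tau)\le\rho_M<\infty$ directly, whereas the paper phrases the same inequality as a contradiction arising from the assumption $Y\sim 0$; your version is in fact slightly more informative since it yields an explicit upper bound.
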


\begin{proof}
We follow the proof of \cite[Corollary 5.6]{NST19}, replacing the original $r_s$-invariant with the involutive $r_s$-invariant. Suppose that we had a linear combination
\[
\sum_i n_i (Y_i, \tau_i) = 0.
\]
Let $k$ be the maximal index for which $n_k \neq 0$. Without loss of generality, suppose $n_k > 0$. Then
\[
(Y_k, \tau_k) = - \sum_{i = 1}^{k-1} n_i(Y_i, \tau_i) - (n_k - 1) (Y_k, \tau_k).
\]
Consider the involutive $r_0$-invariant of the right-hand side. By repeatedly applying the connected sum inequality and utilizing (\ref{it:seq1}) and (\ref{it:seq3}), we see that this is greater than or equal to $r_0(Y_{k-1}, \tau_{k-1})$. (Indeed, it is greater than or equal to $r_0(Y_i, \tau_i)$ for the largest value of $i < k$ such that $n_i < 0$.) Considering the involutive $r_0$-invariant of the left-hand side then contradicts (\ref{it:seq1}), given that this is finite by (\ref{it:seq2}).
\end{proof}

Theorem~\ref{thm:1.4} follows quickly from Theorem~\ref{key:sequence}:

\begin{proof}[Proof of \cref{thm:1.4}]
Denote by
\[
Y_n = S^3_{1/n}(K).
\]
It suffices to prove that $\{(Y_n, \tau)\}_{n = 1}^\infty$ satisfies the assumptions of \cref{key:sequence}. The first and third conditions are immediate from Theorem~\ref{thm:1.1} (combined with Lemma~\ref{lem:knotsA}) and Theorem~\ref{thm:definitebounding}. If $K$ is the first knot $K_0$ in the family displayed in Figure~\ref{fig:1.1}, then the second condition is just Lemma~\ref{cal for Akbulut}. If $K$ is a different element in this family, then we have an equivariant negative-definite cobordism from $S^3_{+1}(K)$ to $S^3_{+1}(K_0)$, as displayed in Figure~\ref{fig:7.3}. Hence Theorem~\ref{thm:1.1} again shows that $r_0(Y_n(K), \tau)$ is finite. Applying \cref{key:sequence} completes the proof. 
\end{proof}

\begin{figure}[h!]
\includegraphics[scale=0.95]{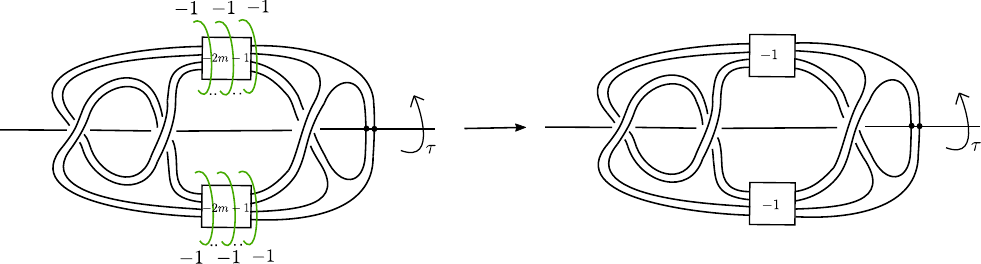} 
\caption{An equivariant negative-definite cobordism from $S^3_{+1}(K)$ to $S^3_{+1}(K_0)$ given by attaching the $-1$-framed green 2-handles.}\label{fig:7.3}
\end{figure}

Finally, we have:

\begin{proof}[Proof of Theorem~\ref{thm:1.5}]
Let $K$ be the any of the strongly invertible knots in Figure~\ref{fig:1.1}. We have already checked in the proof of Theorem~\ref{thm:1.4} that 
\[
r_0(S^3_{1/n}(K), \tau) < \infty
\]
for any $n \in \N$. A simply-connected, equivariant negative-definite cobordism from $\smash{(S^3_{1/n}(K), \tau)}$ to itself would violate the strict inequality in Theorem~\ref{thm:1.1}. (In the positive-definite case, we turn the cobordism around.) This completes the proof. 
\end{proof}

Another application of Theorem~\ref{thm:1.5} is as follows: let $K$ be any of the knots in Theorem~\ref{thm:1.5}. Using the Montesinos trick, write 
\[
S^3_{1/n}(K) = \Sigma_2(K'_n)
\]
for some knot $K'_n$ in $S^3$. Here, $\Sigma_2(K'_n)$ is the branched double cover of $K'_n$ and the induced symmetry $\tau$ on the left is identified with the branching involution on the right. If $C$ is any concordance from $K'_n$ to $K'_n$, then $\Sigma_2(C)$ constitutes an equivariant homology cobordism from $\smash{S^3_{1/n}(K)}$ to itself. Moreover, if we had
\[
\pi_1([0,1]\times S^3 \setminus C ) = \Z,
\]
then this homology cobordism would be simply connected, which is ruled out by Theorem~\ref{thm:1.5}. Hence the methods of this paper can be used to rule out self-concordances with fundamental group $\Z$. Such results can already be obtained using singular knot instanton Floer theory with a general holonomy parameter \cite{DISST22}, at least in the case that $K_n'$ is not algebraically slice. 

\subsection{Nonorientable surfaces}\label{sec:7.5}

We now prove Theorems~\ref{thm:1.6} and \ref{thm:1.7}. These will follow immediately from the discussion of Section~\ref{sec:2.3} and our prior results regarding equivariant bounding. For the benefit of the reader, we first briefly discuss some details regarding the geography question for nonorientable surfaces. Recall that we have the Gordon-Litherland bound \cite{GL78}:
\[
\left | \sigma(K) - e(F)/2 \right | \leq h(F).
\]
This corresponds to the region $\Sigma$ displayed below on the left in Figure~\ref{fig:nonorientable}. In general (ignoring issues of parity), the set of realizable pairs is a subset of $\Sigma$ consisting of the union of sets of the form $\{|a - e/2| \leq h - b\}$, as shown in Figure~\ref{fig:nonorientable}. (To see this, note that we may take the connected sum of any surface with $\RP$, which increases $h$ by one and changes $e$ by $\pm 2$.) Extremal surfaces correspond to the points on the boundary of $\Sigma$.

\begin{figure}[h!]
\includegraphics[scale = 0.83]{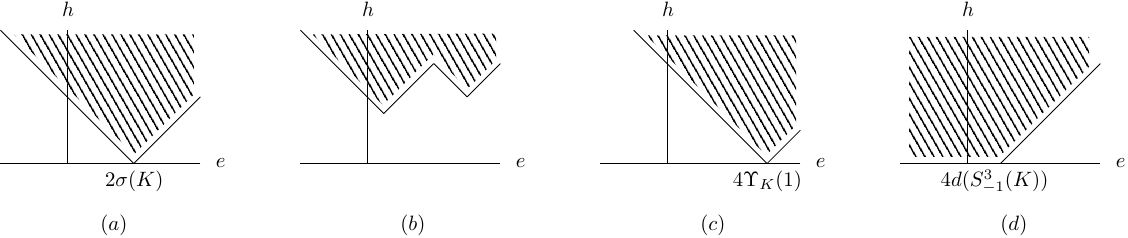}
\caption{From left-to-right: $(a)$ the region due to the Gordon-Litherland bound; $(b)$ a schematic example of the set of realizable pairs; $(c)$ the region due to the Ozv\'ath-Szab\'o-Stipsicz inequality; and $(d)$ the region due to Batson's bound.}\label{fig:nonorientable}
\end{figure}

Several authors have used Floer theory to derive additional constraints on the set of realizable pairs. The most well-known of these are the following: in \cite{Ba14}, Batson showed that 
\begin{equation}\label{eq:constraint1}
e(F)/2 - 2d(S^3_{-1}(K)) \leq h(F),
\end{equation}
while in \cite{OSSunoriented}, Ozsv\'ath-Stipsicz-Szab\'o proved
\begin{equation}\label{eq:constraint2}
\left | 2\Upsilon_K(1) - e(F)/2 \right | \leq h(F).
\end{equation}
These are schematically displayed in Figure~\ref{fig:nonorientable}. Either constraint can be used (in conjunction with the Gordon-Litherland bound) to prove that the nonorientable slice genus may be arbitrarily large and (additionally) rule out a subset of extremal surfaces. Note, however, that neither inequality is individually capable of ruling out the set of \textit{all} extremal surfaces. Moreover, using the fact that
\[
2V_0(-K) \geq \Upsilon_K(1)
\]
as shown in \cite[Proposition 2.54]{Sa23}, it is possible to show that such a result cannot be obtained by even by combining (\ref{eq:constraint1}) and (\ref{eq:constraint2}). The authors thank Kouki Sato for bringing this to their attention.

\begin{proof}[Proof of Theorem~\ref{thm:1.6}]
Let $Y$ be the cork constructed in Theorem~\ref{thm:1.2}. Using the Montesinos trick, we may write $Y$ as the branched double cover of some knot $J$ in such a way so that the branching involution over $J$ corresponds to the relevant involution on $Y$. Explicitly, this means that $J$ is either
\[
-2 A_1 \# - 2B_1 \# A_2 \quad \text{or} \quad -2A_1 \# -2B_1 \# B_2,
\]
with $A_n$ and $B_n$ as in Figure~\ref{fig:1.2}. Suppose that $J$ bounded an extremal surface $F$. Then $\Sigma_2(J) = \partial \Sigma_2(F)$. As discussed in Section~\ref{sec:2.3}, $\Sigma_2(F)$ is a homology-reversing equivariant definite manifold with $H_1(\Sigma(F), \Z_2) = 0$. This contradicts Theorem~\ref{thm:1.2}. 
\end{proof}

\begin{proof}[Proof of Theorem~\ref{thm:1.7}]
Consider any $K$ and $n$ as in Theorem~\ref{thm:1.5}. Consider the cork
\[
Y = S^3_{1/n}(K) \# - S^3_{1/n}(K).
\]
Using the Montesinos trick, we may write $Y$ as the branched double cover of some knot $J$ in such a way so that the branching involution over $J$ corresponds to the relevant involution on $Y$. For instance, when $K = \ov{9}_{46}$, we have
\[
J = A_n \# -A_n,
\]
with $A_n$ as in Figure~\ref{fig:1.1}. Clearly, $J$ is slice. Taking the connected sum of any slice disk with $\RP$ gives a trivial extremal surface. Suppose that $J$ bounded an extremal $\Z_2$-surface $F$. Then $\Sigma_2(J) = \partial \Sigma_2(F)$ and it is easily checked that $\Sigma_2(F)$ is simply-connected. As discussed in Section~\ref{sec:2.3}, $\Sigma_2(F)$ is a homology-reversing equivariant definite manifold. This contradicts Theorem~\ref{thm:1.5}. 
\end{proof}

\bibliographystyle{plain}
\bibliography{tex}

\end{document}